\newcommand{\EEE}{\color{black}}
\numberwithin{equation}{section}
\newcommand{\mathbcal}[1]{{\bm#1}}
\newcommand{\YX}{\mathcal X}
\def \M{\mathcal M}
\newcommand{\LET}{\calL\!\calE\!\calT}
\renewcommand{\bfH}{H}
\renewcommand{\d}{\mathrm{d}}
\renewcommand{\bfLambda}{\boldsymbol{\Lambda}}
\renewcommand{\dd}{\mathbcal{\ell}}
\newcommand{\xx}{\boldsymbol{x}}
\newcommand{\zz}{\boldsymbol{z}}
\newcommand{\rr}{\boldsymbol{r}}
\newcommand{\ztzt}{\boldsymbol{\zeta}}
\newcommand{\mm}{\boldsymbol{\upmu}}
\newcommand{\lala}{\boldsymbol{\uplambda}}
\newcommand{\sisi}{\bfbeta}
\newcommand{\x}{\times}
\newcommand{\kc}{\kappa}
\newcommand{\argmin}{\mathop{\operator@font argmin}}
\newcommand{\astop}{\mathop{\operator@font \ast}}
\newcommand{\DOP}{\mathop{\operator@font \Delta}}
\newcommand{\mfP}{\mathfrak P}
\newcommand{\ACCPOINTS}[1]{\bm{\calA\calP}(#1)}
\newcommand{\EQOVER}[2]{\overset{\text{\eqref{#1}}}{#2}}
\def\mafo{\mathrm}
\def\M{\mathfrak M}
\newcommand{\sfW}{\mathsf W}
\newcommand{\HK}{\mathsf H \hspace{-0.09em} \mathsf K}
\newcommand{\SHK}{\mathsf S \hspace{-0.09em} \mathsf{H} \hspace{-0.09em} \mathsf{K}}
\newcommand{\SHKwo}{\mathsf{S\hspace{-0.18em} H\hspace{-0.25em} K}}
\newcommand{\HKwo}{\mathsf{H\hspace{-0.25em} K}}
\def\div{\mathop{\mafo{div}}}
\newcommand{\sfd}{\mathsf{d}}  
\newcommand{\Geod}{\operatorname {Geod}}
\newcommand{\curv}{\operatorname {curv}}
\newcommand{\dist}{\mathsf{dist}}
\newcommand{\supp}{\operatorname{supp}}
\newcommand{\cP}{\calP}
\newcommand{\dil}[1]{\operatorname{dil}_{#1}}
\newcommand{\prd}[1]{\operatorname{prd}_{#1}}
\def\mfo{\mathfrak o}
\def\mfC{\mathfrak C}
\def\mfP{\mathfrak P}
\newcommand{\res}[1]{\mathop{\hbox{\vrule height 7pt width .5pt depth 0pt
			\vrule height .5pt width 6pt depth 0pt}} #1}
\renewcommand{\a}{\alpha}
\renewcommand{\b}{\beta}
\renewcommand{\M}{\mathcal{M}}
\begin{document}

\title{Geometric properties of cones with applications on the Hellinger-Kantorovich
  space, and a new distance on the space of probability measures}

\author{Vaios Laschos$^a$ and Alexander Mielke$^{a,b}$\\
{\footnotesize\begin{tabular}{@{}l}
  $^a$ Weierstra\ss{}-Institut f\"ur Angewandte Analysis und Stochastik,
   Berlin \\
  $^b$ Institut f\"ur Mathematik, Humboldt-Universit\"at zu Berlin
 \end{tabular}}
}

\date{May 15, 2018}
\maketitle

\begin{abstract}
  We study general geometric properties of cone spaces, and we apply them on the Hellinger--Kantorovich space
  $(\M(X),\HKwo_{\alpha,\beta}).$ We exploit a two-parameter scaling property of the Hellinger-Kantorovich metric $ \HKwo_{\alpha,\beta},$ and we prove the existence of a distance $\SHKwo_{\alpha,\beta}$ on the space of
  Probability measures that turns the Hellinger--Kantorovich space
  $(\M(X),\HKwo_{\alpha,\beta})$ into a cone space
  over the space of probabilities measures
  $(\cP(X),\SHKwo_{\alpha,\beta}).$ We provide a two parameter rescaling of geodesics in   $(\M(X),\HKwo_{\alpha,\beta}),$ and for $(\cP(X),\SHKwo_{\alpha,\beta})$ we obtain a full
  characterization of the geodesics.  We finally prove finer geometric
  properties, including local-angle condition and
  partial $K$-semiconcavity of the squared distances, that will be used in a future paper
  to prove existence of gradient flows on both spaces.
\end{abstract}

\section{Introduction}
\label{se:Intro}

In \cite{LiMiSa16OTCR,LiMiSa14?OETP}, and independently in
\cite{KoMoVo16NOTD} and \cite{CPSV15?UOTG,CPSV15?IDOT}, a new family
of distances $\HKwo_{\alpha,\beta}$ on the space $\M(X)$ of arbitrary
nonnegative and finite measures  was introduced, where
$(X,\sfd_X)$ is a geodesic, Polish space. This new family of
Hellinger--Kantorovich distances generalize both the
Kantorovich--Wasserstein distance (for $\alpha=1$ and $\beta= 0$)
and the Hellinger-Kakutani distance (for $\alpha=0$ and
$\beta=1$), allowing for both transportation and
creation/annihilation of mass, which is organized in a jointly
optimal fashion depending on the ratio of the parameters $\alpha$ and
$\beta$.  

The origin of our work stems from the observation in
\cite[Prop.\,19]{LiMiSa16OTCR} that the total mass $m(s)=\int_X
1\mathrm d \mu(s)$ of a constant-speed geodesic $[0,1]\ni s \mapsto
\mu(s) \in \M(X)$ is a quadratic function in $s$, viz.\ 
\begin{equation}\label{massgeodesic}
m(s) = (1{-}s)m(0) + s m(1) -s(1{-}s) \frac4\beta
\HKwo^2_{\alpha,\beta}(\mu(0),\mu(1)).
\end{equation}

We will show here that this formula is already a consequence of a simpler scaling property, 
that  \textbf{fully characterizes cone spaces}, which in the case of $\HKwo^2_{\alpha,\beta},$ takes the form 
\begin{equation}
  \label{eq:HK.scale}
  \HKwo^2_{\alpha,\beta}(r_0^2 \mu_0,r_1^2\mu_1)=r_0r_1 
  \HKwo^2_{\alpha,\beta}( \mu_0,\mu_1) +(r_0^2{-}r_0r_1)\frac4\beta
  \mu_0(X)+  (r_1^2{-}r_0r_1)\frac4\beta \mu_1(X).
\end{equation}
The property is proved independently in Theorem \ref{th:ScalHK} based on the
characterization of $\HKwo^2_{\alpha,\beta}$ via the
logarithmic-entropy functional $\LET_{\dd}$, cf.\ Theorem
\ref{thm:LET}. 

This suggests to write arbitrary measures $\mu\in
\M(X)\setminus\{0\}$ as 
\begin{equation}\label{cone structure of HK}
 \mu = r^2 \nu \quad \text{with} \quad [\nu,r] \in \cP(X)\x
(0,\infty), \quad \text{where  } 
r=\sqrt{\mu(X)}, \ 
\nu=\frac1{r^2} \mu ,
\end{equation}
and $\cP(X)$ denotes the probability measures. Thus, the set $\M(X)$ can be
interpreted as a cone over $\cP(X)$ in the sense of Section
\ref{se:Cones}, and the Hellinger--Kantorovich distance has the form 
\[
\HKwo_{\alpha,\beta}^2(r_0^2\nu_0 , r_1^2 \nu_1)= \frac{4}\beta 
  \Big( r_0^2 + r_1^2 - 2r_0r_1 \cos 
        \big(\SHK_{\alpha,\beta}(\nu_0,\nu_1)\big)\Big),   
\]
where the so-called \emph{spherical Hellinger--Kantorovich distance}
on $\cP(X)$ is simply defined by 
\[
\SHKwo_{\alpha,\beta}(\nu_0,\nu_1) = \arccos\left( 1- \frac{\frac\beta4
  \HKwo_{\alpha,\beta}^2(\nu_0, \nu_1)}2\right) .
\]
One main result is that $\SHKwo_{\alpha,\beta}$ is indeed a distance
on the space of probability measures, such that
the Hellinger--Kantorovich space $(\M(X),\HKwo_{\a,\b})$ is indeed a
cone space over the space of probability measures, namely  
$(\cP(X),\SHKwo_{\alpha,\beta})$. This distance is a generalization of
the spherical Hellinger distance, also called ``Fisher-Rao distance''
or ``Bhattacharya distance 1'' in
\cite[Sec.\,7.2+Sec.\,14.2]{DezDez09ED}, in a similar way that the
Hellinger-Kantorovich distance is a generalization of the Hellinger
distance. 

The fact that $\SHKwo_{\alpha,\beta}$ satisfies the triangle
inequality will be derived  in the abstract Section \ref{se:Cones}
for general distances $\sfd_\calC$ satisfying a 
scaling property as in \eqref{eq:HK.scale}.  We work on the 
cone $(\calC,\sfd_{\calC})$ over a general space $(\YX,\sfd_{\YX})$, and the  sole additional
assumption we need is that the distance $\sfd_\calC$ is bounded on the
set $\{\,[x,1]\,:\,  x\in \YX  \,\}\subset \calC$ by the
constant 2, see Theorem \ref{coneinverse}. The latter bound  follows easily
for the Hellinger-Kantorovich distance from 
\[
\frac\beta4 \HKwo^2_{\alpha,\beta}(\nu_0,\nu_1) \leq 
\frac\beta4\left( \frac4\beta \nu_0(X)+\frac4\beta \nu_1(X)\right) 
=  2 \leq 4. 
\] 

 In Sections \ref{su:GeodC} to \ref{su:Projecting} we consider the case
that $(\YX,\sfd_\YX)$ is a geodesic space and that $\sfd_\calC$ is given by 
\begin{equation}
  \label{conedistance}
\sfd^{2}_{\calC}([x_0, r_{0}],[x_1, r_{1}]) 
= r^{2}_{0}+r^{2}_{1}-2r_{0}r_{1}\cos_{\pi}(\sfd_{\YX}(x_{0},x_{1})),
\end{equation}
where $\cos_{a}(b)=\cos(\min\{a,b\})$. In Sections \ref{su:Lifting}
and \ref{su:Projecting} we show how geodesics in $(\calC,\sfd_\calC) $
between $[x_0, r_{0}] $ and $[x_1, r_{1}]$ can be obtained from those
between $x_0$ and $x_1$ in $(\YX,\sfd_\YX)$. 
Based on this, we discuss how comparison angles and local
angles behave when we move between the spherical space $(\YX,\sfd_\YX)$ and the
cone $(\calC,\sfd_\calC)$. In particular, we discuss the \emph{local
  angle condition} $m$-LAC,  see Definition \ref{def:m-LAC} and
\cite{Savare2007,OlPaVi14OTTA} for the usefulness of this in the
theory of metric gradient flows. The main observation is that if $\sfd_{\YX}(x_{0},x_{i})<\pi,$ $\xx_{0i}$ are constant-speed geodesics in $\YX$ connecting $x_0$ with
$x_i$, and if $\zz_{0i}$ are the corresponding geodesics in $\calC$
connecting $z_0=[x_0,r_0] $ and $z_i=[x_i,r_i]$ with $r_0,r_i>0$, then the
upper angles satisfy the relation 
\begin{align*}
\sfd_\calC(z_0,z_i)\sfd_\calC(z_0,z_j)  \cos \big(\varangle_{\rmu\rmp} (\zz_{0i},\zz_{0j})\big)&=
  (r_0{-}r_i\cos(\sfd_\YX(x_0,x_i)))(r_0{-}r_j\cos(\sfd_\YX(x_0,x_j)))\\&+
r_i r_j \sin(\sfd_\YX(x_0,x_i))\sin(\sfd_\YX(x_0,x_j)) \cos\big( \varangle_{\rmu\rmp}
(\xx_{0i},\xx_{0j})\big).
\end{align*}

Based on this, Theorem \ref{Cones and LAC} establishes that the $m$-LAC
condition transfers between 
$(\YX,\sfd_\YX)$ and $(\calC\setminus\{\bm0\}, \sfd_\calC)$. We conclude the second section by proving some
$K$-semiconcavity results. More specifically for any three points $x_{0},x_{1},x_{2}$ contained in a ball of radius $\mathfrak{D}<\frac{\pi}{2}$ we prove the following. if $\xx_{01}$ satisfies $K$-semiconcavity with respect to the \emph{observer} $x_{2},$ then for any $z_{0}=[x_{0},r_{0}],z_{1}=[x_{1},r_{1}],z_{2}=[x_{2},r_{2}],$ we have that $\zz_{01},$ satisfies $K'$-semiconcavity with respect to the \emph{observer} $z_{2},$ where $K'$ depends only on $K,r_{0},r_{1},r_{2},\mathfrak{D}.$ Conversely, if $r_{0}=r_{1}$ and $\zz_{01}$ satisfies $K$-semiconcavity with respect to the \emph{observer} $z_{2},$ then $\xx_{01}$ satisfies $K'$-semiconcavity with respect to the \emph{observer} $x_{2},$ where $K'$ depends only on $K,r_{0},r_{2},\mathfrak{D}.$

Section \ref{se:HK} shows that the abstract results apply in the
specific case of the Hellinger-Kantorovich space
$(\M(X),\HKwo_{\alpha,\beta})$, which takes the role of
$(\calC,\sfd_\calC)$, which then leads to the spherical space 
$(\cP(X),\SHKwo_{\alpha,\beta})$. A direct characterization in the sense
of \cite[Sec.\,8.6]{LiMiSa14?OETP} of the
geodesic curves using a continuity and a Hamilton-Jacobi equation in
the latter space is given in Theorem \ref{thm:SHK.Geod}.

In Section \ref{se:Angles} we provide additional geometric properties
that hold for both spaces. Among them, is the
local-angle condition, and some partial semiconcavity. In
\cite{LiMiSa16OTCR}, it was proved that $K$-semiconcavity, a property,
which is associated among other things with the existence of gradient
flows, does not hold in general. In this article, we prove that on the
subsets of measures that have bounded density (both from
below and above) with respect to some finite, locally doubling measure
$\mathcal{L}$, this property holds for sufficient large $K$ depending
only on the bounds and $\mathcal{L}$. This result will be used in a
consecutive paper to prove the existence of gradient flows. For this we 
provide a sharp estimate of the total mass of the calibration measure
associated with the optimal entropy-transport problem. This estimate
is used in our proofs, but it is also helpful for the numerical
approximations of the Hellinger-Kantorovich distance.

To simplify the subsequent notations we use the simple relation
$\HKwo^2_{\alpha,\beta}=\frac{1}{\beta}\HKwo^2_{\alpha/\beta,1}$,
which shows that it suffices to work with a one-parameter family. We
set $\HK_{\dd}^2= \HKwo^2_{1/\dd^2,4}$, which allows us to recover
$\HKwo_{\alpha,\beta}$ via $\HKwo^2_{\alpha,\beta}=\frac4\beta
\HK^2_\dd$ with $\dd^2=\beta/(4\alpha)$.

\section{Cones over metric spaces}
\label{se:Cones}

\subsection{Background and scaling property}
\label{su:ConesScaling}

In \cite{Berestovskii1983} (see also \cite{Aleksandrov1986},
\cite{BriHaf99MSNP}, and \cite{Burago2001}), the concept of the cone
$\calC$ over a metric space $(\YX,\sfd_{\YX}),$ is introduced.  The
cone is the quotient of the product $\YX\times[0,\infty),$ obtained by
identifying together all points in $\YX\times\{0\}$ with a point $\bm
0$, called the apex or tip of the cone. The cone $\calC$ is equipped
with the distance $\sfd_{\calC}$ given in
\eqref{conedistance}. In \cite{Burago2001}, one can find a proof
that $\sfd_{\calC}$ is a metric distance.  The following results
exhibits the scaling properties of such cone distances.

\begin{lemma}[Cone distances have scaling properties]\label{le:ConeScales}
The cone distance $\sfd_{\calC}$ in \eqref{conedistance} satisfies the scaling
property 
\begin{equation}
\label{asasas}
\forall\, [x_0,r_0],[x_1,r_1]\in \calC:\quad \sfd^{2}_{\calC}([x_0,
r_{0}],[x_1, r_{1}])=r_{0}r_{1} \sfd^{2}_{\calC}([x_0,1],[x_1, 1]) 
 + (r_{0}{-}r_{1})^2.
\end{equation}
Moreover, any distance $\sfd_{\calC}$ satisfying \eqref{asasas}
(i.e.\ without assuming \eqref{conedistance} a priori)  satisfies
the more general scaling property
\begin{equation}\label{eq:asasa}
\sfd^{2}_{\calC}([x_0, r_{0}\widetilde{r}_{0}],[x_1,
r_{1}\widetilde{r}_{1}])=r_{0}r_{1}\sfd^{2}_{\calC}([x_0,{r}_{0}],[x_1,
{r}_{1}]) +(\widetilde r^{2}_{0} {-} \widetilde r_{0}
\widetilde r_{1}){r}^{2}_{0}
+(\widetilde r^{2}_{1} {-} \widetilde r_{0} \widetilde r_{1}){r}^{2}_{1}
\end{equation}
for all $\widetilde{r}_{0}$ and $\widetilde{r}_{1}$.
\end{lemma}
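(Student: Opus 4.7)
The plan is to treat both assertions as polynomial identities in the radii, to be verified by elementary algebra; no geometric input beyond \eqref{conedistance} and \eqref{asasas} is needed.

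For the first part, I would expand the right-hand side of \eqref{asasas} using \eqref{conedistance}: specializing to $r_0=r_1=1$ gives $\sfd^2_\calC([x_0,1],[x_1,1]) = 2 - 2\cos_\pi(\sfd_\YX(x_0,x_1))$. Multiplying this by $r_0 r_1$ and adding $(r_0-r_1)^2 = r_0^2 - 2 r_0 r_1 + r_1^2$, the $\pm 2 r_0 r_1$ cross-terms cancel, yielding $r_0^2 + r_1^2 - 2 r_0 r_1 \cos_\pi(\sfd_\YX(x_0,x_1)) = \sfd^2_\calC([x_0,r_0],[x_1,r_1])$. This is essentially a one-line verification and presents no obstacle.

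For the second part, only \eqref{asasas} is available. The strategy is to apply it twice, treating $\sfd^2_\calC([x_0,1],[x_1,1])$ as an auxiliary ``reference distance'' to be eliminated. Applying \eqref{asasas} to the pair of radii $(r_0\widetilde r_0,\, r_1 \widetilde r_1)$ yields
\[ \sfd^2_\calC([x_0, r_0\widetilde r_0],[x_1, r_1\widetilde r_1]) = r_0 \widetilde r_0\, r_1 \widetilde r_1\, \sfd^2_\calC([x_0,1],[x_1,1]) + (r_0\widetilde r_0 - r_1\widetilde r_1)^2, \]
while applying it to $(r_0, r_1)$ and inverting algebraically gives $\sfd^2_\calC([x_0,1],[x_1,1]) = (r_0 r_1)^{-1}\bigl(\sfd^2_\calC([x_0,r_0],[x_1,r_1]) - (r_0-r_1)^2\bigr)$. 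Substituting the latter into the former eliminates the reference distance and produces \eqref{eq:asasa} after simplification.

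The only bookkeeping step demanding care is simplifying the residual polynomial; after substitution one needs to verify
\[ -\widetilde r_0 \widetilde r_1 (r_0-r_1)^2 + (r_0\widetilde r_0 - r_1\widetilde r_1)^2 = (\widetilde r_0^2 - \widetilde r_0 \widetilde r_1) r_0^2 + (\widetilde r_1^2 - \widetilde r_0 \widetilde r_1) r_1^2, \]
which holds because, upon expanding both sides, the mixed terms $\pm 2\, r_0 r_1 \widetilde r_0 \widetilde r_1$ cancel and the remaining coefficients of $r_0^2$ and $r_1^2$ match. I anticipate no genuine obstacle: the entire argument is a verification of polynomial identities in the scalar radii, and the fact that the stronger scaling law \eqref{eq:asasa} follows purely algebraically from the one-parameter law \eqref{asasas} is really the conceptual content of the lemma.
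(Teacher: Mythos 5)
Your proof is correct and follows exactly the paper's route: \eqref{asasas} is obtained by eliminating $\cos_{\pi}(\sfd_{\YX}(x_0,x_1))$ between \eqref{conedistance} and its specialization to unit radii, and \eqref{eq:asasa} by eliminating $\sfd^2_{\calC}([x_0,1],[x_1,1])$ between two instances of \eqref{asasas}. One remark: your computation correctly produces the coefficient $\widetilde r_0\widetilde r_1$ in front of $\sfd^2_{\calC}([x_0,r_0],[x_1,r_1])$, whereas \eqref{eq:asasa} as printed has $r_0 r_1$ there --- that is a typo in the paper (as one checks directly from \eqref{conedistance}, or by comparison with \eqref{eq:HK.scale}), not an error in your argument.
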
 
\begin{proof}
Statement \eqref{asasas} follows by using \eqref{conedistance} twice,
once as it is given, and once with $r_0=r_1=1$, and then eliminating
$\cos_{\pi}(\sfd_{\YX}(x_{0},x_{1}))$. 

Statement \eqref{eq:asasa} follows by using \eqref{asasas} twice,
once as it is given, and once with $r_0$ $r_1$ replaced by
$r_{0}\widetilde{r}_{0}$ and $r_{1}\widetilde{r}_{1}$,
respectively. After eliminating $\sfd^{2}_{\calC}([x_0,1],[x_1,1])$ the assertion follows.
\end{proof}

While we were studying the Hellinger-Kantorovich space, we noticed
that the scaling property \eqref{asasas} actually fully characterizes a cone
space. We have the following general theorem, which allows us to
derive the cone distance from the scaling property. 

\begin{theorem}[Scaling implies cone distance]\label{coneinverse}
  For a metric space $(\calC,\sfd_{\calC}),$ let assume that it exists a
  set $\YX,$ that could possibly be identified with a subset of $\calC,$
  and a surjective function
  $[\cdot,\cdot]:\YX\times[0,\infty) \to  \calC,$ such that the
  distance $\sfd_{\calC}$ satisfies \eqref{asasas} and 
\begin{equation}
  \label{eq:d.bound}
 \forall x_{0}\neq x_{1}\in \YX:\quad
 0<\sfd^{2}_{\calC}([x_{0},1],[x_{1},1])\leq 4 ;
\end{equation} 
then $\sfd_{\YX}:\YX\times \YX \to [0,\infty)$ given by $\sfd_{\YX}
(x_{0},x_{1}) = \arccos\left(1-\frac{\sfd^{2}_{\calC}([x_0,1] ,[x_1,
    1])}{2}\right) \in [0,\pi]$ is a metric distance on $\YX$, and
$(\calC,\sfd_\calC)$ is a metric cone over $(\YX,\sfd_\YX)$, i.e.\
\eqref{conedistance} holds. 
\end{theorem}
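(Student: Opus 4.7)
The plan is to first derive the cone distance formula \eqref{conedistance} from the scaling identity \eqref{asasas} and the definition of $\sfd_\YX$, and then to verify that $\sfd_\YX$ is a metric on $\YX$; only the triangle inequality will demand real work. For the formula, the definition of $\sfd_\YX$ immediately gives $\sfd^2_\calC([x_0,1],[x_1,1]) = 2(1-\cos\sfd_\YX(x_0,x_1))$, and since $\sfd_\YX(x_0,x_1)\in[0,\pi]$ the truncation $\cos_\pi$ agrees with $\cos$ here; substituting into \eqref{asasas} and simplifying yields
\[
\sfd^2_\calC([x_0,r_0],[x_1,r_1]) = r_0^2 + r_1^2 - 2r_0 r_1 \cos_\pi\bigl(\sfd_\YX(x_0,x_1)\bigr),
\]
which is exactly \eqref{conedistance}. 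The easy metric axioms for $\sfd_\YX$ then follow at once: nonnegativity because $\arccos\in[0,\pi]$, symmetry from that of $\sfd_\calC$, and positive definiteness because $\sfd_\YX(x_0,x_1)=0 \Leftrightarrow \sfd_\calC([x_0,1],[x_1,1]) = 0 \Leftrightarrow x_0 = x_1$, where the last equivalence is the contrapositive of the lower bound in \eqref{eq:d.bound}.

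The triangle inequality is the hard part, and my approach is a two-dimensional Euclidean embedding trick. Set $\theta_{ij} := \sfd_\YX(x_i, x_j)$ and suppose, for contradiction, that $\theta_{02} > \theta_{01} + \theta_{12}$. Since $\theta_{02}\leq\pi$, this forces $\theta_{01}+\theta_{12}<\pi$, and the degenerate cases $\theta_{01},\theta_{12}\in\{0,\pi\}$ are immediate from positive definiteness and $\theta_{02}\leq\pi$, so I may assume $\theta_{01},\theta_{12}\in(0,\pi)$. I then place in $\RR^2$ the three points
\[
P_0 := (1,0),\quad P_1 := r_1\bigl(\cos\theta_{01},\sin\theta_{01}\bigr),\quad P_2 := \bigl(\cos(\theta_{01}+\theta_{12}),\sin(\theta_{01}+\theta_{12})\bigr),
\]
and choose the key value $r_1 := \sin(\theta_{01}+\theta_{12})/(\sin\theta_{01}+\sin\theta_{12}) > 0$. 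A direct computation intersecting the ray from $O$ at angle $\theta_{01}$ with the segment $P_0P_2$ shows that this specific $r_1$ places $P_1$ exactly on the segment $P_0P_2$, whence $|P_0P_1|+|P_1P_2| = |P_0P_2|$ in $\RR^2$.

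By the already established \eqref{conedistance}, the Euclidean law of cosines at the origin $O$ identifies $|P_0P_1| = \sfd_\calC([x_0,1],[x_1,r_1])$ and $|P_1P_2| = \sfd_\calC([x_1,r_1],[x_2,1])$, whereas $|P_0P_2|^2 = 2-2\cos(\theta_{01}+\theta_{12})$ while $\sfd_\calC^2([x_0,1],[x_2,1]) = 2-2\cos\theta_{02}$. The assumption $\theta_{02}>\theta_{01}+\theta_{12}$ together with strict monotonicity of $\cos$ on $[0,\pi]$ gives $\sfd_\calC([x_0,1],[x_2,1]) > |P_0P_2|$, and combining we obtain
\[
\sfd_\calC([x_0,1],[x_2,1]) > \sfd_\calC([x_0,1],[x_1,r_1])+\sfd_\calC([x_1,r_1],[x_2,1]),
\]
which contradicts the triangle inequality in $(\calC,\sfd_\calC)$. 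Hence $\sfd_\YX$ satisfies the triangle inequality, and combined with \eqref{conedistance} this proves that $(\calC,\sfd_\calC)$ is a metric cone over $(\YX,\sfd_\YX)$.
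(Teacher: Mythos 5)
Your proof is correct and follows essentially the same route as the paper: both arguments build a planar comparison configuration on the unit circle, introduce the auxiliary point $[x_1,r_*]$ with $r_*$ the distance from the origin to the intersection of the ray through $A_1$ with the chord $\overline{A_0A_2}$ (your explicit $r_1=\sin(\theta_{01}{+}\theta_{12})/(\sin\theta_{01}{+}\sin\theta_{12})$ is exactly that $r_*$), and then invoke the triangle inequality of $\sfd_\calC$ together with the scaling/cone formula to compare with $2-2\cos(\theta_{01}{+}\theta_{12})$. The only differences are cosmetic: you argue by contradiction and compute the radius in coordinates, while the paper argues directly and describes the same radius geometrically.
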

\begin{proof} Clearly, $\sfd_\YX$ as defind in the assertion is
  symmetric and positive. Hence, it remains to establish the triangle
  inequality. Given $x_{0},x_{1},x_{2}\in \YX,$ we
  set 
\[
D_{ij}=\sfd_{\calC}([x_i, 1],[x_j,
  1])\hspace{6pt}\text{and}\hspace{6pt}
  \phi_{ij}= \arccos\left(1-\frac{D^{2}_{ij}}{2}\right),\hspace{8pt}\text{for}\hspace{6pt}
  i\neq j\in\{0,1,2\}.
\] 
Hence, we have to show $\sfd_\YX(x_0,x_2)=\phi_{02}\leq \phi_{01} +
\phi_{12}=\sfd_\YX(x_0,x_1)+\sfd_\YX(x_1,x_2)$. If $\phi_{01} +
\phi_{12}\geq\pi$ then there is nothing to show. Without loss of
generality, we will have $\phi_{01} = \min\{\phi_{01},\phi_{12}\} <
\frac{\pi}{2},$ and $\phi_{01}+\phi_{12}<\pi$. We consider a
comparison triangle in $\mathbb{R}^{2}$, as is depicted in Figure
\ref{fig:Scale2}. In particular, $A_j$ are chosen on the unit circle
such that $\phi_{i,i+1}$ and $D_{i,i+1}$ are the angle (arclength on
the unit circle) and the Euclidean distance, respectively, between
$A_i$ and $A_{i+1}$. Now, $A_{*}$ is chosen as the intersection
of $\overline{OA_{1}}$ with the segment $\overline{A_{0}A_{2}}$, see
Figure \ref{fig:Scale2}.

\begin{figure}[t]
\centerline{
\begin{tikzpicture}[scale=1]
        \draw[thick,-] (0,0)--(10: 5);
	\draw[thick, -] (0,0)--(50: 5);
	\draw[thick, -] (0,0)--(155: 5);
	\draw[ultra thick, color=orange] (10 : 4) arc (10 : 155 : 4) ;
	\node[orange, right]  at (30:4.1) {$\phi_{12}= 
                    \arccos\big(1{-} \tfrac12 D_{12}^2\big)$} ; 
	\node[orange, above]  at (90:4.1) {$\phi_{01}= 
		    \arccos\big(1{-} \tfrac12 D_{01}^2\big)$} ; 
	\node (Z0) at (155 : 4) {};
	\fill (Z0) circle (0.15) node [below]  {$A_0\quad $};
	\node (Z2) at (10 : 4) {};
	\fill (Z2) circle (0.15) node [below]  {$\quad A_2$};
	\node (Z1) at (50:4) {};
	\fill (Z1) circle (0.15) node[ right] {$\ \  A_1$};
	\draw[violet!70, very thick] (Z0) -- (Z2); 
	\draw[violet!70, very thick] (Z0) -- (Z1) node[pos=0.6, above]
                {$D_{01}$} ;
	\draw[violet!70, very thick] (Z1) -- (Z2) node[pos=0.4, below]
		{$D_{12}\quad $} ;
	\fill (0,0) circle (0.15) node[below] {$(0,0)$}; 
	\node (Z star) at (50 : 1.45) {};
        \fill[blue] (Z star) circle (0.15) node [above] 
                {\raisebox{0.4em}{$A_*\quad$}};
        \draw [ultra thick, blue] (0,0) -- (Z star) 
                node[pos=0.5, right] {$\:r_*$} ;
\end{tikzpicture}}
\caption{Construction of the optimal radius $r_*$. The points $A_j$
  have distance $r_j=1$ from the origin and thus correspond to
  $z_j=[x_j,1]$, which gives
  $D_{1j}=|\overline{A_1A_j}|=\sfd_{\calC}(z_1,z_j)$ for $j=0$ and
  $2$. The point $A_*$, which corresponds to $z_*=[x_1,r_*]$, is
  chosen such that $|\overline{A_0A_*}| + |\overline{A_*A_2}| =
  |\overline{ A_0A_2}|$.}
\label{fig:Scale2}
\end{figure}
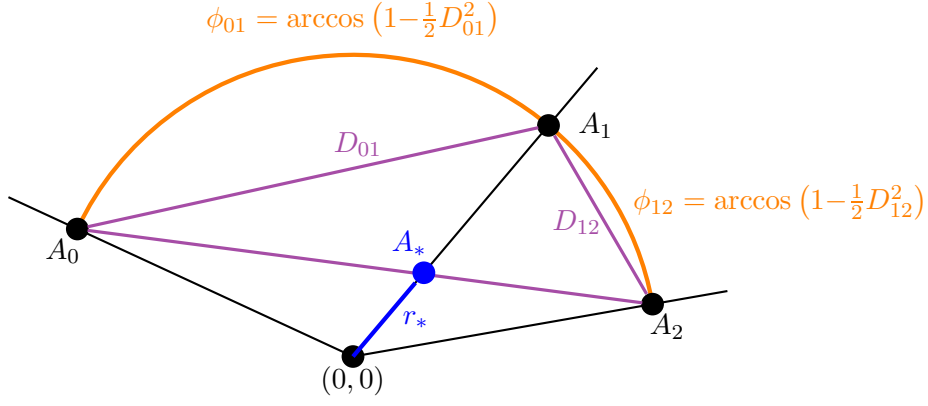 

With this choice of $r_*$ we retur to the cone $(\calC,\sfd_\calC)$
and let $r_{*}=|\overline{OA_{*}}|$ and $z_{0}=[x_{0},1],
z_{1}=[x_{1},r_{*}], z_{2}=[x_{2},1]\in \calC$.  The scaling 
property \eqref{asasas} for $\sfd_{\calC},$ gives
\begin{align*}
 \sfd^{2}_{\calC}([x_{0},1],[x_{1},r_{*}])&= 1 + r_{*}^{2}
  -2r_{*}\cos\phi_{01}=|\overline{A_{0}A_{*}}|^{2} 
\text{ and}\\
\sfd^{2}_{\calC}([x_{1\,},r_{*}],[x_{2},1])& =1 + r_{*}^{2}
-2r_{*}\cos\phi_{12}=|\overline{A_{2}A_{*}}|^{2}.
\end{align*}
Using the triangle inequality for $\sfd_{\calC},$ we arrive at
\begin{equation}
\begin{split}
D^{2}_{02}&=\sfd^{2}_{\calC}([x_{0},1],[x_{2},1])\leq
(\sfd_{\calC}([x_{0},1],[x_{1},r_{*}])
+\sfd_{\calC}([x_{1},r_{*}],[x_{2},1]))^{2}
\\
&= (|\overline{A_{0}A_{*}}|
+|\overline{A_{*}A_{2}}|)^{2} =|\overline{A_{0}A_{2}}|^{2}
=1+1-2\cos(\phi_{01} {+}\phi_{12}).  
\end{split}
\end{equation}
Thus, we conclude that
$\phi_{02}=\arccos\left(1-\frac{D_{02}^{2}}{2}\right)\leq \phi_{01} +
\phi_{12}$, which is the desired triangle inequality for $\sfd_\YX$,
namely $\sfd_\YX(x_0,x_2)\leq \sfd_\YX(x_0,x_1)+\sfd_\YX(x_1,x_2)$. Thus, inserting $\sfd_\calC^2([x_0,1],[x_1,1]) = 2- 2 \cos (
\sfd_\YX(x_0,x_1))$ into \eqref{asasas}, we have established
\eqref{conedistance}, and consequently \eqref{eq:asasa} follows as
well. 
\end{proof}

As a first consequence we obtain the following result.

\begin{corollary}
  \label{co:MetrCone}
  Let $\YX$ a set, and $\calC$ the quotient of the product
  $\YX\times[0,\infty),$ obtained by identifying together all points
  in $\YX\times{0}.$ If $\sfd_{\calC}:\calC\times \calC \to [0,\infty)$
  given by \eqref{conedistance}, for some
  $\sfd_{\YX}:\YX\times \YX \to [0,\infty)$ is a metric distance on
  $\calC$, then $\sfd_{\YX}\wedge\pi$ is a metric distance on $\YX.$
\end{corollary}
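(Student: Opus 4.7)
The plan is to apply Theorem \ref{coneinverse} to the given cone distance and to identify the resulting ``angular'' metric with $\sfd_{\YX}\wedge\pi$. The two hypotheses of the theorem are the scaling relation \eqref{asasas} and the bound \eqref{eq:d.bound}; both need to be verified, and then the identification is a direct calculation with $\arccos\circ\cos_{\pi}$.

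First, since $\sfd_{\calC}$ is given by \eqref{conedistance}, Lemma \ref{le:ConeScales} immediately yields the scaling property \eqref{asasas}. Next, for the bound, observe that \eqref{conedistance} with $r_{0}=r_{1}=1$ gives
\[
\sfd^{2}_{\calC}([x_0,1],[x_1,1]) = 2 - 2\cos_{\pi}(\sfd_{\YX}(x_0,x_1)) \in [0,4],
\]
so the upper estimate in \eqref{eq:d.bound} is automatic. For strict positivity when $x_0\neq x_1$, I would use the assumption that $\sfd_{\calC}$ is a metric distance on $\calC$: by construction of the cone, only the points in $\YX\times\{0\}$ are identified, so for $x_0\neq x_1$ the elements $[x_0,1],[x_1,1]\in\calC$ are distinct, hence $\sfd_{\calC}([x_0,1],[x_1,1])>0$.

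With both hypotheses checked, Theorem \ref{coneinverse} guarantees that
\[
\sfd'_{\YX}(x_0,x_1) \;:=\; \arccos\!\left(1-\tfrac12\sfd^{2}_{\calC}([x_0,1],[x_1,1])\right) \;=\; \arccos\!\bigl(\cos_{\pi}(\sfd_{\YX}(x_0,x_1))\bigr)
\]
is a metric distance on $\YX$. The final step is to simplify the right-hand side: for $b\in[0,\pi]$ one has $\arccos(\cos b)=b$, while for $b\geq\pi$ one has $\cos_{\pi}(b)=\cos\pi=-1$ and therefore $\arccos(\cos_{\pi}(b))=\pi$. Hence $\sfd'_{\YX}(x_0,x_1)=\sfd_{\YX}(x_0,x_1)\wedge\pi$, which is exactly the claim.

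The only nontrivial point is the strict positivity of $\sfd_{\calC}([x_0,1],[x_1,1])$ for $x_0\neq x_1$; but this follows for free from the assumption that $\sfd_{\calC}$ is a metric together with the fact that the quotient defining $\calC$ collapses only the base $\YX\times\{0\}$. Everything else is a direct application of Lemma \ref{le:ConeScales} and Theorem \ref{coneinverse} combined with the elementary identity $\arccos\circ\cos_{\pi}=\mathrm{id}\wedge\pi$ on $[0,\infty)$.
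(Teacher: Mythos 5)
Your proposal is correct and follows essentially the same route as the paper: both reduce the claim to Theorem \ref{coneinverse} after noting that \eqref{conedistance} implies the scaling property \eqref{asasas}. You are in fact more complete than the paper's two-line argument, since you explicitly verify the bound \eqref{eq:d.bound} (positivity from the metric axiom on the cone, the upper bound $4$ from $2-2\cos_{\pi}\le 4$) and carry out the identification $\arccos\circ\cos_{\pi}=\mathrm{id}\wedge\pi$, which the paper leaves implicit.
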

\begin{proof}
  By setting $z_{0}=[x_{0},1],$ and $z_{1}=[x_{1},1],$ we can recover
  both the positivity and symmetry property. For the
  proof of the triangle inequality, we just notice that $\sfd_{\calC}$
  satisfies the scaling property, and then the result is an
  application of Theorem \ref{coneinverse}.
\end{proof}

From the perspective of $(\YX,\sfd_{\YX}),$ we call
$(\calC,\sfd_{\calC})$ the cone space over $\YX$; from the perspective
of $(\calC,\sfd_{\calC}),$ we call $(\YX,\sfd_{\YX}\wedge\pi)$ the spherical
space in $\calC.$

\subsection{Geodesics curves}
\label{su:GeodC}

We first recall the standard definition and hence introduce our
notations.

\begin{definition}\label{def:length}
  Let $(\YX,\sfd_{\YX})$ be a metric space, and
  $\xx:[0,\tau]\rightarrow \YX,$ a continuous mapping. Furthermore,
  let $\mathcal{T}$ be the set of all partitions $T=\{0=\tau_{0}\leq
  \dots\leq\tau_{n_{T}}=\tau\}$ of $[0,\tau].$ Then, the \emph{length of the
    curve} $\xx$ is given by $ \mathrm{Len}(\xx):= \sup_{T\in\mathcal{T}}
  \sum_{i=1}^{n_{T}} \sfd_{\YX}(\xx(\tau_{i}),\xx(\tau_{i-1}))$.
\end{definition}

\begin{definition}
  Let $(\YX,\sfd_{\YX})$ be a metric space. We will call
  $(\YX,\sfd_{\YX})$ \emph{geodesic}, if and only if for every two points
  $x_{0},x_{1}$ there exists a continuous mapping
  $\xx_{01}:[0,\tau]\rightarrow \YX$ such that
\[
\xx_{01}(0)=x_{0},\quad \xx_{01}(\tau)=x_{1}, \quad
\text{and}\hspace{12pt}\sfd_{\YX}(x_{0},x_{1})=\mathrm{Len}(\xx_{01}).
\]
A function like that will be called a \emph{geodesic curve} or simply
a \emph{geodesic}. A geodesic satisfying
\[\sfd_{\YX}(\xx_{01}(t_{1}),\xx_{01}(t_{2}))=C|t_{2}{-}t_{1}|\]
for some constant $C>0,$ will be called a \emph{constant-speed geodesic}. If
$C=1,$ then the geodesic is called a \emph{unit-speed geodesic}. Finally for
$x_{0},x_{1}\in\YX,$ any geodesic $\xx_{01}:[0,1]\rightarrow \YX,$
with $\xx_{01}(0)=x_{0},\, \xx_{01}(1)=x_{1}$ is called a \emph{geodesic
joining} $x_{0}$ to $x_{1}.$ We will denote the set of all such geodesics with $\mathrm{Geod}(x_0,x_1)$, i.e.  
\begin{equation}\mathrm{Geod}(x_0,x_1):=\set{\xx:[0,1]\to \YX}{\xx(0)=x_0,\
    \xx(1)=x_1,\, \xx \text{ is constant-speed geodesic }\!\!}.\end{equation}
\end{definition}
In \cite[Chap.\,I, Prop.\,5.10]{BriHaf99MSNP}, the following Theorem  is proved. 

\begin{theorem}\label{antetwra}
 Let $(\YX,\sfd_{\YX})$ be a geodesic space. Let also $z_{0}=[x_{0},r_{0}]$ and  $z_{1}=[x_{1},r_{1}]$ be elements of $\calC.$
\begin{enumerate}
	\item If $r_{0},r_{1}\in(0,\infty)$ and $\sfd_{\YX}(x_{0},x_{1})<\pi,$ then there is a bijection between $\mathrm{Geod}(x_0,x_1),$ 
	and $\mathrm{Geod}(z_0,z_1).$ \item In all other cases, $\mathrm{Geod}(z_0,z_1)$ has a unique element.
\end{enumerate}	
\end{theorem}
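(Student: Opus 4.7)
The plan is to split along the case distinction in the statement, handling the degenerate Case 2 by explicit construction and the bijection in Case 1 through a Euclidean planar model. For Case 2, I would treat the two sub-cases separately. If $r_0=0$ (the case $r_1=0$ is symmetric) then $z_0=\bm 0$ and \eqref{conedistance} gives $\sfd_\calC(z_0,z_1)=r_1$, so $\zz(t)=[x_1, t r_1]$ is a constant-speed geodesic; any competitor $[y(s),\rho(s)]$ must satisfy $\rho(s)=s r_1$ by constant speed, and \eqref{conedistance} between $[y(s),\rho(s)]$ and $z_1$ then forces $\cos_\pi(\sfd_\YX(y(s),x_1))=1$, hence $y(s)=x_1$ whenever $\rho(s)>0$. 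If instead $r_0,r_1>0$ but $\sfd_\YX(x_0,x_1)\ge\pi$, then \eqref{conedistance} collapses to $\sfd_\calC^2(z_0,z_1)=(r_0+r_1)^2$, realized by the radial concatenation through $\bm 0$; uniqueness follows from the fact that $\cos_\pi>-1$ away from $\{\sfd_\YX\ge\pi\}$ would make any non-apex curve strictly longer than $r_0+r_1$.

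For Case 1, set $\phi:=\sfd_\YX(x_0,x_1)<\pi$ and introduce the planar model $A_0:=(r_0,0)$ and $A_1:=(r_1\cos\phi,r_1\sin\phi)\in\mathbb{R}^2$, so that $|A_1-A_0|^2=\sfd_\calC^2(z_0,z_1)=:D^2$. Given $\xx\in\mathrm{Geod}(x_0,x_1)$, write the Euclidean segment $\zeta(t):=(1{-}t)A_0+tA_1$ in polar coordinates $(\rho(t),\theta(t))$; since $\phi<\pi$ the segment avoids the origin and the angular velocity is $r_0 r_1\sin\phi/|\zeta(t)|^2>0$, so $\theta:[0,1]\to[0,\phi]$ is a smooth strictly increasing bijection. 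Defining the lift $\zz(t):=[\xx(\theta(t)/\phi),\rho(t)]$ and inserting $\sfd_\YX(\xx(u),\xx(v))=|u-v|\phi<\pi$ into \eqref{conedistance} gives
\[
\sfd_\calC^2(\zz(s),\zz(t))=\rho(s)^2+\rho(t)^2-2\rho(s)\rho(t)\cos(\theta(t)-\theta(s))=|\zeta(t)-\zeta(s)|^2=(t-s)^2D^2,
\]
so $\zz\in\mathrm{Geod}(z_0,z_1)$. Injectivity of the lift is immediate since $\xx(u)=y(\theta^{-1}(u\phi))$ recovers $\xx$ from the $\YX$-component $y$ of $\zz$.

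For surjectivity, take $\zz\in\mathrm{Geod}(z_0,z_1)$. A short argument rules out the apex: if $\rho(t^*)=0$ then constant speed gives $\sfd_\calC(z_0,z_1)=r_0+r_1>D$, a contradiction. Hence $\zz(t)=[y(t),\rho(t)]$ with $\rho(t)>0$ throughout. Set $\varphi(t):=\sfd_\YX(x_0,y(t))$ and $\tilde\varphi(t):=\min(\varphi(t),\pi)\in[0,\pi]$, and introduce the planar shadow $\hat\zz(t):=(\rho(t)\cos\tilde\varphi(t),\rho(t)\sin\tilde\varphi(t))\in\mathbb{R}^2$. The identities $\sfd_\calC^2(z_0,\zz(t))=t^2D^2$ and $\sfd_\calC^2(\zz(t),z_1)=(1-t)^2D^2$ translate via \eqref{conedistance} into $|\hat\zz(t)-A_0|=tD$ and $|\hat\zz(t)-A_1|=(1-t)D$, so the degenerate triangle-equality case in $\mathbb{R}^2$ forces $\hat\zz(t)=(1-t)A_0+tA_1$; in particular $\tilde\varphi(t)\in[0,\phi]\subset[0,\pi)$, the clip is vacuous, $\varphi(t)=\tilde\varphi(t)$, and comparing the two identities gives $\sfd_\YX(y(t),x_1)=\phi-\varphi(t)$, whence $y(t)$ lies on a geodesic from $x_0$ to $x_1$. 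Reparametrising that geodesic at constant speed on $[0,1]$ produces the preimage $\xx\in\mathrm{Geod}(x_0,x_1)$. The main obstacle is precisely this surjectivity step: one must extract two scalar functions $(\rho,\varphi)$ from a geodesic in the abstract quotient $\calC$ and verify they jointly trace a Euclidean segment. Strict monotonicity of $\cos$ on $[0,\pi]$ is what makes the bookkeeping work, and its failure at $\phi=\pi$ is exactly what produces the degeneracy in Case 2.
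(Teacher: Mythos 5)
First, a point of comparison: the paper does not actually prove Theorem \ref{antetwra} -- it cites \cite[Chap.\,I, Prop.\,5.10]{BriHaf99MSNP} -- and your planar-development argument is essentially the proof given there, so the overall strategy (law of cosines against the comparison points $A_0,A_1$, ruling out the apex, and reading the geodesic off the Euclidean segment) is exactly right. The lift direction and injectivity are correct, as is the exclusion of the apex in Case 1.

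There are, however, two genuine soft spots in the surjectivity step. (i) You assert that $\sfd^2_{\calC}(\zz(t),z_1)=(1-t)^2D^2$ \emph{translates into} $|\hat\zz(t)-A_1|=(1-t)D$. It does not: the planar angle between $\hat\zz(t)$ and $A_1$ is $|\tilde\varphi(t)-\phi|$, whereas the cone distance involves $\cos_\pi(\sfd_\YX(y(t),x_1))$, and the reverse triangle inequality only gives $|\tilde\varphi(t)-\phi|\le \min\{\sfd_\YX(y(t),x_1),\pi\}$, hence only $|\hat\zz(t)-A_1|\le(1-t)D$. You must then invoke the Euclidean triangle equality $D=|A_0-A_1|\le|\hat\zz(t)-A_0|+|\hat\zz(t)-A_1|\le tD+(1-t)D$ to upgrade this to equality and place $\hat\zz(t)$ on the segment; only after that do you get $\varphi(t)=\tilde\varphi(t)\le\phi$ and $\sfd_\YX(y(t),x_1)=\phi-\varphi(t)$. (ii) More seriously, pointwise betweenness $\sfd_\YX(x_0,y(t))+\sfd_\YX(y(t),x_1)=\phi$ does \emph{not} imply that $t\mapsto y(t)$ traces a single geodesic: if geodesics from $x_0$ to $x_1$ are non-unique (e.g.\ $\YX$ a circle of circumference $2\phi$ with $x_0,x_1$ antipodal), a curve hopping between two such geodesics satisfies your condition at every $t$ without being (the reparametrisation of) any element of $\mathrm{Geod}(x_0,x_1)$. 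You need the two-time identity $\sfd_\YX(y(s),y(t))=|\varphi(t)-\varphi(s)|$, which does follow from your own machinery -- apply the shadow computation to the pair $(\zz(s),\zz(t))$, using $\sfd^2_\calC(\zz(s),\zz(t))=(t-s)^2D^2=|\hat\zz(t)-\hat\zz(s)|^2$ and injectivity of $\cos$ on $[0,\pi]$ -- but it is not in your write-up, and without it surjectivity is not established. Finally, two smaller remarks: your uniqueness argument in Case 2 for $\sfd_\YX(x_0,x_1)\ge\pi$ is only a gesture (summing the pointwise bounds $\sfd_\calC(z_0,\zz(t))\le r_0+\rho(t)$ and $\sfd_\calC(\zz(t),z_1)\le\rho(t)+r_1$ yields no contradiction for a non-apex curve, so one genuinely needs a length/development estimate to force passage through $\boldsymbol{0}$), and the claim that $\theta$ is strictly increasing fails when $\phi=0$, a trivial but unexcluded case.
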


As a corollary, we get that $\calC$ is geodesic, if and only if $\YX$
is geodesic for points of distance less than $\pi$. In the
following two Subsections
\ref{su:Lifting} and \ref{su:Projecting} we give explicit
correspondences in the sense of part 1.\ of the above theorem for the
case of constant-speed geodesics.

\subsection{Lifting from $\YX$ into the cone}
\label{su:Lifting}

  In
\cite{LiMiSa16OTCR}, it is proved that the constant-speed geodesics $\zz_{01}(t)$
connecting $z_{0}=[x_{0},r_{0}]$ to $z_{1}=[x_{1},r_{1}],$ with
$0<\sfd_{\YX}(x_{0}, x_{1})<\pi,$ have the following parametrization
\begin{equation}
\zz_{01}(t)=[\xx_{01}(\ztzt_{01}(t)),\rr_{01}(t)],
\end{equation}
where $\xx_{01}(t)$ is a constant-speed geodesic joining $x_{0}$ to
$x_{1}$ and where
$\ztzt_{01}(t)$ and $\rr_{01}(t)$ are given by
\begin{equation}\label{quadraticmass}
\begin{split}
\rr_{01}^{2}(t)&=(1{-}t)^{2}r^{2}_{0}+ t^{2}r^{2}_{1}+2t (1{-}t) r_{0}r_{1}\cos(\sfd_{\YX}(x_{0},x_{1})),\\
\ztzt_{01}(t)& = \frac1{\sfd_{\YX}(x_{0},x_{1}) } \arcsin \left(
\frac{t r_1 \sin\big(\sfd_{\YX}(x_{0},x_{1})\big)}{\rr_{01}(t)} \right) \\&=\frac1{\sfd_{\YX}(x_{0},x_{1}) } 
\arccos\left(\frac{(1{-}t)r_{0}+t\,r_{1} 
   \cos(\sfd_{\YX}(x_{0},x_{1}))}{\rr_{01}(t)}\right)\\
&  = \frac1{\sfd_{\YX}(x_{0},x_{1}) } \arctan \left(
\frac{t r_1 \sin\big(\sfd_{\YX}(x_{0},x_{1})\big)}{(1{-}t)r_0 + t\,r_1
\cos\big(\sfd_{\YX}(x_{0},x_{1})\big)} \right) .
\end{split}
\end{equation}
Alternatively if we want the parametrization with respect to $\sfd_{\calC},$
\eqref{quadraticmass} becomes 
\begin{equation}\label{quadraticmass2}
\begin{aligned}
\rr_{01}^{2}(t)&=\left((1{-}t) r_{0}+ tr_{1}\right)^{2} -
r_{0}r_{1}t(1{-}t)\sfd^{2}_{\calC}([x_0,1] ,[x_1, 1])\\ 
\ztzt_{01}(t)&= \frac1{\sfd_{\YX}(x_{0},x_{1}) } \arccos\left(\frac{(1{-} t)
      r_{0}+t\,r_{1}\left(1{-}\frac{\sfd^{2}_{\calC}([x_0,1] ,[x_1,
          1])}{2}\right)}{\rr_{01}(t)}\right). 
\end{aligned}
\end{equation}

If we differentiate twice the first equation in \eqref{quadraticmass}, we get 
$$(\rr_{01}^{2})''(t)=r^{2}_{0}+r^{2}_{1}-2r_{0}r_{1} \cos(\sfd_{\YX}(x_{0},x_{1}))=\sfd^{2}_{\calC}(z_{0},z_{1}),$$ 
from which we also recover the following formula 
\begin{equation}\label{rr prop}
\rr_{01}^{2}(t)=(1{-}t)r^{2}_{0}+tr^{2}_{1}-t (1{-}t) \sfd^{2}_{\calC}(z_{0},z_{1}), 
\end{equation}
which later applied to $\HKwo_{\a,\b}$ will give \eqref{massgeodesic}. Furthermore \eqref{rr prop}, trivially gives 
convexity of $\rr^{2}_{01},$ i.e. 
\begin{equation}\label{rr prop2}
\rr_{01}^{2}(t)\leq(1{-}t)r^{2}_{0}+tr^{2}_{1}. 
\end{equation}
Finally for the case where $\sfd_{\YX}(x_{0},x_{1})\leq\frac{\pi}{2},$ we get
\begin{equation}\label{rr prop3}
\rr_{01}^{2}(t)\geq(1{-}t)^{2}r^{2}_{0}+t^{2}r^{2}_{1}\geq \frac{1}{2}\min\{r^{2}_{0},r^{2}_{1}\}. 
\end{equation}

\subsection{Projecting from cone to $\YX$}
\label{su:Projecting}

We are now going to provide the inverse parametrization of the
geodesics in $(\YX,\sfd_{\YX}),$ with respect to the geodesics in $(\calC,\sfd_{\calC}).$

\begin{theorem}
\label{th:Project}
For $x_0, x_1\in \YX,$ with $0<\!\sfd_{\YX}(x_{0},x_{1})<\!\pi,$ and
$r_0,r_1>0$ consider $\zz_{01}\in \mathrm{Geod}(z_0,z_1),$ where $z_{0}=[x_{0},r_{0}], z_{1}=[x_{1},r_{1}].$ Then,
\begin{equation}
  \label{eq:eta.muB}
t\mapsto\xx_{01}(t) =\overline{\xx}_{01}\big(\sisi_{01}(t)\big)
  \text{ with } 
 \sisi_{01} (t)=\frac{r_0 \sin\big(t \sfd_{\YX}(x_0,x_1)\big) }{r_1\sin\!\big((1{-}t)\sfd_{\YX}(x_0,x_1)\big) {+} 
 r_0\sin\!\big(t\sfd_{\YX}(x_0,x_1)\big) }
\end{equation}
is an element of $\mathrm{Geod}(x_0,x_1).$ Furthermore 
\begin{equation}\label{teleutaiateleutaia}
\rr_{01}(\sisi_{01}(t))=\frac{r_{0}r_{1}\sin(\sfd_{\YX}(x_0,x_1))}{r_{1}\sin( (1{-}t) \sfd_{\YX}(x_0,x_1))+r_{0}\sin(t\sfd_{\YX}(x_0,x_1))}.
\end{equation}
\end{theorem}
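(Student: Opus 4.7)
The plan is to observe that Theorem \ref{th:Project} is the explicit inversion of the lifting parametrization \eqref{quadraticmass} from Section \ref{su:Lifting}. Writing $\phi = \sfd_\YX(x_0,x_1) \in (0,\pi)$, Theorem \ref{antetwra} provides a unique $\xx_{01} \in \Geod(x_0,x_1)$ that corresponds under the bijection to the given $\zz_{01} \in \Geod(z_0,z_1)$. If $\overline{\xx}_{01}(t)$ denotes the $\YX$-component of $\zz_{01}(t)$, then \eqref{quadraticmass} reads $\overline{\xx}_{01}(t) = \xx_{01}(\ztzt_{01}(t))$, so $\xx_{01}(t) = \overline{\xx}_{01}(\ztzt_{01}^{-1}(t))$. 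Thus the whole statement reduces to identifying $\sisi_{01}$ with $\ztzt_{01}^{-1}$ and computing $\rr_{01}\circ\sisi_{01}$.

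For the inversion I would start from the arctan form of $\ztzt_{01}$ in \eqref{quadraticmass}: writing $t = \ztzt_{01}(s)$ gives
\[
\tan(t\phi) = \frac{s\,r_1\sin\phi}{(1{-}s)r_0 + s\,r_1\cos\phi}.
\]
Multiplying through by $\cos(t\phi)$ and using $r_1[\sin\phi\cos(t\phi) - \cos\phi\sin(t\phi)] = r_1\sin((1{-}t)\phi)$ collapses this to the linear equation $r_0\sin(t\phi) = s\bigl[r_0\sin(t\phi) + r_1\sin((1{-}t)\phi)\bigr]$, which solves to exactly \eqref{eq:eta.muB}. A sanity check at $t=0,1$ recovers $\sisi_{01}(0)=0$ and $\sisi_{01}(1)=1$, as required.

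For \eqref{teleutaiateleutaia} I would substitute $\sisi_{01}(t)$ into the first line of \eqref{quadraticmass}. Setting $A = r_0\sin(t\phi)$ and $B = r_1\sin((1{-}t)\phi)$ gives $\sisi_{01}(t) = A/(A{+}B)$, and a direct expansion produces
\[
\rr_{01}^2(\sisi_{01}(t)) = \frac{r_0^2 r_1^2\bigl[\sin^2((1{-}t)\phi) + \sin^2(t\phi) + 2\sin(t\phi)\sin((1{-}t)\phi)\cos\phi\bigr]}{(A{+}B)^2}.
\]
The bracket telescopes to $\sin^2\phi$ via the identity $\sin^2 a + \sin^2 b + 2\sin a\sin b\cos(a{+}b) = \sin^2(a{+}b)$ with $a=t\phi$, $b=(1{-}t)\phi$, and taking square roots yields \eqref{teleutaiateleutaia}. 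The only mildly nontrivial step is recognizing this identity, which is just a rewriting of $\sin^2(a{+}b) = (\sin a\cos b + \cos a \sin b)^2$ using the subtraction formula for $\cos(a{+}b)$; once spotted, the rest is routine cross-multiplication.
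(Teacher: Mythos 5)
Your proof is correct and follows essentially the same route as the paper: both arguments reduce the statement to inverting $\ztzt_{01}$ via its arctan representation in \eqref{quadraticmass}, multiply through by $\cos(t\,\sfd_{\YX}(x_0,x_1))$, and use the sine subtraction formula to arrive at \eqref{eq:eta.muB}. The only (harmless) divergence is in the derivation of \eqref{teleutaiateleutaia}: the paper reads $\rr_{01}(\sisi_{01}(t))$ off directly from the arcsin representation, namely $\sin(t\,\sfd_{\YX}(x_0,x_1))=\sisi_{01}(t)\,r_1\sin(\sfd_{\YX}(x_0,x_1))/\rr_{01}(\sisi_{01}(t))$, whereas you expand the quadratic formula for $\rr_{01}^2$ and invoke the identity $\sin^2 a+\sin^2 b+2\sin a\sin b\cos(a{+}b)=\sin^2(a{+}b)$ --- both computations are routine and correct.
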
 

\begin{proof}
Since, by  the proof of Theorem \ref{antetwra}, $\zz_{01}$ is a geodesic in $(\calC,\sfd_{\calC}),$ if and only if  $\xx_{01}$ is a geodesic in
$(\YX,\sfd_{\YX})$ and $\zz_{01}(t)=[\xx_{01}(\ztzt_{01}(t)),\rr_{01}(t)],$ we just have to calculate the inverse of $\ztzt_{01}.$ 

By using the third representation in \eqref{quadraticmass}, we get \begin{equation}\label{blabla}
 \tan\left(\ztzt_{01}(t)\sfd_{\YX}(x_0,x_1)\right)=\frac{tr_{1}\sin(\sfd_{\YX}(x_0,x_1))}{ (1{-}t) r_{0}+tr_{1}\cos(\sfd_{\YX}(x_0,x_1))}.
\end{equation}
Let $\sisi_{01}$ be the inverse of $\ztzt_{01}.$ By composing every elemet of \eqref{blabla} with $\sisi_{01},$ we get
\begin{equation*}
\tan\left(t\sfd_{\YX}(x_0,x_1)\right)=\frac{\sisi_{01}(t)r_{1}\sin\left(\sfd_{\YX}(x_0,x_1)\right)}
{(1-\sisi_{01}(t))r_{0}+\sisi_{01}(t)r_{1}\cos(\sfd_{\YX}(x_0,x_1))},
\end{equation*}
which gives
\begin{equation*}
 \sisi_{01}(t)=\frac{r_{0}\tan\left(t\sfd_{\YX}(x_0,x_1)\right)}{r_{1}\sin(\sfd_{\YX}(x_0,x_1))+r_{0}\tan(t\sfd_{\YX}(x_0,x_1))-r_{1}
 \tan(t\sfd_{\YX}(x_0,x_1))\cos(\sfd_{\YX}(x_0,x_1))}.
\end{equation*}
Multiplying both the nominator and denominator with $\cos(t\sfd_{\YX}(x_0,x_1)),$ we get
\begin{equation*}
 \sisi_{01}(t)=\frac{r_{0}\sin\left(t\sfd_{\YX}(x_0,x_1)\right)}{r_{1}\sin(\sfd_{\YX}(x_0,x_1))\cos(t\sfd_{\YX}(x_0,x_1))+\sin(t\sfd_{\YX}(x_0,x_1))(r_{0}-r_{1}\cos(\sfd_{\YX}(x_0,x_1)))}
\end{equation*}
and by an application of $\sin(a)\cos(b)-\cos(a)\sin(b)=\sin(a-b),$ we get \eqref{eq:eta.muB}.

Now by using the first representation of \eqref{quadraticmass}, we get
\begin{equation*}
\sin(t\sfd_{\YX}(x_0,x_1))=\frac{\sisi_{01}(t)r_{1}\sin\left(\sfd_{\YX}(x_0,x_1)\right)}{\rr_{01}(\sisi_{01}(t))},
\end{equation*}
and combining with \eqref{eq:eta.muB} we get  \eqref{teleutaiateleutaia}. 
\end{proof}


Finally, we are now interested in the scaling properties of
constant-speed geodesics on $\calC$ is we simple change the radius of
$z_j=[x_j,r_j]$ into $r_j\widetilde r_j$. We will show 
that the constant-speed geodesic curves behave nicely under the
two-parameter rescaling. In the sequel, for $z=[x,r]\in\calC,$ and
$\widetilde{r}>0,$ we denote with $\widetilde{r}z,$ the element
$[x,r\widetilde{r}]\in \calC.$

\begin{proposition}
  \label{pr:He.Geod.Sca} For $z_0=[x_{0},r_{0}], z_1=[x_{1},r_{1}]\in
  \calC$ and $\widetilde{r}_0,\widetilde{r}_1\geq 0,$ we have that if
  $\zz_{01}(\cdot)=[\overline{\xx}_{01}(\cdot),\rr_{01}(\cdot)]$ belongs in $\mathrm{Geod}(z_0,z_1),$ then
  $\widetilde{\zz}_{01}(\cdot)= A_{01}(\cdot)\zz_{01}(B_{01}(\cdot)),$ with
\begin{equation}\label{eq:He.Geod.Sca}
  A_{01}(t)=\widetilde{r}_0+\left(\widetilde{r}_1{-}\widetilde{r}_0\right)t
  \hspace{4pt}\text{ and}\hspace{4pt}
  B_{01}(t)=\frac{\widetilde{r}_1t}{A_{01}(t)} ,
\end{equation}
is an element of $\mathrm{Geod}(\widetilde{r}_{0}z_{0},\widetilde{r}_{1}z_{1})$.
\end{proposition}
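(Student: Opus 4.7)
The strategy is first to verify the endpoint conditions and then to show that $\widetilde\zz_{01}$ coincides with the canonical parametrization \eqref{quadraticmass} of the cone geodesic from $\widetilde r_0 z_0$ to $\widetilde r_1 z_1$. The endpoints are immediate: $A_{01}(0)=\widetilde r_0,\,B_{01}(0)=0$ yields $\widetilde\zz_{01}(0)=\widetilde r_0 \zz_{01}(0)=\widetilde r_0 z_0$, and symmetrically $\widetilde\zz_{01}(1)=\widetilde r_1 z_1$. I would then restrict to the generic case $0<\sfd_{\YX}(x_0,x_1)<\pi$ and $r_0,r_1,\widetilde r_0,\widetilde r_1>0$; the degenerate cases (an endpoint at the apex $\bm 0$, or $\sfd_{\YX}(x_0,x_1)\ge\pi$ so that the geodesic passes through $\bm 0$) reduce to radial rays and are dealt with separately by inspection or continuity.

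In the main case, I would write $\overline\xx_{01}=\xx_{01}\circ\ztzt_{01}$, where $\xx_{01}\in\mathrm{Geod}(x_0,x_1)$ is constant-speed and $\ztzt_{01},\rr_{01}$ are given by \eqref{quadraticmass} with endpoints $(r_0,r_1)$, and let $\widetilde\ztzt_{01},\widetilde\rr_{01}$ denote the corresponding functions associated to the scaled endpoints $(r_0\widetilde r_0,r_1\widetilde r_1)$. Since
\[
\widetilde\zz_{01}(t)=\big[\xx_{01}\big(\ztzt_{01}(B_{01}(t))\big),\;A_{01}(t)\,\rr_{01}(B_{01}(t))\big],
\]
it suffices, by Theorem \ref{antetwra}, to establish the two identities
\[
\ztzt_{01}(B_{01}(t))=\widetilde\ztzt_{01}(t)
\qquad\text{and}\qquad
A_{01}(t)\,\rr_{01}(B_{01}(t))=\widetilde\rr_{01}(t).
\]
For the first I would use the $\arctan$-representation of $\ztzt$ in \eqref{quadraticmass} (which is independent of $\rr$); for the second, the first (law-of-cosines) representation $\rr_{01}^2(u)=(1{-}u)^2r_0^2+u^2r_1^2+2u(1{-}u)r_0r_1\cos(\sfd_{\YX}(x_0,x_1))$.

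Both identities collapse upon noticing the homogeneity relations
\[
A_{01}(t)\,B_{01}(t)=\widetilde r_1\,t
\qquad\text{and}\qquad
A_{01}(t)\,\big(1-B_{01}(t)\big)=\widetilde r_0\,(1-t),
\]
which follow directly from the definitions \eqref{eq:He.Geod.Sca}. In the angular identity, multiplying the numerator $B_{01}(t)r_1\sin(\sfd_{\YX}(x_0,x_1))$ and the denominator $(1-B_{01}(t))r_0+B_{01}(t)r_1\cos(\sfd_{\YX}(x_0,x_1))$ of the $\arctan$ argument for $\ztzt_{01}(B_{01}(t))$ by $A_{01}(t)$ turns them, using the relations above, into the corresponding quantities for $\widetilde\ztzt_{01}(t)$; the common factor cancels. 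In the radial identity, multiplying the law-of-cosines expression for $\rr_{01}^2(B_{01}(t))$ by $A_{01}(t)^2$ and applying the same substitutions produces exactly $\widetilde\rr_{01}^2(t)$. The only real obstacle is bookkeeping, namely choosing the representations of $\ztzt$ and $\rr$ for which the rescaling acts transparently through the two homogeneity relations above; once this is organised, the verification is a short direct computation, and the degenerate cases follow trivially.
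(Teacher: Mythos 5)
Your proof is correct, but it follows a genuinely different route from the paper's. The paper never touches the explicit trigonometric parametrization in its verification: after checking the endpoints, it computes $\sfd^{2}_{\calC}\big(\widetilde{\zz}_{01}(0),\widetilde{\zz}_{01}(t)\big)$ directly by applying the scaling identity \eqref{asasas} twice, together with the geodesic property $\sfd_{\calC}(z_0,\zz_{01}(B_{01}(t)))=B_{01}(t)\,\sfd_{\calC}(z_0,z_1)$ and the quadratic mass formula \eqref{rr prop} for $\rr_{01}^{2}(B_{01}(t))$, and shows the result collapses to $t^{2}\sfd^{2}_{\calC}(\widetilde{r}_0z_0,\widetilde{r}_1z_1)$. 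You instead match $\widetilde{\zz}_{01}$ against the canonical parametrization \eqref{quadraticmass} of the geodesic joining $[x_0,r_0\widetilde{r}_0]$ to $[x_1,r_1\widetilde{r}_1]$, via the two homogeneity relations $A_{01}B_{01}=\widetilde{r}_1 t$ and $A_{01}(1{-}B_{01})=\widetilde{r}_0(1{-}t)$; I checked both the angular and radial identities and they are exact. The trade-off: the paper's computation is purely metric, using only the scaling property and \eqref{rr prop}, so it needs no case distinction and stays within the abstract cone framework; on the other hand, as written it only verifies that distances from the initial point grow linearly, whereas your identification with the canonical parametrization yields the full constant-speed property between arbitrary parameters for free and, in addition, exhibits exactly how the angular and radial components transform. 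The price you pay is the reliance on the explicit $\arctan$/law-of-cosines representations, which forces you to split off the degenerate cases ($\widetilde{r}_i=0$, $x_0=x_1$, or $\sfd_{\YX}(x_0,x_1)\geq\pi$); your disposal of these as radial rays is fine but should be stated rather than waved at, since the proposition explicitly allows $\widetilde{r}_0,\widetilde{r}_1\geq 0$.
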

\begin{proof} We first observe
  $\widetilde{\zz}_{01}(0)=\widetilde{r}_0z_0$ and
  $\widetilde{\zz}_{01}(1)=\widetilde{r}_1 z_1$, because
  $A_{01}(0)=\widetilde{r}_0$ and $A_{01}(1)=\widetilde{r}_1$.  Thus,
  to check that $t\mapsto \ol{\zz}_{01}(t) $ is a geodesic it suffices
  to show
	\[
	\sfd_{\calC}(\ol{\zz}_{01}(0),\ol{\zz}_{01}(t))= t\,
	\sfd_{\calC}(\ol{\zz}_{01}(0),\ol{\zz}_{01}(1))=t\,\sfd_{\calC}(r_0z_0,r_1z_1),
	\]
	i.e.\ $\ol{\zz}_{01}$ is a constant-speed geodesic. However, using \eqref{rr prop}, we first observe 
	\begin{equation}
	\label{eq:Gam.Mass}
	\rr_{01}^{2}(B_{01}(t))= (1{-}B_{01}(t)) r^{2}_0 + 
	B_{01}(t) r^{2}_1 - B_{01}(t)(1{-}B_{01}(t))\sfd^{2}_{\calC}(z_0,z_1).
	\end{equation}
 With this, the abbreviation
	$a_t=A_{01}(t)$, and the relations $B_{01}(t)=\frac{\widetilde{r}_1t}{a_t}$ and
	$1{-}B_{01}(t)=\frac{\widetilde{r}_0(1{-}t)}{a_t}$ we obtain
	\begin{align*}
	&\sfd^{2}_{\calC}(\ol{\zz}_{01}(0),\ol{\zz}_{01}(t))= \sfd^{2}_{\calC}\big(\widetilde{r}_0z_0,a_t
	\zz_{01}(B_{01}(t))\big)   \\
&\quad\EQOVER{asasas}= \widetilde{r}_0 a_t
	\sfd^{2}_{\calC}(z_0, \zz_{01}(B_{01}(t))) + \widetilde{r}_0
        (\widetilde{r}_0{-}a_t)r^{2}_0 + 
	a_t(a_t{-}\widetilde{r}_0)\rr_{01}^{2}(B_{01}(t)) \\
&\underset{\text{\eqref{eq:Gam.Mass}}}{\overset{\zz_{01}
        \text{\,is geod.}}= }
	\widetilde{r}_0a_t \tfrac{\widetilde{r}^{2}_1t^2}{a_t^2} 
	\sfd^{2}_{\calC}(z_0,z_1) +\widetilde{r}_0
        (\widetilde{r}_0{-}a_t)r^{2}_0 
       + a_t(a_t{-}\widetilde{r}_0)\big(
        \tfrac{\widetilde{r}_0(1{-}t)}{a_t} r^{2}_{0} {+} 
	\tfrac{\widetilde{r}_1 t}{a_t} r^{2}_1
	{-}\tfrac{\widetilde{r}_0 \widetilde{r}_1
          t(1{-}t)}{a_t^2}\sfd^{2}_{\calC}(z_0,z_1) \big) \\ 
&\qquad \overset{*}= \widetilde{r}_0\widetilde{r}_1 t^2 \sfd^{2}_{\calC}(z_0,z_1)  + (\widetilde{r}_0^2{-}\widetilde{r}_0\widetilde{r}_1)t^2 r^{2}_0 +
	(\widetilde{r}_1^2{-}\widetilde{r}_0\widetilde{r}_1) t^2 r^{2}_1
	\EQOVER{asasas}= t^2
	\sfd^{2}_{\calC}(\widetilde{r}_0z_0,\widetilde{r}_1z_1)= t^2 
	\sfd^{2}_{\calC}(\ol{z}_{0},\ol{z}_1),  
	\end{align*} 
	where in $\overset{*}=$ we simply used the definition of
	$a_s=A_{01}(s)$. Thus, the assertion is shown.
\end{proof}

\subsection{Comparison and local angles}
\label{se:Comp.Angles}

We now introduce comparison angles, see e.g.\ \cite{Stur99MSLB, Burago2001, S.AlexanderV.Kapovitch}, that
are used to study notions of curvature and their properties, and subsequentially be utilized to generate gradient flows 
on metric spaces, cf.\ \cite{Ohta2009a, S.AlexanderV.Kapovitch, Savare2007,OlPaVi14OTTA}.  
Since we relate the space $(\YX,\sfd_\YX)$ with the cone $(\calC,\sfd_\calC),$ we will see  in the
next subsection (cf.\ the proof of Theorem \ref{Cones and LAC})) that it is natural to use comparison angles
$\widetilde{\varangle}_{\kc}$ for  different $\kc$ on these two spaces.

\begin{definition}[Comparison angles]\label{comp}
Let $(\YX,\sfd_\YX)$ be a metric space and  $x_{0},x_{1},x_{2}\in
\YX$ with $x_{0}\not\in\{ x_{1},x_{2}\}$. For $\kc \in \R$ we define
$a_{\kc}$ via   
\begin{equation*}
	a_{\kc}(x_{0};x_{1},x_{2}):=
\begin{cases}
\displaystyle
\frac{\sfd_{\YX}^{2}(x_{0},x_{1})+\sfd_{\YX}^{2}(x_{0},x_{2})-\sfd_{\YX}^{2}(x_{1},x_{2})}
 {2\sfd_{\YX}(x_{0},x_{1})\sfd_{\YX}(x_{0},x_{2})}
&\text{for } \kc=0,
\\[1em]
\displaystyle	\frac{\cos(\sqrt{\kc}\,\sfd_{\YX}(x_{1},x_{2})) -
  \cos(\sqrt{\kc}\, \sfd_{\YX}(x_{0},x_{1})) 
  \cos(\sqrt{\kc}\,\sfd_{\YX}(x_{0},x_{2}))} 
 {\sin(\sqrt{\kc}\,\sfd_{\YX}(x_{0},x_{1})) 
  \;\sin( \sqrt{\kc}\,\sfd_{\YX}(x_{0},x_{2}))}&
\text{for }\kc>0,
\\[1em]
\displaystyle	
\frac{ \cosh( \mathsf{k}\, \sfd_{\YX}(x_{0},x_{1}))  
   \cosh(\mathsf{k}\, \sfd_{\YX}(x_{0},x_{2})) 
 -\cosh(\mathsf{k}\,\sfd_{\YX}(x_{1},x_{2})) }
{\sinh( \mathsf{k}\, \sfd_{\YX}(x_{0},x_{1}))\;
 \sinh( \mathsf{k}\, \sfd_{\YX}(x_{0},x_{2}))}&\text{for } \kc<0,
	\end{cases}
\end{equation*}
where $\mathsf{k}=\sqrt{-\kc}$. 
	The \emph{$\kc$-comparison angle $\widetilde{\varangle}_{\kc}(x_{0};x_{1},x_{2})\in[0,\pi]$ with vertex $x_{0}$} is defined by the formula
\begin{equation*}
	\widetilde{\varangle}_{\kc}(x_{0};x_{1},x_{2})=\arccos(a_{\kc}(x_{0};x_{1},x_{2})). 
\end{equation*}
\end{definition}

From now on, the value of $\kc$ in the previous definition will be
refereed as the choice of model space $\mathbb{M}^{2}(\kappa).$ This terminology is borrowed
from the study of Alexandrov spaces, where the sphere ($\kc>0$), the
plane ($\kc=0$), and the hyberbolic plane ($\kc<0$) are used as
reference, cf.\ \cite{Stur99MSLB, Burago2001, S.AlexanderV.Kapovitch}. Later, our main choice will be
$\kc=1$ on the spherical space (hence the name) $(\YX,\sfd_\YX)$ and
$\kc=0$ on the cone $(\calC,\sfd_\calC)$.

Let $\xx_{01}$ and $\xx_{02},$ be
two geodesics in 
$(\YX,\sfd_{\YX}),$ emanating from the same initial point
$x_{0}:=\xx_{01}(0)=\xx_{02}(0).$ The following theorem guarantees
that the set 
\begin{equation}\label{eq:AccPoints}
\ACCPOINTS{\xx_{01},\xx_{02}}:=\set{c \in [-1,1]}{\exists\, 0<s_k,t_k\to 0:\ a_\kc(x_0;\xx_{01}(t_k),\xx_{02}(s_k)) \to c} 
\end{equation}
of accumulation points of $a_{\kc} (x_{0};
\xx_{01}(t),\xx_{02}(s))$ as $t,s \to 0$ is independent of $\kc.$

\begin{proposition}\label{ThKsSZ} 
  Let  $(\YX,\sfd_{\YX})$ be a metric space and $\xx_{01}:[0,\tau_{1}] \to
  \YX$, $\xx_{02}:[0,\tau_{2}] \to \YX$ be two unit-speed geodesics,
  issuing from $x_{0}\in \YX$. Then, for $\kc\in\mathbb{R}$ we have
\begin{equation}\label{ZoRi}
a_{0}(x_{0};\xx_{01}(t),\xx_{02}(s))-a_{\kc}(x_{0};\xx_{01}(t),\xx_{02}(s)) \to 0 \quad \text{for }t,s\to 0. 
\end{equation}
\end{proposition}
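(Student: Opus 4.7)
The case $\kc=0$ is vacuous, so I focus on $\kc\neq 0$ and, to fix ideas, $\kc>0$ with $\mu:=\sqrt\kc$ (the hyperbolic case is entirely analogous). The approach is to evaluate $1-a_\kc$ directly via a sum-to-product identity, rather than Taylor-expanding numerator and denominator of $a_\kc$ separately. Since the $\xx_{0i}$ are unit-speed and issue from $x_0$, one has $\sfd_{\YX}(x_0,\xx_{01}(t))=t$ and $\sfd_{\YX}(x_0,\xx_{02}(s))=s$; set $d:=\sfd_{\YX}(\xx_{01}(t),\xx_{02}(s))$, so the triangle inequality gives $|t-s|\le d\le t+s$.

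The central step is the identity
\[
1-a_\kc(x_0;\xx_{01}(t),\xx_{02}(s))
=\frac{\cos(\mu(t-s))-\cos(\mu d)}{\sin(\mu t)\sin(\mu s)}
=\frac{2\sin(\mu\alpha)\sin(\mu\beta)}{\sin(\mu t)\sin(\mu s)},
\quad \alpha:=\tfrac{d+t-s}{2},\ \beta:=\tfrac{d+s-t}{2},
\]
which follows from $\cos(\mu(t-s))=\cos(\mu t)\cos(\mu s)+\sin(\mu t)\sin(\mu s)$ combined with the formula $\cos A-\cos B=-2\sin(\frac{A+B}{2})\sin(\frac{A-B}{2})$. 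Two features are now essential: the triangle inequality forces $\alpha\in[0,t]$ and $\beta\in[0,s]$, while the defining relation $d^2=t^2+s^2-2ts\,a_0$ gives $\alpha\beta=\tfrac14(d^2-(t-s)^2)=\tfrac12 ts(1-a_0)$. Combining these with the uniform bound $\sin(\mu x)=\mu x\bigl(1+O(\mu^2 x^2)\bigr)$ on bounded intervals,
\[
1-a_\kc=\frac{2\alpha\beta}{ts}\bigl(1+O(\mu^2(t^2+s^2))\bigr)=(1-a_0)\bigl(1+O(\mu^2(t^2+s^2))\bigr),
\]
so $a_0-a_\kc=O\bigl((1-a_0)\mu^2(t^2+s^2)\bigr)\to 0$ as $t,s\to 0$. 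For $\kc<0$, set $k:=\sqrt{-\kc}$ and use $\cosh A\cosh B-\sinh A\sinh B=\cosh(A-B)$ with $\cosh A-\cosh B=2\sinh(\frac{A+B}{2})\sinh(\frac{A-B}{2})$ to obtain $1-a_\kc=2\sinh(k\alpha)\sinh(k\beta)/(\sinh(kt)\sinh(ks))$; the same argument with $\sinh(kx)=kx\bigl(1+O(k^2x^2)\bigr)$ then concludes.

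The obstacle I anticipate is exactly what the sum-to-product factorization circumvents. A naive Taylor expansion of numerator and denominator of $a_\kc$ in $\mu$, followed by division by the leading $\mu^2 ts$, leaves residual errors of size $O((t+s)^4/(ts))$, which can blow up when $t$ and $s$ approach zero at very different rates (for instance along $t=s^{10}$, $s\to 0$). The identity above exhibits the cancellation enforced by the triangle inequality through the bounds $\alpha\le t$ and $\beta\le s$, producing convergence that is uniform in the ratio $t/s$.
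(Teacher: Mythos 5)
Your proof is correct, and it takes a genuinely different route from the paper's. The paper fixes $\kc=1$, writes the discrepancy as a function $G(s,t;\theta)=\theta-\frac{\cos\sqrt{s^2+t^2-2st\theta}-\cos s\cos t}{\sin s\,\sin t}$ of the free parameter $\theta=a_0\in[-1,1]$, splits it into three pieces $G_1+G_2+G_3$, and bounds each piece separately using Taylor-type estimates on the analytic functions $r\mapsto \frac{1}{r}\sin r$ and $r\mapsto 1-\cos\sqrt r$; the uniformity in the ratio $t/s$ that worries you is obtained there because each of the three bounds ($24(s{+}t)$, $4(C_2{+}C_1^2)st$, $\frac46 s^2t^2+8C_2(t{+}s)^2$) is symmetric and vanishes regardless of how $t$ and $s$ are coupled. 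Your sum-to-product factorization
$1-a_\kc=\frac{2\sin(\mu\alpha)\sin(\mu\beta)}{\sin(\mu t)\sin(\mu s)}$ with $\alpha\beta=\frac12 ts(1-a_0)$, $\alpha\in[0,t]$, $\beta\in[0,s]$, replaces this three-term decomposition by a single exact identity in which the cancellation is manifest, and it yields the sharper multiplicative statement $1-a_\kc=(1-a_0)\bigl(1+O(\kc(t^2{+}s^2))\bigr)$, from which the required uniform convergence (uniform in $\theta$, since $1-a_0\le 2$) is immediate. Your treatment of $\kc<0$ via the hyperbolic analogue is equally clean, and the identities you invoke ($\cos(\mu t)\cos(\mu s)+\sin(\mu t)\sin(\mu s)=\cos(\mu(t{-}s))$, the product formula for $\cos A-\cos B$, and their hyperbolic counterparts) all check out. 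In short: same conclusion, but your argument is more elementary, shorter, and makes the role of the triangle inequality (through $\alpha\le t$, $\beta\le s$) completely transparent, whereas the paper's argument is more computational but requires no trigonometric identity beyond the addition formula.
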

We will provide an analytical proof here. For the reader with a more geometrically oriented 
mind we suggest the proof in \cite[Page 52, Lemma 6.3.1]{S.AlexanderV.Kapovitch}, which became known to us after the completion
of the article.

\begin{proof}  We give here details for the case $\kc=1$. The 
  other cases work exactly the same. 
For $(t,s)\in (0,\tau] \x (0,\tau]$ with $\tau<\min\{1/2,
\tau_1,\tau_2\}$ we set
$c_{t,s}:=\sfd(\xx_{01}(t),\xx_{02}(s))$. Using $t =
\sfd(x_0,\xx_{01}(t))$ and $s= \sfd(x_0,\xx_{02}(s))$, the triangle
inequality  gives $|t{-}s| \leq c_{s,t}\leq t{+}s$. This is equivalent
to 
\[
\exists\, \theta \in [-1,1]:  \quad c^2_{t,s} = s^2+t^2 -2st\theta,
\] 
where $\theta$ equals $a_{0}(x_{0};\xx_{01}(t),\xx_{02}(s))$. Now,
defining the function
\[
G(s,t;\theta) = \theta- \frac{\cos\sqrt{s^2{+}t^2{-}2st\theta} -
  \cos(s)\cos(t)}{\sin(s) \;\sin(t)} , 
\]
we see that \eqref{ZoRi} is established if we show $\|
G(s,t;\cdot)\|_\infty\to 0 $ for $s,t\to 0$, where $\|\cdot\|_\infty$
means the supremum over $\theta\in [-1,1]$. To establish the uniform convergence of $G(s,t;\cdot)$ we decompose
$G$ in three parts, namely
\begin{align*}
G(s,t;\theta)&=G_1(s,t;\theta)+G_2(s,t;\theta)+G_3(s,t;\theta) \quad
\text{with} 
\\
G_1(s,t;\theta)&:= \theta - \frac{\sin(st\theta)}{\sin(s)\:\sin(t)} =
\Big( 1-
\frac{F(s)F(t)}{F(st\theta)}\Big)\frac{\sin(st\theta)}{\sin(s)\sin(t)},\\
 G_2(s,t;\theta)&:=\frac{\sin(st\theta)-\cos\sqrt{s^2{+}t^2{-}2st\theta} 
 +\cos\sqrt{s^2{+}t^2}}{\sin(s)\:\sin(t)},\\
 G_3(s,t;\theta)&:=\frac{\cos(s)\cos(t)-\cos\sqrt{s^2{+}t^2}}{\sin(s)\:\sin(t)},
\end{align*}
where the function $F(r)=\frac1r \sin r$ can be analytically
extended by $F(0)=1$. 

Using $s,t\leq 1/2$ and $|\theta|\leq 1$ we easily obtain 
\[
|G_1(s,t;\theta)|\leq 6\big( s+ t\big) \frac{s t}{(s/2)\:(t/2)}\leq 24
(s{+}t) \ \to\  0 \text{ for }s,t\to 0.
\]
For $G_3$ we use that $K(r)=1-\cos(\sqrt r) $ is an analytic function
with $K(0)= 0$. Thus, with $\sigma=s^2$ and $\tau=t^2$ we have 
\begin{align*}
&\big|\cos(s)\cos(t)-\cos\sqrt{s^2{+}t^2}\big|=
\big|(1{-}K(\sigma))(1{-}K(\tau)) - 1+K(\sigma{+}\tau) \big|  \\
&\leq \big| K(\sigma) +K(\tau) -K(\sigma{+}\tau)- K(0) \big| +
K(\sigma)K(\tau) \\
&\leq \big| \int_0^\sigma \int_0^\tau K''(\hat\sigma{+}\hat\tau)
\:\mathrm d \hat\tau \,\mathrm d \hat\sigma \big| + C^2_1 \sigma\tau
\leq \big( C_2{+}C^2_1) \sigma \tau\  =\  \big( C_2{+}C^2_1)s^2 t^2,
\end{align*}
where $C_1$ and $C_2$ are bounds for $|K'(r)| $ and $|K''(r)|$ with
$r\in [0,1/2]$, repesctively. Inserting this into the definition of
$G_3$ we find
\[
|G_3(s,t;\theta)|\leq \frac{\big( C_2{+}C^2_1)s^2 t^2}{(s/2)\:(t/2)} \
\leq 4 \big( C_2{+}C^2_1)s t \ \to \ 0 \text{ for }s,t\to 0.
\]

The estimate for $G_2$ we use $K$ again and
rewrite the nominator as
\[
\sin(st\theta)+K(s^2{+}t^2)-K(s2{+}t^2{-}2st\theta)=
\sin(st\theta)-st\theta +
\int_0^1\!\!\!\big(1{-}2K'(s^2{+}t^2{-}2st\theta\eta)\big)\rmd \eta
\,st\theta.
\]
Using $1= 2K'(0)$ we can estimate the integral by the bound $C_2$ on $K''$
and obtain
\[
|G_2(s,t;\theta)| \leq \frac{|st\theta|^3/6 + 2C_2(t{+}s)^2
  st|\theta|}{(s/2)\:(t/2)} \leq \frac46 s^2t^2 + 8C_2 (t{+}s)^2 \ \to
\ 0 \text{ for }s,t\to 0.
\]
With this, the desired uniform convergence $G(s,t;\cdot) \to 0$ is
established, and the proof is complete. 
\end{proof}

We are now going to introduce the notion of local angles. 

\begin{definition}[Local Angles]
  Let $\xx_{01}$and $\xx_{02}$ be two geodesics in $\YX$ emanating
  from the same initial point $x_{0}:=\xx_{01}(0)=\xx_{02}(0).$ The
  \emph{upper angle}
  $\varangle_{\rmu\rmp}(\xx_{01},\xx_{02})\in[0,\pi]$ and the
  \emph{lower angle}
  $\varangle_{\rml\rmo}(\xx_{01},\xx_{02})\in[0,\pi],$ between
  $\xx_{01}$ and $\xx_{02}$ are defined by
\begin{subequations}
 \label{eq:LocAng}
\begin{align}
	\varangle_{\rmu\rmp}(\xx_{01},\xx_{02})&:=\limsup_{s, t \downarrow 0}
        \widetilde{\varangle}_{0}(x_{0},\xx_{01}(s),\xx_{02}(t)) 
  { = \arccos\big(\inf \ACCPOINTS{\xx_{01},\xx_{02}}\big), }
\\
\varangle_{\rml\rmo}(\xx_{01},\xx_{02})&:=\liminf_{s, t \downarrow 0}
  \widetilde{\varangle}_{0}(x_{0};\xx_{01}(s),\xx_{02}(t)) 
{ = \arccos\big(\sup \ACCPOINTS{\xx_{01},\xx_{02}}\big) } . 
\end{align}
\end{subequations}   
When $\varangle_{\rmu\rmp}(\xx_{01},\xx_{02})=
\varangle_{\rml\rmo}(\xx_{01},\xx_{02}),$ we say that the
\emph{(local) angle} exists in
the strict sense  and write $\varangle(\xx_{01},\xx_{02})$. 
\end{definition}

In the previous definition, we could use any model space $\mathbb{M}^{2}(\kappa),$ since as we have seen in Proposition \ref{ThKsSZ} the set of
limit points of $a_{\kc}(x_{0};\xx_{01}(t),\xx_{02}(s))$ as $t,s \to
0,$ is independent of $\kc.$ It is also trivial that the above limits
are invariant under re-parametrization, and that is why we are mostly
going to use constant-speed geodesics for joining points.

\subsection{Curvature and Local Angle Condition}
\label{su:CurvLAC}

Curvature is one of the most fundamental geometric properties in geodesic metric spaces, and it has applications in gradient
flows (see \cite{Ohta2009a, S.AlexanderV.Kapovitch,  Savare2007}).  There are many equivalent characterizations, 
see \cite{S.AlexanderV.Kapovitch, Burago2001, Berestovskii1983} for definitions and exposition. We are going to
provide the one that is closer to our results, which was introduced in \cite{Stur99MSLB}.

\begin{definition}\label{WhoAmI} 
We will say that a geodesic metric space $(\YX,\sfd_{\YX})$ has \emph{curvature not less than $\kc$} at a point $x$, 
if there is a neighborhood $U$ of $x$, such that 
\begin{eqnarray}\label{YmnosTouPerel}
    \sum_{i,j=1}^{m}b_{i}b_{j}a_{\kc}(x_{0};x_{i},x_{j})\geq 0
\end{eqnarray} 
for every  $m\in\mathbb{N}$, $x_{0},x_{1},\dots,x_{m}$ in $U,$ and $b_{1},\dots,b_{m}\in[0,\infty).$  We say
that $(\YX,\sfd_{\YX})$ has \emph{curvature not less than $\kc$ ``in the large''}, if we can take $U=\YX.$
We shortly write $\underline{\curv}_{\YX}(x)>\kc,$ if the space $(\YX,\sfd_{\YX})$ has curvature not less than $\kc,$ at $x.$ We finally write
$\underline{\curv}_{\YX}\geq\kc$  if the space $(\YX,\sfd_{\YX})$ has curvature not less than $\kc,$ in the large.
\end{definition}

We would like to note at this point that  $\underline{\curv}_{\YX}(x)>\kc$ for every $x\in\YX,$ does not a-priori imply
that $\underline{\curv}_{\YX}>\kc,$ since the second will require for \eqref{YmnosTouPerel} to hold for arbitrarily big triangles. However 
we recall the following beautiful theorem (see  \cite[Th. 10.3.1]{Burago2001}), which we will use at a later point.

\begin{theorem}[Toponogov's Theorem]\label{Toponogov}
 If a complete geodesic metric space $(\YX,\sfd_{\YX})$ has curvature not less than $\kappa$ at every point, 
 then it has curvature not less than $\kappa$ in the large, i.e.
 \[\left(\forall x\in\YX:\,\, \underline{\curv}_{\YX}(x)>\kc\right) \Leftrightarrow \underline{\curv}_{\YX}>\kc\]
\end{theorem}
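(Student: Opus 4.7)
The plan follows the classical local-to-global strategy for Alexandrov spaces of curvature bounded below: first reduce Sturm's $m$-point condition to a triangle-comparison form, then subdivide a given large triangle into small pieces on which the pointwise hypothesis applies, and finally glue the local model-space comparisons using Alexandrov's lemma.

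First I would invoke the equivalence (established by Sturm \cite{Stur99MSLB}) between the $m$-point inequality \eqref{YmnosTouPerel} and the classical triangle-comparison formulation: in a neighborhood $U$ of $x$, every geodesic triangle $\triangle x_0 x_1 x_2 \subset U$ admits a comparison triangle $\triangle \bar x_0 \bar x_1 \bar x_2 \subset \mathbb{M}^2(\kc)$ such that, for every $p$ on the geodesic from $x_1$ to $x_2$ and its counterpart $\bar p$ on $\overline{\bar x_1 \bar x_2}$ with $\sfd_\YX(x_1,p) = \sfd_{\mathbb{M}^2(\kc)}(\bar x_1, \bar p)$, one has $\sfd_\YX(x_0, p) \geq \sfd_{\mathbb{M}^2(\kc)}(\bar x_0, \bar p)$. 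The delicate direction of this equivalence is derived by specializing \eqref{YmnosTouPerel} to $m=4$ with suitably chosen nonnegative weights and exploiting the midpoint characterization of geodesics.

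Next, I would fix a global geodesic triangle $\triangle x_0 x_1 x_2$ in $\YX$ (with perimeter below $2\pi/\sqrt{\kc}$ when $\kc>0$) and denote by $\xx_{12}:[0,1]\to\YX$ the geodesic joining $x_1$ to $x_2$. Since $\YX$ is complete, the image of $\xx_{12}$ together with the geodesics emanating from $x_0$ to each $\xx_{12}(t)$ forms a compact set, and the pointwise curvature hypothesis supplies a neighborhood satisfying the local triangle comparison around each of its points. Extracting a Lebesgue number $\delta>0$ from a finite subcover, I would choose a partition $0 = t_0 < \dots < t_n = 1$ so that each sub-triangle $\triangle x_0\,\xx_{12}(t_{k-1})\,\xx_{12}(t_k)$ fits entirely within one such neighborhood and apply the local triangle comparison to it. Then, invoking Alexandrov's lemma (see \cite{Burago2001}) inductively at each interior vertex $\xx_{12}(t_k)$, the resulting chain of small comparison triangles in $\mathbb{M}^2(\kc)$ unfolds into a single planar configuration whose opening angle at $\bar x_0$ dominates that of the global comparison triangle built from the side lengths $\sfd_\YX(x_0,x_1)$, $\sfd_\YX(x_0,x_2)$, $\sfd_\YX(x_1,x_2)$. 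Taking the finest subdivision and using the uniform continuity of $a_\kc$ in the spirit of Proposition \ref{ThKsSZ} yields the desired global triangle comparison, which by the Sturm equivalence upgrades to the full $m$-point condition in the large.

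The hard part will be the careful execution of Alexandrov's gluing lemma when $\kc>0$, where the spherical perimeter budget $2\pi/\sqrt{\kc}$ must be respected at every stage so that model comparison triangles exist and are unique; this is typically handled by showing that the unfolding procedure only decreases the total length of the broken path opposite to $\bar x_0$, which allows inductive control of all intermediate perimeters. A secondary subtlety is that the initial reduction from the $m$-point condition to the triangle form has to be done uniformly on a neighborhood so that the Lebesgue-number-based subdivision argument applies cleanly; without this uniformity the sub-triangles produced by the partition could escape the regime in which the local hypothesis is effective.
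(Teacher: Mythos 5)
You should first note that the paper does not prove this statement at all: it is quoted verbatim as Toponogov's/the Globalization Theorem with the citation \cite[Th.\ 10.3.1]{Burago2001}, so there is no internal proof to compare against, and any proposal must stand on its own as a proof of the local-to-global theorem for lower curvature bounds.

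Your sketch has a genuine gap at its central step. You propose to choose a partition $0=t_0<\dots<t_n=1$ of the side $\xx_{12}$ so fine that each sub-triangle $\triangle x_0\,\xx_{12}(t_{k-1})\,\xx_{12}(t_k)$ ``fits entirely within one such neighborhood'' produced by the pointwise curvature hypothesis. This is impossible in general: every one of these sub-triangles contains both the vertex $x_0$ and a point of the opposite side, so its diameter is at least $\dist(x_0,\xx_{12}([0,1]))$, a fixed positive quantity that does not shrink as the partition is refined. Refining the partition makes the triangles \emph{thin}, not \emph{small}, so the Lebesgue-number argument never places them inside the neighborhoods where the local comparison is known to hold, and the subsequent application of Alexandrov's gluing lemma has no verified hypotheses to glue. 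This is precisely the obstruction that makes globalization a nontrivial theorem rather than a routine covering argument. The standard proofs (Burago--Burago--Ivanov, Ch.\ 10, or Alexander--Kapovitch--Petrunin) instead run an induction on perimeter: a ``key lemma'' shows that if the triangle comparison holds for all triangles of perimeter less than $a$, then it holds for all triangles of perimeter less than $\min\{3a/2,\,2\pi/\sqrt{\kc}\}$, and its proof uses an iterative construction (repeatedly replacing a triangle by ones of strictly controlled smaller size, combined with Alexandrov's lemma and a limiting argument), together with completeness to start the induction from the locally valid scale. Your remaining ingredients --- the Sturm equivalence between the $m$-point condition \eqref{YmnosTouPerel} and triangle comparison, the use of Alexandrov's lemma once comparison is known on the pieces, and the care with the perimeter budget $2\pi/\sqrt{\kc}$ --- are all appropriate, but the decomposition that is supposed to reduce the global statement to the local hypothesis is the part that fails, and repairing it essentially forces you into the perimeter-induction scheme of the cited references.
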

 Concerning the curvatures of a cone $\calC$ and its spherical
space $\YX$, the following result is well-known. 

\begin{theorem}\cite[Thm.\,4.7.1]{Burago2001}\label{curvofcones}
Let $(\calC,\sfd_\calC)$  be a cone over a geodesic metric space  $(\YX,\sfd_{\YX})$ , and $\boldsymbol{0}$ its
apex. Then, the following holds:

\begin{itemize}
\item[{\rm{(a)}}]$\left(\forall z\in \calC\setminus \{\boldsymbol{0}\}: \underline{\curv}_{\calC}(z)>0\right),$  
if and only if  $\underline{\curv}_{\YX}\geq 1.$ 
 
\item[{\rm{(b)}}] $\underline{\curv}_{\calC}\geq 0,$ if and only
  if $\underline{\curv}_{\YX}\geq 1$ and no triangle in $\YX$ has perimeter greater than $2\pi$
  (i.e. for any pairwise different $x_{1},x_{2},x_{3},$ we have
  $\sfd_{\YX}(x_{1},x_{2})+\sfd_{\YX}(x_{2},x_{3})+\sfd_{\YX}(x_{3},x_{1})\leq
  2\pi$).\end{itemize}
\end{theorem}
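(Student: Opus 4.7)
My plan centers on a single algebraic identity relating the $\kc{=}0$ comparison quantity on $\calC$ to the $\kc{=}1$ comparison quantity on the base; this is the algebraic shadow of the upper-angle formula stated in the introduction. Writing $z_i=[x_i,r_i]$ and $\phi_{ij}:=\sfd_\YX(x_i,x_j)$, I would first establish, by a direct expansion of $\sfd_\calC^2(z_0,z_i)+\sfd_\calC^2(z_0,z_j)-\sfd_\calC^2(z_i,z_j)$ via \eqref{conedistance} and the defining relation $\cos\phi_{ij}=\cos\phi_{0i}\cos\phi_{0j}+a_1(x_0;x_i,x_j)\sin\phi_{0i}\sin\phi_{0j}$, the identity
\[
\sfd_\calC(z_0,z_i)\sfd_\calC(z_0,z_j)\,a_0(z_0;z_i,z_j)=(r_0{-}r_i\cos\phi_{0i})(r_0{-}r_j\cos\phi_{0j})+r_ir_j\sin\phi_{0i}\sin\phi_{0j}\,a_1(x_0;x_i,x_j).
\]
Dividing and summing against $b_ib_j$ with $b_i\geq 0$ then gives the key decomposition
\[
\sum_{i,j}b_ib_j\,a_0(z_0;z_i,z_j)=\Bigl(\sum_i\tfrac{b_i(r_0-r_i\cos\phi_{0i})}{\sfd_\calC(z_0,z_i)}\Bigr)^{\!2}+\sum_{i,j}\tilde b_i\tilde b_j\,a_1(x_0;x_i,x_j),
\]
with $\tilde b_i:=b_ir_i\sin\phi_{0i}/\sfd_\calC(z_0,z_i)\geq 0$. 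This decomposition is the backbone of both parts.

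To prove (a,\,$\Leftarrow$), I would fix $z_0=[x_0,r_0]$ with $r_0>0$ and choose a cone neighborhood small enough that every configuration point has $\phi_{0\bullet}<\pi$; the first summand is a square and the second is non-negative by $\underline{\curv}_\YX\geq 1$, giving $\underline{\curv}_\calC(z_0)>0$. For (a,\,$\Rightarrow$), given $x_0\in\YX$ and nearby $x_i$ with $\phi_{0i}<\pi/2$, the plan is to set $r_0=1$ and $r_i=1/\cos\phi_{0i}$ so the radial factor vanishes and $a_0(z_0;z_i,z_j)$ becomes a strictly positive multiple of $a_1(x_0;x_i,x_j)$. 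Given arbitrary $c_i\geq 0$, choosing $b_i:=c_i\sfd_\calC(z_0,z_i)/(r_i\sin\phi_{0i})$ transfers the cone curvature inequality at $z_0=[x_0,1]$ into $\sum c_ic_ja_1(x_0;x_i,x_j)\geq 0$. This yields $\underline{\curv}_\YX(x_0)>1$ at every $x_0$, and Toponogov's Theorem \ref{Toponogov} promotes this to $\underline{\curv}_\YX\geq 1$ in the large.

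For (b,\,$\Leftarrow$), the same decomposition applies \emph{without} localization: the perimeter hypothesis ensures that for every triple $(x_0,x_i,x_j)$ arising from cone points a spherical comparison triangle exists, so $a_1\in[-1,1]$ and the identity remains legitimate, while global $\underline{\curv}_\YX\geq 1$ controls the second summand. The hard direction, where I expect the main obstacle, is (b,\,$\Rightarrow$): the spherical curvature bound follows from (a), but the perimeter bound must be obtained by contradiction. The plan would be: assuming a triangle $(x_1,x_2,x_3)$ in $\YX$ has perimeter exceeding $2\pi$, exhibit explicit $z_i=[x_i,r_i]$ and weights $b_i\geq 0$ for which the cone curvature sum becomes strictly negative, exploiting the fact that $\sfd_\calC$ truncates $\phi_{ij}$ at $\pi$ via $\cos_\pi$ and therefore cannot detect excess arc length (so the spherical non-negativity used in (b,\,$\Leftarrow$) genuinely fails). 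This geometric construction, which I would model on the proof of \cite[Thm.\,4.7.1]{Burago2001}, is the delicate step; the rest reduces essentially to the decomposition above and Toponogov.
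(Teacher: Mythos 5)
The paper does not prove Theorem \ref{curvofcones} at all --- it is quoted verbatim from \cite[Thm.\,4.7.1]{Burago2001} --- so there is no in-paper proof to match your argument against. The closest in-paper analogue of what you propose is the pair Lemma \ref{le:PreForm} / Theorem \ref{Cones and LAC pointwise}: your central identity
$\sfd_\calC(z_0,z_i)\sfd_\calC(z_0,z_j)\,a_0(z_0;z_i,z_j)=(r_0{-}r_i\cos\phi_{0i})(r_0{-}r_j\cos\phi_{0j})+r_ir_j\sin\phi_{0i}\sin\phi_{0j}\,a_1(x_0;x_i,x_j)$
is exactly the finite-triangle version of \eqref{preformula}, and your ``square plus nonnegative form'' decomposition is the same trick used in Part (b) of the proof of Theorem \ref{Cones and LAC pointwise}. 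I checked the identity (it requires $\phi_{0i},\phi_{0j},\phi_{ij}\leq\pi$ so that the truncation in $\cos_\pi$ is inactive); it is correct, and your three directions (a,$\Leftarrow$), (a,$\Rightarrow$), (b,$\Leftarrow$) go through along the lines you describe. Two caveats: the degenerate cases $\phi_{0i}\in\{0,\pi\}$, where $a_1$ is undefined, need the explicit convention that the $\sin\phi_{0i}\sin\phi_{0j}\,a_1$ term is read as $\cos\phi_{ij}-\cos\phi_{0i}\cos\phi_{0j}$ (the identity then still holds and the corresponding $\tilde b_i$ vanishes); and your appeal to Toponogov's Theorem \ref{Toponogov} in (a,$\Rightarrow$) requires completeness of $\YX$, which is not among the hypotheses of Theorem \ref{curvofcones} and should either be added or circumvented.

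The genuine gap is (b,$\Rightarrow$), the necessity of the perimeter bound: you describe it as ``the delicate step'' to be ``modeled on the proof of \cite[Thm.\,4.7.1]{Burago2001}'', which is circular when the task is to prove that very theorem, and no construction is actually exhibited. The gap is closable, and in fact the paper already contains the needed computation in Part (c) of Theorem \ref{Cones and LAC pointwise}: test \eqref{YmnosTouPerel} at the apex $x_0=\boldsymbol{0}$ with $z_i=[x_i,1]$, for which $\sfd_\calC(\boldsymbol{0},z_i)=1$ and $\sfd^2_\calC(z_i,z_j)=2-2\cos_\pi(\phi_{ij})$, so that $a_0(\boldsymbol{0};z_i,z_j)=\cos_\pi(\phi_{ij})=\cos(\phi_{ij}\wedge\pi)$. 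Copositivity of the $3\times3$ matrix $\big(\cos(\phi_{ij}\wedge\pi)\big)$ is equivalent (Theorem \ref{altdef}) to $\sum_{i<j}(\phi_{ij}\wedge\pi)\leq 2\pi$, and the triangle inequality in $\YX$ then upgrades this to $\sum_{i<j}\phi_{ij}\leq 2\pi$: if some $\phi_{12}>\pi$, the bound forces $\phi_{13}+\phi_{23}\leq\pi<\phi_{12}$, a contradiction. So the perimeter bound is extracted purely from the curvature condition at the apex, with no contradiction argument in $\YX$ needed; you should replace your placeholder with this explicit apex computation.
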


The notion of curvature is not very stable when we take the cone
$(\calC,\sfd_{\calC})$ over a space $(\YX,\sfd_{\YX})$ or when
constructing the Wasserstein space $(\cP_2(\YX),\sfW)$ over $(\YX,\sfd_{\YX}).$ For the first
statement, we recall the previous theorem and see that we need $\underline{\curv}_{\YX}\geq 1$ to achieve $\underline{\curv}_{\mfC}\geq 0,$
while any other ``lower curvature bound'' $\kc<1$ for $(\YX,\sfd_{\YX})$ is not enough to guarantee any ``lower curvature bound'' for $(\calC,\sfd_{\calC})$.
For the second statement, we refer to \cite{AmGiSa05GFMS}, where it is shown that we
\textbf{need} $\underline{\curv}_{\YX}\geq 0$ to deduce $\underline{\curv}_{\mathcal{P}_{2}(\YX)}\geq 0$.  

Hence, we are going to investigate a significantlly weaker but much more stable notion than lower curvature, which along with
some other geometric
properties, is enough enough to prove existence of gradient flows, cf.\ 
\cite[Part 1, Ch.\,6]{OlPaVi14OTTA}. 
The property  that we are going to examine is the 
\emph{Local Angle Condition} (LAC). As it will be shown, LAC is a property
that is transferable from $(\YX,\sfd_{\YX})$ to
$(\calC\setminus\{\boldsymbol{0}\},\sfd_{\calC}),$ but is also stable when we
move to the Wasserstein and the Hellinger-Kantorovich space
$(\M(\YX),\HK_{\dd})$ over $(\YX,\sfd_{\YX}).$

\begin{definition}
\label{def:m-LAC}
For $m\in\mathbb{N},$ a geodesic metric space  $(\YX,\sfd_{\YX})$ satisfies \emph{$m$-LAC at a point $x_{0},$} if for every
choice of $m$ non-trivial geodesics $\xx_{0i}$ starting at $x_{0}$ and positive real numbers $b_{i},\, i\in\{1,\dots m\},$ we have
\begin{equation}\label{eq:m-LAC}
	\sum_{i,j=1}^{m}b_{i}b_{j}\cos(\varangle_{\rmu\rmp}(\xx_{0i},\xx_{0j}))\geq 0.
\end{equation}
If  $ (\YX,\sfd_{\YX})$ satisfies \emph{$m$-LAC} at all points, we say that the space satisfies $\emph{$m$-LAC}.$
\end{definition}

We note that $(\YX,\sfd_{\YX})$ satisfying $m$-LAC at a point $x_{*}$ is a fundamentally weaker notion than having 
$\underline{\curv}_{\YX}(x_{*})\geq \kc$ for some $\kc\in \mathbb{R}.$ For $m$-LAC, one has to look only 
at infinitesimal triangles with common vertex $x_{*},$ while for curvature bounds, one has 
to look at all triangles in a neighborhood of $x_{*}.$ Furthermore, since the triangles used in the definition of $m$-LAC are arbitrarily
small, by application of Proposition \ref{ThKsSZ} the dependence on any specific $\kc$ disappears. Using loose terminology, 
one can say that curvature is a second order, while $m$-LAC is a first order property. Furthermore one could say that $m$-LAC captures, in a rough sense, the  infinitesimally Euclidean nature around $x_{*}$ of the ``geodesically convex hulls'' generated by $m$ geodesics.
By using geodesics in $\eqref{YmnosTouPerel},$ taking limits, and recalling the fact that  angles exist in spaces with 
curvature not less than a real number (see \cite{Burago2001}), one can easily retrieve the following theorem.

\begin{theorem}
Let $(\YX,\sfd_{\YX})$ a geodesic metric space and $x$ a point in it. If $\underline{\curv}_{\YX}(x)\geq\kc$ for 
some $\kc\in\mathbb{R},$ then $(\YX,\sfd_{\YX})$ satisfies 
$m$-LAC at every $x_{0}$ in a neighborhood $U$ of $x$ and for all $m\in \N.$
\end{theorem}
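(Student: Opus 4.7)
The plan is to combine three ingredients: the curvature inequality \eqref{YmnosTouPerel} evaluated along the geodesics, the fact that in spaces of lower curvature bound the angle between two geodesics exists in the strict sense (so $\varangle_{\rmu\rmp}=\varangle_{\rml\rmo}=\varangle$), and Proposition \ref{ThKsSZ} which lets us freely switch between $a_{\kc}$ and $a_0$ in the small-scale limit.

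First, choose the neighborhood $U$ from Definition \ref{WhoAmI} witnessing $\underline{\curv}_{\YX}(x)\geq\kc$, and fix an arbitrary $x_0\in U$. Given $m$ non-trivial constant-speed geodesics $\xx_{0i}$ emanating from $x_0$ and weights $b_i\geq 0$, pick $s>0$ so small that $\xx_{0i}(s)\in U$ for every $i\in\{1,\dots,m\}$ (possible by continuity). Applying \eqref{YmnosTouPerel} to the points $x_0,\xx_{01}(s),\dots,\xx_{0m}(s)\in U$ yields
\begin{equation*}
  \sum_{i,j=1}^{m} b_i b_j\, a_{\kc}\bigl(x_0;\xx_{0i}(s),\xx_{0j}(s)\bigr)\ \geq\ 0 \qquad\text{for all sufficiently small } s>0,
\end{equation*}
where for $i=j$ we interpret the summand as $b_i^2\cdot 1$.

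Next, I would pass to the limit $s\downarrow 0$. For each pair $(i,j)$, Proposition \ref{ThKsSZ} guarantees that $a_{\kc}(x_0;\xx_{0i}(s),\xx_{0j}(s))$ and $a_{0}(x_0;\xx_{0i}(s),\xx_{0j}(s))$ have the same accumulation behaviour. Now I would invoke the classical fact (see \cite[Thm.\,10.3.1 and its consequences]{Burago2001}) that in a space with lower curvature bound $\kc$, the angle between any two geodesics issuing from a common point exists in the strict sense, i.e.\ $\varangle_{\rmu\rmp}(\xx_{0i},\xx_{0j})=\varangle_{\rml\rmo}(\xx_{0i},\xx_{0j})=\varangle(\xx_{0i},\xx_{0j})$. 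This reduces $\ACCPOINTS{\xx_{0i},\xx_{0j}}$ to a single point, so that the full two-parameter limit exists and
\begin{equation*}
  \lim_{s\downarrow 0} a_{\kc}\bigl(x_0;\xx_{0i}(s),\xx_{0j}(s)\bigr) \;=\; \cos\bigl(\varangle(\xx_{0i},\xx_{0j})\bigr)\;=\;\cos\bigl(\varangle_{\rmu\rmp}(\xx_{0i},\xx_{0j})\bigr).
\end{equation*}
Passing to the limit in the finite sum (which is continuous in its $m^2$ entries) then yields
\begin{equation*}
  \sum_{i,j=1}^{m} b_i b_j \cos\bigl(\varangle_{\rmu\rmp}(\xx_{0i},\xx_{0j})\bigr)\ \geq\ 0,
\end{equation*}
which is exactly \eqref{eq:m-LAC}. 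Since $x_0\in U$ and the geodesics $\xx_{0i}$ and weights $b_i$ were arbitrary, $(\YX,\sfd_{\YX})$ satisfies $m$-LAC at every point of $U$ for every $m\in\N$.

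The only delicate point, and the step I expect to be the main obstacle to write cleanly, is the appeal to the existence of strict angles in lower-curvature-bounded spaces; without it one would be stuck with the mismatch that $\cos(\varangle_{\rmu\rmp})=\inf\ACCPOINTS{\cdot}$ only gives a lower bound on accumulation values, which is the wrong direction to control $\sum b_ib_j\cos(\varangle_{\rmu\rmp})$ from the curvature inequality. Using the diagonal $s_i=s$ for all curves avoids having to argue with simultaneous extraction of subsequences across the $\binom{m}{2}$ pairs, but this shortcut is only legitimate because the angle exists in the strict sense.
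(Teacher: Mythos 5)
Your proof is correct and follows essentially the same route as the paper, which only sketches the argument in one sentence (``using geodesics in \eqref{YmnosTouPerel}, taking limits, and recalling the fact that angles exist in spaces with curvature not less than a real number''); you have simply filled in the details, including the correct observation that the existence of strict angles is what makes the passage to the limit legitimate. No gaps.
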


For $m=1$ and $2$ the condition is trivially satisfied. For
$m=3$, which is the case needed for construction solutions for
gradient flows,  we have the following equivalent, more geometric
characterization.  

\begin{theorem}[\cite{Savare2007,OlPaVi14OTTA}]\label{altdef}
A geodesic metric space $(\YX,\sfd_{\YX})$ satisfies  $3$-LAC at $x_{0}$, if and only if for all triples of geodesics $\xx_{01},\xx_{02}, \xx_{03}$
emanating from $x_{0},$ we have
\[\varangle_{\rmu\rmp}(\xx_{01},\xx_{02}) + \varangle_{\rmu\rmp}(\xx_{02},\xx_{03}) + \varangle_{\rmu\rmp}(\xx_{03},\xx_{01}) \leq 2\pi. \]
\end{theorem}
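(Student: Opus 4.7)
The plan is to reformulate $3$-LAC at $x_0$ for the triple $\xx_{01},\xx_{02},\xx_{03}$ as the copositivity of the symmetric $3\times3$ matrix $M$ with $M_{ii}=1$ and $M_{ij}=\cos\alpha_{ij}$, where $\alpha_{ij}=\varangle_{\rmu\rmp}(\xx_{0i},\xx_{0j})\in[0,\pi]$. Indeed, \eqref{eq:m-LAC} for $m=3$ reads exactly $b^{\top}M b\ge 0$ for all $b\in\mathbb{R}^{3}_{>0}$, and this extends to $b\ge 0$ by continuity. The theorem then reduces to the purely elementary claim that this copositivity is equivalent to $\alpha_{12}+\alpha_{13}+\alpha_{23}\le 2\pi$.

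For the direction \emph{angle sum $\le 2\pi$ implies $3$-LAC}, I would raise the angles to $\beta_{ij}\in[\alpha_{ij},\pi]$ with $\sum\beta_{ij}=2\pi$. This is feasible because the deficit $2\pi-\sum\alpha_{ij}\in[0,2\pi]$ is bounded above by the raising capacity $\sum(\pi-\alpha_{ij})=3\pi-\sum\alpha_{ij}\ge\pi$. The angles $\beta_{ij}$ can then be realized as the pairwise non-obtuse angles between three unit vectors $v_{1},v_{2},v_{3}\in\mathbb{R}^{2}$ placed on the unit circle with consecutive arcs $\beta_{12},\beta_{23},\beta_{13}$, after reordering so that the two arcs placed consecutively sum to $\ge\pi$; this reordering is always available because each $\beta_{ij}\le\pi$ forces some consecutive pair to sum to $\ge\pi$. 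Then $\sum_{i,j}b_{i}b_{j}\cos\beta_{ij}=\big|\sum_{i}b_{i}v_{i}\big|^{2}\ge 0$, and since $\cos\beta_{ij}\le\cos\alpha_{ij}$ and $b_{i}b_{j}\ge 0$, one concludes $b^{\top}M b=\sum_{i,j}b_{i}b_{j}\cos\alpha_{ij}\ge\sum_{i,j}b_{i}b_{j}\cos\beta_{ij}\ge 0$.

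For the converse (contrapositive) assume $\sum\alpha_{ij}>2\pi$. First, each $\alpha_{ij}>0$, for otherwise a vanishing angle would force the other two (both at most $\pi$) to sum to more than $2\pi$, which is impossible. I then lower the angles to $\bar\alpha_{ij}\in(0,\alpha_{ij}]$ with $\bar\alpha_{ij}<\pi$ and $\sum\bar\alpha_{ij}=2\pi$; feasibility is immediate when every $\alpha_{ij}<\pi$, and in the degenerate case where some $\alpha_{ij}=\pi$ the strictly positive slack $\sum\alpha_{ij}-2\pi$ allows one to withdraw a strictly positive amount from that entry while keeping all $\bar\alpha_{ij}$ strictly positive. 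As in the sufficiency argument, one realizes $\bar\alpha_{ij}$ by three unit vectors $v_{1},v_{2},v_{3}\in\mathbb{R}^{2}$; the crucial new point is that since \emph{all three} arcs $\bar\alpha_{ij}$ are strictly less than $\pi$, the $v_{i}$ cannot be contained in any closed half-plane. Therefore $\boldsymbol{0}$ lies in the interior of the convex hull of $\{v_{1},v_{2},v_{3}\}$, providing strictly positive $c_{1},c_{2},c_{3}>0$ with $\sum_{i}c_{i}v_{i}=\boldsymbol{0}$, whence $\sum_{i,j}c_{i}c_{j}\cos\bar\alpha_{ij}=0$.

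To reach the contradiction I would then write
\[
c^{\top}M c=\sum_{i,j}c_{i}c_{j}\cos\bar\alpha_{ij}\;+\;2\!\!\sum_{i<j}c_{i}c_{j}(\cos\alpha_{ij}-\cos\bar\alpha_{ij}) =0+2\!\!\sum_{i<j}c_{i}c_{j}(\cos\alpha_{ij}-\cos\bar\alpha_{ij}).
\]
Every summand on the right is non-positive, because cosine is decreasing on $[0,\pi]$ and $\alpha_{ij}\ge\bar\alpha_{ij}$; and since $\sum\alpha_{ij}>\sum\bar\alpha_{ij}$ at least one pair satisfies $\alpha_{ij}>\bar\alpha_{ij}$ strictly, for which the corresponding positive weight $c_{i}c_{j}>0$ makes the summand strictly negative. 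Thus $c^{\top}M c<0$ with $c\in\mathbb{R}^{3}_{>0}$, contradicting $3$-LAC. The delicate point in the whole argument is precisely this converse: one must engineer the shrinkage $\alpha_{ij}\mapsto\bar\alpha_{ij}$ so that simultaneously every $\bar\alpha_{ij}<\pi$ and $\sum\bar\alpha_{ij}=2\pi$, because only then does the ``no closed half-plane'' geometric condition produce the strictly positive null combination that drives the contradiction.
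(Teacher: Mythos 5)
Your argument is correct. Note first that the paper does not actually prove this statement---it is imported from \cite{Savare2007,OlPaVi14OTTA}---so there is no internal proof to compare against; what you have produced is a complete, self-contained elementary proof, and it holds up. The reduction of $3$-LAC to copositivity of the Gram-type matrix $M$ is exact (the diagonal entries are $1$ because the upper angle of a constant-speed geodesic with itself vanishes, and passing from $b>0$ to $b\geq 0$ by continuity is harmless), and both implications of the numerical equivalence are sound: in the sufficiency direction the inflated angles $\beta_{ij}$ summing to $2\pi$ with $\beta_{ij}\leq\pi$ are realized exactly by consecutive arcs on the unit circle (your ``reordering'' caveat is actually vacuous, since any two of the three arcs automatically sum to $2\pi$ minus the third, hence to at least $\pi$); in the necessity direction the half-plane argument correctly produces a strictly positive null combination for the deflated angles, and the strict monotonicity of cosine on $[0,\pi]$ then forces $c^{\top}Mc<0$. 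Two small simplifications are available: in both directions the feasibility of the angle adjustment can be obtained in one line by rescaling, e.g.\ $\bar\alpha_{ij}:=\frac{2\pi}{\sum\alpha_{kl}}\,\alpha_{ij}$ in the converse (which lands automatically in $(0,\pi)$), and the whole numerical lemma is equivalent to the classical $3\times 3$ copositivity criterion, which via the identity $1+\cos 2x+\cos 2y+\cos 2z+4\cos x\cos y\cos z=2\left(u^{2}+v^{2}+w^{2}+2uvw-1\right)$ with $u=\cos x$ etc.\ reduces to $\arccos u+\arccos v+\arccos w\leq\pi$; but your direct geometric construction is arguably cleaner and needs no such citation.
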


We now provide one of our major abstract results. We will
show that  $m$-LAC is  stable on lifting to  cones and  projecting to
the spherical space inside a cone.

\begin{theorem}\label{Cones and LAC pointwise}
  Let $(\calC, \sfd_{\calC})$ be the cone over a geodesic metric space
  $(\YX,\sfd_{\YX}).$ Then we have
\begin{itemize}
\item[{\rm{(a)}}] If $(\calC, \sfd_{\calC})$ satisfies $m$-LAC at $z_{0}=[x_{0},r_{0}]$ for some 
$x_{0}\in\YX$ and $r_{0}>0,$ then $(\YX,\sfd_{\YX})$  satisfies  $m$-LAC at $x_{0}.$  
\item[{\rm{(b)}}] Conversely if  $(\YX,\sfd_{\YX})$ satisfies $m$-LAC at $x_{0}$,
  then   $z_{0}=(x_{0},r_{0})\in (\calC,\sfd_{\calC})$ also satisfies it for every $r_{0}>0.$
\item[{\rm{(c)}}] $(\calC, \sfd_{\calC})$ satisfies $3$-LAC at the apex \,$\boldsymbol{0}$ if and
  only if $(\YX,\sfd_{\YX})$ has perimeter less
  than $2\pi$.
\item[{\rm{(d)}}] If $(\YX,\sfd_{\YX})$ has diameter less or
  equal to $\pi/2$, then  $(\calC, \sfd_{\calC})$ satisfies $m$-LAC at $\boldsymbol{0}$ for all $m\in\mathbb{N}$. 
\end{itemize}
\end{theorem}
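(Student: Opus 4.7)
All four parts are driven by the upper-angle identity displayed in the introduction: for geodesics $\zz_{0i}$ in $\calC$ connecting $z_0=[x_0,r_0]$ to $z_i=[x_i,r_i]$ with $r_i>0$ and base geodesics $\xx_{0i}$ in $\YX$ of length $\theta_i:=\sfd_\YX(x_0,x_i)<\pi$, one has
\[
 \sfd_\calC(z_0,z_i)\sfd_\calC(z_0,z_j)\cos\big(\varangle_{\rmu\rmp}(\zz_{0i},\zz_{0j})\big) = (r_0{-}r_i\cos\theta_i)(r_0{-}r_j\cos\theta_j) + r_i r_j \sin\theta_i \sin\theta_j \cos\big(\varangle_{\rmu\rmp}(\xx_{0i},\xx_{0j})\big).
\]
For \textbf{(a)}, given $m$ non-trivial geodesics $\xx_{0i}$ at $x_0$ and weights $b_i>0$, I would truncate each to a common length $s\in(0,\pi)$ (upper angles are germ-determined and hence unchanged) and use Theorem~\ref{antetwra} to lift to geodesics $\zz_{0i}^s \in \Geod(z_0,[\xx_{0i}(s),r_0])$ with common end-radius $r_0$. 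With $\theta_i=\theta_j=s$ and $r_i=r_j=r_0$, the identity simplifies via the half-angle formulas to
\[
 \cos\big(\varangle_{\rmu\rmp}(\zz_{0i}^s,\zz_{0j}^s)\big) = \sin^2(s/2)+\cos^2(s/2)\cos\big(\varangle_{\rmu\rmp}(\xx_{0i},\xx_{0j})\big),
\]
so that the $m$-LAC hypothesis at $z_0$ yields $\cos^2(s/2)\sum_{i,j} b_i b_j \cos(\varangle_{\rmu\rmp}(\xx_{0i},\xx_{0j})) \geq -\sin^2(s/2)\big(\sum_i b_i\big)^2$, and sending $s\downarrow 0$ recovers $m$-LAC at $x_0$.

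For \textbf{(b)}, given arbitrary geodesics $\zz_{0i}$ at $z_0$ with weights $b_i>0$, setting $L_i=\sfd_\calC(z_0,\zz_{0i}(1))$ and plugging the identity into the double sum, completion of squares produces
\[
 \sum_{i,j}b_i b_j \cos\big(\varangle_{\rmu\rmp}(\zz_{0i},\zz_{0j})\big) = \Big(\sum_i c_i\Big)^{\!2} + \sum_{i,j} d_i d_j \cos\big(\varangle_{\rmu\rmp}(\xx_{0i},\xx_{0j})\big),
\]
with $c_i=b_i(r_0-r_i\cos\theta_i)/L_i$ and $d_i=b_i r_i \sin\theta_i/L_i \geq 0$. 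The first term is a perfect square, while the second is non-negative by the assumed $m$-LAC in $\YX$ at $x_0$ applied to the indices with non-trivial base geodesic (where $d_i>0$). The delicate point -- and the main technical obstacle -- is that purely radial geodesics and apex-crossing ones ($\theta_i\geq\pi$) must be accommodated: the latter are germ-equivalent at $z_0$ to an inward radial geodesic, and in both cases one gets $d_i=0$ and $c_i=\pm b_i$, so they contribute only to the square. Verifying that the identity and the square decomposition extend continuously (by direct check or germ-equivalence) through these degenerate values of $\theta_i$ is the bulk of the work.

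Parts \textbf{(c)} and \textbf{(d)} reduce to a direct apex computation. For radial rays $\zz_{0i}(t)=[x_i,t]$ from $\bm 0$, the cone formula gives $\sfd_\calC^2(\zz_{0i}(s),\zz_{0j}(t))=s^2+t^2-2st\cos_\pi(\sfd_\YX(x_i,x_j))$, so the comparison angle at $\bm 0$ is independent of $(s,t)$ and equals $\sfd_\YX(x_i,x_j)\wedge \pi$; this is therefore also the upper angle. For \textbf{(c)}, Theorem~\ref{altdef} converts $3$-LAC at $\bm 0$ into $\sum_{\mathrm{cyc}}(\sfd_\YX(x_i,x_j)\wedge\pi)\leq 2\pi$ for every triple, and a short case analysis using the triangle inequality on $\sfd_\YX$ shows this is equivalent to the uncapped perimeter bound $\sfd_\YX(x_1,x_2)+\sfd_\YX(x_2,x_3)+\sfd_\YX(x_3,x_1)\leq 2\pi$ (if one side exceeded $\pi$ while the capped perimeter stayed $\leq 2\pi$, the other two sides would violate the triangle inequality). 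For \textbf{(d)}, $\diam\YX\leq \pi/2$ forces every apex upper angle into $[0,\pi/2]$, so all summands $b_i b_j \cos(\varangle_{\rmu\rmp})$ in~\eqref{eq:m-LAC} are non-negative, and $m$-LAC at $\bm 0$ holds trivially for every $m$.
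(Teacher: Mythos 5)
Your proposal is correct and follows essentially the same route as the paper: part (a) by truncating the base geodesics, lifting with equal radii $r_0$ and passing to the limit in the angle identity (the paper uses the general formula \eqref{Kal} plus the same limits, you use its clean equal-radius specialization); part (b) by exactly the paper's square-plus-positive-form decomposition, with the degenerate radial/apex-crossing directions handled as germ-equivalent to the $\sin\theta_i=0$ case, which the paper disposes of via its opening ``WLOG $\sfd_\YX(x_0,x_i)<\pi$'' remark and the trivial-case addendum to Proposition \ref{prop:angle.Angle}; and parts (c), (d) by the same apex computation giving angle $\sfd_\YX(x_i,x_j)\wedge\pi$ together with Theorem \ref{altdef} and the triangle-inequality case analysis. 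No gaps.
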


Before we prove this theorem, we  provide some
auxiliary lemmas. For notational economy, we again set
$\phi_{ij}=\sfd_{\YX}(x_{i},x_{j})$ and
$D_{ij}=\sfd_{\calC}(z_{i},z_{j}).$ We will use planar comparison
angles (i.e. $\kc=0$) for the cone $\calC,$ and spherical comparison
angles ($\kc=1$) for the underlying space $\YX$ (recall Definition
\ref{comp}).

\begin{lemma}
\label{le:PreForm}
Let $z_{0}=[x_{0},r_{0}] \in \calC\setminus\{\boldsymbol{0}\}$,
$z_{1}=[x_{1},r_{1}]$, $z_{2}=[x_{2},r_{2}]\in\calC$, and
 $0<\sfd_{\YX}(x_{0},x_{i})<\pi,$ $i\in\{1,2\}.$   Let $\xx_{0i}\in \mathrm{Geod}(x_0,x_i),$ for $i=1,2.$ Let also
$\zz_{0i}=[\overline{\xx}_{0i},\rr_{0i}]$ be the corresponding
constant-speed geodesics in $\calC.$ Then,
$\mathcal{A}_{0,\calC}(t,s):=a_{0}(z_{0};\zz_{01}(t),\zz_{02}(s))$ and
$\overline{\mathcal{A}}_{1,\YX}(t,s):=a_{1}(x_{0};
\overline{\xx}_{01}(t),\overline{\xx}_{02}(s))$  are connected by
the relation 
\begin{equation}\label{preformula}
\begin{split}
  &\mathcal{A}_{0,\calC}(t,s)=\frac{(r_{1}\cos(\phi_{01})-r_{0})
    (r_{2}\cos(\phi_{02})-r_{0})}{D_{01}D_{02}}
  \\&+\frac{\sin(\sfd_{\YX}(x_{0},\overline{\xx}_{01}(t)))
    \sin(\sfd_{\YX}(x_{0},\overline{\xx}_{02}(s)))}
  {\sfd_{\YX}(x_{0},\overline{\xx}_{01}(t))\sfd_{\YX}(x_{0},
    \overline{\xx}_{02}(s))}\frac{\rr_{01}(t) \rr_{02}(s)\ztzt_{01}(t)
    \ztzt_{02}(s)\phi_{01}\phi_{02}}
  {tsD_{01}D_{02}}\overline{\mathcal{A}}_{1,\YX}(t,s).
\end{split}
\end{equation}
\end{lemma}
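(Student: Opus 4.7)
The plan is to start from the Euclidean comparison-angle definition of $\mathcal{A}_{0,\calC}$ and reduce it using (i) the cone distance formula~\eqref{conedistance}, (ii) the spherical law of cosines at $x_{0}$ that defines $\overline{\mathcal{A}}_{1,\YX}$, and (iii) the explicit parametrization identities for cone geodesics from Section~\ref{su:Lifting}. No geometric insight is required beyond this; the whole argument is algebraic bookkeeping.

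Since each $\zz_{0i}$ is a constant-speed geodesic one has $\sfd_{\calC}(z_{0},\zz_{0i}(u))=uD_{0i}$, so Definition~\ref{comp} with $\kc=0$ gives
\[
\mathcal{A}_{0,\calC}(t,s)=\frac{t^{2}D_{01}^{2}+s^{2}D_{02}^{2}-\sfd_{\calC}^{2}(\zz_{01}(t),\zz_{02}(s))}{2tsD_{01}D_{02}}.
\]
Setting $\alpha_{i}:=\sfd_{\YX}(x_{0},\overline{\xx}_{0i}(\cdot))=\ztzt_{0i}(\cdot)\phi_{0i}$, the cone metric~\eqref{conedistance} together with the defining identity of $\overline{\mathcal{A}}_{1,\YX}$ yields
\[
\sfd_{\calC}^{2}(\zz_{01}(t),\zz_{02}(s))=\rr_{01}^{2}(t)+\rr_{02}^{2}(s)-2\rr_{01}(t)\rr_{02}(s)\bigl[\cos\alpha_{1}\cos\alpha_{2}+\sin\alpha_{1}\sin\alpha_{2}\,\overline{\mathcal{A}}_{1,\YX}(t,s)\bigr].
\]
I would then invoke the two identities read off from~\eqref{quadraticmass}, namely $\rr_{0i}(u)\cos\alpha_{i}=(1{-}u)r_{0}+u\,r_{i}\cos\phi_{0i}$ and $\rr_{0i}(u)\sin\alpha_{i}=u\,r_{i}\sin\phi_{0i}$, together with $\rr_{0i}^{2}(u)=(1{-}u)r_{0}^{2}+u\,r_{i}^{2}-u(1{-}u)D_{0i}^{2}$ from~\eqref{rr prop}, and expand the right-hand side.

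Collecting the resulting terms by powers of $t$ and $s$, the identity $D_{0i}^{2}=r_{0}^{2}+r_{i}^{2}-2r_{0}r_{i}\cos\phi_{0i}$ forces all pieces not proportional to $ts$ to collapse into exactly $t^{2}D_{01}^{2}+s^{2}D_{02}^{2}$, which gives
\[
\sfd_{\calC}^{2}(\zz_{01}(t),\zz_{02}(s))=t^{2}D_{01}^{2}+s^{2}D_{02}^{2}-2ts\bigl[(r_{1}\cos\phi_{01}{-}r_{0})(r_{2}\cos\phi_{02}{-}r_{0})+r_{1}r_{2}\sin\phi_{01}\sin\phi_{02}\,\overline{\mathcal{A}}_{1,\YX}(t,s)\bigr].
\]
Substituting this back, the $t^{2}D_{01}^{2}+s^{2}D_{02}^{2}$ terms cancel and one is left with the compact identity
\[
\mathcal{A}_{0,\calC}(t,s)=\frac{(r_{1}\cos\phi_{01}{-}r_{0})(r_{2}\cos\phi_{02}{-}r_{0})}{D_{01}D_{02}}+\frac{r_{1}r_{2}\sin\phi_{01}\sin\phi_{02}}{D_{01}D_{02}}\,\overline{\mathcal{A}}_{1,\YX}(t,s).
\]
To reach the precise prefactor displayed in~\eqref{preformula}, the last step is cosmetic: multiplying and dividing the second summand by $\alpha_{1}\alpha_{2}$ and once more invoking $u\,r_{i}\sin\phi_{0i}=\rr_{0i}(u)\sin\alpha_{i}$ rewrites $r_{1}r_{2}\sin\phi_{01}\sin\phi_{02}$ as $\tfrac{\sin\alpha_{1}\sin\alpha_{2}}{\alpha_{1}\alpha_{2}}\tfrac{\rr_{01}(t)\rr_{02}(s)\ztzt_{01}(t)\ztzt_{02}(s)\phi_{01}\phi_{02}}{ts}$, matching~\eqref{preformula} verbatim.

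The only step deserving real care is the collapse into $t^{2}D_{01}^{2}+s^{2}D_{02}^{2}$: one must consistently combine the cosine representation of $\ztzt_{0i}$ with the convexity formula \eqref{rr prop} for $\rr_{0i}^{2}$ and carefully manage the mixed $(1{-}t)(1{-}s)r_{0}^{2}$ cross-term arising from the product $\rr_{01}(t)\rr_{02}(s)\cos\alpha_{1}\cos\alpha_{2}$. Once this bookkeeping is dispatched, everything else amounts to linear algebra plus the harmless insertion of the factors $\sin\alpha/\alpha$ that give~\eqref{preformula} its slightly unwieldy but uniformly valid final shape.
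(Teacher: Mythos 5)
Your proposal is correct and follows essentially the same route as the paper: both start from the Euclidean comparison angle on the cone with $\sfd_{\calC}(z_{0},\zz_{0i}(u))=uD_{0i}$, expand $\sfd_{\calC}^{2}(\zz_{01}(t),\zz_{02}(s))$ via the cone formula and the spherical law of cosines defining $\overline{\mathcal{A}}_{1,\YX}$, and reduce using the identity $\rr_{0i}(u)\cos\alpha_{i}=(1{-}u)r_{0}+u\,r_{i}\cos\phi_{0i}$ from \eqref{quadraticmass}. The only cosmetic difference is that the paper cancels the $\rr_{0i}^{2}$ terms directly against those in $\sfd_{\calC}^{2}(z_{0},\zz_{0i}(\cdot))$ rather than invoking \eqref{rr prop}, but the algebra is the same and your collapse to $t^{2}D_{01}^{2}+s^{2}D_{02}^{2}-2ts[\cdots]$ checks out.
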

\begin{proof}
By the reparametrization rule\eqref{quadraticmass} we have
$\overline{\xx}_{0i}(t)=\xx_{0i}(\ztzt_{0i}(t)),$ where  
\begin{equation}
\label{eq:repar}
\ztzt_{0i}(t)=\frac1{\phi_{0i}
}\arccos\left(\frac{ (1{-}t) r_{0}+tr_{i}\cos(\phi_{0i})}{\rr_{0i}(t)}\right),
\end{equation}
from which we obtain
\begin{equation}\label{aha}
\begin{split}
\rr_{0i}(t)\cos(\sfd_{\YX}(x_{0},\overline{\xx}_{0i}(t)))
&=\rr_{0i}(t)\cos(\ztzt_{0i}(t)\phi_{0i})= (1{-}t) r_{0}+tr_{i}\cos(\phi_{0i})\\
&=r_{0} + t(r_{i}\cos(\phi_{0i})-r_{0}) . 
\end{split}
\end{equation}
On the one hand the definition of the comparison angles $a_{1}$ on
$(\YX,\sfd_\YX)$ yields
\begin{equation}
\begin{split}
 \cos(\sfd_{\YX}(\overline{\xx}_{01}(t),\overline{\xx}_{02}(s)))
 &=\cos(\sfd_{\YX}(x_{0},\overline{\xx}_{01}(t))) 
 \cos(\sfd_{\YX}(x_{0},\overline{\xx}_{02}(s))) 
\\ &
 \quad + \overline{\mathcal{A}}_{1,\YX}(t,s) \sin(\sfd_{\YX}(x_{0}, 
 \overline{\xx}_{01}(t)))\sin(\sfd_{\YX}(x_{0},\overline{\xx}_{02}(s))) .
\end{split}
\end{equation}
On the other hand, the definition of $a_0$ on $(\calC,\sfd_\calC)$ and
$\sfd_\calC(z_0,\zz_{0j}(t)) = t D_{0j}$ lead to  	
\begin{equation}
\label{prepreformula}
\mathcal{A}_{0,\calC}(t,s)= \frac{\sfd_{\calC}^{2}(z_{0},\zz_{01}(t)) 
+\sfd_{\calC}^{2}(z_{0},\zz_{02}(s))-\sfd_{\calC}^{2}(\zz_{01}(t),\zz_{02}(s))}
{2tsD_{01}D_{02}}. 
\end{equation}
The nominator of the right-hand side is equal to
\begin{equation}
\begin{aligned}
&r_{0}^{2}+\rr_{01}(t)^{2}-2r_{0}\rr_{01}(t) 
        \cos(\sfd_{\YX}(x_{0},\overline{\xx}_{01}(t)))
\\
&\quad + r_{0}^{2}+\rr_{02}(s)^{2}-2r_{0}\rr_{02}(s) 
 \cos(\sfd_{\YX}(x_{0},\overline{\xx}_{02}(s)))\\&\quad
-\rr_{01}(t)^{2} - \rr_{02}(s)^{2} + 2\rr_{01}(t)\rr_{02}(s)
\cos(\sfd_{\YX}(\overline{\xx}_{01}(t),\overline{\xx}_{02}(s)))\\
&=\underline{2r_{0}^{2}-2r_{0}\rr_{01}(t)
  \cos(\sfd_{\YX}(x_{0},\overline{\xx}_{01}(t))) 
  -2r_{0}\rr_{02}(s)\cos(\sfd_{\YX}(x_{0},\overline{\xx}_{02}(s)))}\\
&\quad + \underline{ 2\rr_{01}(t)\rr_{02}(s)
  \cos(\sfd_{\YX}(x_{0},\overline{\xx}_{01}(t)))
  \cos(\sfd_{\YX}(x_{0},\overline{\xx}_{02}(s)))}\\
&\quad  +2\rr_{01}(t)\rr_{02}(s) \overline{\mathcal{A}}_{1,\YX}(t,s)
  \sin(\sfd_{\YX}(x_{0},\overline{\xx}_{01}(t))) 
 \sin(\sfd_{\YX}(x_{0},\overline{\xx}_{02}(s))). 
\end{aligned}
\end{equation}
Using \eqref{aha} on the underlined terms on the last sum, we obtain
\begin{equation*}
\begin{split}
&2r_{0}^{2}- 2r_{0}\left(r_{0} + t(r_{1}\cos(\phi_{01})-r_{0})\right)
- 2r_{0}\left(r_{0}+ s(r_{2}\cos(\phi_{02})-r_{0})\right)\\
&\quad+ 2\left(r_{0} + t(r_{1}\cos(\phi_{01})-r_{0})\right)\left(r_{0}+s(r_{2}\cos(\phi_{02})-r_{0})\right)\\&=
2ts\,(r_{1}\cos(\phi_{01})-r_{0})\:(r_{2}\cos(\phi_{02})-r_{0}).
\end{split}
\end{equation*}
So \eqref{prepreformula} takes the form
\[
\begin{split}
  \mathcal{A}_{0,\calC}(t,s)&=\frac{(r_{1}\cos(\phi_{01})-r_{0})(r_{2}\cos(\phi_{02})-r_{0})}{D_{01}D_{02}}
  \\&\quad + \frac{
    \rr_{01}(t)\rr_{02}(s) \sin(\sfd_{\YX}(x_{0},\overline{\xx}_{01}(t))) 
 \sin(\sfd_{\YX}(x_{0},\overline{\xx}_{02}(s)))}{tsD_{01}D_{02}}
  \overline{\mathcal{A}}_{1,\YX}(t,s)
  \\
  &= \frac{(r_{1}\cos(\phi_{01})-r_{0})(r_{2}\cos(\phi_{02})-r_{0})}{D_{01}D_{02}}
  \\&\quad+\frac{\sin(\sfd_{\YX}(x_{0},\overline{\xx}_{01}(t)))
 \sin(\sfd_{\YX}(x_{0},\overline{\xx}_{02}(s)))} 
  {\sfd_{\YX}(x_{0},\overline{\xx}_{01}(t))\sfd_{\YX}(x_{0},\overline{\xx}_{02}(s))}
  \frac{\rr_{01}(t)\rr_{02}(s)\ztzt_{01}(t)\ztzt_{02}(s)
    \phi_{01}\phi_{02}} {tsD_{01}D_{02}}\overline{\mathcal{A}}_{1,\YX}(t,s),
\end{split}
\]
 which is the desired result \eqref{preformula}. 
\end{proof}

Since local angles do not depend on the choice of model space
$\mathbb{M}^{2}(\kappa),$ the previous lemma provides a direct
connection between the local angles of geodesics in
$(\calC,\sfd_{\calC})$ and the the local angles of the corresponding
geodesics in $(\YX,\sfd_{\YX})$.

\begin{proposition}
  \label{prop:angle.Angle} Let $z_{0}=[x_{0},r_{0}] \in
  \calC\setminus\{\boldsymbol{0}\},\,z_{1}=[x_{1},r_{1}],
  z_{2}=[x_{2},r_{2}]\in\calC\setminus\{z_{0}\}$ and  $0<\sfd_{\YX}(x_{0},x_{i})<\pi$ for $i\in\{1,2\}.$  Let
  $\xx_{0i}\in \mathrm{Geod}(x_0,x_i)$ for $i=1,2.$ Let also
  $\zz_{0i}=[\overline{\xx}_{0i},\rr_{0i}]$ the corresponding
  geodesics in $\calC$. Then, $\ACCPOINTS{\xx_{01},\xx_{02}}$ and $
  \ACCPOINTS{\zz_{01},\zz_{02}}$ (see \eqref{eq:AccPoints} for definition) 
  satisfy the relation  
\begin{equation}
 \label{eq:AP.AP}
    \ACCPOINTS{\zz_{01},\zz_{02}} =
    \frac{(r_0 {-} r_{1}\cos \phi_{01}) (r_0 {-} r_{2}\cos \phi_{02} )}
          {\sfd_\calC(z_0,z_1) \sfd_\calC(z_0,z_2)}  
      +\frac{r_{1}r_{2}\sin(\phi_{01} 
      )\sin(\phi_{02})} {\sfd_\calC(z_0,z_1) \sfd_\calC(z_0,z_2)}
  \ACCPOINTS{\xx_{01},\xx_{02}} ,
\end{equation}
where $\phi_{0j}=\sfd_\YX(x_0,x_j)$ and 
where the operations between set and real numbers are per
element. More specifically we have  
\begin{equation}\label{Arka}
\cos\!\left(\!\varangle_{\rmu\rmp}(\zz_{01},\zz_{02})\right)\!\! = \!\!\frac{(r_0 {-} r_{1}\cos
  \phi_{01} )(r_0 {-} r_{2}\cos \phi_{02} ){+}
        r_{1}r_{2}\sin(\phi_{01} 
      )\sin(\phi_{02})\cos\!\left(\varangle_{\rmu\rmp}(\xx_{01},\xx_{02})\right)} {\sfd_\calC(z_0,z_1) \sfd_\calC(z_0,z_2)} , 
\end{equation}
{\centering and}
\begin{equation}\label{Kal}
\cos\left(\varangle_{\rmu\rmp}(\xx_{01},\xx_{02})\right)= 
 \frac{\sfd_\calC(z_0,z_1)
  \sfd_\calC(z_0,z_2)\cos\left(\varangle_{\rmu\rmp}(\zz_{01},\zz_{02})\right)} 
   {r_{1}r_{2}\sin(\phi_{01})\sin(\phi_{02})} - \frac{
  (r_0{-}r_{1}\cos\phi_{01}) (r_0 {-}r_{2}\cos\phi_{02})}
   {r_{1}r_{2}\sin(\phi_{01})\sin(\phi_{02})}.
\end{equation}
Furthermore, when $x_{0}=x_{1}$ or $x_{0}=x_{2},$ formula \eqref{Arka} holds trivially with the right-hand side of the sum being equal to zero.
\end{proposition}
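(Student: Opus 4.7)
The strategy is to pass to the limit $t, s \downarrow 0$ in the identity \eqref{preformula} provided by Lemma \ref{le:PreForm}. The key observation is that \eqref{preformula} already expresses $\mathcal{A}_{0,\calC}(t,s)$ as an exact affine function of $\overline{\mathcal{A}}_{1,\YX}(t,s)$, with coefficients depending on $t,s$ that admit clean limits. So the core task is an asymptotic analysis of the coefficient and constant term.

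First I would compute the small-$(t,s)$ asymptotics of each ingredient using the explicit parametrizations in \eqref{quadraticmass}. Since $\zz_{0i}(0)=z_0$ we have $\rr_{0i}(t)\to r_0$ and $\sfd_{\YX}(x_{0},\overline{\xx}_{0i}(t))\to 0$, so the ratios $\sin(u)/u$ appearing in \eqref{preformula} tend to $1$. The $\arcsin$-representation of $\ztzt_{0i}$ from \eqref{quadraticmass} yields
\[
\frac{\ztzt_{0i}(t)\,\phi_{0i}}{t}\ \longrightarrow\ \frac{r_{i}\sin(\phi_{0i})}{r_{0}}\qquad(t\downarrow 0).
\]
Collecting these limits, the coefficient of $\overline{\mathcal{A}}_{1,\YX}(t,s)$ in \eqref{preformula} converges to the \emph{positive} constant $B_{*}:=r_{1}r_{2}\sin(\phi_{01})\sin(\phi_{02})/(D_{01}D_{02})$, while the constant additive term equals $A:=(r_{0}{-}r_{1}\cos\phi_{01})(r_{0}{-}r_{2}\cos\phi_{02})/(D_{01}D_{02})$, matching the claim after flipping both signs in the product of Lemma \ref{le:PreForm}.

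Next, from the exact identity $\mathcal{A}_{0,\calC}(t,s)=A+B(t,s)\,\overline{\mathcal{A}}_{1,\YX}(t,s)$ with $B(t,s)\to B_{*}>0$, I would establish the set-equality \eqref{eq:AP.AP}. Any sequence $(t_{k},s_{k})\to 0$ along which $\overline{\mathcal{A}}_{1,\YX}$ accumulates at $c\in[-1,1]$ pushes $\mathcal{A}_{0,\calC}$ to $A+B_{*}c$; conversely every $\mathcal{A}_{0,\calC}$-accumulation point arises this way, extracting a further subsequence along which the bounded sequence $\overline{\mathcal{A}}_{1,\YX}(t_{k},s_{k})$ converges. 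Proposition \ref{ThKsSZ} justifies mixing the model spaces $\kc=1$ on $\YX$ and $\kc=0$ on $\calC$, so the accumulation sets computed using $a_{1}$ and $a_{0}$ indeed coincide with $\ACCPOINTS{\cdot,\cdot}$. Formulas \eqref{Arka} and \eqref{Kal} follow by applying the characterization $\cos(\varangle_{\rmu\rmp})=\inf\ACCPOINTS{\cdot,\cdot}$ from \eqref{eq:LocAng} and noting that an affine map with positive slope $B_{*}$ preserves infima; \eqref{Kal} is simply the algebraic inverse of \eqref{Arka}.

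The degenerate case $x_{0}=x_{1}$ (or $x_{0}=x_{2}$) needs a one-line remark: $\sin(\phi_{01})=0$ makes the second summand of \eqref{Arka} vanish, regardless of the (undefined) factor $\cos\varangle_{\rmu\rmp}(\xx_{01},\xx_{02})$. The main obstacle is purely bookkeeping --- carefully expanding the $\arcsin$ formula to confirm the limit of $\ztzt_{0i}(t)/t$ and tracking all factors through \eqref{preformula}; once Lemma \ref{le:PreForm} is in hand, no new geometric input is required.
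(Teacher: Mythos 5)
Your proposal is correct and follows essentially the same route as the paper: both pass to the limit in the exact affine relation of Lemma \ref{le:PreForm}, use the limits $\rr_{0i}(t)\to r_0$, $\sin(u)/u\to 1$ and $\ztzt_{0i}(t)\phi_{0i}/t\to r_i\sin(\phi_{0i})/r_0$, invoke Proposition \ref{ThKsSZ} to identify the accumulation sets across model spaces, and then take infima to obtain \eqref{Arka} and \eqref{Kal}. Your handling of the converse inclusion (extracting a further convergent subsequence of the bounded sequence $\overline{\mathcal{A}}_{1,\YX}(t_k,s_k)$) is in fact slightly more explicit than the paper's ``doing the same for all accumulation points.''
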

\begin{proof}
By reparametrization \eqref{eq:repar} we have 
$\overline{\mathcal{A}}_{0,\YX}(t,s) = \mathcal{A}_{0,\YX}
(\ztzt_{01}(t), \ztzt_{02}(s))$, 
therefore $\overline{\mathcal{A}}_{0,\YX}(t,s)$ and $ \mathcal{A}_{0,\YX}(t,s)$
have the same accumulation points. Furthermore,
Proposition \ref{ThKsSZ} guarantees that
$\overline{\mathcal{A}}_{0,\YX}(t,s)$ and
$\overline{\mathcal{A}}_{1,\YX}(t,s)=a_{1}(x_{0};
\overline{\xx}_{01}(t),\overline{\xx}_{02}(s))$ have the same accumulation
points. 

Let $\ell$ an accumulation point for
$\overline{\mathcal{A}}_{1,\YX}(t,s)$ and $t_{n},s_{n}$ sequences
that achieve that the limit $\ell$. By using formula \eqref{preformula} in
Lemma \ref{le:PreForm} and $\lim_{\tau \to 0} \frac{\sin(\tau)}{\tau}
= 1$, we have
\begin{equation*}
\begin{split}
\lim_{n \to \infty}\mathcal{A}_{0,\calC}(t_{n},s_{n})&=\frac{(r_{1}\cos(\phi_{01})-r_{0})(r_{2}\cos(\phi_{02})-r_{0})}{D_{01}D_{02}}
\\&\quad +\lim_{n \to \infty}\frac{r^{01}(t_{n})r^{02}(s_{n})\ztzt_{01}(t_{n})\ztzt_{02}(s_{n})\phi_{01}\phi_{02}}
{t_{n}s_{n}D_{01}D_{02}}\lim_{n \to \infty}\overline{\mathcal{A}}_{1,\YX}(t_{n},s_{n}).
\end{split}
\end{equation*}
Using formula \eqref{quadraticmass}, we have $\lim_{\epsilon \to 0}
\frac{\ztzt_{0i}(\epsilon)}{\epsilon} = \frac{r_{i}\sin(\phi_{0i})}
{r_{0}\phi_{0i}}$ and $\lim_{\epsilon \to 0}\rr_{0i}(\epsilon)=r_{0},$
and find 
\begin{equation}
\begin{split}
\lim_{n \to
  \infty}\mathcal{A}_{0,\calC}(t_{n},s_{n})&=\frac{(r_{1}\cos(\phi_{01}){-}
  r_{0})(r_{2}\cos(\phi_{02}){-}r_{0}) + r_{1}r_{2}\sin(\phi_{01})\sin(\phi_{02})\ell}
{D_{01}D_{02}}.
\end{split}
\end{equation}
Doing the same for all accumulation points of
$\mathcal{A}_{0,\calC}(t,s),$ we recover the desired formula
\eqref{eq:AP.AP}. 

The formulas for the upper local angle follow simply the taking the
infimum of the sets of accumulation points, see \eqref{eq:LocAng}. 
\end{proof}

 We are now ready to establish the main result giving the connection
between the local angle condition in $(\calC,\sfd_\calC)$ and
$(\YX,\sfd_\YX)$, respectively.\bigskip 

\noindent 
\begin{proof}[Theorem \ref{Cones and LAC}]

Since the local angle between geodesics depends only on their behavior in neighborhoods around point $x_{0}$ or $z_{0}$ respectively,  
for this proof we will assume, without any loss of generality, that $\sfd_{\YX}(x_{0},x_{i})<\pi.$

\underline{Part (a):}  Let now assume that
$z_{0}=[x_{0},r_{0}]\in(\calC\setminus\{\boldsymbol{0}\})$ satisfies $m$-LAC for some $m\in\mathbb{N}$. For $x_{0}\in \YX,$ consider
$m$ non-trivial constant-speed geodesics $\xx_{0i},$ 
connecting $x_{0}$ to $x_{1},\dots,x_{m}$, respectively. Let
$\xx^{\epsilon}_{0i}(t)=\xx_{0i}(\epsilon t)$ be defined on $[0,1]$ and
consider the geodesics $\zz^{\epsilon}_{0i}$ in $\calC$ that
corresponds to $\xx^{\epsilon}_{0i}$ and
$\rr^{\epsilon}_{0i}(0)=\rr^{\epsilon}_{0i}(1)=r_{0}$. Let finally
$b_{1},\dots,b_{m} \geq 0.$ Using $
\varangle_{\rmu\rmp}(\xx_{0i}^\epsilon,\xx_{0j}^\epsilon)  =
\varangle_{\rmu\rmp}(\xx_{0i},\xx_{0j})$ for all $\epsilon\in (0,1)$,  
applying \eqref{Kal} with $r_i=r_{0}$, and using the simple limits $
\lim_{\tau \to  0}\frac{\sqrt{2-2\cos(\tau)}}{\sin(\tau)}=1$ and $  \lim_{\tau
	\to  0}\frac{1-\cos(\tau)}{\sin(\tau)}=0$,
we have
\begin{equation*}
\begin{split}
&\sum_{i,j=1}^{m}b_{i}b_{j}\cos(\varangle_{\rmu\rmp}(\xx_{0i},\xx_{0j}))
=\lim_{\epsilon \to 0} \sum_{i,j=1}^{m}
b_{i}b_{j}\cos(\varangle_{\rmu\rmp} (\xx^{\epsilon}_{0i},\xx^{\epsilon}_{0j}))\\
&=\lim_{\epsilon \to 0}\sum_{i,j=1}^{m}b_{i}b_{j}\bigg(
\frac{\sqrt{2{-}2\cos\sfd_{\YX} (x_{0},\xx_{0i}(\epsilon))}
	\sqrt{2{-}2\cos\sfd_{\YX}(x_{0},\xx_{0j}(\epsilon))}
	\cos(\varangle_{\rmu\rmp}(\zz^{\epsilon}_{0i},\zz^{\epsilon}_{0j}))}
{\sin(\sfd_{\YX}(x_{0},\xx_{0i}(\epsilon)))
	\sin(\sfd_{\YX}(x_{0},\xx_{0j}(\epsilon)))}\\
&\hspace*{8em} -
\frac{(\cos(\sfd_{\YX}(x_{0},\xx_{0i}(\epsilon))){-}1)
	(\cos(\sfd_{\YX}(x_{0},\xx_{0j}(\epsilon))){-} 1)}
{\sin(\sfd_{\YX}(x_{0},\xx_{0i}(\epsilon)))
	\sin(\sfd_{\YX}(x_{0},\xx_{0j}(\epsilon)))}\bigg) \\&=
\lim_{\epsilon \to 0} \sum_{i,j=1}^{m} b_{i}b_{j} \cos(\varangle_{\rmu\rmp}(
\zz^{\epsilon}_{0i},\zz^{\epsilon}_{0j}))\geq0.
\end{split}
\end{equation*}

   \underline{Part (b):} We start by assuming  that  $x_{0} \in \YX$
  satisfies $m$-LAC for some $m\in\mathbb{N}$.  Let $z_{0}=[x_{0},r_{0}]\in \calC\setminus\{\boldsymbol{0}\}$ and
  $\zz_{01 },\dots,\zz_{0m},$ $m$ non-trivial constant-speed geodesics
  connecting $z_{0}$ to some $z_{1},\dots,z_{m}\in\calC.$ 
 By applying \eqref{Arka}, for all $b_i^\calC \geq 0$ we have
\begin{equation*}
\begin{split}
&\sum_{i,j=1}^{m}b^\calC_{i}b^\calC_{j}\cos(\varangle_{\rmu\rmp}(\zz_{0i},\zz_{0j}))\\
&=\sum_{i,j=1}^{m}b^\calC_{i}b^\calC_{j}\frac{(r_{i}\cos(\phi_{0i}){-}r_{0})
  (r_{j}\cos(\phi_{0j}){-}r_{0}) + r_{i}r_{j}\sin(\phi_{0i}) 
 \sin(\phi_{0j})\cos(\varangle_{\rmu\rmp}(\xx_{0i},\xx_{0j}))}{D_{0i}D_{0j}}\\
&=\left(\sum_{i}^{m}b^\calC_{i}\frac{(r_{i}\cos(\phi_{0i}){-}r_{0})}{D_{0i}}\right)^{2}+\sum_{i,j=1}^{m}b^\calC_{i}b^\calC_{j}\frac{r_{i}r_{j}\sin(\phi_{0i})\sin(\phi_{0j})}
{D_{0i}D_{0j}}\cos(\varangle_{\rmu\rmp}(\xx_{0i},\xx_{0j})). 
\end{split}
\end{equation*} 
Since $x_{0}$ satisfies $m$-LAC, the last term is
non-negative as we may choose $b_j^X:=b_j^\calC r_/D_{0j}\geq 0$ as
testvector. As the first term is a square we conclude that
$z_{0}\in(\calC,\sfd_\calC)$ satisfies $m$-LAC as well.\medskip

For parts (c) and (d) we have to study the geodesics $\zz_{0i}$
starting at the apex $\bm0$. For this  we just notice that for
such geodesics $\zz_{01},\zz_{02}$  ending at some
$z_{1}=[x_{1},r_{1}], z_{2}=[x_{2},r_{2}]$ the angle is equal to
$\sfd_{\YX}(x_{1},x_{2})\wedge\pi.$ Therefore by using Definition
\ref{altdef}, we see that $3$-LAC is satisfied if and only if for
every choice of pairwise different points $x_{1},x_{2},x_{3},$ we have
$\sfd_{\YX}(x_{1},x_{2})\wedge\pi+ \sfd_{\YX}(x_{2},x_{3})\wedge\pi+
\sfd_{\YX}(x_{3},x_{1})\wedge\pi\leq 2\pi,$ which by applying the
triangule inequality is easy to see that it holds if and only if for every
choice of pairwise different points $x_{1},x_{2},x_{3},$ we have
$\sfd_{\YX}(x_{1},x_{2}) + \sfd_{\YX}(x_{2},x_{3}) +
\sfd_{\YX}(x_{3},x_{1})\leq 2\pi.$ This shows part (c).  

When the diameter is less than
$\pi/2,$ then all cosines are positive and therefore \eqref{eq:m-LAC}
is satisfied trivially for all $m\in\mathbb{N}$.  Hence, part (d)
is shown as well. 
\end{proof}	

We can now recover the following immediate result. 

\begin{corollary}\label{Cones and LAC}
Let $(\calC, \sfd_{\calC})$ be the cone over a geodesic metric space
$(\YX,\sfd_{\YX}).$ 
\begin{itemize}
\item[{\rm{(a)}}] If $(\calC, \sfd_{\calC})$ satisfies $m$-LAC, then $(\YX,\sfd_{\YX})$ does too.
\item[{\rm{(b)}}] Conversely if $(\YX,\sfd_{\YX})$ satisfies $m$-LAC,
then $(\calC, \sfd_{\calC})$ satisfies it at every point in $\calC\setminus\{\boldsymbol{0}\}$.
\item[{\rm{(c)}}] $(\calC, \sfd_{\calC})$ satisfies $3$-LAC, if and
only if $(\YX,\sfd_{\YX})$ satisfies $3$-LAC and has perimeter less or equal to $2\pi$.
\item[{\rm{(d)}}] If $(\YX,\sfd_{\YX})$ has diameter less or
equal to $\pi/2$ and satisfies $m$-LAC for some $m$, then $(\calC,\sfd_{\calC})$  satisfies $m$-LAC. 
\end{itemize}
\end{corollary}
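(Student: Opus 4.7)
The corollary is an immediate globalization of the pointwise Theorem \ref{Cones and LAC pointwise}: the strategy is to apply the pointwise statements at every base point $x_0\in \YX$ (or every cone point $z_0=[x_0,r_0]$ with $r_0>0$) and to handle the apex $\bm 0$ separately via the dedicated parts (c)--(d) of the pointwise theorem.

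For part (a) I would fix an arbitrary $x_0\in \YX$ and choose any $r_0>0$, so that $z_0=[x_0,r_0]\in \calC\setminus\{\bm 0\}$. The hypothesis that $(\calC,\sfd_\calC)$ satisfies $m$-LAC means in particular that $m$-LAC holds at $z_0$, and Theorem \ref{Cones and LAC pointwise}(a) then gives $m$-LAC for $(\YX,\sfd_\YX)$ at $x_0$; since $x_0$ was arbitrary, the conclusion follows. For part (b) I would run the same argument in reverse: every $z\in \calC\setminus\{\bm 0\}$ is of the form $[x_0,r_0]$ with $r_0>0$, so Theorem \ref{Cones and LAC pointwise}(b) transfers $m$-LAC at $x_0\in \YX$ to $m$-LAC at $z$, which is precisely the statement of (b).

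For part (c), the forward direction would combine (a) (to obtain $3$-LAC on $\YX$) with Theorem \ref{Cones and LAC pointwise}(c) applied at $\bm 0$ (to deduce perimeter $\leq 2\pi$ in $\YX$). Conversely, assuming $(\YX,\sfd_\YX)$ satisfies $3$-LAC and has perimeter bounded by $2\pi$, part (b) of the corollary yields $3$-LAC at every $z\in \calC\setminus\{\bm 0\}$, while Theorem \ref{Cones and LAC pointwise}(c) yields $3$-LAC at $\bm 0$; taking these two together gives $3$-LAC on all of $\calC$. Part (d) is handled analogously: $m$-LAC on $\YX$ lifts via (b) to every non-apex point of $\calC$, and the diameter condition $\mathrm{diam}(\YX)\leq \pi/2$ triggers Theorem \ref{Cones and LAC pointwise}(d) to deliver $m$-LAC at the apex $\bm 0$.

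There is no real technical obstacle here—the work has already been done in the pointwise theorem. The only point one has to be mindful of is that the apex $\bm 0$ is genuinely a special point: it is excluded from the transfer results (a) and (b) of the pointwise theorem, so in parts (c) and (d) of the corollary it is essential to invoke the dedicated apex statements (c)--(d) of Theorem \ref{Cones and LAC pointwise} instead of trying to treat $\bm 0$ as a limit of non-apex points.
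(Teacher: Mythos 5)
Your proposal is correct and matches the paper's intent exactly: the paper states the corollary as an ``immediate result'' of Theorem \ref{Cones and LAC pointwise} without further proof, and your argument simply makes explicit the quantification over all base points together with the separate treatment of the apex via parts (c)--(d) of the pointwise theorem. Your closing remark about not treating $\boldsymbol{0}$ as a limit of non-apex points is exactly the right caution, and nothing further is needed.
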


\subsection{$K$-semiconcavity}
\label{su:Ksemiconcave}

Another notion that we are going to introduce is the one of
$K$-semiconcavity of a metric space $(\YX,\sfd_{\YX}),$ on a set
$A\subset \YX.$ Before we do that, we are going to give the definition
of $K$-semiconcave functions, and some lemmas that are going to be
used in the proofs.
 
\begin{definition}
A function $f:[0,1] \to \mathbb{R}$ is called \emph{K-semiconcave},
if and only if  the mapping $t \mapsto f{-} K t^{2}$ is concave.  
\end{definition}
Of course, for smooth functions $f$ this means $f''(t) \leq
2K$. The following result deals with semiconcave functions under
composition. For a smooth $K$-semiconve function $f[0,1]\to [a,b]$ and another
smooth function $g:[a,b]\to \R$  the composition satisfies $(g\circ
f)''(t)= g'(f(t))f''_1(t)+ g''(f_1(t))\big(f'_1(t)\big)^2$. In the
lemma below we will use the concave function $g_B:[0,\pi^2] \ni v \mapsto
1-\cos \sqrt v $ in Part B, where the term
involving $g''$ can be estimated by $0$, while in Part A we use
$g_A:{[0,2[} \ni w \mapsto \big(\arccos( 1{-}w)\big)^2$, where $g''$
needs to be estimated on the range of $f$. A major part in the proof
involves the proper treatment of the non-smooth situation where $f''$
is merely a measure.

\begin{lemma}\label{apotoenastoallo}
  Let $(\YX,\sfd_{\YX})$ be a geodesic metric space, $x\in\YX,$ and
  $\mathcal{D}<\frac{\pi}{2}.$ Let also $x_{0},x_{1},x_{2}\in
  B(x,\mathcal{D})\subset\YX,$ and $\xx_{01}\in\Geod(x_{0},x_{1}),$
  with $x_{2}\notin \xx_{01}([0,1]).$ Let finally
  $f_{1},f_{2}:[0,1]\to\mathbb{R}$ given by
  $f_{1}(t)=1-\cos(\sfd_{\YX}(x_{2},\xx_{01}(t)))$ and
  $f_{2}(t)=\sfd^{2}_{\YX}(x_{2},\xx_{01}(t))$ respectively.  We have:
\begin{itemize}
\item[(A)\!] If $ f_{1}$ is $\sfd^{2}_{\YX}(x_{0},x_{1})K$-semiconcave
  for some $K\!>\!0,$ then $f_{2}$ is
  $\!\left(\!1{+}\frac{1+K}{\pi-2\mathcal{D}}\!\right) \!\sfd^{2}_{\YX}
  (\!x_{0},x_{1}\!)$-semiconcave.  
\item[(B)\!] If $ f_{2}$ is $\sfd^{2}_{\YX}(x_{0},x_{1})K$-semiconcave
  for some $K\!>\!0,$ then $f_{1}$ is $\left(1{+}
    K\right)\sfd^{2}_{\YX}(x_{0},x_{1})$-semiconcave. 
\end{itemize}
\end{lemma}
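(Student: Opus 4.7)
The plan is to observe that both $f_1$ and $f_2$ depend only on the common scalar function $h(t):=\sfd_\YX(x_2,\xx_{01}(t))$, since $f_1=1-\cos h$ and $f_2=h^2$; so the lemma is about transferring $K$-semiconcavity along the smooth scalar compositions $v\mapsto 1-\cos\sqrt v$ and $w\mapsto(\arccos(1-w))^2$. Two preliminary ingredients are needed. First, $h$ is $\sfd_\YX(x_0,x_1)$-Lipschitz, because $\xx_{01}$ is a constant-speed geodesic and $\sfd_\YX(x_2,\cdot)$ is non-expansive. Second, since $x_0,x_1,x_2\in B(x,\mathcal{D})$, the triangle inequality gives $\xx_{01}([0,1])\subset \bar B(x,2\mathcal{D})$ and so $h(t)\in(0,H]$ with $H\leq 2\mathcal{D}<\pi$, the positivity coming from $x_2\notin\xx_{01}([0,1])$. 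These bounds keep $h$ bounded away from $0$ and $\pi$, which is exactly what is needed to control the $\sin h$ denominators in the chain rule. The non-smoothness of $f_1,f_2$ is handled distributionally: $K$-semiconcavity of $f$ means $f''\leq 2K$ as a signed Radon measure, and this is enough to pass the chain-rule estimates through.

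For Part~(B) I would write $f_1=g_B\circ f_2$ with $g_B(v)=1-\cos\sqrt v$. A direct computation shows $g_B$ is non-decreasing and concave on $[0,\pi^2]$, with derivative $g_B'(v)=\sin(\sqrt v)/(2\sqrt v)\leq 1/2$. Starting from the chord form of the $K\sfd_\YX^2(x_0,x_1)$-semiconcavity of $f_2$ and applying successively the concavity of $g_B$ (to pass the convex combination inside), its monotonicity, and the tangent bound $g_B(v+\delta)\leq g_B(v)+g_B'(v)\delta$ for $\delta\geq 0$, one finds that $f_1$ is $(K\sup g_B')\sfd_\YX^2(x_0,x_1)$-semiconcave. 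Since $\sup g_B'\leq 1/2$, this yields $(K/2)\sfd_\YX^2(x_0,x_1)$-semiconcavity, which is stronger than the claimed $(1+K)\sfd_\YX^2(x_0,x_1)$.

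For Part~(A) it is more natural to work directly with $h$. Interpreting things distributionally, the chain rule $f_1''=\cos h\,(h')^2+\sin h\,h''$ and the Lipschitz bound $(h')^2\leq \sfd_\YX^2(x_0,x_1)$ give $\sin h\cdot h''\leq 2K\sfd_\YX^2(x_0,x_1)-\cos h\,(h')^2$; substituting into $f_2''=2(h')^2+2h\,h''$ yields
\[
f_2''\;\leq\;2\sfd_\YX^2(x_0,x_1)\,\Bigl(1+\tfrac{h(2K-\cos h)}{\sin h}\Bigr)
\]
in the distributional sense. The main obstacle is then the analytic bound showing that the bracket is at most $1+\tfrac{1+K}{\pi-2\mathcal{D}}$ on the range of $h$. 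This is the delicate step: it rests on $\sin h=\sin(\pi-h)\geq\tfrac{2(\pi-h)}{\pi}$ for $h\in[\pi/2,\pi)$, the monotonicity of $h\mapsto h/\sin h$ on $(0,\pi)$, and a careful case analysis $h\lessgtr\pi/2$ combined with the refined pointwise bound $(f_1')^2\leq\sin^2 h\,(h')^2$, which absorbs a factor of $\sin h$ that would otherwise cause the estimate to lose the factor $(\pi-2\mathcal{D})^{-1}$ and produce an inflated constant. The rest of the argument is structural: once the pointwise inequality above is upgraded to a chord inequality (either by integration, or by the density of smooth geodesics), the claimed semiconcavity of $f_2$ follows.
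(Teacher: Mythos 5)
Part (B) of your proposal is correct and takes a genuinely different, and cleaner, route than the paper. The paper works throughout with distributional second derivatives of $v(t)=\sfd_{\YX}(x_{2},\xx_{01}(t))$, first establishing that $v'$ is of bounded variation and then manipulating the resulting measures; you instead compose the chord inequality for $f_{2}$ with the increasing concave function $g_B(v)=1-\cos\sqrt v$ and use only $g_B'\le \tfrac12$ on $[0,\pi^2]$. This avoids the BV bookkeeping entirely and yields the stronger constant $\tfrac{K}{2}\sfd^{2}_{\YX}(x_{0},x_{1})$ in place of $(1{+}K)\sfd^{2}_{\YX}(x_{0},x_{1})$, which of course implies the stated claim. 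The one case you should add is when the lower chord value $\lambda f_{2}(a)+(1-\lambda)f_{2}(b)-K\sfd^{2}_{\YX}(x_0,x_1)\lambda(1-\lambda)(a-b)^{2}$ is negative, where the bound $g_B'\le\tfrac12$ is no longer available; there the desired inequality holds trivially because $g_B(w)\le w/2$ for $w\ge0$ forces the right-hand side to be negative while $f_{1}\ge0$.

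Part (A) follows the paper's skeleton (chain rule for the distributional second derivative of $h$, division by $\sin h$, multiplication by $2h$, and the Lipschitz bound $(h')^{2}\le\sfd^{2}_{\YX}(x_0,x_1)$) and correctly arrives at $f_{2}''\le 2\sfd^{2}_{\YX}(x_0,x_1)\bigl(1+\tfrac{h(2K-\cos h)}{\sin h}\bigr)$. The gap is the final step: the inequality $\tfrac{h(2K-\cos h)}{\sin h}\le\tfrac{1+K}{\pi-2\mathcal{D}}$, which you identify as the ``delicate analytic bound,'' is false. Both $h/\sin h$ and $2K-\cos h$ increase on $(0,\pi)$, so the left side is maximized at $h=2\mathcal{D}$, where for $2\mathcal{D}$ close to $\pi$ it behaves like $\tfrac{\pi(2K+1)}{\pi-2\mathcal{D}}$, exceeding $\tfrac{1+K}{\pi-2\mathcal{D}}$ for every $K>0$. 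The mechanism you propose for rescuing it cannot work: $(f_{1}')^{2}\le\sin^{2}h\,(h')^{2}$ is an identity, and the offending term is $\tfrac{4Kh}{\sin h}\sfd^{2}_{\YX}(x_0,x_1)$, which arises from dividing the hypothesis $f_{1}''\le 2K\sfd^{2}_{\YX}(x_0,x_1)$ by $\sin h$; no information about $f_{1}'$ removes that $1/\sin h$. What your computation does prove is semiconcavity of $f_{2}$ with constant $1+\tfrac{2\mathcal{D}(2K+1)}{\sin(2\mathcal{D})}\le 1+\tfrac{\pi^{2}(2K+1)}{2(\pi-2\mathcal{D})}$, i.e.\ the same qualitative dependence on $K$ and $\mathcal{D}$ but a larger absolute constant. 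For every use the paper makes of this lemma only that qualitative form matters (and the paper's own choice of $K_{2}$ appears to be over-optimistic by a comparable absolute factor), but as a proof of the statement with the displayed constant your last step does not go through: either the constant must be inflated or a different argument supplied.
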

\begin{proof}
  We are going to prove the result by taking second derivatives. Since
  the classical second derivatives are not enough to characterize
  convexity/concavity, we are going to make use of distributional
  derivatives (for definition see \cite{Rudi91FA}). More
  specifically, by \cite[Thm.\,6.8]{EvaGar15MTFP}, we have that a
  continuous function $g:[0,1]\to\mathbb{R}$ is convex if and only if  its
  derivative is of bounded variation (for definition and properties,
  see \cite[Ch.\,5]{EvaGar15MTFP}) and its second derivative is a
  finite positive measure. This means that a finite $g$ is concave
  if and only if  it exists a negative, finite measure $\mu_{g},$ such that for
  every $f\in C^{\infty,+}_{c}((0,1))=\{f:(0,1)\to\mathbb{R}:
  f\hspace{4pt} \text{is positive and smooth}\},$ we have
\begin{equation}
\int_{0}^{1}g(t)f''(t)\,\d t =\int_{0}^{1}f(t)\mu_{g}(\d t)\leq 0.
\end{equation}

 So, we just have to prove that if

\begin{equation}\label{disdis} \int_{0}^{1}\left(
    f_{1}(t)-K_{1}t^{2}\right)f''(t)\d t \leq 0 \ \text{ for all } f\in 
\mathrm C^{\infty,+}_{\mathrm c}((0,1))
\end{equation}
for some $K_{1}>0,$ then 
\begin{equation}\label{disdis2}
  \int_{0}^{1}\left(f_{2}(t)-K_{2}t^{2}\right)f''(t) \d t \leq 0 \  
\text{ for all } f\in C^{\infty,+}_{c}((0,1)),
\end{equation}
for some $K_{2}>0,$ and vice-versa, where the relationship between $K_{1}$ and $K_{2},$ will be specified later.

For abbreviation, we set $v(t)=\sfd_{\YX}(x_{2},\xx_{01}(t)).$ By applying the triangular inequality, we have \begin{equation}\label{vest}
|v(t)-v(s)|= |\sfd_{\YX}(x_{2},\xx_{01}(t))-\sfd_{\YX}(x_{2},\xx_{01}(s))|\leq\sfd_{\YX}(\xx_{01}(t),\xx_{01}(s))\leq |t-s|\sfd_{\YX}(x_{0},x_{1}),
\end{equation}
from which we get that $v(\cdot)$ is Lipschits and $|v'(t)|\leq\sfd_{\YX}(x_{0},x_{1}),$ almost everywhere.
From \eqref{vest} we can deduce that $f_{1},f_{2}$ are also Lipschitz,  therefore the first classical derivative coincides with the first distributional one, and is given by:

\begin{equation}
 f'_{1}(t)=\sin\left(v(t)\right)v'(t), \hspace{16pt} f'_{2}(t)=v(t)v'(t).
\end{equation}
If either of the assumptions are satisfied, which implies concavity we
get that the derivative is of bounded variation. Now, since $v$ is
Lipschitz and bounded away from zero, we get that
$v'=\frac{f'_{1}}{\sin(v)}, \frac{f'_{2}}{v}$ is of bounded variation,
therefore its distributional derivative is a locally finite measure
$\mu_{v}$ (\cite[Th,\,5.1]{EvaGar15MTFP}), and even more, it is
straightforward to see the product rule for the second derivative
holds true, i.e. we have
\begin{equation}\label{AAA}
\int_{0}^{1}\!\! (f_{1}(t)-K_{1}t^{2})f''(t)dt \leq 0 \hspace{2pt}\Leftrightarrow \hspace{2pt} 
\int_{0}^{1}\!\!f(t)\left(\cos(v(t))(v'(t))^{2}dt + \sin(v(t))\mu_{v}(dt)-K_{1}dt\right)\! \leq \!0.\newline 
\end{equation}
Similarly we get
\begin{equation}\label{BBB}
\int_{0}^{1} (f_{2}(t)-K_{2}t^{2})f''(t)dt \leq 0 \hspace{8pt}\Leftrightarrow \hspace{8pt} 
\int_{0}^{1}f(t)\left((v'(t))^{2}dt + v(t)\mu_{v}(dt)-K_{2}dt\right) \leq 0. 
\end{equation}

Part (A). Let assume that \eqref{disdis} is true. By
\eqref{AAA}, we have:
\begin{equation}
\begin{split}
&\int_{0}^{1}f(t)\left(\cos(v(t))(v'(t))^{2}dt + \sin(v(t))\mu_{v}(dt)-K_{1}dt\right)\leq 0 \Rightarrow\\&
\left(\min_{y\in[0,2\mathcal{D}]}\frac{\sin y}{y}\right)\!\! \int_{0}^{1}\!f(t)\!\left(\!\frac{v(t)}{\sin(v(t))}\cos(v(t))(v'(t))^{2}dt + v(t)\mu_{v}(dt)-K_{1}\frac{v(t)}{\sin(v(t))}dt\!\right)\!\leq\! 0 \Rightarrow\\&
\int_{0}^{1}f(t)\left((v'(t))^{2}dt + v(t)\mu_{v}(dt)-K_{2}dt\right)\leq\\&
\int_{0}^{1}f(t)\left(-K_{2} + (v'(t))^{2} -\frac{v(t)}{\sin(v(t))}\cos(v(t))(v'(t))^{2} +K_{1}\frac{v(t)}{\sin(v(t))}\right)dt,
\end{split}
\end{equation}
where we retrieve the last inequality by adding and subtracting. If we
choose $K_{2}$ such that the second term is negative for every
positive test function $f,$ the we are done. We recall that
$|v'(t)|\leq\sfd_{\YX}(x_{0},x_{1}), v(t)\leq 2\mathcal{D}<\pi.$ Now
if $K_{1}= \sfd^{2}_{\YX}(x_{0},x_{1})K,$ we can choose $K_{2}$ to be
equal to
$\left(1+\frac{1+K} {\pi-2\mathcal{D}}\right)\sfd^{2}_{\YX}(x_{0},x_{1})$
and retrieve \eqref{BBB}, independently of the choice of $f\in
C^{\infty,+}_{c}((0,1)).$\newline

Part (B). Let assume that \eqref{disdis2} is true. By
\eqref{BBB}, we have:
\begin{equation}
\begin{split}
&\int_{0}^{1}f(t)\left((v'(t))^{2}dt + v(t)\mu_{v}(dt)-K_{2}dt\right)\leq 0 \hspace{12pt}\Rightarrow \\&
\left(\min_{y\in[0,2\mathcal{D}]}\frac{y}{\sin y}\right)\! \int_{0}^{1}\!f(t)\!\left(\frac{\sin(v(t))}{v(t)}(v'(t))^{2}dt + \sin(v(t))\mu_{v}(dt)-K_{2}\frac{\sin(v(t))}{v(t)}dt\right)\leq 0  \hspace{12pt}\Rightarrow \\&
\int_{0}^{1}f(t)\left(\cos(v(t))(v'(t))^{2}dt + \sin(v(t))\mu_{v}(dt)-K_{1}dt\right)\leq\\&
\int_{0}^{1}f(t)\left(-K_{1} + \cos(v(t))(v'(t))^{2} -\frac{\sin(v(t))}{v(t)}(v'(t))^{2}+K_{2}\frac{\sin(v(t))}{v(t)}\right)dt
\end{split}
\end{equation}
Now, if $K_{2}= \sfd^{2}_{\YX}(x_{0},x_{1})K,$ we can take $K_{1}=\left(1+ K\right)\sfd^{2}_{\YX}(x_{0},x_{1})$ and retrieve \eqref{AAA}, independently of the choice of $f\in C^{\infty,+}_{c}((0,1)).$
\end{proof}

We will use the result of Lemma \ref{apotoenastoallo} in the
following rescaled form that allows to characterize $K$-semiconcavity
by comparing the function with approximating parabolae. 

\begin{corollary} \label{asasa} Let $x_{0},x_{1},x_{2}\in \YX$ 
  and choose  $\xx_{01}\in \Geod(x_{0},x_{1})$.  Let $f_1$
  and $f_2$ be as in Lemma
  \ref{apotoenastoallo}.  For $t_{1},t_{2}\in[0,1]$ we set
 \[
\tilde{x}^{[t_{1},t_{2}]}_{0}=\xx_{01}(t_{1}), \qquad
  \tilde{x}^{[t_{1},t_{2}]}_{1}=\xx_{01}(t_{2}),\qquad \text{and }\quad
  \tilde{f}^{[t_{1},t_{2}]}_{i}(t)=f_{i}\big( t_1 {+} t(t_2 {-}t_1) \big).
\] 
 Then, the following three conditions are equivalent: 
\begin{itemize}
\item[(i)]
   $f_{i}$ is $K\sfd^{2}_{\YX}(x_{0},x_{1})$-semiconcave if,
\item[(ii)]
  for every $t_{1},t_{2}$ the mapping $\tilde{f}^{[t_{1},t_{2}]}_{i}$ is
  $K\sfd^{2}_{\YX}(\tilde{x}^{[t_{1},t_{2}]}_{0},
  \tilde{x}^{[t_{1},t_{2}]}_{1} )-$semiconcave, 
\item[(iii)] for every $t_{1},t_{2},t\in[0,1]$ we have
\begin{equation}\label{equivksemi}
\tilde{f}^{[t_{1},t_{2}]}_{i}(t)+Kt (1{-}t)
\sfd^{2}_{\YX}(\tilde{x}^{[t_{1},t_{2}]}_{0} ,
\tilde{x}^{[t_{1},t_{2}]}_{1}) \geq  (1{-}t)
\tilde{f}^{[t_{1},t_{2}]}_{i}(0) +t\tilde{f}^{[t_{1},t_{2}]}_{i}(1). 
\end{equation}
\end{itemize}
\end{corollary}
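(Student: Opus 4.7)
The plan is to observe that the result is purely about the structure of $K$-semiconcavity on rescaled intervals, and the geometry enters only through the simple identity $\sfd_{\YX}(\tilde{x}^{[t_1,t_2]}_0,\tilde{x}^{[t_1,t_2]}_1)=|t_2{-}t_1|\,\sfd_{\YX}(x_0,x_1)$, which holds because $\xx_{01}$ is a constant-speed geodesic on $[0,1]$. Once this identity is available, the claim reduces to an elementary manipulation of concavity under affine reparametrization.

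First I would treat the equivalence $\text{(ii)}\Leftrightarrow\text{(iii)}$. This is just the scalar characterization of $C$-semiconcavity on $[0,1]$: a continuous function $g:[0,1]\to\mathbb{R}$ satisfies that $g-Ct^2$ is concave iff for every $t\in[0,1]$ one has $g(t)\geq (1{-}t)g(0)+tg(1)-Ct(1{-}t)$, as follows from concavity applied to the endpoints after absorbing the quadratic term. Applying this to $g=\tilde{f}^{[t_1,t_2]}_i$ with $C=K\sfd^2_{\YX}(\tilde{x}^{[t_1,t_2]}_0,\tilde{x}^{[t_1,t_2]}_1)$ produces exactly \eqref{equivksemi}.

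Next I would prove $\text{(i)}\Rightarrow\text{(ii)}$ by a direct change of variables. Set $C=K\sfd^2_{\YX}(x_0,x_1)$ and $s=t_1+t(t_2{-}t_1)$; then $s^2=t_1^2+2t_1(t_2{-}t_1)t+(t_2{-}t_1)^2t^2$, so
\begin{equation*}
\tilde{f}^{[t_1,t_2]}_i(t)-C(t_2{-}t_1)^2 t^2
= \bigl(f_i(s)-Cs^2\bigr)+Ct_1^2+2Ct_1(t_2{-}t_1)t.
\end{equation*}
The right-hand side is the composition of the concave function $s\mapsto f_i(s)-Cs^2$ with the affine map $t\mapsto s$, plus an affine function of $t$, hence concave in $t$. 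Since $C(t_2{-}t_1)^2=K\sfd^2_{\YX}(\tilde{x}^{[t_1,t_2]}_0,\tilde{x}^{[t_1,t_2]}_1)$, this is precisely (ii). The converse $\text{(ii)}\Rightarrow\text{(i)}$ is obtained by specializing to $t_1=0,\,t_2=1$, in which case $\tilde{f}^{[0,1]}_i=f_i$ and $\tilde{x}^{[0,1]}_j=x_j$, recovering (i).

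There is essentially no obstacle here; the only point requiring a line of justification is the key geometric identity $\sfd_{\YX}(\tilde{x}^{[t_1,t_2]}_0,\tilde{x}^{[t_1,t_2]}_1)=|t_2{-}t_1|\sfd_{\YX}(x_0,x_1)$, which lets the rescaling factor $(t_2{-}t_1)^2$ emerging from the quadratic correction precisely match the rescaled semiconcavity constant. The full statement then follows by combining the three implications above.
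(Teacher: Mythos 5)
The paper states this corollary without any proof, treating it as an immediate consequence of the definitions, so there is no official argument to compare against; your proposal supplies the missing details and is essentially correct. The two load-bearing observations are exactly the right ones: the constant-speed identity $\sfd_{\YX}(\xx_{01}(t_1),\xx_{01}(t_2))=|t_2{-}t_1|\,\sfd_{\YX}(x_0,x_1)$, which makes the rescaled constant $K\sfd^2_{\YX}(\tilde{x}^{[t_1,t_2]}_0,\tilde{x}^{[t_1,t_2]}_1)$ equal to $(t_2{-}t_1)^2K\sfd^2_{\YX}(x_0,x_1)$, and the computation showing that $\tilde{f}^{[t_1,t_2]}_i(t)-C(t_2{-}t_1)^2t^2$ differs from the composition of the concave function $s\mapsto f_i(s)-Cs^2$ with an affine map by an affine function of $t$. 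Your treatment of (i)$\Leftrightarrow$(ii) is clean and complete.

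One step is mis-stated, although the conclusion survives: it is not true that a continuous $g$ on $[0,1]$ satisfying $g(t)\geq(1{-}t)g(0)+tg(1)-Ct(1{-}t)$ for all $t$ must have $g(t)-Ct^2$ concave. Take $C=0$ and $g(t)=t^2(1{-}t)^2$: it lies above its single endpoint chord (the zero function) but is not concave. The single endpoint-chord inequality is strictly weaker than concavity, so your ``iff'' for a fixed interval fails in the direction needed for (iii)$\Rightarrow$(ii). What rescues the argument is that (iii) is quantified over \emph{all} pairs $(t_1,t_2)$: substituting $s=t_1+t(t_2{-}t_1)$ and using $t(1{-}t)(t_2{-}t_1)^2=(s{-}t_1)(t_2{-}s)$ together with the constant-speed identity shows that \eqref{equivksemi} is precisely the chord inequality for $s\mapsto f_i(s)-K\sfd^2_{\YX}(x_0,x_1)\,s^2$ over the subinterval with endpoints $t_1,t_2$, and validity of the chord inequality over every subinterval \emph{is} equivalent to concavity. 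So you should either prove (iii)$\Leftrightarrow$(i) directly by this substitution, or observe that the restriction of $\tilde{f}^{[t_1,t_2]}_i$ to any $[a,b]\subset[0,1]$ is again of the form $\tilde{f}^{[t_1',t_2']}_i$ with compatibly scaled constant, before invoking the endpoint-chord characterization.
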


 The next elementary lemma will be crucial to estimate the
semiconcavities, where we crucially extract the
factor $t (1{-}t)$ that multiplies $K$ on the right-hand side in
\eqref{equivksemi}.

\begin{lemma}\label{similarsins}
  It exists $C>0,$ such that $|\sin(xt)-t\sin(x)|\leq Ct (1{-}t) x^{3},$
  for all $t\leq1, x\leq\pi.$
\end{lemma}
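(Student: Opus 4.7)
The plan is to view $\varphi_x(t) := \sin(xt) - t\sin(x)$ as a smooth function of $t \in [0,1]$ for each fixed parameter $x \in [0,\pi]$, and to exploit the fact that it vanishes at both endpoints. Indeed $\varphi_x(0)=0$ and $\varphi_x(1) = \sin(x) - \sin(x) = 0$, so we expect it to factor through $t(1-t)$. Computing the second derivative gives $\varphi_x''(t) = -x^2 \sin(xt)$, and applying the elementary bound $|\sin y| \le |y|$ with $y = xt$ yields the uniform estimate $|\varphi_x''(t)| \le x^2 \cdot xt \le x^3$ for all $t \in [0,1]$.

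Next I would invoke the standard Green's function representation for the Dirichlet problem on $[0,1]$. With
\[
G(t,s) = \min(t,s)\bigl(1-\max(t,s)\bigr)\ge 0,
\]
the identity $\varphi_x(t) = -\int_0^1 G(t,s)\,\varphi_x''(s)\,\d s$ holds because both sides solve $-u''=-\varphi_x''$ on $(0,1)$ with $u(0)=u(1)=0$. The first moment one needs is $\int_0^1 G(t,s)\,\d s = \tfrac{1}{2}t(1-t)$, which is nothing but the explicit solution of $-u''=1$ with zero boundary values. Combining these ingredients gives the estimate
\[
|\varphi_x(t)| \le \|\varphi_x''\|_{L^\infty(0,1)} \int_0^1 G(t,s)\,\d s \le x^3 \cdot \frac{t(1-t)}{2},
\]
which establishes the claim with the explicit constant $C = 1/2$.

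There is essentially no technical obstacle here: the two key observations are the vanishing at the endpoints (which produces the $t(1-t)$ factor through the Green's function) and the bound $|\sin(xt)|\le xt$ (which upgrades the naive bound $|\varphi_x''|\le x^2$ to the $x^3$ scaling required). A sanity check via Taylor expansion confirms that the bound is sharp up to a constant, since the leading term in $x$ is $\sin(xt) - t\sin(x) = \tfrac{x^3}{6}\bigl(t - t^3\bigr) + O(x^5) = \tfrac{x^3}{6}t(1-t)(1+t) + O(x^5)$, so the constant cannot be smaller than $1/6$ and our bound $1/2$ is certainly of the correct order. Note that the restriction $x \le \pi$ is not really used in the Green's function argument itself; it only serves to keep $x^2 \le \pi x$ under control in the statement, but since we bound by $x^3$ directly, any fixed range of $x$ would do.
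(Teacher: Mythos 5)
Your proof is correct, but it follows a genuinely different route from the paper's. The paper expands $\sin(xt)-t\sin(x)$ into its Taylor series, rewrites it as $\sum_{n\ge 1}(-1)^n t x^3 (t^{2n}-1)x^{2n-2}/(2n+1)!$, extracts the factor $1-t$ from $1-t^{2n}\le 2n(1-t)$ term by term, and arrives at the constant $C=\sum_{n\ge1}4n\pi^{2n-2}/(2n+1)!$ (numerically about $1.6$); the hypothesis $x\le\pi$ enters precisely to make that series summable with this constant. You instead observe that $\varphi_x(t)=\sin(xt)-t\sin(x)$ vanishes at $t=0$ and $t=1$ and has $\varphi_x''(t)=-x^2\sin(xt)$, bounded in modulus by $x^3$ via $|\sin(xt)|\le xt$, and you convert this into a pointwise bound through the Dirichlet Green's function on $[0,1]$, using the moment $\int_0^1 G(t,s)\,\d s=\tfrac12 t(1-t)$. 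All of these steps check out (the representation formula is justified by uniqueness for the two-point boundary value problem, and the sign and normalization of $G$ are right). What your approach buys is a smaller explicit constant $C=1/2$ and, as you note, the hypothesis $x\le\pi$ becomes superfluous since $|\varphi_x''|\le x^3$ holds for every $x\ge 0$; what the paper's approach buys is that it is pure series manipulation with no appeal to ODE theory. Either version is fully adequate for the application, where only the existence of some finite $C$ and the structural factor $t(1-t)x^3$ are used.
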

\begin{proof}
Using the Taylor series
$\sin(y)=\sum_{n=0}^{\infty}\frac{(-1)^{n}(y)^{2n+1}}{(2n+1)!}$ we
obtain   
	\begin{equation*}
	\begin{split}
	\sin(xt)-t\sin(x)&=\sum_{n=0}^{\infty}(-1)^{n}\left(\frac{(tx)^{2n+1}}{(2n+1)!}-t\frac{x^{2n+1}}{(2n+1)!}\right)
	=\sum_{n=1}^{\infty}(-1)^{n}tx^{3} \frac{(t^{2n} {-}1) x^{2n-2}}{(2n+1)!}.
	\end{split}
	\end{equation*}
Using $t \in [0,1]$ and $x\in [0,\pi]$ we find 
	\begin{equation*}
	|\sin(xt)-t\sin(x)|\leq tx^{3}\sum_{n=1}^{\infty}\frac{(1-t^{2n})x^{2n-2}}{(2n+1)!}\leq tx^{3} (1{-}t) \sum_{n=1}^{\infty}\frac{4n\pi^{2n-2}}{(2n+1)!}.
	\end{equation*}
Setting $C:=\sum_{n=1}^{\infty}\frac{4n\pi^{2n-2}}{(2n+1)!} <
\infty$ we arrive at the claimed estimate.
\end{proof}

Next we define notions of local semiconcavity on a space
$(\YX,\sfd_{\YX})$, we give a precise meaning of
\emph{$K$-semiconcavity on a subset of $\YX$}. 

\begin{definition}
  We say that $(\YX,\sfd_{\YX})$ satisfies \emph{$K$-semiconcavity
    along  $\xx_{01}\in \Geod(x_{0},x_{1})$ for some
    $x_{0},x_{1}\in \YX$ with respect to the ``observer'' $x_{2},$} if\/
  $[0,1]\ni t \mapsto  f(t)=\sfd^{2}_{\YX}(x_{2},\xx_{01}(t))$ is
  $K\sfd^{2}_{\YX}(x_{0},x_{1})$-semiconcave. Furthermore, we say that
  $(\YX,\sfd_{\YX})$ satisfies \emph{$K$-semiconcavity on
    $\mathcal{A}\subset \YX$ with respect to observers from
    $\mathcal{B}\subset\YX,$} if it satisfies \emph{$K$-semiconcavity}
  along some geodesic $\xx_{01}\in \Geod(x_{0},x_{1})$ for every
  $x_{0},x_{1}\in A,$ and with respect to every observer $x_{2}\in B.$
  In the case $\mathcal{A}=\mathcal{B},$ we shortly say that $(\YX,\sfd_{\YX})$
  satisfies \emph{$K$-semiconcavity on $\mathcal{A}\subset \YX.$}
  Finally we say that $(\YX,\sfd_{\YX})$ satisfies
  \emph{$K$-semiconcavity}, if $\mathcal{A}=\YX.$
\end{definition}

We would like to remark that in the previous definition, $\xx_{01}(t)$
for $t\in(0,1)$ doesn't have to belong to $A.$ Now, we are going to
prove some results, that are going to be used in the last subsection
to prove $K'$-semiconcavity on ``important'' subsets of
$(\M(X),\HK_{\dd})$  or  $(\mathcal{P}(X),\SHK_{\dd}),$ when
$(X,\sfd_{X})$ satisfies $K$-semiconcavity for some $K>0.$

\begin{proposition}\label{mainKseminconcavitylemma}
  Let $(\YX,\sfd_{\YX})$ be a geodesic metric space, and $(\calC,
  \sfd_{\calC})$ the cone over $(\YX,\sfd_{\YX}).$ For three
  points $z_{0}=[x_{0},r_{0}]$, $z_{1}=[x_{1},r_{1}]$,
  $z_{2}=[x_{2},r_{2}]\in \calC$, consider a geodesic 
  $\xx_{01}\in\Geod(x_{0},x_{1})$, and the corresponding
  geodesic $\zz_{01}$ in $(\calC, \sfd_{\calC})$. Finally let assume that for
  $x\in\YX$ and $\mathcal{D}<\frac{\pi}{2},$ we have $x_{0},x_{1},x_{2}\in
  B\left(x,\mathcal{D}\right).$ 
\begin{itemize}
\item[(A)] If $(\YX,\sfd_{\YX})$ satisfies
  $K$-semiconcavity along $\xx_{01}(t),$ with respect to $x_{2},$ then
  $(\calC, \sfd_{\calC})$ satisfies $K'$-semiconcavity along
  $\zz_{01}(t)$ with respect to $z_{2},$ where $K'$ can be chosen to
  depend continuously only on $K,r_{0},r_{1},r_{2},\mathcal{D}.$
\item[(B)]  If $x_{0}\neq x_{1}$, $r_0=r_1$, and  $(\calC, \sfd_{\calC})$
  satisfies $K$-semiconcavity along $\zz_{01}(t)$ with respect to
  $z_{2}$, then $(\YX,\sfd_{\YX})$ satisfies
  $K'$-semiconcavity along $\xx_{01}(t)$ with respect to $x_{2},$
  where $K'$ can be chosen to depend continuously only on $ K,
  r_{0},r_{2},\mathcal{D}.$
\end{itemize}
\end{proposition}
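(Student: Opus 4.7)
The plan is to reduce both parts of the proposition to a single algebraic identity relating the cone and base-space semiconcavity defects, and to control the difference between them with the help of Lemmas \ref{apotoenastoallo} and \ref{similarsins}. By Corollary \ref{asasa} it suffices to check the $K'$-semiconcavity inequality at the endpoints $t_1=0,t_2=1$, since both $\zz_{01}$ and $\xx_{01}$ restrict to constant-speed geodesics on any sub-interval and the constants depend only on the geometric data.

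Setting $g(\tau):=\sfd^2_\calC(z_2,\zz_{01}(\tau))=r_2^2+\rr_{01}^2(\tau)-2r_2\rr_{01}(\tau)\cos v(\tau)$ with $v(\tau):=\sfd_\YX(x_2,\xx_{01}(\ztzt_{01}(\tau)))$ and $\tilde h(s):=1-\cos\sfd_\YX(x_2,\xx_{01}(s))$, a direct expansion using \eqref{rr prop} produces the master identity
\begin{equation*}
(1-\tau)g(0)+\tau g(1)-g(\tau)=\tau(1-\tau)\sfd^2_\calC(z_0,z_1)+2r_2 A(\tau)-2r_2 B(\tau),
\end{equation*}
where $A(\tau):=\rr_{01}(\tau)-(1-\tau)r_0-\tau r_1$ and $B(\tau):=\rr_{01}(\tau)\tilde h(\ztzt_{01}(\tau))-(1-\tau)r_0\tilde h(0)-\tau r_1\tilde h(1)$. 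From \eqref{quadraticmass2} and the elementary bound $\sfd^2_\calC(z_0,z_1)\ge 2r_0 r_1(1-\cos\phi_{01})$ one immediately gets $A(\tau)\le 0$ and $|A(\tau)|\le\tau(1-\tau)\sfd^2_\calC(z_0,z_1)/\min\{r_0,r_1\}$, so the whole task reduces to bounding $B(\tau)$.

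For Part (A), Lemma \ref{apotoenastoallo}(B) upgrades the hypothesis to $(1{+}K)\phi_{01}^2$-semiconcavity of $\tilde h$, which applied at $s=\ztzt_{01}(\tau)$ gives a lower bound on $\tilde h(\ztzt_{01}(\tau))$ with weights $1-\ztzt_{01}(\tau)$ and $\ztzt_{01}(\tau)$. To align these with the weights $(1-\tau)r_0$ and $\tau r_1$ in $B$, I would exploit the reparametrization identity $\rr_{01}(\tau)\sin(\ztzt_{01}(\tau)\phi_{01})=\tau r_1\sin\phi_{01}$ from \eqref{quadraticmass} together with Lemma \ref{similarsins} applied with $x=\phi_{01}$ and $t=\ztzt_{01}(\tau)$ to obtain
$$|\rr_{01}(\tau)\ztzt_{01}(\tau)-\tau r_1|,\ |\rr_{01}(\tau)(1{-}\ztzt_{01}(\tau))-(1{-}\tau)r_0|\le C\,\tau(1-\tau)\phi_{01}^2,$$
together with the bound $\ztzt_{01}(\tau)(1-\ztzt_{01}(\tau))\le C''\tau(1-\tau)$ that follows from $\ztzt_{01}$ being a smooth bijection of $[0,1]$ fixing the endpoints. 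Combined with $\phi_{01}^2\le(\pi^2/(4r_0 r_1))\sfd^2_\calC(z_0,z_1)$ these give $B(\tau)\ge -C_B\tau(1-\tau)\sfd^2_\calC(z_0,z_1)$; plugging into the master identity yields the desired $K'$-semiconcavity of $g$ with $K'$ depending continuously on $K,r_0,r_1,r_2,\mathcal{D}$.

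For Part (B) the flow is reversed: the hypothesis directly forces $B(\tau)\ge -C'_B\tau(1-\tau)\sfd^2_\calC(z_0,z_1)$, and the assumption $r_0=r_1=r$ is exactly what is needed to invert the reparametrization. From Theorem \ref{th:Project} one has $\sisi_{01}(s)=\sin(s\phi_{01})/[\sin(s\phi_{01})+\sin((1{-}s)\phi_{01})]$ and $\rr_{01}(\sisi_{01}(s))=r\sin\phi_{01}/[\sin(s\phi_{01})+\sin((1{-}s)\phi_{01})]$; substituting $\tau=\sisi_{01}(s)$ into the $B$-bound and clearing the common denominator converts it into
$$\sin\phi_{01}\,\tilde h(s)\ge\sin((1{-}s)\phi_{01})\tilde h(0)+\sin(s\phi_{01})\tilde h(1)-C\,s(1{-}s)\phi_{01}^2\sin\phi_{01}.$$
A final application of Lemma \ref{similarsins} to the two sines on the right-hand side (they agree with the linear interpolants $(1{-}s)\sin\phi_{01}$ and $s\sin\phi_{01}$ up to $C s(1{-}s)\phi_{01}^3$) turns this into the $K''\phi_{01}^2$-semiconcavity of $\tilde h$, and Lemma \ref{apotoenastoallo}(A) finally returns $K'$-semiconcavity of $\sfd^2_\YX(x_2,\cdot)$ along $\xx_{01}$. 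The main obstacle in both parts is the bookkeeping for the nonlinear reparametrization $\ztzt_{01}$, for which Lemma \ref{similarsins} is the decisive tool; and Part (B) requires the symmetric hypothesis $r_0=r_1$ precisely because only then does the change of variables $\tau=\sisi_{01}(s)$ produce an inequality that, after dividing by $\sin\phi_{01}$, is already of the form $(1{-}s)\tilde h(0)+s\tilde h(1)-O(s(1{-}s)\phi_{01}^2)$ without further asymmetric error terms that could not be absorbed.
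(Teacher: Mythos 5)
Your overall strategy is the same as the paper's: reduce to the endpoint comparison via Corollary \ref{asasa}, pass between $\sfd_\YX^2(x_2,\cdot)$ and $1-\cos\sfd_\YX(x_2,\cdot)$ via Lemma \ref{apotoenastoallo}, and extract the factor $t(1{-}t)$ from the nonlinear reparametrization via Lemma \ref{similarsins}. Your ``master identity'' is exactly the paper's computation of $\tilde A(t)-1$ in \eqref{a-1}, just written additively; your term $A(\tau)$, with its favourable sign, is the ``$1+$'' in the paper's formula, and your weight-correction bounds $|\rr_{01}(\tau)\ztzt_{01}(\tau)-\tau r_1|\leq C\tau(1{-}\tau)\phi_{01}^2$ are the same sine manipulations the paper performs after substituting $\tau=\sisi_{01}(t)$. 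Part (A) as you sketch it is sound (the bound $\ztzt_{01}(\tau)(1{-}\ztzt_{01}(\tau))\leq C''\tau(1{-}\tau)$ does need the explicit Lipschitz control of $\ztzt_{01}$ and its inverse coming from $\rr_{01}\geq r_{\min}(r_0,r_1,\mathcal D)$, not just ``smooth bijection fixing endpoints'', but that is readily available from \eqref{quadraticmass}).

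There is, however, a genuine gap in Part (B). Your opening reduction claims it suffices to verify the three-point inequality at $t_1=0$, $t_2=1$ because restrictions to subintervals ``depend only on the geometric data''. But the endpoint inequality $\tilde h(s)\geq(1{-}s)\tilde h(0)+s\tilde h(1)-K''s(1{-}s)\phi_{01}^2$ on $[0,1]$ alone does not imply semiconcavity of $\tilde h$ (needed to invoke Lemma \ref{apotoenastoallo}(A)); by Corollary \ref{asasa}(iii) you must establish it on every subinterval $[s_1,s_2]$, which corresponds to the restriction of $\zz_{01}$ to $[\sisi_{01}(s_1),\sisi_{01}(s_2)]$ — and there the endpoint radii $\rr_{01}(\tau_1)\neq\rr_{01}(\tau_2)$ even though $r_0=r_1$. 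By your own account the hypothesis $r_0=r_1$ is what makes the change of variables work, so your argument does not apply to the restricted configuration as written. The computation in fact survives unequal radii (the common factor $\tilde r_0\tilde r_1/D_s$ still cancels), but one then must control $\sfd^2_\calC(\tilde z_0,\tilde z_1)/\tilde\phi_{01}^2$, which now contains the term $(\tilde r_0-\tilde r_1)^2$; this is bounded only because the parent geodesic has $r_0=r_1$, giving $|\rr_{01}(\tau_1)-\rr_{01}(\tau_2)|\lesssim\phi_{01}\,\tilde\phi_{01}$ — precisely the content of the paper's estimate \eqref{kou}. You need to add this step for Part (B) to close.
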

\begin{proof}
From Theorem \ref{th:Project} we recall that for a geodesic
$\zz_{01}\left(t\right)=[\overline{\xx}_{01}\left(t\right),\rr_{01}\left(t\right)]$
is the corresponding geodesic in $\left(\YX,\sfd_{\YX}\right)$ is given by  
\begin{equation}
t\mapsto\xx_{01}\left(t\right) =\overline{\xx}_{01}\left(\sisi_{01}\left(t\right)\right)
\text{ with } 
\sisi_{01} \left(t\right)=\frac{r_0 \sin\left(t  \phi_{01} \right)
}{r_1\sin\!\left(\left(1{-}t\right) \phi_{01} \right) {+}  
	r_0\sin\!\left(t \phi_{01} \right) } \ \text{ with }
      \phi_{01}:= \sfd_{\YX}(x_0,x_1). 
\end{equation}
Later, we are going to use the fact that when $r_{0}=r_{1},$ we have
\begin{equation}\label{kou}
\begin{split}
&\max_{t,s\in [0,1]}
\frac{d_{\cal{C}}\left(\zz_{01}\left(t\right),\zz_{01}\left(s\right)\right)}
  {d_{\YX}\left(\overline{\xx}_{01}\left(t\right),
  \overline{\xx}_{01}\left(s\right)\right)}
= 
\max_{t,s\in [0,1]}
\frac{d_{\cal{C}}\left(\zz_{01}\left(\beta\left(t\right)\right), 
  \zz_{01}\left(\beta\left(s\right)\right)\right)}
{d_{\YX}\left(\xx_{01}\left(t\right),\xx_{01}\left(s\right)\right)}
= \max_{t,s\in [0,1]}
\frac{\sisi\left(t\right)-\sisi\left(s\right)}{t-s} \\ &\leq
\max_{t}\sisi'\left(t\right)  
\leq \max_{t\in[0,1]}\frac{
  \phi_{01} }{2\sin\left(\frac{ \phi_{01}
    }{2}\right)}\frac{\cos\left(t \phi_{01}
  \right)\cos\left(\frac{1-2t}{2} \phi_{01} \right)+\sin\left(
    \phi_{01} \right)\sin\left(\left(\frac{1-2t}{2} \phi_{01}
    \right)\right)}{\cos^{2}\left(\frac{1-2t}{2} \phi_{01} \right)}\\
&\leq  \frac{ \phi_{01} }{2\sin\left(\frac{ \phi_{01} }{2}\right)}\frac{1}{\cos^{2}\left(\mathcal{D}\right)}.
\end{split}
\end{equation}

Our proof relies on utilizing Corollary \ref{asasa} for
arbitrary $t_{1}$ and  $t_{2}$ and  noticing  that the new
$\tilde{r}_{0}^{[t_{1},t_{2}]}=\rr_{01}\left(t_{1}\right)$ and
$\tilde{r}^{[t_{1},t_{2}]}_{1}=\rr_{01}\left(t_{2}\right)$
are bounded from below by some $r_{\min}>0$  that depend only
on $r_{0},r_{1},\mathcal{D}.$ For notational convenience, we will
drop the dependence
on $t_{1},t_{2}$, but we will use tilde $\tilde{\text{\ }}$ for the new
functions. 

To compare the ``concavity'' magnitude of $\sfd^{2}_\calC$ with to the one of 
$\sfd^{2}_\YX$ along the respective geodesics and observers,  we set
\begin{equation}
  \tilde{A}\left(t\right) = \frac{\left(1{-}t\right)
    \sfd^{2}_{\mathcal{C}}\left(z_{2},\tilde{z}_{0}\right) + t\,
    \sfd^{2}_{\mathcal{C}}\left(z_{2},\tilde{z}_{1}\right) -
    \sfd^{2}_\calC \left(z_{2},\tilde{\zz}
      \left(t,\tilde{z}_{0},\tilde{z}_{1}\right)\right)} 
  {t\left(1{-}t\right)\sfd^{2}_\calC \left(\tilde{z}_{0},\tilde{z}_{1}\right)}.
\end{equation}
Using the formula for the cone distance $\sfd_\calC$  we get
\begin{equation}\begin{split}
\!\!\!\tilde{A}\left(t\right)&\!=\frac{\left(1-t\right)\!\left[\!r_{2}^{2}+\tilde{r}_{0}^{2}-2\tilde{r}_{0}r_{2}\cos\left(\sfd_{\YX}\left(x_{2},\tilde{x}_{0}\right)\right)\right]{+}t\!\left[r_{2}^{2}+\tilde{r}_{1}^{2}-2\tilde{r}_{1}r_{2}\cos\left(\sfd_{\YX}\left(x_{2},\tilde{x}_{1}\right)\right)\right]}{t\left(1{-}t\right)\sfd^{2}_{\mathcal{C}
  }\left(\tilde{z}_{0}, \tilde{z}_{1}\right)} \\&\! 
 \quad -\frac{\left[r_{2}^{2}+\tilde{\rr}_{01}\left(t\right)^{2}-2r_{2}\tilde{\rr}_{01}\left(t\right)
	\cos\left( \sfd_{\YX}\left(x_{2},\bar{\tilde{\xx}}_{01}\left(t\right)\right)\right)\right]}{t\left(1{-}t\right)\sfd^{2}_{\mathcal{C}}\left(\tilde{z}_{0},\tilde{z}_{1}\right)}
\\&\!=1+r_{2}\frac{\tilde{\rr}_{01}\left(t\right)\cos\left(
    \sfd_\YX 
    \left(x_{2},\bar{\tilde{\xx}}_{01}\left(t\right)\right)\right)-\left(1-t\right)\tilde{r}_{0}\cos\left(
     \sfd_\YX \left(x_{2},\tilde{x}_{0}\right)\right)-t\tilde{r}_{1}
	\cos\left( \sfd_\YX  \left(x_{2},\tilde{x}_{1}\right)\right)}{t\left(1{-}t\right)\sfd^{2}_{\mathcal{C}}\left(\tilde{z}_{0},\tilde{z}_{1}\right)}
\end{split}
\end{equation}
 We compose $\tilde{A}\left(t\right)$ with
 $\tilde{\sisi}\left(t\right)$ and find
\begin{align*}
&\frac{\tilde{A}\big(\tilde{\sisi}\left(t\right)\big){-}1}{r_{2}}\\
& =
\frac{\tilde{\rr}_{01}\big(\tilde{\sisi}\left(t\right)\big)
  \cos\left( \sfd_\YX 
    \left(x_{2},\tilde{\xx}_{01}\left(t\right)\right)\right) {-} \big(
  1{-}\tilde{\sisi}\left(t\right)\big) \tilde{r}_{0} \cos\left( 
    \sfd_\YX \left(x_{2} , \tilde{x}_{0}\right)\right)
   {-} x\tilde{\sisi}\left(t\right)\tilde{r}_{1} \cos \left( \sfd_\YX
   \left(x_{2},\tilde{x}_{1}\right) \right)}
{\tilde{\sisi}\left(t\right) \big(1{-}\tilde{\sisi}\left(t\right)\big)
  \sfd^{2}_{\mathcal{C}}\left(\tilde{z}_{0},\tilde{z}_{1}\right)}.
\end{align*}
Recalling that 
$\tilde{\rr}_{01} \big(\tilde{\sisi}_{01}\left(t\right)\big) =
\frac{\tilde{r}_{0}\tilde{r}_{1}\sin\left(\sfd_{\YX} \left(\tilde{x}_0
      , \tilde{x}_1\right)\right)}{\tilde{r}_{1} \sin\left(\left(1-t\right)\sfd_{\YX}\left(\tilde{x}_0,\tilde{x}_1\right)\right)+\tilde{r}_{0}\sin\left(t\sfd_{\YX}\left(\tilde{x}_0,\tilde{x}_1\right)\right)}$ 
we find  
\[
\frac{\tilde{\rr}_{01}\left(\tilde{\sisi}_{01}\left(t\right)\right)}{\tilde{\sisi}_{01}\left(t\right)}=\frac{\tilde{r}_{1}\sin\left(\sfd_{\YX}\left(\tilde{x}_0,\tilde{x}_1\right)\right)}{
  \sin\left(t \sfd_{\YX}\left(\tilde{x}_0,\tilde{x}_1\right)\right)}  \
\text{ and } \ 
\frac{1-\tilde{\sisi}_{01}\left(t\right)}{\tilde{\sisi}_{01}\left(t\right)}=\frac{\tilde{r}_1\sin\!\left(\left(1{-}t\right)\sfd_{\YX}\left(\tilde{x}_0,\tilde{x}_1\right)\right)}{\tilde{r}_0
  \sin\left(t \sfd_{\YX}\left(\tilde{x}_0,\tilde{x}_1\right)\right)}.
\]
 Using the abbreviations  $\tilde{\phi}_{ij}=
\sfd_{\YX}\left(\tilde{x}_i,\tilde{x}_j\right)$, $\tilde{\phi}_{2t}=
\sfd_{\YX}\left(x_2,\tilde{\xx}_{01}\left(t\right)\right)$ for
$i,j\in \{0,1,2\}$ and $t\in[0,1]$ we can write 
$\big(\tilde{A}\big(\!\tilde{\sisi}\left(\!t\!\right)\!\big)-1\big)/r_2$
as a product to estimate the terms individually: \EEE 
\begin{equation}\label{a-1}
\begin{split}
&\frac{\tilde{A}\big(\!\tilde{\sisi}\left(\!t\!\right)\!\big)-1}{r_{2}}=\frac{\tilde{r}_{1}\sin \big(\!\tilde{\phi}_{01}\! \big) \cos \big(\!\tilde{\phi}_{2t}\! \big) 
  -\tilde{r}_1\sin\! \big(\!\left(\!1{-}t\!\right)\tilde{\phi}_{01}\! \big) \cos \big(\!\tilde{\phi}_{20}\! \big) -\tilde{r}_{1}\sin \big(\!t
    \tilde{\phi}_{01}\! \big) \cos \big(\!\tilde{\phi}_{21}\! \big) }{\sin \big(\!
    t\tilde{\phi}_{01}\! \big)  \big(\!1{-}\tilde{\sisi}\left(\!t\!\right)\! \big) \sfd^{2}_{\mathcal{C}
  }\left(\!\tilde{z}_{0},\tilde{z}_{1}\!\right)}
\\&
= \frac{\sin \big(\!\tilde{\phi}_{01} \! \big) \cos
  \big(\!\tilde{\phi}_{2t}\! \big) -\sin\!
  \big(\!\left(\!1{-}t\!\right)\tilde{\phi}_{01}\! \big) \cos
  \big(\!\tilde{\phi}_{20}\! \big) -\sin \big(\! t\tilde{\phi}_{01}\!
  \big) \cos \big(\!\tilde{\phi}_{21}\! \big) }{\sin \big(\!
  t\tilde{\phi}_{01}\! \big) \sin\!
  \big(\!\left(\!1{-}t\!\right)\tilde{\phi}_{01}\! \big) }\times
\frac{\tilde{r}_{1}\sin \big(\!\left(\! 1{-}t
    \!\right)\tilde{\phi}_{01}\! \big) +\tilde{r}_{0}\sin
  \big(\!t\tilde{\phi}_{01}\! \big)
}{\sfd^{2}_{\mathcal{C}}\left(\!\tilde{z}_{0},\tilde{z}_{1}\!\right)} 
\end{split}
\end{equation}

 Part (A). Let's first assume that
$\left(\YX,\sfd_{\YX}\right)$ satisfies $K$-semiconcavity along
$\xx_{01}\left(t\right),$ with respect to $x_{2}$.
If the left term of the last line in \eqref{a-1} is negative then we directly get a bound for $\tilde{A}\big(\!\tilde{\sisi}\left(\!t\!\right)\!\big)$ by $1.$ If the aforementioned term is positive, we proceed as follows. 
Using $\sfd^2_\calC( \tilde z_0,
\tilde z_1 ) \geq 4 \tilde r_0 \tilde r_1 \sin^2 \!\big(
\tilde\phi_{01}/2 \big)$, we \EEE can bound the last term in \eqref{a-1} by
\begin{equation}\label{smallterm}
\begin{split}
&\frac{\max\{\tilde{r}_{0},\tilde{r}_{1}\}\left[\sin \big(\left( 1{-}t
    \right)\tilde{\phi}_{01} \big) +\sin \big(t\tilde{\phi}_{01} \big)
  \right]}{4\tilde{r}_{0}\tilde{r}_{1}\sin^{2}
  \big(\tilde{\phi}_{01}/2 \big)
}=\frac{\max\{\tilde{r}_{0},\tilde{r}_{1}\}\sin
  \big(\tilde{\phi}_{01}/2 \big) \cos
  \big(\frac{\left(1-2t\right)}{2}\tilde{\phi}_{01} \big)
}{4\tilde{r}_{0}\tilde{r}_{1}\sin^{2} \big(\tilde{\phi}_{01}/2 \big) }
\\
& = \frac{\cos \big(\frac{\left(1-2t\right)}{2}\tilde{\phi}_{01} \big) }{4\min\{\tilde{r}_{0},\tilde{r}_{1}\}\sin \big(\tilde{\phi}_{01}/2 \big) }\leq \frac{1}{4r_{\min}\sin \big(\tilde{\phi}_{01}/2 \big) }.
\end{split}
\end{equation}

Now the $K$-semiconvexity of $(\YX,\sfd_\YX)$ and Lemma
\ref{apotoenastoallo} provide us with 
\begin{equation}\label{coscos}
\cos \!\big(\sfd_{\YX}\!\left(\tilde{\xx}_{01}\!\left(t\right),x_{2}\!\right)
\!\big)\! \leq \!\left( 1{-}t \right)\cos
\big(\!\sfd_{\YX}\left(x_{2},\tilde{x}_{0}\right) \!\big)  +  t\cos
\big(\!\sfd_{\YX}\left(x_{2},\tilde{x}_{1}\right)\! \big)  + t(
1{-}t)\left(K {+}1\right)\sfd^{2}_{\YX}\left(\tilde{x}_{0},\tilde{x}_{1}\right). 
\end{equation}
Hence, by combining \eqref{a-1}, \eqref{smallterm}, \eqref{coscos} we get

\begin{equation*}
\begin{split}
\frac{4r_{\min} \big(\!\tilde{A}
  \big(\!\tilde{\sisi}\left(\!t\!\right)\! \big) {-}1\! \big)
}{r_{2}}&\leq \frac{\cos \big(\!\tilde{\phi}_{20}\! \big)
  \big(\!\left(\! 1{-}t \!\right)\sin \big(\!\tilde{\phi}_{01}\! \big)
  {-}\sin \big(\!\left(\! 1{-}t \!\right)\tilde{\phi}_{01}\! \big) \!
  \big) {+}\cos \big(\!\tilde{\phi}_{21}\! \big)  \big(\!t\sin
  \big(\!\tilde{\phi}_{01}\! \big) {-} \sin \big(\!t
  \tilde{\phi}_{01}\! \big) \! \big) }{\sin \big(\!
  \frac{\tilde{\phi}_{01}}{2}\! \big) \sin \big(\!
  t\tilde{\phi}_{01}\! \big) \sin\!
  \big(\!\left(\!1{-}t\!\right)\tilde{\phi}_{01}\! \big) } 
\\
&\quad + (K{+}1) \frac{t\left(\! 1{-}t \!\right)\sin \big(\! \tilde{\phi}_{01}\! \big) \tilde{\phi}^{2}_{01}}{\sin \big(\! \frac{\tilde{\phi}_{01}}{2}\! \big) \sin \big(\! t\tilde{\phi}_{01}\! \big) \sin\! \big(\!\left(\!1{-}t\!\right)\tilde{\phi}_{01}\! \big) }.
\end{split}
\end{equation*}
 Exploiting  Lemma \ref{similarsins} and using
$\sin\left(2y\right)=\sin\left(y\right) \cos\left(y\right)$ we arrive
at 
\begin{equation}
\begin{split}
	\frac{4r_{\min} \big(\tilde{A}
          \big(\tilde{\sisi}\left(t\right) \big) {-}1 \big)
        }{r_{2}}&\leq \frac{\cos \big(\tilde{\phi}_{20} \big)
          \left(Ct\left( 1{-}t
            \right)\tilde{\phi}^{3}_{01}\right){+}\cos
          \big(\tilde{\phi}_{21} \big)  \big(Ct\left( 1{-}t
          \right)\phi^{3}_{01} \big) }{\sin \big(
          \frac{\tilde{\phi}_{01}}{2} \big) \sin \big(
          t\tilde{\phi}_{01} \big) \sin\!
          \big(\left(1{-}t\right)\tilde{\phi}_{01} \big) }
\\
& \quad + (K{+}1) \frac{2\cos \big(\frac{\tilde{\phi}_{01}}{2} \big) \sin \big(\frac{\tilde{\phi}_{01}}{2} \big) t\tilde{\phi}_{01}\left( 1{-}t \right)\tilde{\phi}_{01}}{\sin \big( \frac{\tilde{\phi}_{01}}{2} \big) \sin \big( t\tilde{\phi}_{01} \big) \sin\! \big(\left(1{-}t\right)\tilde{\phi}_{01} \big) }.
\end{split}
\end{equation}

Finally we set
$M=\max_{y\in[0,2\mathcal{D}]}\frac{y}{\sin\left(y\right)}$ and
use $\tilde \phi_{ij} \in [0,2 \mathcal D]$ to obtain 
\[
\frac{4r_{\min} \big(\tilde{A} \big(\tilde{\sisi}\left(t\right) \big)
  -1 \big) }{r_{2}}\leq 2CM^{3}+2CM^{3}+(K{+}1 )M^{2}.
\]
In particular this implies 
\[
\tilde{A}\left(t\right) \leq   K' \quad \text{with} \quad 
K':=  r_{2}\frac{
  2CM^{3}+2CM^{3}+\left(K{+}1\right)M^{2}}{4r_{\min}}+1  .
\]
 Thus, Part (A) is shown in view of Corollary \ref{asasa}. 

 Part (B). To derive the opposite conclusion we again start from
 \eqref{a-1} and obtain
\begin{equation*}
\begin{split}
 &\cos \big(\tilde{\phi}_{2t} \big) -\frac{\sin\!
   \big(\left(1{-}t\right)\tilde{\phi}_{01} \big) }{\sin
   \big(\tilde{\phi}_{01} \big) }\cos \big(\tilde{\phi}_{20} \big)
 -\frac{\sin \big( t \tilde{\phi}_{01} \big) }{\sin
   \big(\tilde{\phi}_{01} \big) }\cos \big(\tilde{\phi}_{21} \big)
\\&
= \frac{\tilde{A} \big(\tilde{\sisi}\left(t\right) \big)
  -1}{r_{2}}\frac{\sfd^{2}_{\mathcal{C}
  }\left(\tilde{z}_{0},\tilde{z}_{1}\right)}{\tilde{r}_{1}\sin
  \big(\left( 1{-}t \right)\tilde{\phi}_{01} \big) +\tilde{r}_{0}\sin
  \big(t\tilde{\phi}_{01} \big) }\frac{\sin \big( t\tilde{\phi}_{01}
  \big) \sin\big(\left(1{-}t\right)\tilde{\phi}_{01}\big)}{\sin
  \big(\tilde{\phi}_{01} \big) } \, . 
\end{split}
\end{equation*}
 Using the $K$-semiconcavity in $(\calC,\sfd_\calC)$ we can use 
$\tilde{A}\left(t\right)\leq K$, and with Lemma \ref{similarsins} we get
\begin{equation*}
\begin{split}
&\cos\left( \sfd_\YX 
  \left(x_{2},\xx_{01}\left(t\right)\right)\right)-\left(1{-}t\right)\cos\left(
  \sfd_\YX \left(x_{2},\tilde{x}_{0}\right)\right)-t\cos\left(
   \sfd_\YX \left(x_{2},\tilde{x}_{1}\right)\right)
\\&
= \left(\left(1{-}t\right)-\frac{\sin\big(\left(1{-}t\right)\tilde{\phi}_{01}\big)}{\sin\big(\tilde{\phi}_{01}\big)}\right)\cos\left(\tilde{\phi}_{20}\right)+
\left(t-\frac{\sin\big(
      t\tilde{\phi}_{01}\big)}{\sin\big(\tilde{\phi}_{01}\big)}\right)\cos\left(\tilde{\phi}_{21}\right)
\\&
\quad
+\frac{\tilde{A}\big(\tilde{\sisi}\left(t\right)\big)-1}{r_{2}}\, 
 \frac{ \sfd^{2}_\calC \left(\tilde{z}_{0},\tilde{z}_{1}\right)}
{ \tilde{r}_{1}\sin\big(\left(1{-}t\right)\tilde{\phi}_{01}\big) 
  +\tilde{r}_{0}\sin\big(t\tilde{\phi}_{01}\big)}\,
  \frac{\sin\big( t\tilde{\phi}_{01}\big) \sin\big(\left(1{-}t\right)\tilde{\phi}_{01}\big)}{\sin\big(\tilde{\phi}_{01}\big)}
\\&
 \leq \left(\left(1{-}t\right)-
  \frac{\sin\big(\left(1{-}t\right)\tilde{\phi}_{01}\big)}
        {\sin\big(\tilde{\phi}_{01}\big)}\right)
      \cos\big(\tilde{\phi}_{20}\big)  +  \left(t-
  \frac{\sin\big( t\tilde{\phi}_{01}\big)}
  {\sin\big(\tilde{\phi}_{01}\big)}
  \right) \cos\big(\tilde{\phi}_{21}\big)\\
& \quad +  
\frac{K{-}1}{r_{2}}\,\sfd^{2}_{\mathcal{C}}\left(\tilde{z}_{0},\tilde{z}_{1}\right)
 \,
 \frac{1}{r_{\min}\cos\big(\frac{\left(1-2t\right)}{2}\tilde{\phi}_{01}\big)}
 \,\frac{1}{\sin\big(\frac{\tilde{\phi}_{01}}{2}\big) 
  \sin\big(\tilde{\phi}_{01}\big)}\: t\left(1{-}t\right)\tilde{\phi}^{2}_{01}
\\&
\leq \left[
  2\frac{C\tilde{\phi}_{01}}{\sin\big(\tilde{\phi}_{01}\big)}
      +\frac{K{-}1}{r_{2}} \,
   \frac{1}{r_{\min}\cos\big(\frac{\tilde{\phi}_{01}}{2}\big) 
   \cos\big(\frac{\left(1-2t\right)}{2}\tilde{\phi}_{01}\big)}
  \,
   \frac{\sfd^{2}_{\mathcal{C}}\left(\tilde{z}_{0},\tilde{z}_{1}\right)}
   {\sin^{2}\big(\frac{\tilde{\phi}_{01}}{2}\big)} \right]
 t \left(1{-}t\right)\tilde{\phi}^{2}_{01}\: .
\end{split}
\end{equation*}
Using $M=\max_{y\in[0,2\mathcal{D}]}\frac{y}{\sin\left(y\right)}$
as above and recalling  \eqref{kou} we arrive at the desired result 
\begin{equation*}
\cos\left( \sfd_\YX 
  \left(x_{2},\tilde{\xx}_{01}\left(t\right)\right)\right)-\left(1-t\right)\cos\left(
   \sfd_\YX 
  \left(x_{2},\tilde{x}_{0}\right)\right)-t\cos\left( \sfd_\YX
   \left(x_{2},\tilde{x}_{1}\right)\right)\leq
K't\left(1{-}t\right)
\sfd^2_\YX  \left(x_{2},\tilde{x}_{1}\right) 
\end{equation*}
with $K'
=2CM+\frac{K-1}{r_{\min}r_{2}\cos^{4}\left(\mathcal{D}\right)}M^{2}$. Applying
Corollary \ref{asasa} once again, the proof of Proposition
\ref{mainKseminconcavitylemma} is complete. 
\end{proof}

Now we directly recover the following Corollary.

\begin{corollary}\label{Ksemicor}
	Let 	$(\YX,\sfd_{\YX})$ be a geodesic metric space, and $(\calC, \sfd_{\calC})$ the cone over $(\YX,\sfd_{\YX}).$ If $(\YX,\sfd_{\YX})$  satisfies $K\!$-semiconcavity, for some $K\!\in\!\mathbb{R}$ on a set $\mathcal{A},$ then  $(\calC, \sfd_{\calC})\!$ satisfies  $K'\!$-semiconcavity on $\left(\mathcal{A}\cap B(x,\mathcal{D})\right)\times [R_{min},R_{max}],$ for every $x\in\YX, \mathcal{D}<\frac{\pi}{2}, R_{min}>0,R_{max}>0$ where $K'$ can be chosen to depend only on $K, R_{min}, R_{max}, d.$ On the other hand, if $(\calC, \sfd_{\calC})$ satisfies  $K$-semiconcavity, for some $K\in\mathbb{R},$ on a set of the form $\left(\mathcal{A}\cap B(x,\mathcal{D})\right)\times \{1\},$ then $(\YX,\sfd_{\YX})$  satisfies $K'$-semiconcavity on $\left(\mathcal{A}\cap B(x,\mathcal{D})\right),$ where $K'$ can be chosen to depend only on $K,\mathcal{D}.$  
\end{corollary}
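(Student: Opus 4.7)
My plan is to unpack the set-wise $K$-semiconcavity into pointwise statements, apply Proposition \ref{mainKseminconcavitylemma} triple-by-triple, and then exploit the continuous dependence of the resulting constant on the radii to pass to a uniform bound by compactness of $[R_{\min},R_{\max}]$.

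For the forward direction, I would fix arbitrary $z_0=[x_0,r_0]$ and $z_1=[x_1,r_1]$ in $(\mathcal{A}\cap B(x,\mathcal{D}))\times [R_{\min},R_{\max}]$. The hypothesis on $(\YX,\sfd_\YX)$ produces a single geodesic $\xx_{01}\in\Geod(x_0,x_1)$ along which $t\mapsto \sfd_\YX^2(x_2,\xx_{01}(t))$ is $K\sfd_\YX^2(x_0,x_1)$-semiconcave for every observer $x_2\in\mathcal{A}\cap B(x,\mathcal{D})$. Since $\mathcal{D}<\pi/2$ forces $\sfd_\YX(x_0,x_1)<\pi$, Theorem \ref{antetwra} lifts this geodesic to some $\zz_{01}\in\Geod(z_0,z_1)$. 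For any cone observer $z_2=[x_2,r_2]$ with $x_2\in\mathcal{A}\cap B(x,\mathcal{D})$ and $r_2\in[R_{\min},R_{\max}]$, Part (A) of Proposition \ref{mainKseminconcavitylemma} then gives $K'$-semiconcavity of $\zz_{01}$ with respect to $z_2$, where $K'$ depends continuously on $K,r_0,r_1,r_2,\mathcal{D}$. Replacing $K'$ by its supremum over the compact cube $[R_{\min},R_{\max}]^3$ yields a single constant depending only on $K,R_{\min},R_{\max},\mathcal{D}$, as required.

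For the converse, I would fix $x_0,x_1,x_2\in\mathcal{A}\cap B(x,\mathcal{D})$ and set $z_i=[x_i,1]$ for $i=0,1,2$. By hypothesis $(\calC,\sfd_\calC)$ provides a geodesic $\zz_{01}\in\Geod(z_0,z_1)$ along which $K$-semiconcavity holds with respect to $z_2$. If $x_0=x_1$ the constant geodesic in $\YX$ trivially satisfies the required inequality; otherwise $0<\sfd_\YX(x_0,x_1)<\pi$ and $r_0=r_1=1$, so Part (B) of Proposition \ref{mainKseminconcavitylemma} applies directly and delivers $K'$-semiconcavity along the projected geodesic $\xx_{01}$ (obtained as in Theorem \ref{th:Project}) with respect to $x_2$, with $K'$ depending continuously on $K,r_0=1,r_2=1,\mathcal{D}$, hence only on $K$ and $\mathcal{D}$.

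Because the technical heart --- the quantitative passage between semiconcavity in the spherical base and the cone for a single triple --- is already packaged inside Proposition \ref{mainKseminconcavitylemma}, the remaining task is essentially organizational and no real obstacle is expected. The one point that warrants a quick check is that the constant $K'$ produced by the proposition depends continuously on $(r_0,r_1,r_2)$ on the closed box $[R_{\min},R_{\max}]^3$; this is immediate from inspection of the explicit bounds derived there, which are combinations of trigonometric quantities in $\tilde\phi_{ij}\in[0,2\mathcal{D}]$ together with the uniform lower radius bound $r_{\min}>0$ inherited from $R_{\min}$.
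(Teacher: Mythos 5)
Your proposal is correct and follows exactly the route the paper intends: the paper gives no written proof, stating only that the corollary is ``directly recovered'' from Proposition \ref{mainKseminconcavitylemma}, and your triple-by-triple application of Parts (A) and (B) together with the uniformization of $K'$ over the compact box $[R_{\min},R_{\max}]^3$ is precisely that derivation. The only detail worth noting is that in the converse direction one should invoke Theorem \ref{antetwra} to identify the given cone geodesic with a geodesic in $\Geod(x_0,x_1)$, which you implicitly do via Theorem \ref{th:Project}.
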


\section{Hellinger--Kantorovich space $(\M(X),\HK_\dd)$}
\label{se:HK}

In the sequel we are going to work on spaces of measures over
some underlying (geodesic) metric space  $(X,\sf\sfd_X)$ and denote the
associated cone by $(\mfC,\sf\sfd_{\mfC})$. A typical example will
be $X=\Omega \subset \R^d$, where $\Omega$  convex, compact and
equipped with the Euclidean metric $\sfd_X(x,y)=|x{-}y|$. All the
abstract theory from above applies to these couples; however, our main
interest lies in the case where $(\calC,\sfd_\calC)$ is identified with
$(\M(X),\HK_{\dd})$ while the spherical space $(\YX,\sfd_\YX)$ will be
given in terms of the probability measures $\cP(X)$ equipped with the
metric $\SHK_{\dd}$, which is still to be constructed. 

\subsection{Notation and preliminaries}
\label{subsec:notation}

For the sequel, let $(X,\sfd_X)$ be a geodesic, Polish space. We will
denote by $\M(X)$ the space of all 
nonnegative and finite Borel measures on $X$ endowed with the 
weak topology induced by the duality with the continuous and bounded
functions of $\rmC_b(X)$. The subset of measures with 
finite quadratic moment will be denoted by $\M_2(X)$. The spaces $\cP(X)$ and $\cP_2(X)$ are the corresponding subsets of probability
measures.

If $\mu\in \M(X)$ and $T:X\to Y$ is a Borel map, $T_\sharp \mu$ will denote the
push-forward measure on $\M(Y)$, defined by
\begin{equation}\label{eq:push_forw}
T_\sharp\mu(B):=\mu(T^{-1}(B))\quad
\text{for every Borel set $B\subset Y$}.
\end{equation}
We will often denote elements of $X\times X$ by $(x_0,x_1)$ and
the canonical projections by $\pi^i:(x_0,x_1)\to x_i$ for $i=0,1$.
A transport plan on $X$ is a measure $M_{01}\in \M(X{\times} X)$ with marginals
$\mu_i:=\pi^i_\sharp M_{01}$.

Given a couple of measures $\mu_0,\mu_1\in \mathcal{P}_2(X)$ with $
\mu_0(X)=\mu_1(X)$, its
(quadratic) Kantorovich-Wasserstein distance $\sfW_{\sfd_{X}}$ is defined by
\begin{equation}
\label{eq:11}
\sfW^{2}_{\sfd_{X}}\!(\mu_0,\mu_1)\!:=\!\min\!\bigg\{\!\!\iint 
\sfd_X^2(x_0,x_1)\,\d M_{01}(x_0,x_1)\,\Big|\, M_{01}\in \mathcal{P}_{2}(X{\times} X),\ \pi^i_\sharp M_{01}=\mu_i,\ i=0,1\bigg\}.
\end{equation}
We refer to \cite{AmGiSa05GFMS} for a survey on the Kantorovich--Wasserstein
distance and related topics.

\subsection{The logaritmic-entropy transport formulation}
\label{su:LogEntrF}

Here we first provide the definition of the $\HK_{\dd}(\mu_0,\mu_1)$
distance in terms of a minimization problem that balances a specific
transport problem of measures $\sigma_0\mu_0$ and $\sigma_1\mu_1$ with
the relative entropies of $\sigma_j\mu_j$ with respect to
$\mu_j$. From this, the fundamental scaling property
\eqref{eq:HK.scale} of $\HK_{\dd}$ will follow, see Theorem
\ref{th:ScalHK}.

For the characterization of the Hellinger--Kantorovich distance via
the static Logarithmic-Entropy Transport (LET) formulation, we 
define the logarithmic entropy density
$F:\left[0,\infty\right[\to[0,\infty[$ via $F(r) =r \log r - r+1 $ and
the cost function $L_{\dd}:\left[0,\infty\right[\to[0,\infty]$ via
$L_{\dd}(R) = -2 \log\left(\cos\left(R\dd\right)\right)$ for
$R\dd<\frac{\pi}{2}$ and $L_{\dd}\equiv+\infty$ otherwise. For
given measures $\mu_0,\mu_1$ the LET
functional $\LET_{\dd}(\,\cdot\,;\mu_0,\mu_1): \M(X\times
X)\to[0,\infty[$ reads
\begin{equation}\label{def:let}
\LET_{\dd}(\bfH_{01};\mu_0,\mu_1):=
\int_X F(\sigma_0)\d\mu_0+ \int_X F(\sigma_1)\d\mu_1+
\iint_{X\times X}  L_{\dd}(\sfd_X(x_0,x_1)) \d\bfH_{01} 
\end{equation}
with $\eta_i:=  (\pi_i)_\sharp
\bfH_{01}=\sigma_i\mu_i\ll\mu_i$.  With this, the equivalent
formulation of the Hellinger--Kantorovich distance as
entropy-transport problem reads as follows.

\begin{theorem}[{LET formulation, \cite[Sec.\,5]{LiMiSa14?OETP}}]
\label{thm:LET}
For every $\mu_0,\mu_1\in \M(X)$ we have
\begin{equation}\label{eq:HK.ET}
  \HK^{2}_{\dd}(\mu_0,\mu_1)=\min\big\{ 
  \LET_{\dd}(\bfH_{01};\mu_0,\mu_1)\,\big|\,
  \bfH_{01}\in\M(X\times X),\  (\pi_i)_\sharp
  \bfH_{01}  \ll\mu_i\big\}.
\end{equation}
\end{theorem}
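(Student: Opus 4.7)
My approach is convex duality: I would show that the LET problem and the cone-based Kantorovich characterization of $\HK_\dd^2$ admit the same dual, hence the same primal value. I take as starting point the representation of $\HK_\dd^2(\mu_0,\mu_1)$ as the infimum of $\iint \sfd_\mfC^2\,\d\alpha$ over plans $\alpha$ on the cone $\mfC\times\mfC$ (over $(X,\sfd_X)$) whose homogeneous projections equal $\mu_i$; this representation is fully compatible with the cone construction used throughout the paper.

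First I would compute the dual of the LET functional. Since $F(r)=r\log r-r+1$ has Fenchel conjugate $F^*(\psi)=e^\psi-1$, a Rockafellar--Fenchel argument applied to the two linear marginal constraints together with the transport cost yields
\[
\min_{\bfH_{01}}\LET_\dd(\bfH_{01};\mu_0,\mu_1)=\sup_{(\varphi_0,\varphi_1)}\Bigl\{\int(1-e^{-\varphi_0})\,\d\mu_0+\int(1-e^{-\varphi_1})\,\d\mu_1\Bigr\},
\]
with the supremum over $\varphi_i\in C_b(X)$ satisfying $\varphi_0(x_0)+\varphi_1(x_1)\le L_\dd(\sfd_X(x_0,x_1))$. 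Introducing $u_i:=1-e^{-\varphi_i}\le 1$ and using $L_\dd(R)=-2\log\cos(R\dd)$, the pointwise transport constraint translates into
\[
(1-u_0(x_0))(1-u_1(x_1))\ge \cos^2(\sfd_X(x_0,x_1)\dd)
\]
in the regime $\sfd_X(x_0,x_1)\dd<\pi/2$, and reduces to $u_i\le 1$ outside this regime.

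Second, the cone-based formulation admits the Kantorovich dual
\[
\HK_\dd^2(\mu_0,\mu_1)=\sup_{(u_0,u_1)}\Bigl\{\int u_0\,\d\mu_0+\int u_1\,\d\mu_1 \, : \, r_0^2 u_0(x_0)+r_1^2 u_1(x_1)\le \sfd_\mfC^2([x_0,r_0],[x_1,r_1])\ \forall\, r_0,r_1\ge 0\Bigr\}.
\]
Using $\sfd_\mfC^2=r_0^2+r_1^2-2r_0r_1\cos\phi$ with $\phi=\sfd_X(x_0,x_1)\dd\wedge\pi/2$, this constraint becomes $r_0^2(1-u_0)+r_1^2(1-u_1)\ge 2r_0r_1\cos\phi$ for every $r_0,r_1\ge 0$. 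The elementary AM--GM identity shows that this is equivalent to $u_i\le 1$ combined with $(1-u_0)(1-u_1)\ge\cos^2\phi$, which matches the LET dual constraint set exactly. Consequently the two duals coincide, and so do the primals.

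The main obstacle is establishing strong duality (no duality gap) for both problems. For the LET functional, the superlinear growth of $F$ yields weak lower semicontinuity and coercivity on bounded subsets of $\M(X\times X)$, delivering existence of a minimizer and Fenchel duality through the standard entropy-transport machinery. For the cone Kantorovich problem the radial non-compactness of $\mfC$ is controlled by the integrability bound $\iint r_i^2\,\d\alpha=\mu_i(X)<\infty$ arising from the homogeneous marginal constraint; this supplies the tightness needed on the otherwise unbounded cone, after which the min-max exchange is classical. A secondary care is needed in the regime $\sfd_X(x_0,x_1)\dd\ge\pi/2$, where $L_\dd=\infty$ imposes no constraint beyond $u_i\le 1$, corresponding precisely to the angular cap at $\pi/2$ in the definition of $\sfd_\mfC$ on the cone side.
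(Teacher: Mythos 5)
The paper offers no proof of this statement: Theorem \ref{thm:LET} is imported verbatim from \cite[Sec.\,5]{LiMiSa14?OETP}, so there is no internal argument to compare against. Your duality plan is, in substance, the strategy of that cited reference, and the computations you do carry out are correct: $F^*(\psi)=e^\psi-1$ is the right conjugate, the substitution $u_i=1-e^{-\varphi_i}$ turns the LET dual constraint $\varphi_0\oplus\varphi_1\le L_\dd$ into $(1-u_0)(1-u_1)\ge\cos^2_{\pi/2}(\dd\,\sfd_X(x_0,x_1))$, and the elementary fact that $r_0^2a+r_1^2b\ge 2r_0r_1c$ for all $r_0,r_1\ge0$ is equivalent to $a,b\ge0$, $ab\ge c^2$ (for $c\ge0$) correctly identifies the cone-side dual constraint set with the LET one, including the behaviour at and beyond the threshold $\dd\,\sfd_X=\pi/2$.

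Two caveats keep this a plan rather than a proof. First, your starting point is the cone characterization, which in this paper is Theorem \ref{thm:OTcone} --- itself only cited from \cite{LiMiSa14?OETP}. So what your argument actually establishes is the \emph{equivalence} of the two characterizations; this is a legitimate proof of \eqref{eq:HK.ET} only if one takes the cone/lifting formulation as the definition of $\HK_\dd$ (as in \cite{LiMiSa16OTCR}), and you should say so explicitly to avoid circularity. Second, essentially all of the technical content of the theorem sits in the two strong-duality claims you defer: lower semicontinuity and existence for the LET primal, the min--max exchange, the approximation and measurability of the dual potentials, and above all the Kantorovich duality on the non-locally-compact cone with the homogeneous (projection) constraint $\mathfrak P\lambda_i=\mu_i$, where the test functions $r^2u(x)$ are unbounded and one must use the dilation invariance to normalize plans. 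These are precisely the points where \cite{LiMiSa14?OETP} spends its effort, so the plan is sound but the hard part remains untouched.
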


An optimal transport plan $\bfH_{01}$, which always exists, gives the
effective transport of mass. Note, in particular, that only
$\eta_i\ll\mu_i$ is required and the cost of a deviation of $\eta_i$
from $\mu_i$ is given by the entropy functionals associated with
$F$. Moreover, the cost function $L_{\dd}$ is finite in the case $
 \dd \,\sfd_X(x_0 , x_1)  <\frac{\pi}{2}$, which highlights the
sharp threshold between transport and pure absorption-generation
mentioned earlier.

In general, optimal transport plans $\bfH_{01}\in\M(X\times X)$
are not unique. However, due to the strict convexity of $F$ its marginals
$\eta_i$ are unique such that the non-uniqueness of the plan $\bfH_{01}$
is solely a property of the optimal transport problem for 
the cost $L_{\dd}$.

\begin{theorem}[Optimality conditions {\cite[Thm.\,6.3]{LiMiSa14?OETP}}]
	\label{thm:optimality.cond} 
 For $\mu_0,\mu_1\in\M(X)$ let
 \begin{equation}
	\label{eq:33}
	A_i':=\left\{x\in X: \dd\dist(x,\supp(\mu_{1-i}))<\frac{\pi}{2}\right\},
	\quad
	A_i'':=X\setminus A_i',
 \end{equation}
 with the related decomposition
 \begin{equation}
	\label{eq:43}
	\mu_i:=\mu_i'+\mu_i'',\quad
	\mu_i':=\mu_i\res{A_i'},\quad
	\mu_i'':=\mu_i\res{A_i''}. 
 \end{equation}
	
 \begin{enumerate}[(i)]
  \item A plan $\bfH_{01}\in\M(X\times X)$ is optimal for the
   logarithmic entropy-transport problem in \eqref{eq:HK.ET} for
   $\mu_0$, $\mu_1\in\M(X)$ if and only if
   $\iint L_{\dd} \d\bfH_{01}<\infty$ and its marginals $\eta_i$ are
   absolutely continuous with respect to $\mu_i$ with densities
   $\sigma_i$, which satisfy (we adopt the convention $0\cdot
   \infty=1$ in \eqref{eq:34})
   \begin{subequations}
   \begin{align}
    \label{eq:32}
    \sigma_i=0	\quad
     &\text{on}\quad \supp(\mu_i'')\subset A_i''
   \\
     \sigma_i>0 \quad&\text{on}\quad X\setminus \supp(\mu_i''),
   \\
   \label{eq:34}
    \sigma_0(x_0)\sigma_1(x_1)\ge
    \cos_{\pi/2}^2\left(\dd \,\sfd_X(x_0,x_1)\right)\quad&\text{on}\quad
                        X\times X ,
   \\ \label{eq:ww}
     \sigma_0(x_0)\sigma_1(x_1)= \cos_{\pi/2}^2\left(\dd
       \,\sfd_X(x_0,x_1) \right)
    \quad&\bfH_{01}\text{-a.e.~on}\quad A_0'\times A_1'.
   \end{align}
  \end{subequations}
		
 \item  Moreover, we have that
  \begin{subequations}
   \begin{gather}
    \label{eq:48}
     \HK^{2}_{\dd}(\mu_0,\mu_1)= \HK^{2}_{\dd}(\mu_0',\mu_1') +
     \HK^{2}_{\dd}(\mu_0'',\mu_1''),
    \\
     \label{eq:60}
     \text{the couples $(\mu_0,\mu_1) $ and $(\mu'_0,\mu'_1)$ share
     the same optimal plans $\bfeta$, and}
    \\
     \label{eq:61}
     \HK^{2}_{\dd}(\mu_0'',\mu_1'')=\mu_0''(X)+\mu_1''(X)=
     \mu_0(X\setminus A_0')+\mu_1(X\setminus A_1').
   \end{gather}
  \end{subequations}
 \end{enumerate}
\end{theorem}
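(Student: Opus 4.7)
The plan is to exploit the joint convexity of $\LET_\dd(\,\cdot\,;\mu_0,\mu_1)$ in $\bfH_{01}$ and to derive KKT-type first-order conditions that are both necessary and sufficient. Since $F(r)=r\log r-r+1$ is strictly convex and the marginal maps $\bfH_{01}\mapsto(\pi_i)_\sharp\bfH_{01}$ are linear, the total energy is jointly convex in $\bfH_{01}$, so the pointwise first variation captures all of the optimality information. Existence of a minimizer with finite energy follows by the direct method (coercivity from the entropy part together with the weak lower semicontinuity of each of the three terms in \eqref{def:let}).

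First I would fix an optimal $\bfH_{01}^*$; finiteness of $\iint L_\dd\,\d\bfH_{01}^*$ immediately forces $\supp(\bfH_{01}^*)\subset\{(x_0,x_1):\dd\,\sfd_X(x_0,x_1)<\pi/2\}$, so its projections $\eta_i$ are concentrated on $A_i'$ and \eqref{eq:32} follows. To obtain \eqref{eq:34} and \eqref{eq:ww} I would perturb $\bfH_{01}^*\mapsto\bfH_{01}^*+\varepsilon\delta\bfH$ for admissible signed $\delta\bfH$ and compute, using $F'(s)=\log s$ together with the pushforward identity $\delta\!\int F(\sigma_i)\,\d\mu_i=\iint F'(\sigma_i(x_i))\,\d\delta\bfH$,
\[
\frac{\d}{\d\varepsilon}\bigg|_{0}\LET_\dd(\bfH_{01}^*+\varepsilon\delta\bfH)
=\iint\bigl(\log\sigma_0(x_0)+\log\sigma_1(x_1)+L_\dd(\sfd_X(x_0,x_1))\bigr)\,\d\delta\bfH.
\]
Non-negativity of this derivative for all positive perturbations yields the pointwise lower bound $\log(\sigma_0\sigma_1)\geq -L_\dd$, which after exponentiating (using $e^{-L_\dd(R)}=\cos_{\pi/2}^2(\dd R)$) is exactly \eqref{eq:34}; signed perturbations supported inside $\supp(\bfH_{01}^*)$ force equality, giving \eqref{eq:ww}. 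Strict positivity of $\sigma_i$ off $\supp(\mu_i'')$ then follows from \eqref{eq:34} by testing against some $x_{1-i}$ for which $\cos_{\pi/2}^2(\dd\,\sfd_X)>0$. Sufficiency is a direct consequence of convexity (any critical point of a convex functional is a global minimum); alternatively one can construct dual potentials $\varphi_i:=-\tfrac12\log\sigma_i$ and verify the vanishing duality gap of the associated primal-dual pair.

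For part (ii), the decomposition \eqref{eq:43} combined with the concentration of optimal plans on $A_0'\times A_1'$ (proved above) implies that the mass contained in $\mu_0''$ and $\mu_1''$ is never transported; the convention $F(0)=1$ together with $\sigma_i\equiv 0$ on $A_i''$ shows that this mass contributes exactly $\mu_0''(X)+\mu_1''(X)$ to the cost, proving \eqref{eq:61} and the additive decomposition \eqref{eq:48}. The shared-plan statement \eqref{eq:60} follows because the transport part of the energy depends only on the restricted measures $(\mu_0',\mu_1')$.

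The main obstacle is the careful measure-theoretic treatment of $L_\dd$ at its singular threshold $\dd\,\sfd_X=\pi/2$, and of the degenerate convention $0\cdot\infty=1$ needed to interpret \eqref{eq:34} consistently across the transition between $A_i'$ and $A_i''$. In particular one must justify the differentiation under the integral sign when the densities $\sigma_i$ may be unbounded near the threshold, and verify that the perturbed marginals remain absolutely continuous with respect to $\mu_i$ along all admissible variations — this is precisely the technical content that replaces, in the entropy-transport setting, the classical $c$-cyclical monotonicity arguments of pure optimal transport.
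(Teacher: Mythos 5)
This theorem is not proved in the paper at all: it is imported verbatim, with citation, from \cite[Thm.\,6.3]{LiMiSa14?OETP}, so there is no in-paper argument to compare your proposal against. Your convex first-variation strategy is nonetheless the right one and is, in spirit, how the cited source proceeds (convexity of $\LET_{\dd}$ in $\bfH_{01}$, subdifferential/first-order optimality conditions, and a duality argument for sufficiency), so as an outline it is sound.

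That said, several steps you pass over quickly are exactly where the real work lies, and two of them are stated in a form that is not quite correct. First, finiteness of $\iint L_{\dd}\,\d\bfH_{01}$ gives only $\bfH_{01}$-a.e.\ concentration on the open set $\{\dd\,\sfd_X(x_0,x_1)<\pi/2\}$, not containment of the (closed) support in that set; this is harmless for \eqref{eq:32} but should be phrased as concentration. Second, testing with positive perturbations whose marginals are absolutely continuous with respect to $\mu_i$ (e.g.\ $\delta\bfH=f\mu_0\otimes g\mu_1$) yields \eqref{eq:34} only for $\mu_0\otimes\mu_1$-a.e.\ $(x_0,x_1)$; upgrading this to the pointwise statement ``on $X\times X$'', consistently with the convention $0\cdot\infty=1$, requires constructing canonical everywhere-defined versions of $\sigma_0,\sigma_1$, which is a nontrivial part of the cited proof and not merely a remark. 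Third, strict positivity of $\sigma_i$ off $\supp(\mu_i'')$ does \emph{not} follow from \eqref{eq:34} by ``testing against some $x_{1-i}$'': the convention permits $\sigma_0(x_0)=0$ provided $\sigma_1(x_1)=\infty$, so one must separately exclude that degeneracy. Finally, in the sufficiency direction the convexity inequality $\LET_{\dd}(\bfH')\ge\LET_{\dd}(\bfH)+\iint(\log\sigma_0+\log\sigma_1+L_{\dd})\,\d(\bfH'-\bfH)$ needs the integrands $\log\sigma_i$ to be handled where they are unbounded or of indefinite sign (the $\infty-\infty$ issue), which you acknowledge but do not resolve. As a blind reconstruction of a quoted external theorem your sketch is reasonable, but it is a sketch, not a proof.
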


 We easily obtain upper bounds on $\HK^{2}_{\dd}$ by inserting
$H_{01}=0$ into the definition of $\LET_\dd$ in \eqref{eq:HK.ET}, viz,
for $\mu_0,\mu_1\in \M(X)$ and $\nu_0,\nu_1\in \cP(X)$ we have 
\begin{equation}
\label{massbound} 
\HK^{2}_{\dd}(\mu_{0},\mu_{1})\leq
\mu_{0}(X) +\mu_{1}(X)  \qquad
\text{and} \qquad \HK^{2}_{\dd}(\nu_{0},\nu_{1}) \leq 2. 
\end{equation}

\subsection{Scaling property of $ \HK_{\dd}$ and the definition of
  $(\cP(X),\SHK_\dd).$} 
\label{su:asa} 

Here we give the basic scaling property of the Hellinger--Kantorovich
distance that is  the basis of our interpretation of $(\M(X),\HK_\dd)$
as a cone space. 

\begin{theorem}[Scaling property of $\HK_{\dd}$]
\label{th:ScalHK}
For all $\mu_{0},\mu_{1}\in\M(X)$ and  $r_0,r_1 \geq 0$ we have
\begin{equation} 
  \label{eq:ScalHK}
   \HK_{\dd} ^{2}(r_0^2\mu_{0}, r_1^2\mu_{1})=r_0r_1 
  \HK_{\dd}^{2}(\mu_{0},\mu_{1}) +(r_0^2{-}r_0r_1)\mu_{0}(X) + (r_1^2
  {-} r_0r_1)\mu_{1}(X).
\end{equation}
Evenmore, if $\bfH_{01}$ is an optimal plan for the $\LET_\dd$
formulation of $ \HK_{\dd} (\mu_{0},\mu_{1}),$ then
$\bfH_{01}^{r_0r_1}= r_0r_1\bfH_{01}$ is an optimal plan for $ \HK_{\dd}
(r_0^2\mu_{0}, r_1^2\mu_{1}).$
\end{theorem}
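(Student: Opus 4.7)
The natural route is through the LET formulation \eqref{eq:HK.ET}: the entropy term involving $F(r)=r\log r-r+1$ interacts nicely with multiplicative rescalings, while the transport term $\iint L_\dd\,\mathrm d\bfH_{01}$ is just linear in the plan. The plan is first to establish a pointwise identity for $F$, then to compute $\LET_\dd$ at the candidate plan $r_0r_1\bfH_{01}$, and finally to pass to the infimum using a bijection between competitors.

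First, a direct expansion of $F(r)=r\log r-r+1$ gives for all $\sigma\geq 0$ and $r,s>0$
\begin{equation*}
r^2 F\!\left(\tfrac{s}{r}\sigma\right) \;=\; rs\,F(\sigma)\;+\;rs\,\sigma\log(s/r)\;+\;(r^2-rs).
\end{equation*}
Suppose $\bfH_{01}$ is a plan for $(\mu_0,\mu_1)$ with marginal densities $\sigma_i=\mathrm d\eta_i/\mathrm d\mu_i$. Then $r_0r_1\bfH_{01}$ is a plan for $(r_0^2\mu_0,r_1^2\mu_1)$ whose marginals have densities $\tfrac{r_1}{r_0}\sigma_0$ w.r.t.\ $r_0^2\mu_0$ and $\tfrac{r_0}{r_1}\sigma_1$ w.r.t.\ $r_1^2\mu_1$. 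Applying the identity above with $(r,s)=(r_0,r_1)$ and $(r_1,r_0)$ respectively, integrating, and noticing that $\int\sigma_0\,\mathrm d\mu_0=\int\sigma_1\,\mathrm d\mu_1=\bfH_{01}(X\!\times\! X)$, the two logarithmic correction terms $r_0r_1\log(r_1/r_0)\bfH_{01}(X\!\times\! X)$ and $r_0r_1\log(r_0/r_1)\bfH_{01}(X\!\times\! X)$ cancel exactly. Combined with linearity of the transport cost, this yields the key identity
\begin{equation*}
\LET_\dd\!\big(r_0r_1\bfH_{01};\,r_0^2\mu_0,r_1^2\mu_1\big) \;=\; r_0r_1\,\LET_\dd(\bfH_{01};\mu_0,\mu_1) \;+\;(r_0^2-r_0r_1)\mu_0(X)\;+\;(r_1^2-r_0r_1)\mu_1(X).
\end{equation*}

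Next, for the two inequalities defining $\HK_\dd^2$: the map $\bfH_{01}\mapsto r_0r_1\bfH_{01}$ is a bijection between admissible plans for $(\mu_0,\mu_1)$ and for $(r_0^2\mu_0,r_1^2\mu_1)$ when $r_0r_1>0$, since the absolute continuity of marginals is preserved in both directions. Thus the displayed identity, valid for every plan, gives the claimed equality \eqref{eq:ScalHK} after taking the infimum, and simultaneously shows that $\bfH_{01}$ is optimal on the left-hand side iff $r_0r_1\bfH_{01}$ is optimal on the right-hand side, proving the second assertion.

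Finally, the degenerate cases $r_0=0$ or $r_1=0$ fall outside the bijection argument and must be treated separately. If say $r_0=0<r_1$, then $r_0^2\mu_0=0$ forces the only admissible plan to be $\bfH_{01}=0$, so $\HK_\dd^2(0,r_1^2\mu_1)=r_1^2\mu_1(X)$ by \eqref{massbound} applied with marginal $\eta_1=0$; the right-hand side of \eqref{eq:ScalHK} also evaluates to $r_1^2\mu_1(X)$, so the identity holds trivially. The main nontrivial step is the cancellation of the logarithmic correction terms via the common mass $\bfH_{01}(X\!\times\! X)$ of both marginals; once that is observed the rest is bookkeeping.
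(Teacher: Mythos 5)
Your proof is correct and follows essentially the same route as the paper: both rescale the marginal densities to $\tfrac{r_1}{r_0}\sigma_0$ and $\tfrac{r_0}{r_1}\sigma_1$, expand the entropy of $F$ under this rescaling, and cancel the two logarithmic correction terms using $\eta_0(X)=\eta_1(X)=\bfH_{01}(X\times X)$. The only (minor) organizational difference is that you prove the $\LET_\dd$ identity for every admissible plan and conclude via the bijection $\bfH_{01}\mapsto r_0r_1\bfH_{01}$ — which has the small advantage of delivering the optimality-transfer assertion and the degenerate cases $r_0r_1=0$ explicitly — whereas the paper proves the upper bound at the optimal plan and obtains the reverse inequality by substituting $r_j\mapsto 1/r_j$, $\mu_j\mapsto r_j^2\mu_j$.
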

\begin{proof}
  Let $\bfH$ be the minimizer in the definition of $\LET_{\dd}(
  \cdot;\mu_{0},\mu_{1}) $.  We now calculate the scale version
  $\LET_{\dd} (r_0r_1\bfH_{01}; r_0^2 \mu_{0}, r_1^2\mu_{1})$ as
  an upper estimate for $\inf \LET_{\dd} (\cdot;r_0^2\mu_{0},
  r_1^2\mu_{1}) =\HK_\dd(r_0^2\mu_{0}, r_1^2\mu_{1})^2 $. For the
  relative densities $\sigma^{r_0r_1}_{0}$ and $\sigma^{r_0r_1}_{1}$
  we calculate
\begin{equation*}
 \eta^{r_0r_1}_{0}= r_0r_1 \eta_{0}= r_0r_1
 \sigma_{0}\mu_{0}=\frac{r_1}{r_0}\sigma_{0}\, r_0^2 \mu_{0} 
 \quad\text{and}\quad 
 \eta^{r_0r_1}_{1} = r_0r_1 \eta_{1}=
 r_0r_1\sigma_{1}\mu_{1}=\frac{r_0}{r_1}\sigma_{1} \, r_1^2\mu_{1}, 
\end{equation*}
from which we obtain $\sigma_{0}^{r_0r_1}=\frac{r_1}{r_0}\sigma_{0} $
and $\sigma_{1}^{r_0r_1}= \frac{r_0}{r_1}\sigma_{1}$. 
To determine  $\LET_{\dd} (r_0r_1\bfH_{01}; r_0^2 \mu_{0},
r_1^2\mu_{1})$ we first calculate the relative entropy for
$\sigma_{0}^{r_0r_1}$: 
\begin{align*}
&\int_{X}F(\sigma_{0}^{r_0r_1}(x_{0}))r_0^2\mu_{0}(\d x_{0}) =
\int_{X}\big(\sigma^{r_0r_1}_{0}(x_{0})\log(\sigma^{r_0r_1}_{0}(x_{0}))-
  \sigma^{r_0r_1}_{0}(x_{0}) + 1\big) r_0^2\mu_{0}(\d x_{0})
\\&=
\int\left(\frac{r_1}{r_0}\sigma_{0}(x_{0})\log 
 \big(\frac{r_1}{r_0} \sigma_{0}(x_{0})\big)- \frac{r_1}{r_0}
\sigma_{0}(x_{0}) + 1\right) r_0^2\mu_{0}(\d x_{0})
\\&=  \int_{X} 
   \left( r_0r_1\big(\sigma_{0}(x_{0})\log\sigma_{0}(x_{0}) -\sigma_{0}(x_{0})
   +1\big)+r_0r_1\log\big(\frac{r_1}{r_0}\big)\sigma_{0}(x_{0})
   +(r_0^2{-}r_0r_1) \right)\mu_{0}(\d x_{0})
\\&= r_0r_1\int_{X}F(\sigma_{0}(x_{0}))\mu_{0}(\d x_{0}) + r_0r_1
\log\big(\frac{r_1}{r_0}\big) \eta_{0}(X) 
+(r_0^2{-}r_0r_1)\mu_{0}(X).
\end{align*}
Adding the corresponding term for $\sigma^{r_0r_1}_{1}$ we see that
the middle term cancels because we have $\eta_0(X)=\eta_1(X)$, and we
arrive at the following upper bound:  
\begin{align*}
  \HK_{\dd}^{2}(r_0^2\mu_{0}, r_1^2 \mu_{1}) &\leq \LET_{\dd}(r_0r_1
  \bfH_{01};r_0^2\mu_{0}, r_1^2\mu_{1})
  \\
  & = r_0r_1\left(\int_{X}F(\sigma_{0})\mu_{0}(dx_{0})
    +\int_{X}F(\sigma_{1})\mu_{1}(dx_{1})\right)
  +(r_0^2{-}r_0r_1)\mu_{0}(X)
 \\&\qquad + (r_1^2{-}r_0r_1) \mu_{1}(X)
  + \iint\nolimits_{X\times X}L_{\dd}(  \sfd_X(x_{0},x_{1})
   )r_0r_1\bfH_{01}(\d x_{0}\d x_{1} ) \\
& =r_0r_1
  \LET_{\dd}( \bfH_{01};\mu_{0},\mu_{1})+(r_0^2{-}r_0r_1)
  \mu_{0}(X) + (r_1^2{-}r_0r_1)\mu_{1}(X) \\
&= r_0r_1
  \HK^{2}_{\dd}(\mu_0,\mu_1)+(r_0^2{-}r_0r_1) \mu_{0}(X)
  +(r_1^2{-}r_0r_1)\mu_{1}(X),
\end{align*}
where in the last step we used that $\bfH_{01}$ is optimal. 

By replacing $r_j $ by $1/r_j$ and $\mu_j$ by $r_j^2\mu_j$ this
upper bound also yields
\begin{equation*}
	\HK^{2}_{\dd}(\mu_{0},\mu_{1}) \leq
        \frac{1}{r_0r_1}\HK^{2}_{\dd}(r_0^2 \mu_{0}, r_1^2\mu_{1}) + 
 \left(\frac{1}{r_0^2}-\frac{1}{r_0r_1}\right)r_0^2\mu_{0}(X) +
 \left(\frac{1}{r_1^2} -\frac{1}{r_0r_1}\right)r_1^2\mu_{1}(X).
\end{equation*}
Multiplying both sides with $r_0r_1$ and rearranging the terms, we obtain the
desired lower bound for $\HK^{2}_{\dd}(r_0^2\mu_{0}, r_1^2\mu_{1}),$ and the
scaling relation \eqref{eq:ScalHK} is proved. 
\end{proof}

 The above theory for the Hellinger-Kantorovich distance $\HK_\dd$ and
the abstract Theorem \ref{coneinverse} allows us now to introduce a
new metric distance on the probability measure $\cP(X)$ via
\begin{equation}
  \label{eq:Def.SHK}
  \SHK_\dd(\nu_0,\nu_1):= \arccos\Big( 1- \frac12
  \HK^{2}_\dd(\nu_0,\nu_1) \Big) \quad \text{for }\nu_0,\nu_1 \in \cP(X),
\end{equation}
where the mass bound \eqref{massbound} gives $\HK_\dd(\nu_0,\nu_1)\leq
\sqrt{2}$, which guarantees that the argument of ``arccos'' is in the
interval $[0,1]$, so that $\SHK_\dd$ takes values in $[0,\pi/2]$. 
The mapping $[\cdot,\cdot]: \cP(X)\ti [0,\infty)\to \M(X)$ is given
via

\[  \cP(X)\ti [0,\infty) \ni (\nu,r) \ \mapsto\ [\nu,r] \ \widehat= \ r\nu \in \M(X). \]
 
The general theory of Section \ref{se:Cones} shows that $\SHK_\dd$ is
indeed a metric and, even more, it is a geodesic metric if
$(X,\sfd_X)$ is a geodesic space. It is shown in \cite{LiMiSa14?OETP}
that $\HK_\dd$ is geodesic and hence our Theorem \ref{th:Project}
shows that $(\cP(X),\SHK_\dd)$ is a geodesic space. 
We summarize the result as follows. 

\begin{theorem}\label{thm:diam}
The Hellinger--Kantorovich space $(\M(X),\HK_{\dd})$ 
can be identified with the cone over the spherical space 
$(\mathcal{P}(X),\SHK_{\dd})$ in the above sense. Moreover, the latter
has diameter less or equal to $\frac{\pi}{2}.$
\end{theorem}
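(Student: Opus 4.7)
The plan is to view this theorem as an essentially immediate consequence of the scaling property \eqref{eq:ScalHK} (Theorem \ref{th:ScalHK}) and the abstract cone-reconstruction result of Theorem \ref{coneinverse}, applied to the identifications $\calC = \M(X)$, $\YX = \cP(X)$, with surjection $[\cdot,\cdot]\colon \cP(X)\times[0,\infty)\to\M(X)$ defined by $[\nu,r] \widehat= r^2\nu$. Surjectivity is obvious: any $\mu\in\M(X)\setminus\{0\}$ admits the representation $\mu=r^2\nu$ with $r=\sqrt{\mu(X)}$ and $\nu=\mu/r^2\in\cP(X)$, cf.\ \eqref{cone structure of HK}, while $0\in\M(X)$ corresponds to the apex $r=0$.

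Next I would verify the two hypotheses of Theorem \ref{coneinverse}. Specializing \eqref{eq:ScalHK} to $\mu_i=\nu_i\in\cP(X)$ makes $\mu_i(X)=1$ and yields
\[
 \HK^2_\dd(r_0^2\nu_0,r_1^2\nu_1)=r_0r_1\,\HK^2_\dd(\nu_0,\nu_1)+(r_0^2{-}r_0r_1)+(r_1^2{-}r_0r_1)=r_0r_1\,\HK^2_\dd(\nu_0,\nu_1)+(r_0{-}r_1)^2,
\]
which is exactly the scaling identity \eqref{asasas} for the induced cone distance. The bound \eqref{eq:d.bound} is supplied by the mass bound \eqref{massbound}: for $\nu_0\neq\nu_1\in\cP(X)$ one has $0<\HK^2_\dd(\nu_0,\nu_1)\leq\nu_0(X)+\nu_1(X)=2\leq 4$, where positivity comes from the fact that $\HK_\dd$ is already known to be a metric on $\M(X)$.

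Having verified both hypotheses, Theorem \ref{coneinverse} immediately yields that the map $\SHK_\dd\colon\cP(X)\times\cP(X)\to[0,\pi]$ defined in \eqref{eq:Def.SHK} is a metric on $\cP(X)$, and that the cone-distance formula \eqref{conedistance} holds for the pair $(\M(X),\HK_\dd)$ over $(\cP(X),\SHK_\dd)$. This gives the required identification of $(\M(X),\HK_\dd)$ as a cone space over $(\cP(X),\SHK_\dd)$.

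Finally, the diameter bound is just a monotonicity observation. Using \eqref{massbound} again, $\HK^2_\dd(\nu_0,\nu_1)\leq 2$ for all probability measures, hence $1-\tfrac12\HK^2_\dd(\nu_0,\nu_1)\in[0,1]$ and therefore
\[
 \SHK_\dd(\nu_0,\nu_1)=\arccos\!\Big(1-\tfrac12\HK^2_\dd(\nu_0,\nu_1)\Big)\leq\arccos(0)=\frac{\pi}{2}.
\]
There is no real obstacle here: the whole statement is essentially a corollary of Theorem \ref{th:ScalHK} and Theorem \ref{coneinverse}, and the only thing one must not forget is to notice that \eqref{eq:ScalHK} collapses to \eqref{asasas} precisely when restricted to probability measures, which is where the additional constant terms $\mu_i(X)$ disappear into $(r_0{-}r_1)^2$.
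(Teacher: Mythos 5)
Your proposal is correct and follows essentially the same route as the paper: verify the scaling identity \eqref{asasas} by specializing Theorem \ref{th:ScalHK} to probability measures, verify the bound \eqref{eq:d.bound} via \eqref{massbound}, invoke Theorem \ref{coneinverse}, and read off the diameter bound from $\HK^2_\dd(\nu_0,\nu_1)\leq 2$. Your convention $[\nu,r]\widehat{=}r^2\nu$ is the one consistent with \eqref{cone structure of HK} and with the scaling property, so this is fine.
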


\subsection{Cone space formulation}
\label{su:conespace} 

Amongst the many charaqctierizations of $\HK_{\dd}$ discussed in
\cite{LiMiSa14?OETP} there is one that connects $\HK_{\dd}$ with 
the classic Kantorovich--Wasser\-stein distance on the cone $\mfC$
over the base space $(X,\dd \sfd_{X})$ with metric 
\begin{equation}
\label{eq:7}
\mathsf \sfd^{2}_{\mfC,\dd}(z_0,z_1):=
r_0^2+r_1^2-2r_0r_1\cos_{\pi}\left(\dd \sfd_{X}(x_{0},x_{1})\right),\quad
z_i=[x_i,r_i],
\end{equation} 
where  as above $\cos_{b}(a)=\cos(\min\{b,a\})$.
Measures in $\M(X)$ can be ``lifted'' to measures in $\M(\mfC)$, 
e.g.\ by considering the measure $\mu\otimes\delta_{1}$ for $\mu\in\M(X)$.
On the other, we can define the projection of measures in $\M_2(\mfC)$
onto measures in $\M(X)$ via
\[
\mfP:\left\{
\begin{array}{ccc}
\M_2(\mfC)&\to&\M(X),\\[0.2em]
\lambda&\mapsto&\int_{r=0}^\infty r^2\,\lambda(\cdot,\rmd r).\
\end{array}\right.
\]
For example, the lift $\lambda = m_0\delta_{\{\boldsymbol{0}\}} 
+ \mu\otimes\frac{1}{r(\cdot)^2}\delta_{r(\cdot)}$, with $m_0\geq 0$
and $r:\mathrm{supp}(\mu)\to\left]0,\infty\right[$ arbitrary,
gives $\mfP\lambda= \mu$. Now, the cone space formulation of the Hellinger--Kantorovich distance
of two measures $\mu_0$, $\mu_1\in\M(X)$ is given as follows.

\begin{theorem}[Optimal transport formulation on the cone]
\label{thm:OTcone}
For $\mu_0,\mu_1\in \M(\R^d)$ we have
\begin{subequations}
\label{eq:HK.ConeSpace}
\begin{align}    \label{eq:6}
\HK^2_{\dd}(\mu_0,\mu_1) &= 
  \min\Big\{\sfW_{\sfd_{\mfC,\dd}}^{2}(\lambda_0,\lambda_1) \, \Big|\, 
    \lambda_i\in \cP_2(\mfC),\ \mfP\lambda_i = \mu_i\Big\}\\    
\label{eq:70}
&=\min\Big\{\iint_{\mfC\times\mfC} \sfd_{\mfC, \dd}^{2}(z_0,z_1) 
  \rmd  \Lambda_{01} (z_0,z_1)\,\Big|\, \pi^i_{\sharp}\Lambda_{01}=
 \lambda_i,~\text{and}~\mfP\lambda_i=\mu_i\Big\}.
\end{align}
\end{subequations}
\end{theorem}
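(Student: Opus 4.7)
The plan is to prove $\HK^{2}_{\dd}(\mu_0,\mu_1) = \min_{\lambda_i,\Lambda_{01}}\iint \sfd^2_{\mfC,\dd}\,\d\Lambda_{01}$ by two matching inequalities, each obtained by an explicit construction transferring optimal objects between the LET formulation (Theorem~\ref{thm:LET}) and the cone transport formulation. The second equality in~\eqref{eq:HK.ConeSpace} is immediate from the definition of the quadratic Wasserstein distance as an infimum over transport plans with prescribed marginals, so the content reduces to the first equality.

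For the inequality $\HK^2_\dd \leq \sfW^2_{\sfd_{\mfC,\dd}}$, I would start from admissible $\lambda_0,\lambda_1 \in \cP_2(\mfC)$ with $\mfP\lambda_i = \mu_i$ and any transport plan $\Lambda_{01}$, and construct a LET competitor on $X\times X$ by
\[
\bfH_{01} \;:=\; (\pi_X \times \pi_X)_\sharp \bigl(r_0 r_1\,\mathbbm{1}_{\{\dd\sfd_X(x_0,x_1)<\pi/2\}}\,\Lambda_{01}\bigr),
\]
where $\pi_X:\mfC\to X$ denotes the base projection $[x,r]\mapsto x$. Using the constraint $\mfP\lambda_i=\mu_i$ together with Cauchy--Schwarz on the disintegration of $\Lambda_{01}$ along $\pi_X$, the marginals $\eta_i$ of $\bfH_{01}$ are absolutely continuous with respect to $\mu_i$, yielding densities $\sigma_i$. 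The bound on the LET functional then rests on a pointwise Young--Fenchel inequality coming from the Legendre duality of $F(s)=s\log s - s + 1$, namely that for any $a,b>0$ and $s<\pi/(2\dd)$,
\[
a^2 F(\sigma_0) + b^2 F(\sigma_1) + a b L_\dd(s) \;\leq\; a^2 + b^2 - 2 a b \cos(\dd s)
\]
whenever the optimal pairing between the two sides of the formulation is respected; combined with Jensen's inequality applied along the disintegration, this yields $\LET_\dd(\bfH_{01};\mu_0,\mu_1) \leq \iint \sfd^2_{\mfC,\dd}\,\d\Lambda_{01}$. The complementary region $\{\dd\sfd_X\geq\pi/2\}$ and any apex mass are absorbed using $r_0^2+r_1^2 \leq \sfd^2_{\mfC,\dd}$ there (since $\cos_{\pi}(\dd\sfd_X)\leq 0$) together with the trivial bound $\HK^2_\dd(\mu_0,\mu_1)\leq \mu_0(X)+\mu_1(X)$ of~\eqref{massbound}.

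For the reverse inequality, I would lift the optimal LET plan $\bfH_{01}$ from Theorem~\ref{thm:optimality.cond}, whose densities satisfy $\sigma_0(x_0)\sigma_1(x_1) = \cos^2(\dd\sfd_X)$ $\bfH_{01}$-a.e.\ on $A_0'\times A_1'$, to a cone transport plan by using the map $(x_0,x_1)\mapsto ([x_0,\sigma_0(x_0)^{-1/2}],[x_1,\sigma_1(x_1)^{-1/2}])$, so that pushing the resulting measure through $\mfP$ on each factor recovers $\sigma_i\mu_i = \eta_i$. Using the decomposition $\mu_i=\mu_i'+\mu_i''$ from~\eqref{eq:43}, the remaining mass $\mu_i''$ (pure annihilation/generation) is paired with the apex $\mathbf{0}$ at cost $\mu_i''(X)$, matching \eqref{eq:61}; additional Dirac mass at the apex normalizes each $\lambda_i$ into a probability measure without contributing to either $\mfP\lambda_i$ or the cone cost. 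A direct computation using the identity $\sfd^2_{\mfC,\dd}([x_0,\sigma_0^{-1/2}],[x_1,\sigma_1^{-1/2}]) = \sigma_0^{-1}+\sigma_1^{-1} - 2(\sigma_0\sigma_1)^{-1/2}\cos(\dd\sfd_X)$ combined with the KKT relation $\sigma_0\sigma_1=\cos^2(\dd\sfd_X)$ yields exactly the LET cost.

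The main obstacle is the delicate bookkeeping at the apex and the careful handling of normalizations: the lifts $\lambda_i$ must simultaneously be probability measures on $\mfC$, satisfy $\mfP\lambda_i = \mu_i$, and produce a transport cost matching the LET value. A second subtle point is that the pointwise Young--Fenchel inequality driving the upper bound must achieve equality precisely on the KKT locus produced by Theorem~\ref{thm:optimality.cond}, so that the two constructions dualize each other; this is ultimately what makes the threshold $\dd\sfd_X = \pi/2$ separating transport from annihilation/generation on the LET side correspond, on the cone side, to the switch between effective transport and routing through the apex.
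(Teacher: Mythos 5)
First, a point of comparison: the paper does not prove Theorem~\ref{thm:OTcone} at all --- it is recalled from \cite{LiMiSa14?OETP}, so there is no in-paper argument to measure you against. Your overall architecture (two inequalities obtained by transporting optimal objects between the $\LET_{\dd}$ formulation and the cone formulation) is the standard one, and your \emph{lower}-bound direction is essentially correct: lifting the optimal plan $\bfH_{01}$ via $(x_0,x_1)\mapsto([x_0,\sigma_0(x_0)^{-1/2}],[x_1,\sigma_1(x_1)^{-1/2}])$, sending $\mu_i''$ to the apex at cost $\mu_i''(X)$, and padding with mass at $\boldsymbol{0}$ to normalize does produce admissible $\lambda_i\in\cP_2(\mfC)$ with $\mfP\lambda_i=\mu_i$, and the optimality relation $\sigma_0\sigma_1=\cos^2_{\pi/2}(\dd\,\sfd_X)$ makes the cone cost equal to $\mu_0(X)+\mu_1(X)-2\bfH_{01}(X\times X)=\LET_{\dd}(\bfH_{01};\mu_0,\mu_1)$.

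The \emph{upper}-bound direction, however, has a genuine gap: the weight $r_0r_1$ in your definition of $\bfH_{01}$ is wrong, and the ``Young--Fenchel'' inequality you invoke points the wrong way. The correct statement is the exact Legendre identity
\begin{equation*}
\inf_{\theta>0}\Big[aF\big(\tfrac{\theta}{a}\big)+bF\big(\tfrac{\theta}{b}\big)+\theta L_{\dd}(s)\Big]
= a+b-2\sqrt{ab}\,\cos_{\pi/2}(\dd s),
\end{equation*}
attained at $\theta=\sqrt{ab}\,\cos_{\pi/2}(\dd s)$; in particular the bracket is $\geq$ the right-hand side for \emph{every} $\theta$. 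With your choice $\theta=r_0r_1$ the relevant integrand is $r_0^2+r_1^2-2r_0r_1+r_0r_1L_{\dd}(s)$, which exceeds $r_0^2+r_1^2-2r_0r_1\cos(\dd s)$ by $r_0r_1\big({-}2\log\cos(\dd s)-2(1{-}\cos(\dd s))\big)>0$ whenever $s>0$ (since $-\log u>1-u$ for $u<1$), so $\LET_{\dd}(\bfH_{01};\mu_0,\mu_1)\leq\iint\sfd^2_{\mfC,\dd}\,\d\Lambda_{01}$ cannot follow. The caveat ``whenever the optimal pairing is respected'' does not rescue this: the inequality $\HK^2_{\dd}(\mu_0,\mu_1)\leq\iint\sfd^2_{\mfC,\dd}\,\d\Lambda_{01}$ must be established for \emph{every} admissible $\Lambda_{01}$, about which no optimality structure is known a priori. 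The repair is to set $\bfH_{01}:=(\pi_X\times\pi_X)_\sharp(\theta^*\Lambda_{01})$ with $\theta^*(z_0,z_1)=r_0r_1\cos_{\pi/2}(\dd\,\sfd_X(x_0,x_1))$, where $\pi_X([x,r])=x$. Then the transport term equals $\iint\theta^*L_{\dd}\,\d\Lambda_{01}$ exactly, while the entropy terms are controlled by Jensen's inequality for the jointly convex, positively $1$-homogeneous perspective function $(a,\theta)\mapsto aF(\theta/a)$, which together with $\mfP\lambda_0=\mu_0$ gives $\int F(\sigma_0)\,\d\mu_0\leq\iint r_0^2F(\theta^*/r_0^2)\,\d\Lambda_{01}$; summing the three terms yields precisely $\iint\widetilde{\sfd}^{\,2}_{\mfC,\dd}\,\d\Lambda_{01}\leq\iint\sfd^2_{\mfC,\dd}\,\d\Lambda_{01}$ (cf.\ Remark~\ref{dtilde}), with the region $\dd\,\sfd_X\geq\pi/2$ handled automatically because $\theta^*=0$ and $r_0^2+r_1^2\leq\sfd^2_{\mfC,\dd}$ there. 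This is exactly the point where the threshold $\pi/2$ emerges, so making the identity above explicit is not optional bookkeeping but the core of the proof.
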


\begin{remark}\label{dtilde}
	By \cite[Lem.\,7.19]{LiMiSa14?OETP}, we also have
\begin{equation}	
\HK^{2}_{\dd}(\mu_0,\mu_1)=\min\Big\{\iint_{\mfC\times\mfC}\widetilde{\sfd}^{2}_{\mfC, \dd}(z_0,z_1)\rmd\Lambda_{01}(z_0,z_1)\,\Big|\,
\pi^i_{\sharp}\Lambda_{01}=\lambda_i~\text{and}~\mfP\lambda_i=\mu_i\Big\},	
\end{equation}	
where $\widetilde{\sfd}_{\mfC,
  \dd}^{2}([x_0,r_0],[x_1,r_1])=r_0^2+r_1^2-2r_0r_1\cos_{\pi/2}\left(\dd
  \sfd_{X}(x_{0},x_{1})\right)$  is defined  with the earlier cut-off at
$\pi/2$ instead of $\pi$ as in \eqref{eq:7}. 
\end{remark}

The cone space formulation is reminiscent of classical optimal
transport problems. Here, however, the marginals $\lambda_i$ of the
transport plan $\Lambda_{01}\in\M(\mfC\times\mfC)$ are not fixed, and
it is part of the problem to find an optimal pair of measures
$\lambda_i$ satisfying the constraints $\mfP\lambda_i=\mu_i$ and
having minimal Kantorovich--Wasserstein distance on the cone
$(\mfC,\sfd_\mfC)$.

The squared cone distance $\sf\sfd_\mfC$ has an important scaling
invariance: For  an arbitrary Borel function  $\theta:\mfC^2
\to \left]0,\infty\right[,$ we define the transformation
$\prd\theta:\mfC^2\to\mfC^2$ via
\[  
\prd\theta(z_0,z_1):=
([x_0,r_0/\theta(z_0,z_1)];[x_1,r_1/\theta(z_0,z_1)]), \text{ where }
z_i=[x_i,r_i] .
\]
Its dilation on measures $\Lambda_{01}\in \M(\mfC^2)$ is defined by
\begin{equation}
	\label{eq:40}
	\dil\theta(\Lambda_{01}):=(\prd\theta)_\sharp(\theta^2\Lambda_{01}),\quad
	\text{whenever }\theta\in \rmL^2(\mfC^2;\Lambda_{01}).
\end{equation}
Using the transformation rule, it is easy to see that 
\begin{equation}\label{eq:41}
\int_{\mfC^2} \sfd_{\mfC, \dd}^2(z_0,z_1)\,\d\Lambda_{01}=\int_{\mfC^2} \sfd_{\mfC, \dd}^2(z_0,z_1)\,\d\big(\!\dil\theta(\Lambda_{01})\big).
\end{equation}
\subsection{Characterization of geodesics in $(\cP(X),\SHK_\dd)$.}
\label{su:SHK}

For $X$ being a closed convex subset of $\R^d$ with the Euclidean
distance, we want to show that 
the goedesic curves can be characterized in terms of a generalized
continuity equation and a Hamilton--Jacobi equation. Thus,
$(\cP(X),\SHK_\dd)$ has pseudo-Riemannian structure that is in complete
analogy to that of $(\M(X),\HK_\dd)$ or that of the Wasserstein space
$(\cP(X), \mathsf{W}_2)$. 

Indeed, according to \cite[Eqn.\,(5.1)]{LiMiSa16OTCR} or
\cite[Thm.\,8.19]{LiMiSa14?OETP} all constant-speed geodesics for
$\HK_\dd$ are given as suitable solutions of the coupled system of
equations
\begin{align}
  \label{eq:HK.Geod}
\pl_t \mu + \frac1{\dd^2} \mathrm{div}\big( \mu\nabla \xi \big) = 4\xi
\mu,\quad \pl_t  \xi + \frac1{2\dd^2} |\nabla \xi|^2 + 2 \xi ^2 = 0. 
\end{align}
Here $\xi=\xi(t,x)$ is the dual potential, which satisfies the
Hamilton--Jacobi equation, while the measure $\mu(t)\in
\M(X)$ follows the generlized continuity equation with transport via
$V=\frac1{\dd^2}\nabla \xi$ and growth-decay according to $4\xi$.

We now want to derive the corresponding system for the spherical space
$(\cP(X),\SHK_\dd)$ by applying Theorem \ref{th:Project}, which tells
us that any geodesic $s\mapsto \nu(s)\in \cP(X)$ is a rescaling of 
the geodesic for $\HK_\dd$ connecting $\nu_0$ and $\nu_1$. 

\begin{theorem}[Equation for geodesics in$(\cP(X),\SHK_\dd)$] \label{thm:SHK.Geod} 
The geodesic curves $s \mapsto \nu(s)$ lying in space $(\cP(X),\SHK_\dd)$ are given by 
\begin{equation}\label{eq:SHK.geod}
\pl_s \nu + \frac{1}{\dd^2} \mathrm{\div}\big(\nu\nabla\zeta \big) =4\big( \zeta {-}{\textstyle\int_X\zeta \,\d\nu}\big) \nu 
,\quad \pl_s  \zeta + \frac1{2\dd^2} |\nabla \zeta|^2 + 2 \big(\zeta 
 {-} {\textstyle\int_X\zeta \,\d\nu} \big)^2 = 0, 
\end{equation}
where the equations have to be understood in the sense as described in \cite[Sec.\,8.6]{LiMiSa14?OETP}. 
\end{theorem}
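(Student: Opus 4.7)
The plan is to pull the known characterization \eqref{eq:HK.Geod} of $\HK_\dd$-geodesics back to the sphere through the explicit bijection between $\HK_\dd$-geodesics of unit-mass endpoints and $\SHK_\dd$-geodesics provided by Theorems \ref{antetwra} and \ref{th:Project}. Concretely, for a geodesic $s\mapsto \nu(s)$ in $(\cP(X),\SHK_\dd)$ joining $\nu_0,\nu_1$, I lift it to the $\HK_\dd$-geodesic $t\mapsto \mu(t)$ between $\nu_0,\nu_1\in\M(X)$, written as $\mu(t)=m(t)\,\nu(\tau(t))$ with $m(t)=\mu(t)(X)$ and $\tau=\ztzt$ from Section \ref{su:Lifting}. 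Specializing \eqref{quadraticmass} and \eqref{rr prop} to $r_0=r_1=1$ gives the two key identities
\[
m(t)=1-t(1-t)\HK_\dd^{2}(\nu_0,\nu_1) \quad\text{and}\quad \tau'(t)\,m(t)=\tfrac{\sin\phi}{\phi}\ \text{ with } \phi=\SHK_\dd(\nu_0,\nu_1),
\]
so in particular $\tau''/\tau'^{2}=-m'/(m\tau')$; this identity is what will force the mean-subtraction terms in \eqref{eq:SHK.geod}.

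For the continuity equation I substitute the ansatz $\mu(t)=m(t)\,\nu(\tau(t))$ into the first equation of \eqref{eq:HK.Geod} and test against $1$; using $\nu(s)(X)\equiv 1$ produces the mass ODE $m'(t)=4m(t)\int_X\xi\,\d\nu(\tau(t))$. Eliminating $m'$, dividing by $m(t)\tau'(t)$, and setting $\zeta(s,x):=\xi(t(s),x)/\tau'(t(s))+h(s)$ with $t=\tau^{-1}$ (for a shift $h$ still to be fixed) produces the first equation of \eqref{eq:SHK.geod}, because the quantities $\nabla\zeta=\nabla\xi/\tau'$ and $\zeta-\int\zeta\,\d\nu=(\xi-\int\xi\,\d\nu)/\tau'$ are both invariant under the additive shift by $h(s)$.

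For the Hamilton--Jacobi equation I divide the second equation of \eqref{eq:HK.Geod} by $\tau'(t(s))^{2}$ and re-express every term through $\zeta$, using
\[
\pl_s\zeta=\frac{\pl_t\xi}{\tau'(t(s))^{2}}-\frac{\xi\,\tau''(t(s))}{\tau'(t(s))^{3}}+h'(s)\quad\text{and}\quad \frac{\xi^{2}}{\tau'(t(s))^{2}}=(\zeta-h)^{2}.
\]
The resulting equation for $\zeta$ agrees with the target in \eqref{eq:SHK.geod} up to a handful of spurious terms in $h$, $h'$, and $\tau''/\tau'^{2}$. Substituting the identity $\tau''/\tau'^{2}=-m'/(m\tau')$ together with $\int\xi\,\d\nu=m'/(4m)$ from the continuity-equation test shows that the spurious terms collapse to $4h\int\zeta\,\d\nu-2h^{2}-h'(s)$, which equals $2(\int\zeta\,\d\nu)^{2}$ precisely when
\[
h'(s)=-2\bigl(h(s)-\textstyle\int_X\zeta(s,\cdot)\,\d\nu(s)\bigr)^{2},
\]
an ODE that determines $h$ uniquely up to an additive constant, matching the constant-in-$(s,x)$ gauge freedom of the spherical HJ equation. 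This converts $2\zeta^{2}$ into $2(\zeta-\int\zeta\,\d\nu)^{2}$ and yields the second equation of \eqref{eq:SHK.geod}. The converse implication---that every solution of \eqref{eq:SHK.geod} lifts via Proposition \ref{pr:He.Geod.Sca} to a solution of \eqref{eq:HK.Geod}, hence to an $\HK_\dd$-geodesic---is obtained by running the same chain of identities backwards.

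The hard part is not the algebra but giving it rigorous meaning in the weak sense of \cite[Sec.\,8.6]{LiMiSa14?OETP}: the potential $\xi$ is only an appropriate (sub)solution defined essentially on $\supp\mu$, the mass $m(t)$ may degenerate on cut-loci where the pure-transport/pure-reaction threshold $\dd\,\sfd_X=\pi/2$ is saturated, and one must verify that the nonlinear reparametrization $\tau$, the lift $\mu=m\,\nu\circ\tau$, and the shifted potential $\zeta=\xi/\tau'+h$ all preserve this weak structure in both directions. Once the framework of \cite[Sec.\,8.6]{LiMiSa14?OETP} is imported, the formal manipulations above become rigorous.
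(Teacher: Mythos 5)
Your proposal is correct and follows essentially the same route as the paper: both substitute a mass-rescaling plus arclength-reparametrization ansatz $\zeta=a\,\xi+b$ (your $a=1/\tau'$, $b=h$) into the known $\HK_\dd$-geodesic system \eqref{eq:HK.Geod}, fix the gauge so the continuity equation's source has zero $\nu$-average, and arrive at the same shift ODE $\dot b+2\big(b-\int_X\zeta\,\d\nu\big)^2=0$ for the additive potential. The only difference is bookkeeping: the paper leaves $n,\tau,a,b$ undetermined and derives the constraints $\dot\tau=a=n$, while you import the explicit $m$ and $\ztzt$ from Section \ref{su:Lifting}.
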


\begin{proof} We simply use the result in \cite[Thm.\,8.19]{LiMiSa14?OETP} and transform it as given the
abstract projection from the cone $(\M(X),\HK_\dd)$ to the spherical space $(\cP(X),\SHK_\dd)$, namely by a 
renormalizing of the mass and a rescaling of the arclength parameter. For this, we use the ansatz
\[\nu(s) = n(s) \mu(\tau(s)) \quad \text{ and } \quad 
\zeta(s,x) = a(s) \xi(\tau(s),x) + b(s),\]
where the functions $n,\;\tau,\;a$, and $b$ have to be chosen
suitably as functions of $s$, but will be independent of $x\in X$.  
In particular, we have 
\begin{equation}
  \label{eq:NuMu.b}
\int_X \zeta(s,\cdot) \,\d\nu(s)= b(s) + a(s) \int_X
\xi(\tau(s),\cdot) \,\d \nu(s)=  b(s) + \frac{a(s)}{n(s)} \int_X
\xi(\tau(s),\cdot) \,\d \mu(s).   
\end{equation}

Using that $(\mu,\xi)$ solves \eqref{eq:HK.Geod}, we obtain the
relations
\[
\pl_s \nu+ \frac{\dot\tau}{a\dd^2} \mathrm{div}\big( \nu\nabla
\zeta\big) = \Big( \frac{4\dot\tau}{a} (\zeta{-} b) + \frac{\dot
  n}n\big) \nu, \quad 
\pl_s\zeta +\frac{\dot\tau}{a\dd^2} |\nabla\zeta|^2
+\frac{2\dot\tau}{a}(\zeta{-}b)^2= \frac{\dot a}a(\zeta{-}b) + \dot
b.
\]
To keep the transport terms, which involve the spatial derivatives,
correct we choose $\tau$ such that $\dot\tau=a$ from now on. As
$\nu(s) \in \cP(X)$, the term on the right-hand side of the continuity
equation must have average $0$, hence we impose 
\begin{equation}
  \label{eq:dot.n}
  \textstyle 4 \int_X \zeta \,\d \nu= 4 b + \dot n/n.
\end{equation}
With this, we can rewrite the Hamilton--Jacobi equation for $\zeta$ in
the form 
\[
\pl_s\zeta +\frac{1}{\dd^2} |\nabla\zeta|^2
+2 \big(\zeta{-}{\textstyle\int_X\zeta \,\d\nu}\big)^2= \big(
\frac{\dot a}a - \frac{\dot n}n\big)\zeta + \dot b -\frac{\dot a}a b -
2 b^2 +2\big( {\textstyle\int_X\zeta \,\d\nu}\big)^2 .
\]
Choosing  further $a=n$ the right-hand side simplifies further,
because the term linear in $\zeta$ vanishes and the remaining
term is $\dot b + 2(b{-} {\textstyle\int_X\zeta \,\d\nu})^2$. 

Now, we show starting from a solution $(\nu,\zeta)$ of
\eqref{eq:SHK.geod} we can find a solution $(\mu,\xi)$ of
\eqref{eq:HK.Geod}. We first solve $\dot b + 2(b{-}
{\textstyle\int_X\zeta \,\d\nu})^2=0$ with $b(s_0)$ such that
\eqref{eq:NuMu.b} holds at initial time $s_0$. Then, $a=n$ is determined
from \eqref{eq:dot.n} with $n(s_0)=1$. Finally, the reparametrization
$t=\tau(s)$ is obtained from $\dot\tau(s)=a(s)$ and $\tau(s_0)=t_0$.
The inverse direction from a solution $(\mu,\xi)$  of
\eqref{eq:HK.Geod} to a solution $(\nu,\zeta)$ of
\eqref{eq:SHK.geod} works similarly. 
\end{proof} 

The dual dissipation potential $\mathcal R^*$ and the
associated Onsager operator $\mathbb K$, as described in 
\cite{Miel11GSRD,LieMie13GSGC,LMPR17MOGG} for  $(\cP(X),\SHK_\dd)$ are
given formally as 
\begin{align*}
\calR^*_\dd(\nu,\zeta) &= \int_X \Big( \frac1{2\dd^2}|\nabla \zeta|^2 +
2\big( \zeta {-}{\textstyle \int_X\zeta\,\d\nu} \big)^2\Big)\,\d\nu \
\text{ and}
\\
\mathbb K_\dd(\hat\nu)\zeta &=- \frac{1}{\dd^2} \mathrm{div} \big(
\hat\nu \nabla \zeta\big) + 4 \hat\nu \,\big( \zeta {-}{\textstyle
  \int_X\zeta\,\d\nu} \big), 
\end{align*}
where in the latter case $\nu$ is assumed to have the density
$\hat\nu$ with respect to the Lebesgue measure. Note that 
$\calR^*_\dd(\nu,\zeta)$ is no longer affine in $\nu$, but it is still
concave, which reflects the fact that the set of geodesic curves
connecting two  measures $\nu_0$ and $\nu_1\in \cP(X)$ is still
convex, a fact which is inherited from $(\M(X),\HK_\dd)$.

Thus, a gradient flow for a density $\mathcal E(\nu) = \int_X E(\hat
\nu)\,\d x$ would formally take the form 
\[
\pl_t \hat \nu = - \mathbb K_\dd(\hat\nu) \mathrm D \mathcal
E(\hat\nu) = \frac1{\dd^2} \mathrm{div} \big(
 \hat\nu \nabla(E'(\hat\nu)) \big) -  4 \hat\nu \Big(E'(\hat\nu) -
{\textstyle \int_X E'(\hat\nu)\,\hat\nu \!\;\d x} \Big).
\]

Existence results for such gradient-flow equations will be
studied in a forthcoming paper. The next section provides first steps
into this direction.

\section{Finer geometric properties of the Hellinger--Kantorovich and the Spherical Hellinger--Kantorovich spaces}
\label{se:PropHK}

In this section we are going to prove that the metric space
$(X,\sfd_{X})$ satisfies $m$-LAC (cf.\ Definition
\ref{def:m-LAC}), if and only if both $(\M(X),\HK_{\dd})$ and
$(\mathcal{P}(X),\SHK_{\dd})$ satisfy $m$-LAC. This result is
surprising since the cone $(\mfC,\sfd_\mfC)$, which is intrinsically
linked to $(\M(X),\HK_{\dd})$, does not share this equivalence;
however the disturbing role of the apex $\bm\mfo \in \mfC$ is
irrelevant for $\HK_\dd$.

We are also going to prove that under the extra assumption that the metric space $(X,\sfd_{X})$ satisfies $K$-semiconcavity on every ball $B\left(x,\frac{\pi}{2\dd}\right),$ then $(\M(X),\HK_{\dd})$ and $(\mathcal{P}(X),\SHK_{\dd})$ satisfy $K'$-semiconcavity on some sets  $\overline{\M}^{\mathcal{L}}_{\delta}(X), \overline{\mathcal{P}}^{\mathcal{L}}_{\delta}(X)$ respectively, where
$K'$ depends on $\delta,\dd.$ We would like to remark that every space $(X,\sfd_{X}),$ with curvature not less than $\kc,$ for some $\kc\in\mathbb{R},$ satisfies such a property \cite[Lemma 3.3]{Ohta2009a} . As it is was mentioned in Section \ref{su:CurvLAC} (see \cite[Part\,1, Ch.\,6]{OlPaVi14OTTA},
\cite{Savare2007}), when these two properties hold in a space, and a
functional $F$ defined on that space is $\lambda$-convex, then for
every point in the space there exists a unique gradient flow with respect
to $\calF$ starting on that point. In some parallel work, we are aiming
to extend that result to cover cases where $K$-seminconcavity holds
only on suitable collections of subsets, as long as the functionals
$\calF$ have the property that starting from any point that belongs in a
set in the collection, then any minimizer in the JKO scheme, belongs
in an another suitable subset in the class. This way, we are going to
provide several examples of gradient flows in $(\M(X),\HK_{\dd}), (\mathcal{P}(X),\SHK_{\dd}).$

\subsection{Stability of $m$-LAC between 
  $(X,\sfd_X)$, $(\M(X),\HK_{\dd}(X))$,  and $(\cP(X),\SHK_{\dd}(X))$}
\label{se:Angles}

We will start by proving that the metric space $(X,\sfd_{X})$
satisfies $m$-LAC, if and only if both $(\M(X),\HK_{\dd})$ and
$(\mathcal{P}(X),\SHK_{\dd})$ satisfy it too. The proof of the first
is a modification of the proof that if a metric space $(X,\sfd_{X})$
satisfies $m$-LAC, then the Wasserstein space $(\cP_2(X),\sfW_{2})$
over $(X,\sfd_{X})$ also satisfies it, which was kindly communicated
to us by Giuseppe Savar\'e (personal communication, May 2017). Because the cone $(\mfC,\sfd_{\mfC})$ over $(X,\sfd_{X})$ does not
necessarily satisfy $m$-LAC due to the degeneracy at the apex
 (see Theorem \ref{Cones and LAC}), one cannot use the
argument verbatim.  We will show the
desired equivalence by exploiting that the minimizing plans  satisfy the 
optimality conditions. 

\begin{proposition}\label{a.e. LAC}
  Consider $\mu_{0}\in (\M(X),\HK_{\dd})$ such that
  $(X,\sfd_X)$ satifies $m$-LAC for $\mu_{0}$-a.e.\ $x_{0}\in
  X$. Then, $(\M(X),\HK_{\dd}(X))$ satisfies $m$-LAC at $\mu_{0}.$
\end{proposition}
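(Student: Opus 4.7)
The plan is to represent each geodesic emanating from $\mu_0$ in $(\M(X),\HK_\dd)$ by an optimal cone transport plan (Theorem~\ref{thm:OTcone}), combine these plans into a single multi-marginal measure on $\mfC^{m+1}$, and then transfer the pointwise $m$-LAC on $X$ to the global $m$-LAC on $\M(X)$ by integration. The argument is in spirit the Savar\'e-type proof for the Wasserstein space, but the cone $(\mfC,\sfd_\mfC)$ itself does not satisfy $m$-LAC at the apex, so the construction must be done carefully so that the apex never enters.

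First, for each of the $m$ non-trivial constant-speed geodesics $\bm\mu_{0i}:[0,1]\to \M(X)$ with $\bm\mu_{0i}(0)=\mu_0$ and $\bm\mu_{0i}(1)=\mu_i$, I would invoke Theorem~\ref{thm:OTcone} together with Remark~\ref{dtilde} to obtain an optimal plan $\Lambda_{0i}\in\M(\mfC\x\mfC)$ with $\mfP(\pi^0_\sharp\Lambda_{0i})=\mu_0$, $\mfP(\pi^1_\sharp\Lambda_{0i})=\mu_i$, supported on pairs $(z_0,z_i)=([x_0,r_0],[x_i,r_i])$ with $\dd\,\sfd_X(x_0,x_i)<\pi/2$. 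Each $\Lambda_{0i}$ lifts $\bm\mu_{0i}$ to a Wasserstein geodesic on $(\cP_2(\mfC),\sfW_{\sfd_{\mfC,\dd}})$ whose $\mfP$-projection returns $\bm\mu_{0i}$. Using the dilation transformation $\dil\theta$ of \eqref{eq:40} together with the invariance \eqref{eq:41}, I would align the left marginals $(\pi^0)_\sharp\Lambda_{0i}$ into a common $\lambda_0\in\cP_2(\mfC)$ with $\mfP\lambda_0=\mu_0$, and by iterated application of the gluing lemma build a single $\Lambda\in\M(\mfC^{m+1})$ whose two-point projections $(\pi_0,\pi_i)_\sharp\Lambda$ agree with the rescaled $\Lambda_{0i}$.

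Since $\mu_0$-a.e.\ $x_0\in X$ satisfies $m$-LAC in $X$, for $\Lambda$-a.e.\ $(z_0,\dots,z_m)$ with $z_j=[x_j,r_j]$ and $r_0>0$ the space $X$ satisfies $m$-LAC at $x_0$. Given the coefficients $b_1,\dots,b_m\ge 0$ of the target inequality, I would choose pointwise weights $b_i^\mfC=b_i^\mfC(z_0,\dots,z_m)$ tailored to \eqref{Arka} and the ``square plus non-negative'' decomposition used in the proof of Theorem~\ref{Cones and LAC pointwise}(b). This yields the pointwise inequality
\[
\sum_{i,j=1}^m b_i^\mfC b_j^\mfC \cos\varangle_{\rmu\rmp}(\zz_{0i},\zz_{0j})\;\ge\;0 .
\]
Integrating against $\Lambda$ and invoking the Otto-type characterization of upper angles on $(\M(X),\HK_\dd)$ as a cone-integrated inner product on the tangent fields then yields $\sum_{i,j} b_ib_j\cos\varangle_{\rmu\rmp}(\bm\mu_{0i},\bm\mu_{0j})\ge 0$, which is the desired $m$-LAC at $\mu_0$.

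The hard part will be the construction of the common multi-marginal $\Lambda$ with prescribed bivariate projections: the optimal plans $\Lambda_{0i}$ are determined only up to the $\dil\theta$-action on radial coordinates, so aligning their left marginals across the $m$ indices demands a careful use of the dilation invariance combined with disintegration and a Kolmogorov-type gluing in $\mfC$. A secondary difficulty is interchanging $\limsup$ with the integral against $\Lambda$ when passing from the pointwise cosines to the upper angles, which I would handle by a weak-convergence argument along a sequence $t_k,s_k\downarrow 0$ realising $\varangle_{\rmu\rmp}(\bm\mu_{0i},\bm\mu_{0j})$, in the spirit of Savar\'e's Wasserstein argument. Finally, the contribution of the subset $\{r_0=0\}$ (pure mass creation at $x_0$) contributes only a perfect square through \eqref{Arka}, hence stays non-negative and does not obstruct the conclusion.
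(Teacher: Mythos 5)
Your overall strategy coincides with the paper's: lift the geodesics $\mm_{0i}$ to Wasserstein geodesics on the cone $(\mfC,\sfd_{\mfC,\dd})$ with a common starting measure $\lambda_0$, glue them into a single plan on tuples of cone geodesics, transfer the pointwise $m$-LAC from $X$ to $\mfC\setminus\{\boldsymbol 0\}$ via Theorem \ref{Cones and LAC pointwise}(b), and integrate. However, the step that actually carries the proof is missing. The link between the pointwise cosines on the cone and the upper angles $\varangle_{\rmu\rmp}(\mm_{0i},\mm_{0j})$ in $(\M(X),\HK_\dd)$ is \emph{not} an ``Otto-type characterization of upper angles as a cone-integrated inner product''; no such identity is available or needed. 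What the paper establishes is the one-sided, per-pair estimate
\[
a^2\cos\big(\varangle_{\rmu\rmp}(\mm_{0i},\mm_{0j})\big)\;\ge\;\int\cos\big(\varangle_{\rmu\rmp}(\zz_{0i},\zz_{0j})\big)\,\d\bfLambda^{\mfC\setminus\{\boldsymbol 0\}}
\]
(with $a$ the common speed), obtained by bounding $\HK^2_\dd(\mm_{0i}(t),\mm_{0j}(s))$ from above by the cost of the glued plan for the modified metric $\widetilde\sfd_{\mfC,\dd}$ of Remark \ref{dtilde} and then applying Fatou's lemma. Because this estimate is one-sided and per pair, it only combines with the pointwise cone inequality if you test with the \emph{same fixed} coefficients $b_1,\dots,b_m$ everywhere. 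Your plan to use point-dependent weights $b_i^\mfC(z_0,\dots,z_m)$ ``tailored to \eqref{Arka}'' severs exactly this link: $\int\sum_{i,j} b_i^\mfC b_j^\mfC\cos(\cdot)\,\d\Lambda\ge 0$ with varying weights says nothing about $\sum_{i,j} b_ib_j\cos\varangle_{\rmu\rmp}(\mm_{0i},\mm_{0j})$. The tailoring is also unnecessary, since Theorem \ref{Cones and LAC pointwise}(b) already yields the pointwise inequality for arbitrary fixed nonnegative coefficients.

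Your handling of the apex is also wrong in mechanism. Formula \eqref{Arka} and the ``square plus nonnegative'' decomposition are only valid for $z_0\neq\boldsymbol 0$; at the apex the upper angle between two geodesics ending at $[x_i,r_i]$ and $[x_j,r_j]$ equals $\dd\,\sfd_X(x_i,x_j)\wedge\pi$, whose cosine may be negative, and the cone need not satisfy $m$-LAC at $\boldsymbol 0$ at all (parts (c) and (d) of Theorem \ref{Cones and LAC pointwise}). The correct reason the apex does not obstruct the argument is the $\pi/2$ cut-off in $\widetilde\sfd_{\mfC,\dd}$: on the set where the curves start at $\boldsymbol 0$ the integrand
\[
\sfd^{2}_{\mfC,\dd}(\boldsymbol 0,\zz_{0i}(t))+\sfd^{2}_{\mfC,\dd}(\boldsymbol 0,\zz_{0j}(s))-\widetilde\sfd^{\,2}_{\mfC,\dd}(\zz_{0i}(t),\zz_{0j}(s))
=2\,\rr_{0i}(t)\,\rr_{0j}(s)\cos_{\pi/2}\!\big(\dd\,\sfd_X(\cdot,\cdot)\big)\;\ge\;0
\]
is nonnegative term by term, so this contribution can simply be discarded before passing to the limit. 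With the cut-off at $\pi$ instead of $\pi/2$ this fails, which is precisely why Remark \ref{dtilde} is invoked.
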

\begin{proof}
  For the proof, we are going to utilize the cone representation
  introduced in Section \ref{su:conespace}. Let
  $\mm_{01},\dots,\mm_{0m}$ be geodesics connecting $\mu_{0}\in\M(X),$
  with $\mu_{i}\in\M(X), i=\{1,\dots,m\}.$ By an application of
  \cite[Thm.\,8.4]{LiMiSa14?OETP}, we can find geodesics
  $\lala_{01},\dots,\lala_{0m}$ in $\mathcal{P}(\mfC),$ such that
  $\mfP\lala_{0i}(t)=\mm_{0i}(t)$ (the fact that we can have
  $\lala_{0i}(0)$ to be equal to some fixed $\lambda_{0}$ for
  $i=1,\dots,m$ is given by \cite[Lemma 7.10]{LiMiSa14?OETP}). By
  \cite[Thm.\,6]{Lisini2006a} we can find optimal geodesic plans
  $\bfLambda_{0\rightarrow i}\in\cP(C[0,1];\mfC)$ in the sense that
  $(e_{t})_{\sharp}\bfLambda_{0\rightarrow i}=\bflambda_{0i}(t).$ By a
  refined version of the glueing lemma we can find a plan
  $\bm{\Lambda}\in\cP((C([0,1];\mfC)^{m}),$ such that
  $\pi^{0\rightarrow i}_{\sharp} \bfLambda=\bfLambda_{0\rightarrow
    i}$. For $\bfLambda$-a.e.\ $\zz=( \zz_{01}, \dots,\zz_{0m})$ we
  have that $\zz_{01},\dots,\zz_{0m}$ are geodesics and
  $\zz_{01}(0)=\dots=\zz_{0m}(0)$. We split the measure $\bfLambda$ in
  two parts $\bfLambda^{\{\mathfrak{\boldsymbol{0}}\}}$ and
  $\bfLambda^{\mfC \setminus\{\mathfrak{\boldsymbol{0}}\}},$ such that
  $\bfLambda^{\{\mathfrak{\boldsymbol{0}}\}}(\zz_{0i}(0) =
  \{\mathfrak{\boldsymbol{0}}\})= \bfLambda(\zz_{0i}(0) =
  \{\mathfrak{\boldsymbol{0}}\})$ and
  $\bfLambda^{\mfC\setminus\{\mathfrak{\boldsymbol{0}}\}}
  (\zz_{0i}(0)\neq \{\mathfrak{\boldsymbol{0}}\}) =
  \bfLambda(\zz_{0i}(0)\neq\{\mathfrak{\boldsymbol{0}}\})$.  For
  $\bfLambda^{\mfC\setminus\{\mathfrak{\boldsymbol{0}}\}}$ let us set
  $\theta_{ij}(\zz)=\varangle_{\rmu\rmp}(\zz_{0i},\zz_{0j}).$ Since
  $m$-LAC is satisfied for $\mu_{0}$-a.e.\ $x_{0}$ in $(X,\sfd_X),$ by
  an application of Theorem \ref{Cones and LAC pointwise}, we have
  that $m$-LAC is satisfied for $(e_{t})_{\sharp}\pi^{0\rightarrow
    i}_{\sharp}\bfLambda^{\mfC\setminus\{\mathfrak{\boldsymbol{0}}\}}$-a.e.\ $z_{0}$
  in $(\mfC,\sfd_\mfC),$ and therefore for
  $\bfLambda^{\mfC\setminus\{\mathfrak{\boldsymbol{0}}\}}$-a.e.\ $\zz=(
  \zz_{01}, \dots,\zz_{0m}).$ We will assume without any loss of
  generality that all geodesics have length equal to $a.$ By applying
  Remark \ref{dtilde},  where we introduced
  $\widetilde\sfd_{\calC,\dd}$ with the cut-off $\pi/2$ instead of
  $\pi$ as in $\sfd_{\calC,\dd}$, we obtain
\begin{align*}
&a^{2} \cos\varangle_{\rmu\rmp}(\mm_{0i},\mm_{0j}) =
\liminf_{s,t\downarrow 0} \frac{1}{2st}\left(\HK^{2}
_{\dd}(\mu_{0},\mm_{0i}(t)){+}\HK^{2}
_{\dd}(\mu_{0},\mm_{0j}(s)){-}\HK^{2}_{\dd}(\mm_{0i}(t),\mm_{0j}(s))\right)
\\ 
&\geq
\liminf_{s,t\downarrow 0}\frac{1}{2st}\left(W^{2}_{\sfd_{\mfC,\dd}} 
(\lambda_{0},\lala_{0i}(t)){+}W^{2}_{
	\sfd_{\mfC,\dd}}(\lambda_{0},\lala_{0j}(s)){-}W^{2}_{
	\widetilde{\sfd}_{\mfC,\dd}}(\lala_{0i}(t),\lala_{0j}(s))\right)\\
& \geq \liminf_{s,t\downarrow
	0} \frac{1}{2st}\int\left( 	\sfd_{\mfC,
	\dd}^{2}(z_{0},\zz_{0i}(t)){+} 	\sfd_{\mfC,
	\dd}^{2}(z_{0},\zz_{0j}(s)){-}\widetilde{	\sfd}_{\mfC,
	\dd}^{2}(\zz_{0i}(t),\zz_{0j}(s))\right) \d  \bfLambda\\ 
&\geq \liminf_{s,t\downarrow 0} 
\frac{1}{2st} \int\left( \sfd_{\mfC,
	\dd}^{2}(\mathfrak{\boldsymbol{0}},\zz_{0i}(t)){+} 	\sfd_{\mfC,
	\dd}^{2}(\mathfrak{\boldsymbol{0}},\zz_{0j}(s)){-}\widetilde{
	\sfd}_{\mfC,\dd}^{2}(\zz_{0i}(t),\zz_{0j}(s))\right)
 \d  \bfLambda^{\{\mathfrak{\boldsymbol{0}}\}}\\   
&\quad +\liminf_{s,t\downarrow 0} \frac{1}{2st} \int\left(\sfd_{\mfC,
	\dd}^{2}(z_{0},\zz_{0i}(t)){+} \sfd_{\mfC, \dd}^{2}
(z_{0},\zz_{0j}(s)){-}\widetilde{\sfd}_{\mfC,
	\dd}^{2}(\zz_{0i}(t),\zz_{0j}(s))\right) \d \bfLambda^{\mfC
	\setminus\{\mathfrak{\boldsymbol{0}}\}}. 
\end{align*} 
The first term in the last sum is strictly positive. For the second term, we 
are able to use $\widetilde{\sfd}_{\mfC, \dd}^{2}(\zz_{0i}(t),\zz_{0j}(s))\leq \sfd_{\mfC, \dd}^{2}(\zz_{0i}(t),\zz_{0j}(s))$.
Therefore, by applying Fatou's lemma we have 
\begin{equation*}
\begin{split}
& a^2  \cos\varangle_{\rmu\rmp}(\mm_{0i},\mm_{0j})\\
&\geq \!\int\liminf_{s, t \downarrow 0} \frac{1}{2st}\left(\sfd_{\mfC,
	\dd}^{2}(z_{0},\zz_{0i}(t)){+}\sfd_{\mfC,
	\dd}^{2}(z_{0},\zz_{0j}(s)) {-} \sfd_{\mfC,
	\dd}^{2}(\zz_{0i}(t),\zz_{0j}(s))\right)  \d \bfLambda^{\mfC
	\setminus\{\mathfrak{\boldsymbol{0}}\}}\\
&\geq
\int \cos(\theta_{ij}(\zz )) \,\d  
 \bfLambda^{\mfC \setminus\{\mathfrak{\boldsymbol{0}}\}}.
\end{split}
\end{equation*}
Thus, applying part (b) of Theorem \ref{Cones and LAC}  for
every choice of positive $b_{i}$ ($i=1,\dots,m$) we find 
\begin{equation*}
\sum_{i,j=1}^{m}\cos(\mm_{0i},\mm_{0j})b_{i}b_{j}\geq\frac{1}{a^2}\int
\left(\sum_{i,j=1}^{m}\cos(\theta_{ij}(\zz))b_{i}b_{j}\right) \, \d
 \bfLambda^{\mfC \setminus\{\mathfrak{\boldsymbol{0}}\}}\geq 0,
\end{equation*}
which is the desired result for $\mu_{0}$. 
\end{proof}

We conclude this subsection with the following main result.

\begin{theorem} The space 
 $(X,\sfd_{X})$ satisfies $m$-LAC, if and only if
  the space $(\M(X),\HK_{\dd})$ satisfies $m$-LAC, if and
  only if the space  $(\mathcal{P}(X),\SHK_{\dd})$ satisfies $m$-LAC.
\end{theorem}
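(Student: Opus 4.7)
The plan is to establish the three-way equivalence by a circular chain of implications
$(X,\sfd_X) \Longrightarrow (\M(X),\HK_\dd) \Longrightarrow (\cP(X),\SHK_\dd) \Longrightarrow (X,\sfd_X)$,
using the tools already assembled in the paper. The first implication is an immediate consequence of Proposition~\ref{a.e. LAC}: if every point of $X$ satisfies $m$-LAC, then for every $\mu_0\in\M(X)$ the hypothesis holds $\mu_0$-a.e., yielding $m$-LAC for $(\M(X),\HK_\dd)$ at $\mu_0$. The second implication is equally short: Theorem~\ref{thm:diam} identifies $(\M(X),\HK_\dd)$ with the cone over $(\cP(X),\SHK_\dd)$, so each $\nu\in\cP(X)$ corresponds to the non-apex point $[\nu,1]$, and Theorem~\ref{Cones and LAC pointwise}(a) transfers $m$-LAC from the cone to the base at that point.

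The substantive step is the third implication, for which I would use the Dirac embedding $\iota:X\to\cP(X)$, $x\mapsto\delta_x$, and show that it is a rescaled isometry on small balls whose geodesics lift to geodesics in $(\cP(X),\SHK_\dd)$. First I would compute $\HK_\dd^2(\delta_{x_0},\delta_{x_1})$ directly from the $\LET_\dd$ formulation of Theorem~\ref{thm:LET}: admissible plans have the form $\bfH=a\,\delta_{(x_0,x_1)}$ with $a\ge 0$, yielding
\begin{equation*}
\LET_\dd(\bfH;\delta_{x_0},\delta_{x_1}) \;=\; 2F(a) + a\,L_\dd(\sfd_X(x_0,x_1)).
\end{equation*}
For $\dd\,\sfd_X(x_0,x_1)<\pi/2$ the minimum is attained at $a=\cos(\dd\,\sfd_X(x_0,x_1))$, giving $\HK_\dd^2(\delta_{x_0},\delta_{x_1}) = 2-2\cos(\dd\,\sfd_X(x_0,x_1))$, so that by \eqref{eq:Def.SHK} we get $\SHK_\dd(\delta_{x_0},\delta_{x_1})=\dd\,\sfd_X(x_0,x_1)$ on that range. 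In particular, for a constant-speed geodesic $\xx_{01}\in\Geod(x_0,x_1)$ with $\dd\,\sfd_X(x_0,x_1)<\pi/2$, the curve $t\mapsto\delta_{\xx_{01}(t)}$ is a constant-speed geodesic in $(\cP(X),\SHK_\dd)$.

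With this isometric embedding in hand, given $m$ geodesics $\xx_{01},\dots,\xx_{0m}$ emanating from $x_0\in X$, I would restrict each to a short initial piece (permissible because upper angles are defined via $\limsup_{s,t\to 0}$ and thus depend only on infinitesimal behaviour) and pass to the Dirac images $\nu_{0i}(t):=\delta_{\xx_{0i}(t)}$, which are geodesics in $(\cP(X),\SHK_\dd)$ starting at $\delta_{x_0}$. Since the planar comparison quotient $a_0$ is homogeneous of degree zero in the distances, the factor $\dd$ cancels and
\begin{equation*}
\widetilde{\varangle}_0\bigl(\delta_{x_0};\nu_{0i}(s),\nu_{0j}(t)\bigr) \;=\; \widetilde{\varangle}_0\bigl(x_0;\xx_{0i}(s),\xx_{0j}(t)\bigr),
\end{equation*}
so $\varangle_{\rmu\rmp}(\nu_{0i},\nu_{0j})=\varangle_{\rmu\rmp}(\xx_{0i},\xx_{0j})$. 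Applying the assumed $m$-LAC of $(\cP(X),\SHK_\dd)$ at $\delta_{x_0}$ to the $\nu_{0i}$ and arbitrary weights $b_i\ge 0$ yields the $m$-LAC inequality for the $\xx_{0i}$ at $x_0$, and since $x_0$ was arbitrary, $(X,\sfd_X)$ satisfies $m$-LAC. The main subtlety I expect is ensuring that the Dirac curves are truly geodesics in $(\cP(X),\SHK_\dd)$ and not merely length-minimising sub-paths, but this is secured by the explicit distance formula above together with the triangle inequality.
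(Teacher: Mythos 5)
Your proposal is correct and uses essentially the same ingredients as the paper's proof: Proposition~\ref{a.e. LAC} for $(X,\sfd_X)\Rightarrow(\M(X),\HK_\dd)$, the cone-transfer result (Theorem~\ref{Cones and LAC pointwise}/Corollary~\ref{Cones and LAC} together with Theorem~\ref{thm:diam}) to pass between $(\M(X),\HK_\dd)$ and $(\cP(X),\SHK_\dd)$, and Dirac measures for the return to $(X,\sfd_X)$. The only cosmetic difference is that you arrange the three implications as a cycle and carry out the Dirac step in $(\cP(X),\SHK_\dd)$ — making explicit the computation $\SHK_\dd(\delta_{x_0},\delta_{x_1})=\dd\,\sfd_X(x_0,x_1)$ for $\dd\,\sfd_X(x_0,x_1)<\pi/2$ — whereas the paper performs it in $(\M(X),\HK_\dd)$ and leaves the corresponding distance formula implicit.
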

\begin{proof} We simple collect the results from above. 
\\[0.4em]
\underline{$((X,\sfd_{X})\Rightarrow (\M(X),\HK_{\dd}))$:} It is a
straightforward application of Proposition \ref{a.e. LAC}.
\\[0.4em]
\underline{$((\M(X),\HK_{\dd}) \Rightarrow (X,\sfd_{X}))$:}  We just
use Dirac measures, and the fact that geodesics stay within the set of
Dirac measures.
\\[0.4em]
\underline{$((\M(X),\HK_{\dd}) \Leftrightarrow (\mathcal{P}(X),\SHK_{\dd}))$:} 
The proof is a straightforward  application of Theorem \ref{Cones and LAC} part (d), using that $(\mathcal{P}(X),\SHK_{\dd}))$ has diameter less than $\pi/2$ (see Theorem \ref{thm:diam}.)
\end{proof}

\subsection{$K$-semiconcavity on sets of measures with doubling
  properties}\label{su:KSemiconcave}

Here we are going to provide results related to $K$-semiconcavity. We
will start with a general lemma that gives an estimate for the total
mass of the minimizer in $\LET(\cdot;\mu_0,\mu_1)$ (see Theorem \ref{thm:LET}).
By $\calB(X)$ we denote the collection of all Borel sets in
$(X,\sfd_X)$. 
\begin{lemma}
\label{totaleta}
Let $\mu_{0},\,\mu_{1}\in \M(X),$ and let $\bfH_{01}$ be a minimizer
for $\LET_{\dd}(\cdot;\mu_0,\mu_1),$ then 
\begin{equation}
 \label{eq:H.XX}
\bfH_{01}(X\times X) \leq \sqrt{\mu'_{0}(X)\mu'_{1}(X)}\leq
  \sqrt{\mu_{0}(X)\mu_{1}(X)},
\end{equation} 
where $(\mu'_{0},\mu'_{1})$ is the reduced couple of
$(\mu_{0},\mu_{1}).$ Furthermore,  we
have 
\begin{equation}
  \label{eq:H.AX}
  \bfH_{01}(A\times X)\leq 
\sqrt{\mu'_{0}(A)\mu'_{1}\left(A_{\frac{\pi}{2\dd}}\right)} \ 
\text{ for all } A\in \calB(X) ,
\end{equation} 
where $A_{b}=\set{y \in X}{\forall\, x\in A:\;\sfd_{X}(x,y)\leq b}$. Finally, if
$X\subset\mathbb{R}^{d}$ and $\mu_{0},\,\mu_{1} \ll \mathcal{L}$,  and
$T:X \to X$ is a function whose graph is the support of $\bfH_{01} $ (such a function exists by \cite[Theorem 6.6]{LiMiSa14?OETP}),   then
\begin{equation}
\label{secondpart}
\bfH_{01}(A\times T(A))\leq\sqrt{\mu'_{0}(A)\mu'_{1}(T(A))} \ {
\text{ for all } A\in \calB(X).}
\end{equation}
\end{lemma}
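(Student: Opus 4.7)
The plan is to derive all three inequalities from a single master estimate, namely
\[
\bfH_{01}(A\times B)\;\le\;\sqrt{\mu_0'(A)\,\mu_1'(B)}\quad\text{for every pair of Borel sets } A,B\subseteq X,
\]
using only the optimality conditions from Theorem \ref{thm:optimality.cond}. The crucial observation is \eqref{eq:ww}: on $\supp(\bfH_{01})\subseteq A_0'\times A_1'$ one has
$\sigma_0(x_0)\sigma_1(x_1)=\cos^2\!\bigl(\dd\,\sfd_X(x_0,x_1)\bigr)\le 1$,
so that $1\le 1/\sqrt{\sigma_0(x_0)\sigma_1(x_1)}$ $\bfH_{01}$-a.e. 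Integrating the constant $1$ against $\mathbf 1_{A\times B}\d\bfH_{01}$ and applying Cauchy--Schwarz with the splitting $\sqrt{1/\sigma_0(x_0)}\cdot\sqrt{1/\sigma_1(x_1)}$ yields
\[
\bfH_{01}(A\times B)\;\le\;\Bigl(\iint_{A\times B}\tfrac{\d\bfH_{01}}{\sigma_0(x_0)}\Bigr)^{1/2}\Bigl(\iint_{A\times B}\tfrac{\d\bfH_{01}}{\sigma_1(x_1)}\Bigr)^{1/2}\;\le\;\Bigl(\int_{A}\tfrac{\d\eta_0}{\sigma_0}\Bigr)^{1/2}\Bigl(\int_{B}\tfrac{\d\eta_1}{\sigma_1}\Bigr)^{1/2},
\]
where the last inequality uses the marginal relations $(\pi_i)_\sharp\bfH_{01}=\eta_i$ together with the inclusions $A\times B\subseteq A\times X$ and $A\times B\subseteq X\times B$.

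The reduction is completed by the identity $\int_A \d\eta_i/\sigma_i=\mu_i'(A)$, valid for every Borel $A\subseteq X$. Since $\eta_i=\sigma_i\mu_i$, the integral evaluates to $\mu_i(A\cap\{\sigma_i>0\})$ under the convention $0\cdot\infty=0$. The optimality conditions \eqref{eq:32} force $\sigma_i=0$ on $\supp(\mu_i'')$ and $\sigma_i>0$ on its complement, so $\{\sigma_i>0\}=X\setminus\supp(\mu_i'')$ up to a $\mu_i$-null set. Since $\mu_i$ restricted to $A_i''$ equals $\mu_i''$ and hence carries no mass outside $\supp(\mu_i'')$, one computes
\[
\mu_i\bigl(A\setminus\supp(\mu_i'')\bigr)=\mu_i(A)-\mu_i\bigl(A\cap\supp(\mu_i'')\bigr)=\mu_i(A)-\mu_i''(A)=\mu_i'(A),
\]
which proves the identity and therefore the master estimate.

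The three assertions then follow by tailored choices of $A$ and $B$. For \eqref{eq:H.XX}, taking $A=B=X$ gives the first inequality and the second is the trivial bound $\mu_i'\le\mu_i$. For \eqref{eq:H.AX}, since $L_\dd\equiv+\infty$ on $\{\sfd_X\ge\pi/(2\dd)\}$, the plan $\bfH_{01}$ puts no mass on $A\times(X\setminus A_{\pi/(2\dd)})$, hence $\bfH_{01}(A\times X)=\bfH_{01}(A\times A_{\pi/(2\dd)})$, and the master estimate with $B=A_{\pi/(2\dd)}$ yields the claim. For \eqref{secondpart}, the hypothesis $\supp(\bfH_{01})=\mathrm{graph}(T)$ immediately gives $\bfH_{01}(A\times X)=\bfH_{01}(A\times T(A))$, and the master estimate with $B=T(A)$ concludes the proof. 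The only real technicality, rather than obstacle, is the measurability of $T(A)$ in part \eqref{secondpart}: since $T$ is Borel and $\mu_1'\ll\mathcal{L}$ is a Radon measure, the image $T(A)$ is an analytic, hence $\mu_1'$-measurable, set, so $\mu_1'(T(A))$ is well-defined. The part requiring most care is the bookkeeping identity $\int_A\d\eta_i/\sigma_i=\mu_i'(A)$, where the complementary clauses of the optimality conditions must be combined with the structure of the decomposition $\mu_i=\mu_i'+\mu_i''$.
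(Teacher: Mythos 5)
Your proof is correct, and its first part coincides with the paper's own argument: both extract $1=\cos(\dd\,\sfd_X(x_0,x_1))/\sqrt{\sigma_0(x_0)\sigma_1(x_1)}$ from the optimality condition \eqref{eq:ww}, bound the cosine by $1$, apply Cauchy--Schwarz, and use the identity $\int_A \sigma_i^{-1}\,\d\eta_i=\mu_i'(A)$, whose verification via \eqref{eq:32} and $\supp(\mu_i'')\subset A_i''$ you carry out correctly. Where you genuinely diverge is in the localized estimates \eqref{eq:H.AX} and \eqref{secondpart}. The paper proves these by restricting $\bfH_{01}$ to $A\times X$, manufacturing sub-marginals $\widetilde\mu_1'$ and $\overline{\mu}_1'$, arguing that the restricted plans are again optimal for the corresponding sub-problems (because their costs add up to the total cost), and then re-applying the first part to the sub-problem. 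You instead notice that the Cauchy--Schwarz step localizes for free: restricting the integration to $A\times B$ and enlarging the two factors to $A\times X$ and $X\times B$ gives the master estimate $\bfH_{01}(A\times B)\le\sqrt{\mu_0'(A)\,\mu_1'(B)}$ for all Borel $A,B$, from which the three claims follow with $B=X$, $B=A_{\pi/(2\dd)}$ (finite cost forces $\bfH_{01}$ to vanish where $\dd\,\sfd_X\ge\pi/2$), and $B=T(A)$. This is shorter and sidesteps the optimality-of-restrictions claim, which is the one step the paper does not fully justify. Two minor remarks: your reduction for \eqref{eq:H.AX} (exactly like the paper's) reads $A_b$ as the closed $b$-neighbourhood $\set{y}{\exists\, x\in A:\ \sfd_X(x,y)\le b}$ rather than the literal $\forall$ in the statement --- this is clearly the intended meaning, as the use of $B(x_1,\sfd_1)_{\pi/(2\dd)}$ in Lemma \ref{uniform bounds of density} confirms; and your measurability remark on $T(A)$ implicitly requires extending the master estimate from Borel to universally measurable $B$, which follows by sandwiching $T(A)$ between Borel sets of equal $\mu_1'$-measure.
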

\begin{proof}
By \eqref{eq:61}, $(\mu_{0},\mu_{1})$ and $(\mu'_{0},\mu'_{1}),$ share
the same optimal plans. Let $\sigma_{i}$ be the optimal densities
$\frac{d\bfeta_{i}}{d\mu'_{i}}$. Then, the optimality condition
\eqref{eq:ww}, which is valid in the support of $H_{01}$, gives    
\begin{align*}
&\bfH^{2}_{01}(X\times X) =\bigg( \int_{ A_{0}'\times A_{1}'} 1
\d\bfH_{01} \bigg)^2 \overset{\eqref{eq:ww}}{=}\bigg(
\int_{A_{0}'\times A_{1}' } \frac{\cos\big( \dd \sfd_X(x_0,x_1)\big)}
{\sqrt{\sigma_0(x_0) \sigma_1(x_1) }} 
\d\bfH_{01} \bigg)^2\\
&\overset{\cos \leq 1}{\leq}
\bigg( \int_{A_{0}'\times A_{1}' }
\frac{ 1 }{\sqrt{\sigma_0(x_0) \sigma_1(x_1) }}
\d \bfH_{01} \bigg)^2
\overset{\text{C-S}}{\leq}
\bigg(\int_{A_{0}'\times A_{1}'} \frac1{\sigma_0(x_0)} \d \bfH_{01} \bigg)
\bigg(\int_{A_{0}'\times A_{1}'} \frac1{\sigma_1(x_1)} \d \bfH_{01} \bigg)\\
&=\int_{A'_{0}} \frac{1}{\sigma_0} \d \eta_0 \;
\int_{A'_{1}} \frac{1}{\sigma_1} \d \eta_1
= \int_{A_{0}'} \d \mu_0 \;\int_{A'_{1}} \d \mu_1 \ = \ \mu'_0(X)\,\mu'_1(X).
\end{align*}
For showing \eqref{eq:H.AX} we define 
\[\sigma_{1,A}=\frac{\d\bfH_{01}(A\times\cdot)}{\d\mu'_{1}}\hspace{8pt}  \text{and}\hspace{8pt} 
\sigma_{1}=\frac{d\bfH_{01} (X\times \cdot)}{\d\mu'_{1}}.\]
 such that $0\leq \sigma_{1,A} \leq \sigma_{1}$.   We define two
measures $\widetilde{\mu}'_{1}$ and $\overline{\mu}'_{1}$ via 
\begin{equation}\label{resmeasures}
\widetilde{\mu}'_{1}(B)=\int_{B}\frac{\sigma_{1,A}(x_{1})}{\sigma_{1}(x_{1})}\mu'_{1}(\d
x_{1}) \hspace{16pt} \text{and}
\hspace{16pt}\overline{\mu}'_{1}(B)=\mu'_{1}(B)-\widetilde{\mu}'_{1}(B)
\ \text{ for all } B\in \calB(X).
\end{equation}
We have that $(\bfH_{01}) \res{(A{\times} X)},$ is a plan 
between $(\mu'_0) \res{{A}}$ and $\widetilde{\mu}'_{1}$. In a similar way we
see that $(\bfH_{01}) \res{((X{\setminus} A){\times} X)}$ is a plan
between $(\mu'_0) \res{(X{\setminus} A)}$ and $\overline{\mu}'_{1}.$
Also it is straightforward to see that the sum of the cost of the two
plans is equal to the cost of $\bfH_{01},$ therefore these plans must
be both optimal. Now applying the first part,  i.e.\
\eqref{eq:H.XX}, w we have
\begin{align*}
\bfH_{01}(A\times X)&=\big(\bfH_{01} \res{(A {\times} X)} \big)(X\times
        X)\leq\sqrt{ \mu'_0(A)\widetilde{\mu}'_1(X)} \\
&\leq\sqrt{
          \mu'_0(A)\widetilde{\mu}'_1\left(A_{\frac{\pi}{2\dd}}\right)}\leq\sqrt{
          \mu'_0(A)\mu'_1\left(A_{\frac{\pi}{2\dd}}\right)},
\end{align*}
which is the desired result \eqref{eq:H.AX}.
	
Finally, if $\bfH_{01}$ is an optimal plan for $\mu'_0,\mu'_1$, and
$T:X \to X$ is a function whose graph is the support of $\bfH_{01} ,$  then 
$\bfH_{01} \res{(A {\times} T(A))}=\bfH_{01} \res{(A {\times} X)}$ is an optimal plan between
$ \mu'_0 \res{A}$ and $\tilde{\mu}'_1 \res{T(A)}=\tilde{\mu}'_1 ,$ where $\tilde{\mu}'_{1}$ is defined as in \eqref{resmeasures}. Now by applying the same argument as before, we have
\begin{align*}
\bfH_{01}(A\times T(A))&=\big(\bfH_{01} \res{(A {\times} T(A))} \big)(X\times
X)=\big(\bfH_{01} \res{(A {\times} X)} \big)(X\times
X)\leq\sqrt{ \mu'_0(A)\widetilde{\mu}'_1(X)} \\
&\leq\sqrt{ \mu'_0(A)(\widetilde{\mu}'_1\res{T(A)})(X)}\leq \sqrt{ \mu'_0(A)\widetilde{\mu}'_1(T(A))} \leq\sqrt{
	\mu'_0(A)\mu'_1\left(T(A)\right)},
\end{align*}
\end{proof}
\vspace{3pt}

Before we proceed with the main result of this subsection, we are
going to provide some definitions and extra notation. In the
following we use the notation $B(x,r)$ for metric balls in
$(X,\sfd_X)$ and possibly in over metric spaces.

\begin{definition}[Doubling metric space]\label{dbspace}
  A metric space $(X,\sfd_{X})$ is called \emph{doubling}, if for every
  $\mathfrak{D}_{2}\geq \mathfrak{D}_{1}>0,$ there exists a constant
  $C(\mathfrak{D}_{2}/\mathfrak{D}_{1})\geq 1,$ that depends only on
  the ratio, such that every ball of radius $\mathfrak{D}_{2}$ can be covered
  by $C(\mathfrak{D}_{2}/\mathfrak{D}_{1})$ balls of radius
  $\mathfrak{D}_{1}.$ 
\end{definition}

\begin{definition}[Doubling measure on metric space\!]\label{dbmeasure}
  In  $(X,\sfd_{X}),$ a Borel measure $\mathcal{L}$ is
  called \emph{doubling} if for every $\mathfrak{D}_{2}\geq
  \mathfrak{D}_{1}>0,$ it exists a constant
  $\bar{C}(\mathfrak{D}_{2}/\mathfrak{D}_{1})\geq 1,$ that depends
  only on the ratio, that for every $x\in X,$ we have $\mathcal{L}
  (B(x,\mathfrak{D}_{2}))\leq \bar{C}
  (\mathfrak{D}_{2}/\mathfrak{D}_{1})
  \mathcal{L}(B(x,\mathfrak{D}_{1})).$
\end{definition}

In \cite{HKST15SSMM,Hein01LAMS} one can find more information on
doubling spaces and measures. The existence of a doubling measure in
every complete doubling metric space is provided in
\cite[Thm.\,13.3]{Hein01LAMS}. We are mostly interested in
$X=\mathbb{R}^{d}$ or $X=\Omega,$ where $\Omega$ is a compact subset
of $\mathbb{R}^{d}$ with Lipschitz boundary, in which case the
Lebesgue measure is doubling. We are also interested in manifolds of
finite dimension  with  lower bounds on the Ricci curvature,
where the volume measure is doubling, see \cite{Sturm2006,Sturm2006a}.

\begin{definition}[Locally doubling measure]\label{locally doubling}
  In a metric space $(X,\sfd_{X}),$ a Borel measure $\mathcal{L}$ is
  called \emph{locally doubling}, if for every $M>0$ and $M\geq
  \mathfrak{D}_{2}\geq \mathfrak{D}_{1}>0$ there exists a constant
  $\bar{C}_{M}(\mathfrak{D}_{2}/\mathfrak{D}_{1})\geq 1$ that depends
  only on the ratio $ \mathfrak{D}_{2}/\mathfrak{D}_{1}$ and on the
  upper bound $M$ such that for every $x\in X$ we have
  $\mathcal{L}(B(x,\mathfrak{D}_{2}))\leq
  \bar{C}_{M}(\mathfrak{D}_{2}/\mathfrak{D}_{1})
  \mathcal{L}(B(x,\mathfrak{D}_{1})).$
\end{definition}

Since for our result it is easier to work with finite reference
measures, we provide the following useful lemma, where we exchange the
global doubling property with finiteness of the reference measure.

\begin{lemma}
  For every doubling measure $\widetilde{\mathcal{L}}$ we can find a
  finite locally doubling measure $\mathcal{L}$ that is equivalent to
  $\widetilde{\mathcal{L}}$
  (i.e. $\widetilde{\mathcal{L}}\ll\mathcal{L}$ and
  $\mathcal{L}\ll\widetilde{\mathcal{L}}$ ).
\end{lemma}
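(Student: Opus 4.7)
The plan is to construct $\mathcal{L}$ as a weighted measure of the form $\mathcal{L}=\rho\widetilde{\mathcal{L}}$, where $\rho:X\to(0,\infty)$ is a strictly positive Borel function that decays fast enough at infinity to guarantee finiteness, yet has controlled multiplicative oscillation on bounded balls to preserve a local doubling property. A natural candidate is $\rho(x)=e^{-\lambda\sfd_X(x,x_0)}$ for some fixed reference point $x_0\in X$ and some $\lambda>0$ to be tuned. Because $\rho>0$ everywhere, the equivalence $\widetilde{\mathcal{L}}\ll\mathcal{L}\ll\widetilde{\mathcal{L}}$ is then automatic from the Radon-Nikodym formula.

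For finiteness, I would use the polynomial volume bound that follows by iterating Definition \ref{dbmeasure}: writing $s=\log_2\bar C(2)$, one gets
\[
\widetilde{\mathcal{L}}\bigl(B(x_0,r)\bigr)\le \bar C(2)\,(1{+}r)^{s}\,\widetilde{\mathcal{L}}\bigl(B(x_0,1)\bigr)\qquad\text{for every }r\ge 0,
\]
by choosing $k=\lceil\log_2(1{+}r)\rceil$ and applying the doubling inequality $k$ times to the ball $B(x_0,2^k)$. Then the layer-cake formula
\[
\mathcal{L}(X)=\int_X e^{-\lambda\sfd_X(y,x_0)}\,\widetilde{\mathcal{L}}(\d y)=\lambda\int_0^{\infty}e^{-\lambda r}\,\widetilde{\mathcal{L}}\bigl(B(x_0,r)\bigr)\,\d r
\]
converges for every $\lambda>0$ by the polynomial bound above. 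So finiteness holds for any positive $\lambda$.

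For the locally doubling property, fix $M>0$ and $0<\mathfrak{D}_1\le\mathfrak{D}_2\le M$, and any $x\in X$. On a ball $B(x,\mathfrak{D}_2)$, the triangle inequality gives
\[
e^{-\lambda(\sfd_X(x,x_0)+\mathfrak{D}_2)}\le \rho(y)\le e^{-\lambda(\sfd_X(x,x_0)-\mathfrak{D}_2)}\quad\text{for all }y\in B(x,\mathfrak{D}_2),
\]
so combining the elementary sandwich estimates
\[
\mathcal{L}\bigl(B(x,\mathfrak{D}_2)\bigr)\le e^{-\lambda\sfd_X(x,x_0)}e^{\lambda\mathfrak{D}_2}\widetilde{\mathcal{L}}\bigl(B(x,\mathfrak{D}_2)\bigr),\quad \mathcal{L}\bigl(B(x,\mathfrak{D}_1)\bigr)\ge e^{-\lambda\sfd_X(x,x_0)}e^{-\lambda\mathfrak{D}_1}\widetilde{\mathcal{L}}\bigl(B(x,\mathfrak{D}_1)\bigr)
\]
with the global doubling bound $\widetilde{\mathcal{L}}(B(x,\mathfrak{D}_2))\le\bar C(\mathfrak{D}_2/\mathfrak{D}_1)\widetilde{\mathcal{L}}(B(x,\mathfrak{D}_1))$ yields
\[
\mathcal{L}\bigl(B(x,\mathfrak{D}_2)\bigr)\le e^{2\lambda M}\bar C(\mathfrak{D}_2/\mathfrak{D}_1)\mathcal{L}\bigl(B(x,\mathfrak{D}_1)\bigr),
\]
which is precisely the locally doubling condition with constant $\bar C_M(\mathfrak{D}_2/\mathfrak{D}_1):=e^{2\lambda M}\bar C(\mathfrak{D}_2/\mathfrak{D}_1)$, depending only on the ratio and on $M$.

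The main subtlety, and really the only nontrivial point, is recognizing that the doubling assumption in Definition \ref{dbmeasure} gives the polynomial growth bound used for finiteness; once that is in place, the Lipschitz character of the exponent $\lambda\sfd_X(\cdot,x_0)$ automatically converts global doubling into local doubling of $\rho\widetilde{\mathcal{L}}$, with the extra factor $e^{2\lambda M}$ absorbing the oscillation of $\rho$ on balls of radius at most $M$. No compatibility between $\lambda$ and the doubling constant is needed, so any choice such as $\lambda=1$ works.
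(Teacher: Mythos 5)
Your proposal is correct and follows essentially the same route as the paper: both construct $\mathcal{L}$ by multiplying $\widetilde{\mathcal{L}}$ with an exponentially decaying weight in the distance to a fixed base point, and both convert global doubling into local doubling by bounding the oscillation of the weight on balls of radius at most $M$. The only (cosmetic) difference is in the finiteness check: the paper ties the decay base to $(1{+}\bar C(2))^{2}$ and sums over annuli so that a geometric-type series converges, whereas you first extract the polynomial volume growth $\widetilde{\mathcal{L}}(B(x_0,r))\le \bar C(2)(1{+}r)^{s}\widetilde{\mathcal{L}}(B(x_0,1))$ and then observe that any exponential rate $\lambda>0$ suffices — a slightly cleaner observation that decouples $\lambda$ from the doubling constant.
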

\begin{proof}
  For some point $x_{\mathfrak{a}}\in X,$ we define
  $\mathcal{L}(dx)=\frac{1}{(1+\bar{C}(2))^{2\sfd(x_{\mathfrak{a}},x)}}
  \widetilde{\mathcal{L}}(dx).$ For the finiteness of $\mathcal{L},$
  we observe that
\begin{equation}
\begin{split}
 \mathcal{L}(X)\!&=\!\sum_{i=0}^{\infty}\!
   \int_{\overline{B(x_{\mathfrak{a}},i+1)}\setminus
   B(x_{\mathfrak{a}},i)}\!\!\frac{1}{(1 {+}
          \bar{C}(2))^{2\sfd(x_{\mathfrak{a}},x)}}
     \widetilde{\mathcal{L}}(\d x)\!\leq\!
        \sum_{i=0}^{\infty}\!\int_{\overline{B(x_{\mathfrak{a}},i{+}1)}}\!\frac{1}{(1{+}\bar{C}(2))^{2i}}\widetilde{\mathcal{L}}(\d
        x)\\
&\leq\sum_{i=0}^{\infty}\frac{\mathcal{L}(\overline{B(x_{\mathfrak{a}},i{+}1)})}
{(1{+}\bar{C}(2))^{2i}} \leq \mathcal{L}(B(x_{\mathfrak{a}},1)) 
 \sum_{i=0}^{\infty}\frac{\bar{C}(2)^{i+2}}
 {(1{+}\bar{C}(2))^{2i}}<\infty,  
\end{split}
\end{equation}
where $\bar{C}(2)$ is the doubling constant for $\mathcal{L}.$
We also have
\[
\frac{\mathcal{L}B(x,\mathfrak{D}_{2})}{\mathcal{L}B(x,\mathfrak{D}_{1})}\leq\frac{\tilde{\mathcal{L}}B(x,\mathfrak{D}_{2})}{\tilde{\mathcal{L}}B(x,\mathfrak{D}_{1})}\frac{(1+
  \bar{C}(2))^{2(\sfd(x_{\mathfrak{a}},x)+\mathfrak{D}_{1})}}{(1+
  \bar{C}(2))^{2(\sfd(x_{\mathfrak{a}},x)-\mathfrak{D}_{2})}}\leq
\bar{C}(\mathfrak{D}_{2}/\mathfrak{D}_{1})(1+\bar{C}(2))^{2(\mathfrak{D}_{1}+\mathfrak{D}_{2})}. 
\]
Therefore for $M>0,$  we conclude that $\mathcal{L}$ is locally
doubling with
constant $\bar{C}_{M}(\mathfrak{D}_{2}/\mathfrak{D}_{1}):=
\bar{C}(\mathfrak{D}_{2}/\mathfrak{D}_{1})(1+\bar{C}(2))^{6M}$, which
proves the result. 
\end{proof}

For a finite, locally doubling measure $\mathcal{L}$  and
$\delta \in (0,1)$ we define the set
\begin{equation}
\label{eqdef:Mdelta}
\overline{\M}^{\mathcal{L}}_{\delta}(X)=\bigg\{\mu\in\M(X) : \mu\ll\mathcal{L}, \  \delta\leq\frac{d\mu}{d\mathcal{L}}(x)\leq\frac{1}{\delta}, \text{ for } \mathcal{L}\text{-a.e.\ } x\in X \bigg\}.
\end{equation}
For positive numbers $\sfd_{1},\sfd_{2},$ we also define
\begin{equation}
\label{eqdef:Md}
	\widetilde{\M}^{\mathcal{L}}_{\sfd_{1},\sfd_{2}}(X)=\bigg\{\mu\in\M(X)
          : \forall x\in X:\ \sfd_{2}\leq
	\frac{\mu\left(B\left(x,\sfd_{1}\right)\right)} 
         {\mathcal{L}(B\left(x,\sfd_{1}\right))} 
	\leq\frac{1}{\sfd_{2}} \bigg\}.
\end{equation} 
It is straightforward to see that
$\overline{\M}^{\mathcal{L}}_{\delta} (X) \subset
\widetilde{\M}^{\mathcal{L}}_{\sfd_{1},\delta}(X).$ Furthermore all
elements in $\overline{\M}^{\mathcal{L}}_{\delta}(X)$ have total mass
bounded by $\frac{1}{\delta}\mathcal{L}(X)$. The reason that we are
using these two sets instead of just of one of them is that neither is
geodesically closed in $(\M(X),\HK_\dd)$. However, as will be proved
later, for each $\delta>0$ we can find $\tilde{d}_{1},\tilde{d}_{2}>0$
such that for every 
$\mu_{0},\mu_{1}\in\overline{\M}^{\mathcal{L}}_{\delta}(X)$ we have
$\mm_{01}(t)\in \widetilde{\M}^{\mathcal{L}}_{\sfd_{1},\sfd_{2}}(X)$ for all
$t\in [0,1].$

\begin{theorem}[$K$-semiconcavity for $(\M(X),\HK_{\dd}) $]\label{thm:K.Semi} 
  Let $(X,\sfd_{X})$ be a doubling metric space. We also assume that $(X,\sfd_{X})$ satisfies $K$-semiconcavity on every ball $B\left(x,\frac{\pi}{2\dd}\right).$ Finally, let $\mathcal{L}$ be a finite, locally
  doubling measure, and $\overline{\M}^{\mathcal{L}}_{\delta}(X)$ as
  in \eqref{eqdef:Mdelta}. Then, there exists $K'\in\mathbb{R},$ that depends only on $K,\delta$ such that $(\M(X),\HK_{\dd})$ is $K'$-semiconcave on  $\overline{\M}^{\mathcal{L}}_{\delta}(X)$ 
\end{theorem}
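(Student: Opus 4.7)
The strategy is to lift the $K$-semiconcavity of $(X,\sfd_X)$ first to the cone $(\mfC,\sfd_{\mfC,\dd})$ via Proposition \ref{mainKseminconcavitylemma}(A), and then down to $(\M(X),\HK_\dd)$ through the cone representation of Theorem \ref{thm:OTcone} by integrating a pointwise cone estimate against a suitably chosen multi-plan on $\mfC^3$.

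Given $\mu_0,\mu_1,\mu_2\in\overline{\M}^{\mathcal L}_\delta(X)$, I would first invoke \cite[Thm.\,8.4]{LiMiSa14?OETP} to choose optimal cone lifts $\lala_0,\lala_1\in\cP_2(\mfC)$ with $\mfP\lala_i=\mu_i$ realising $W^2_{\sfd_{\mfC,\dd}}(\lala_0,\lala_1)=\HK^2_\dd(\mu_0,\mu_1)$, together with the Wasserstein geodesic $\lala_{01}(t)$ between them on the cone; then $\mm_{01}(t):=\mfP\lala_{01}(t)$ is a constant-speed geodesic in $(\M(X),\HK_\dd)$ depending only on the pair $(\mu_0,\mu_1)$, which shall play the role of the distinguished geodesic in the $K'$-semiconcavity definition uniformly in the observer $\mu_2$. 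After lifting $\mu_2$ to some $\lala_2\in\cP_2(\mfC)$ compatible with the cone formulation, I would use the gluing lemma to produce $\bm\Lambda\in\cP(\mfC^3)$ whose $(0,1)$-marginal is the optimal cone transport plan between $\lala_0$ and $\lala_1$, and whose $(i,2)$-marginals ($i=0,1$) are optimal for the cone transport problems between $\lala_i$ and $\lala_2$.

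The heart of the argument consists in verifying that the pointwise hypotheses of Proposition \ref{mainKseminconcavitylemma}(A) hold $\bm\Lambda$-a.e.\ with constants depending only on $K$ and $\delta$, namely (i) the three base points $x_0,x_1,x_2$ lie in a common ball of radius $\mathcal{D}<\pi/(2\dd)$ in $(X,\sfd_X)$, and (ii) the radii $r_0,r_1,r_2$ are uniformly bounded in some interval $[r_{\min},r_{\max}]$ with $r_{\min}>0$. Claim (i) follows from the optimality conditions of Theorem \ref{thm:optimality.cond}, which confine the supports of the optimal $\bfH_{ij}$ to $\{\dd\sfd_X(x,y)<\pi/2\}$, together with a triangle-inequality argument to collect the three pairwise bounds into a single common ball. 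Claim (ii) is the principal obstacle; I would derive it by combining Lemma \ref{totaleta} (which bounds the total mass of the optimal $\bfH_{ij}$ in terms of the masses $\mu_i(X)$), the optimality identity $\sigma_0(x_0)\sigma_1(x_1)=\cos^2(\dd\sfd_X(x_0,x_1))$ from \eqref{eq:ww}, the density bounds $\delta\leq \d\mu_i/\d\mathcal L\leq 1/\delta$, and the locally doubling property of $\mathcal L$, to pin the densities $\sigma_i=\d\eta_i/\d\mu_i$ into a range $[c(\delta,\dd),C(\delta,\dd)]$ on $\supp\bfH_{ij}$, which in turn yields via the explicit formulas for the cone lifts uniform radial bounds depending only on $\delta$ and $\dd$.

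With (i)--(ii) established, Proposition \ref{mainKseminconcavitylemma}(A) gives $\bm\Lambda$-a.e.\ the estimate
\begin{equation*}
\sfd^2_{\mfC,\dd}(z_2,\zz_{01}(t))\ \geq\ (1{-}t)\sfd^2_{\mfC,\dd}(z_2,z_0)+t\sfd^2_{\mfC,\dd}(z_2,z_1)-K'' t(1{-}t)\sfd^2_{\mfC,\dd}(z_0,z_1),
\end{equation*}
with $K''=K''(K,\delta,\dd)$ depending continuously on its arguments. Integrating against $\bm\Lambda$ and using the optimality of the pairwise marginals and of the lifts turns the right-hand side into $(1{-}t)\HK^2_\dd(\mu_2,\mu_0)+t\HK^2_\dd(\mu_2,\mu_1)-K'' t(1{-}t)\HK^2_\dd(\mu_0,\mu_1)$; the left-hand side is identified with $\HK^2_\dd(\mu_2,\mm_{01}(t))$ along the lines of the argument in Proposition \ref{a.e. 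LAC}, employing the truncated cost $\widetilde{\sfd}_{\mfC,\dd}$ of Remark \ref{dtilde} to align the direction of the inequality. Finally, Corollary \ref{asasa} applied to arbitrary subintervals $[t_1,t_2]\subset[0,1]$ delivers the desired $K'$-semiconcavity on $\overline{\M}^{\mathcal L}_\delta(X)$ with $K'=K''$, which depends only on $K$ and $\delta$ once the ambient parameter $\dd$ of the space is fixed.
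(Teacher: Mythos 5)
Your overall strategy coincides with the paper's: lift to the cone via Theorem \ref{thm:OTcone}, apply the pointwise cone estimate of Proposition \ref{mainKseminconcavitylemma}(A) under uniform bounds on the base-point spread and on the radii, and integrate against a glued multi-plan. You also correctly identify the two uniform bounds that make this work and the right ingredients for them (Lemma \ref{totaleta}, the optimality identity \eqref{eq:ww}, the density bounds, and the doubling properties). However, there are two genuine gaps. First, your estimates are derived only for the triple $\mu_0,\mu_1,\mu_2\in\overline{\M}^{\mathcal L}_\delta(X)$ and yield the three-point inequality comparing $\mm_{01}(t)$ with the \emph{endpoints} $\mu_0,\mu_1$; this alone does not give semiconcavity of $t\mapsto\HK^2_\dd(\mu_2,\mm_{01}(t))$. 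Condition (iii) of Corollary \ref{asasa} requires the same inequality on every subinterval $[t_1,t_2]$, whose endpoints $\mm_{01}(t_1),\mm_{01}(t_2)$ are interior points of the geodesic and in general do \emph{not} belong to $\overline{\M}^{\mathcal L}_\delta(X)$ (the paper stresses that this set is not geodesically closed). One must show that the density and transport-distance bounds propagate to all interior points with constants depending only on $\delta$ and $\dd$; this is exactly the role of the intermediate class $\widetilde{\M}^{\mathcal L}_{\sfd_1,\sfd_2}(X)$ and of Lemma \ref{MdeltatoMd1d2} in the paper, and your proposal contains no substitute for it.

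Second, the integration step as you describe it goes in the wrong direction. The term $\HK^2_\dd(\mu_2,\mm_{01}(t))$ sits on the \emph{left} of the semiconcavity inequality, so after integrating the pointwise cone estimate you must bound $\int \sfd^2_{\mfC,\dd}(z_2,\zz_{01}(t))\,\d\bm\Lambda$ \emph{from above} by $\HK^2_\dd(\mu_2,\mm_{01}(t))$; since $\HK^2_\dd$ is a minimum over couplings of lifts, an arbitrary coupling only gives the reverse inequality. Hence the glued plan must be optimal for the pair $(\lambda_2,\lala_{01}(t))$ at the specific time $t$, \emph{and} the lift $\lala_{01}(t)$ of $\mm_{01}(t)$ must itself be an optimal lift, i.e.\ $\sfW_{\sfd_{\mfC,\dd}}(\lambda_2,\lala_{01}(t))=\HK_\dd(\mu_2,\mm_{01}(t))$. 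The paper secures this by gluing along the time-$t$ fibre and by the explicit dilation construction of Lemma \ref{exist Lambdas}, which produces $\lambda_0,\lambda_1,\lambda_2,\lambda_t$ supported in $\mfC[R_{\min},R_{\max}]$ with all three distances $\sfW_{\sfd_{\mfC,\dd}}(\lambda_i,\lambda_t)$ simultaneously equal to the corresponding $\HK_\dd$ distances. Your $\bm\Lambda$, with all three pairwise marginals prescribed to be optimal, cannot in general be produced by the gluing lemma (only two of the three pairwise marginals can be prescribed), and even if it existed it would control the wrong couplings; the truncated cost $\widetilde{\sfd}_{\mfC,\dd}$ of Remark \ref{dtilde} only helps for terms carrying a minus sign, as in Proposition \ref{a.e. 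LAC}, not here.
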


The result is based on two facts. The first one is Corollary
\ref{Ksemicor}, i.e.\ that for $R_{1},R_{2}>0$ and
$0<\hat{\mathfrak{D}}<\frac{\pi}{2}$ it exists a $K'\in\mathbb{R}$
that depends only on $R_{1},R_{2},\hat{\mathfrak{D}},K,\dd$ such that
for every $x \in \YX$ the space $(\mfC,\sf\sfd_{\mfC,\dd})$
satisfies $K'$-semiconcavity on $B_{\dd\sf\sfd_{\YX}}\left(x,
  \mathfrak{D}\right)\times [R_{1},R_{2}]$. The second is that when
two measures, are ``uniform'' enough, and have bounded densities with
respect to each other, then the transport happens in distances less
than $\frac{\mathfrak{D}}{\dd},$ for some $\mathfrak{D}$ with
$\mathfrak{D}<\pi/{2},$ and also the densities with respect to the
optimal plan are bounded. The result is established via of
several intermediate results.

\begin{lemma}\label{uniform bounds of density} Let $(X,\sfd_{X})$ be
  doubling, $\mathcal{L}$ a finite locally doubling measure, and
  $\widetilde{\M}^{\mathcal{L}}_{\sfd_{1},\sfd_{2}}(X),$ as in
  \eqref{eqdef:Md} for $0< \sfd_{1}<\frac{\pi}{2\dd}$ and
  $\sfd_{2}>0$. Then, there 
  exists $0<C_{\min}\leq C_{\max},$ such that for every
  $\mu_{0},\mu_{1}\in \widetilde{\M}^{\mathcal{L}}_{\sfd_{1},\sfd_{2}}(X)$
  and any optimal plan $\bfH_{01}$ for
  $\LET_{\dd}(\cdot;\mu_0,\mu_1)$ we have
\begin{equation}
\label{bounds} 
C_{\min}\leq\sigma_{i}(x_{i})\leq C_{\max},\hspace{10pt}
\eta_{i}\text{-a.e.\ } 
\end{equation}
where $\eta_i= \pi^i_\# H_{01}= \sigma_i \mu_i$ for $i=0,1$.
 Furthermore, any transportation happens in distances strictly less than some $\frac{\pi}{2\dd},$ i.e. it exists $\mathfrak{D}<\frac{\pi}{2}$ that depends only on $\sfd_{1},\sfd_{2},$ such that $\dd\sfd_{X}(x_{0},x_{1})\leq \mathfrak{D}$  for $H_{01}$ almost every $(x_{0},x_{1}).$ 
\end{lemma}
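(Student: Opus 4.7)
The plan is to exploit Lemma~\ref{totaleta} at the natural scale $\sfd_1$, combine it with the locally doubling property of $\mathcal{L}$ and the doubling property of $(X,\sfd_X)$, and then upgrade the resulting coarse bounds to pointwise $\eta_i$-a.e.\ bounds via the optimality condition \eqref{eq:34}. Before any estimate, I would first observe that since every ball $B(x,\sfd_1)$ carries mass at least $\sfd_2\mathcal{L}(B(x,\sfd_1))>0$ for both $\mu_0$ and $\mu_1$, both supports coincide with $X$. In particular $A_i''=\emptyset$, so $\mu_i=\mu_i'$, which removes the distinction between primed and unprimed quantities in Theorem~\ref{thm:optimality.cond} and Lemma~\ref{totaleta}.

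Next I would establish a ``coarse'' density bound at scale $\sfd_1$. For any $x\in X$, Lemma~\ref{totaleta} gives
\[
\eta_0(B(x,\sfd_1))\le\sqrt{\mu_0(B(x,\sfd_1))\;\mu_1\big(B(x,\sfd_1+\tfrac{\pi}{2\dd})\big)}.
\]
Since $(X,\sfd_X)$ is doubling, the enlarged ball $B(x,\sfd_1+\frac{\pi}{2\dd})$ can be covered by a number $N=N(\dd\sfd_1)$ of balls of radius $\sfd_1$; applying the $\sfd_1$-scale upper bound of $\widetilde{\M}^{\mathcal{L}}_{\sfd_1,\sfd_2}$ on each and using the locally doubling property of $\mathcal{L}$ to compare $\mathcal{L}(B(y_j,\sfd_1))$ with $\mathcal{L}(B(x,\sfd_1))$ at uniform cost, one obtains $\mu_1\big(B(x,\sfd_1+\frac{\pi}{2\dd})\big)\le C_1\mathcal{L}(B(x,\sfd_1))$. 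Using the lower bound $\mu_0(B(x,\sfd_1))\ge\sfd_2\mathcal{L}(B(x,\sfd_1))$ then yields
\[
\frac{\eta_0(B(x,\sfd_1))}{\mu_0(B(x,\sfd_1))}\le C_2 \quad \text{uniformly in }x,
\]
with a symmetric statement for $\eta_1$, where $C_2$ depends only on $\sfd_1,\sfd_2,\dd$ and the (locally) doubling constants.

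To pass from this scale-$\sfd_1$ bound to a true pointwise bound $\sigma_0\le C_{\max}$ $\eta_0$-a.e., I would combine the previous step with a differentiation argument: since $(X,\sfd_X)$ is doubling and $\mu_0$ inherits doubling properties at scale $\sfd_1$ from $\mathcal{L}$, the Lebesgue differentiation theorem applies and one can run the same Cauchy–Schwarz estimate from Lemma~\ref{totaleta} on \emph{arbitrary} Borel sets to conclude that the Radon–Nikodym derivative $\sigma_0=d\eta_0/d\mu_0$ is bounded in $L^\infty(\mu_0)$ by the same uniform constant $C_{\max}:=C_2$. The main technical obstacle here is ruling out a scenario where $\sigma_0$ concentrates on a thin $\mu_0$-set while maintaining moderate averages — this is precisely where the $\sfd_1$-scale constraint (rather than just global mass bounds) is indispensable, since one can test the Cauchy–Schwarz estimate on any Borel set and obtain $\eta_0(A)^2\le \mu_0(A)\mu_1(A_{\pi/(2\dd)})$, and the $\sfd_1$-uniform control of $\mu_1$ turns this into $\eta_0(A)\le C_{\max}\mu_0(A)$ for every Borel $A$.

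Once the upper bound is in place, the optimality condition \eqref{eq:34} closes the argument for both the lower bound on $\sigma_i$ and the transport distance simultaneously. Indeed, on $\supp(\bfH_{01})$ one has the pointwise identity $\sigma_0(x_0)\sigma_1(x_1)=\cos^2(\dd\sfd_X(x_0,x_1))$, which is $\le 1$; combined with the symmetric upper bound $\sigma_1\le C_{\max}$, testing the full condition $\sigma_0(x_0)\sigma_1(x_1)\ge \cos^2(\dd\sfd_X(x_0,x_1))$ with the choice $x_1=x_0$ gives
\[
\sigma_0(x_0)\ge \frac{1}{\sigma_1(x_0)}\cdot\cos^2(0)\ge \frac{1}{C_{\max}}=:C_{\min},
\]
and analogously for $\sigma_1$. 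Finally, inserting these lower bounds back into the equality $\cos^2(\dd\sfd_X(x_0,x_1))=\sigma_0(x_0)\sigma_1(x_1)\ge C_{\min}^2$ valid $\bfH_{01}$-a.e., yields $\dd\sfd_X(x_0,x_1)\le \arccos(C_{\min})=:\mathfrak{D}<\tfrac{\pi}{2}$, which proves the transport-distance bound and completes the proof.
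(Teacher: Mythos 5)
There is a genuine gap in the step that upgrades the scale-$\sfd_1$ estimate to the pointwise bound $\sigma_0\le C_{\max}$ $\eta_0$-a.e. The inequality you propose to test on arbitrary Borel sets, $\eta_0(A)^2\le \mu_0(A)\,\mu_1\big(A_{\pi/(2\dd)}\big)$, does not linearize in $\mu_0(A)$ as $A$ shrinks: the enlargement is by the \emph{fixed} amount $\pi/(2\dd)$, so for a small ball $B(x,r)$ with $r\ll\sfd_1$ the factor $\mu_1\big(B(x,r)_{\pi/(2\dd)}\big)$ stays bounded away from zero (it is at least of order $\sfd_2\,\mathcal L(B(x,\sfd_1))$), while $\mu_0(B(x,r))$ can be arbitrarily small --- membership in $\widetilde{\M}^{\mathcal L}_{\sfd_1,\sfd_2}(X)$ controls ball averages only at the single scale $\sfd_1$ and does not even force $\mu_0\ll\mathcal L$. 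Hence the estimate only yields $\eta_0(A)\lesssim\sqrt{\mu_0(A)}$ for small $A$, which is perfectly compatible with $\sigma_0\notin L^\infty(\mu_0)$; the differentiation argument therefore does not close. Since your lower bound $\sigma_0\ge 1/C_{\max}$ and the final distance bound both feed off this upper bound, the whole chain collapses. (A secondary issue: "testing \eqref{eq:34} with $x_1=x_0$" is delicate because $\sigma_0$ and $\sigma_1$ are only defined up to $\mu_0$-, resp.\ $\mu_1$-null sets and these measures need not be mutually absolutely continuous.)

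The paper's proof avoids differentiation entirely and runs in the opposite order. For a fixed $x_1$ it divides the optimality inequality $\sigma_0(x_0)\sigma_1(x_1)\ge\cos^2_{\pi/2}(\dd\,\sfd_X(x_0,x_1))$ by $\sigma_1(x_1)$ and \emph{integrates over $x_0\in B(x_1,\sfd_1)$ against $\mu_0$}, obtaining
$\eta_0(B(x_1,\sfd_1))\ge \cos^2_{\pi/2}(\dd\,\sfd_1)\,\sfd_2\,\mathcal L(B(x_1,\sfd_1))/\sigma_1(x_1)$.
Combining this with the ball-level upper bound $\eta_0(B(x_1,\sfd_1))\le \widetilde C\,\mathcal L(B(x_1,\sfd_1))/\sfd_2$ from Lemma~\ref{totaleta} (your coarse estimate, which is correct at the fixed scale $\sfd_1$) gives the \emph{lower} bound $\sigma_1(x_1)\ge C_{\min}$ directly; the \emph{upper} bound then follows pointwise from the equality $\sigma_0(x_0)\sigma_1(x_1)=\cos^2_{\pi/2}(\dd\,\sfd_X(x_0,x_1))\le 1$ valid $\bfH_{01}$-a.e. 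Your concluding step for the transport distance (feeding $\sigma_i\ge C_{\min}$ back into this equality) is correct and identical to the paper's, once the density bounds are actually established.
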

\begin{proof}
  By the optimality conditions, we know that there exist sets
  $A_{0},A_{1}$ with 
  $\mu_{0} (X{\setminus}A_{0}) =\eta_{0}(X{\setminus} A_{0}) = 
 \mu_{1}(X {\setminus}A_{1})=\eta_{1}(X{\setminus}A_{1})=0,$ 
  such that
\begin{equation}
\label{a}\sigma_{0}(x_{0})\sigma_{1}(x_{1})\geq
          \cos_{\frac{\pi}{2}}^{2}\left(\dd\sfd_{X}(x_{0},x_{1})\right)
          \hspace{8pt}\text{in} \hspace{8pt}A_{0}\times A_{1}. 
\end{equation}
By dividing with $\sigma_{1}(x_{1})$ and integrating with respect to
$\mu_{0}$ on $B(x_{1},\sfd_{1}),$ we obtain
\begin{equation}
 \label{b}
 \eta_{0}(B(x_{1},\sfd_{1}))\geq
 \frac{\cos_{\frac{\pi}{2}}^{2}\left(\dd
     \sfd_{1}\right)}{\sigma_{1}(x_{1})} 
 \mu_{0}(B(x_{1},\sfd_{1}))\geq
 \frac{\cos_{\frac{\pi}{2}}^{2}\left(\dd \sfd_{1}\right)}
 {\sigma_{1}(x_{1})} \sfd_{2}\mathcal{L}(B(x_{1},\sfd_{1})), 
\end{equation}
for every $x_{1}\in A_{1}.$ Using Lemma \ref{totaleta} we
find  
\begin{equation}
\label{bb}\begin{split}
\eta_{0}(B(x_{1},\sfd_{1}))
& \leq
\sqrt{\mu_{0}(B(x_{1},\sfd_{1}))\mu_{1}(B(x_{1},\sfd_{1})_{\frac{\pi}{2\dd}})}\\
& \leq
\sqrt{\mu_{0}(B(x_{1},\sfd_{1}))}\sqrt{C\left( 
  \left(\frac{\pi}{2\dd}{+}\sfd_{1}\right)/\sfd_{1}\right)\sup_{y\in
    B_{\frac{\pi}{2\dd}}(x_{1},\sfd_{1})}\mu_{1}(B(y,\sfd_{1}))}\\
&  \leq \frac{1}{\sfd_{2}} \sqrt{\mathcal{L}(B(x_{1},\sfd_{1}))} 
\sqrt{C\left(\left(\frac{\pi}{2\dd} {+} \sfd_{1}\right)/\sfd_{1}\right) 
 \sup_{y\in
   B_{\frac{\pi}{2\dd}}(x_{1},\sfd_{1})}\mathcal{L}(B(y,\sfd_{1}))}\\
 &  \leq \frac{1}{\sfd_{2}} \sqrt{\mathcal{L}(B(x_{1},\sfd_{1}))} 
 \sqrt{C\left(\left(\frac{\pi}{2\dd} {+} \sfd_{1}\right)/\sfd_{1}\right)
  \mathcal{L}(B(x_{1},\sfd_{1})_{\frac{\pi}{2\dd}})}\\
 & \leq \frac{1}{\sfd_{2}} \mathcal{L}(B(x_{1},\sfd_{1})) 
 \sqrt{C\left(\left(\frac{\pi}{2\dd} {+} \sfd_{1}\right) 
 /\sfd_{1}\right)\bar{C}_{\frac{\pi}{\dd}}\left( 
\left(\frac{\pi}{2\dd}+\sfd_{1}\right)/\sfd_{1}\right)},
\end{split}
\end{equation} 
where the constant
$C\left(\left(\frac{\pi}{2\dd} {+} \sfd_{1}\right)/\sfd_{1}\right)$ is as in the
definition of doubling metric spaces to cover a set of radius
$\frac{\pi}{2\dd}+\sfd_{1}$ by balls of radius $\sfd_{1},$ and
$\bar{C}_{\frac{\pi}{\dd}}\left(\left(\frac{\pi}{2\dd} {+} \sfd_{1}\right)/\sfd_{1}\right)$
is the doubling measure constant for radius less than
$\frac{\pi}{\dd}.$ We set $\widetilde{C}= \sqrt{C\left(\left(
      \frac{\pi}{2\dd} {+} \sfd_{1}\right)/\sfd_{1}\right)
  \bar{C}_{\frac{\pi}{\dd}} \left(\left(\frac{\pi}{2\dd} {+} \sfd_{1}\right)
    /\sfd_{1}\right)},$ and by combining \eqref{b} and \eqref{bb}, we
 derive the lower bound 
\begin{equation}
\sigma_{1}(x_{1}) \geq  {\cos_{\frac{\pi}{2}}^{2}\left(\dd
                          \sfd_{1}\right)d^{2}_{2}}/\widetilde{C} 
\hspace{8pt}\text{in} \hspace{8pt} A_{1}.
\end{equation}
Now, by the second optimality condition we have 
\begin{equation}
\sigma_{0}(x_{0})= \frac{\cos_{\frac{\pi}{2}}^{2}
  (\dd\sfd_{X}(x_{0},x_{1}))} {\sigma_{1}(x_{1})} \leq
\frac{\widetilde{C}}{\cos_{\frac{\pi}{2}}^{2} \left(\dd
    \sfd_{1}\right)d^{2}_{2}}, \hspace{8pt}  \bfH_{01}\text{-a.e.\ in}
\hspace{8pt} A_{0}\times A_{1}.
\end{equation} 
	
By interchaning the roles of $\sigma_0$ and $\sigma_1$ and combining all the inequalities we arrive at 
\begin{equation*}
 C_\mathrm{min}:=  \frac{\cos_{\frac{\pi}{2}}^{2} \left(\dd
            \sfd_{1}\right)d^{2}_{2}}{\widetilde{C}}\leq
        \sigma_{i}(x_{i}) \leq
        \frac{\widetilde{C}}{\cos_{\frac{\pi}{2}}^{2}\left(\dd
            \sfd_{1}\right)d^{2}_{2}} =: C_\mathrm{max}, \hspace{8pt}
        \bfeta_{i}\text{-a.e.\ in} \hspace{8pt} A_{i},
\end{equation*}	
 which is the desired result. 

Now by visiting the second optimality condition one more time, we get that $\cos_{\frac{\pi}{2}}^{2}
(\dd\sfd_{X}(x_{0},x_{1}))$ is bounded from below by a positive constant that depends only on the bounds of 
$\sigma_{i},$ for $ \bfH_{01}$-a.e. $(x_{0},x_{1}).$ Therefore by continuity of the cosine, we get that  it exists
$\mathfrak{D}<\frac{\pi}{2}$ such that  for every $\mu_{0}, \mu_{1}\in \widetilde{\M}^{\mathcal{L}}_{\sfd_{1},\sfd_{2}}(X),$ we have $\dd\sfd_{X}(x_{0},x_{1})\leq \mathfrak{D},$  for $ \bfH_{01}$-a.e. $(x_{0},x_{1}).$ 

\end{proof}

 The next result shows that the geodesic closure of $
\overline{\M}^{\mathcal{L}}_{\delta}(X)$ is contained in
$\widetilde{\M}^{\mathcal{L}}_{\sfd_{1},\sfd_{2}}(X)$ for suitably chosen
$\sfd_1,\sfd_2$.

\begin{lemma}\label{MdeltatoMd1d2}
  Let $(X,\sfd_{X})$ be doubling, $\mathcal{L}$ be a finite locally doubling
  measure, and $\overline{\M}^{\mathcal{L}}_{\delta}(X)$ be as in
  \eqref{eqdef:Mdelta}. Then, for each $\delta>0$ there exist
  $\sfd_{1} \in (0,\frac{\pi}{2\dd})$ and $\sfd_2>0$ such that any
  constant-speed geodesic $\mm_{01}$ connecting $\mu_{0}$ to $\mu_{1}$
  with $\mu_0,\mu_1\in\overline{\M}^{\mathcal{L}}_{\delta}(X)$
  satisfies $\mm_{01}(t)\in \widetilde{\M}
  ^{\mathcal{L}}_{\sfd_{1},\sfd_{2}}(X)$ for all $t\in [0,1].$
\end{lemma}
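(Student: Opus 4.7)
I would combine the cone-space representation of $\HK_\dd$ (Theorem \ref{thm:OTcone}) with the explicit cone-geodesic formulas from Section \ref{su:Lifting} and the uniform bounds of Lemma \ref{uniform bounds of density}. Since $\overline{\M}^{\mathcal{L}}_\delta(X) \subset \widetilde{\M}^{\mathcal{L}}_{s,\delta}(X)$ for every $s>0$, I first apply Lemma \ref{uniform bounds of density} with, say, $s=\pi/(4\dd)$ to obtain constants $C_{\min}\leq C_{\max}$ and $\mathfrak{D}<\pi/2$ (depending only on $\delta$, $\dd$, and $\mathcal{L}$) such that for every $\mu_0,\mu_1\in\overline{\M}^{\mathcal{L}}_\delta(X)$ and every optimal $\bfH_{01}$ for $\LET_\dd(\cdot;\mu_0,\mu_1)$, the marginal densities satisfy $C_{\min}\leq \sigma_i\leq C_{\max}$ and $\dd\,\sfd_X(x_0,x_1)\leq \mathfrak{D}$ on $\supp\bfH_{01}$. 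I then choose $\sfd_1\in(\mathfrak{D}/\dd,\,\pi/(2\dd))$, which is nonempty since $\mathfrak{D}<\pi/2$. Observe that $\mu_i\geq \delta\mathcal{L}$ forces $\supp(\mu_i)=\supp(\mathcal{L})$, so the null pieces $\mu_i\res A_i''$ vanish and only the ``transported'' component needs to be tracked.

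By Theorem \ref{thm:OTcone} and the correspondence of Section \ref{su:Lifting}, any constant-speed geodesic $\mm_{01}$ in $(\M(X),\HK_\dd)$ admits the representation
\[
  \mm_{01}(t)=\int_{\mfC^2}\rr_{01}(t;z_0,z_1)^2\,\delta_{\xx_{01}(\ztzt_{01}(t))}\,\d\bfLambda_{01}(z_0,z_1),
\]
for some optimal Wasserstein coupling $\bfLambda_{01}$ on $\mfC^2$ with $\mfP(\pi^i)_\sharp\bfLambda_{01}=\mu_i$, with $(\rr_{01},\ztzt_{01},\xx_{01})$ the cone-geodesic data from \eqref{quadraticmass}. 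For the canonical lift associated with $\bfH_{01}$, where $r_i=1/\sqrt{\sigma_i(x_i)}$, the optimality relation \eqref{eq:ww} yields $r_0 r_1\cos(\dd\,\sfd_X(x_0,x_1))=1$, so \eqref{quadraticmass} gives
\[
  \rr_{01}^2(t)=(1{-}t)^2/\sigma_0+t^2/\sigma_1+2t(1{-}t),
\]
uniformly bounded between $1/(2C_{\max})$ and $1/C_{\min}+1/2$.

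Both bounds now follow by direct integration. For the \textbf{upper bound}, $\xx_{01}(\ztzt_{01}(t))\in B(x,\sfd_1)$ combined with the transport-distance bound forces $x_0\in B(x,\sfd_1+\mathfrak{D}/\dd)$, and therefore
\[
  \mm_{01}(t)(B(x,\sfd_1))\leq \bigl(\tfrac{1}{C_{\min}}{+}\tfrac{1}{2}\bigr)\,\eta_0(B(x,\sfd_1{+}\mathfrak{D}/\dd))\leq \frac{C'\,\bar{C}_M}{\delta}\,\mathcal{L}(B(x,\sfd_1))
\]
via $\eta_0\leq \mu_0\leq \mathcal{L}/\delta$ and the local doubling property at ratio $(\sfd_1+\mathfrak{D}/\dd)/\sfd_1$. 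For the \textbf{lower bound}, the choice $\sfd_1>\mathfrak{D}/\dd$ ensures that whenever $x_0\in B(x,\sfd_1-\mathfrak{D}/\dd)$ the base-geodesic point $\xx_{01}(\ztzt_{01}(s))$ remains in $B(x,\sfd_1)$ for every $s\in[0,1]$; combining $\rr_{01}^2(t)\geq 1/(2C_{\max})$, $\sigma_0\geq C_{\min}$ (so $\d\eta_0\geq C_{\min}\,\d\mu_0$) and $\mu_0\geq \delta\mathcal{L}$ with local doubling yields
\[
  \mm_{01}(t)(B(x,\sfd_1))\geq \frac{C_{\min}\,\delta}{2C_{\max}\,\bar{C}_M}\,\mathcal{L}(B(x,\sfd_1)).
\]
Choosing $\sfd_2$ smaller than both the lower-bound constant and the reciprocal of the upper-bound constant completes the proof.

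The \textbf{main obstacle} lies in the cone-lift step: while the LET marginals $\eta_i$ are unique by Theorem \ref{thm:optimality.cond}, the optimal coupling $\bfLambda_{01}$ on $\mfC^2$ need not be, and distinct optimal couplings produce distinct HK geodesics. Crucially, however, the estimates above only use the (unique) marginal densities $\sigma_i$ and the transport-distance bound $\mathfrak{D}$, both of which hold uniformly on the support of any such coupling; consequently the bounds on $\mm_{01}(t)(B(x,\sfd_1))$ are insensitive to which constant-speed geodesic is chosen.
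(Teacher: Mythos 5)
Your overall strategy coincides with the paper's: both proofs feed $\overline{\M}^{\mathcal{L}}_{\delta}(X)\subset\widetilde{\M}^{\mathcal{L}}_{s,\delta}(X)$ into Lemma \ref{uniform bounds of density} to get a transport-distance bound $\dd\,\sfd_X(x_0,x_1)\le\mathfrak{D}<\pi/2$ on the support of the optimal plan, lift the geodesic to the cone $\mfC$, compare $\mm_{01}(t)$ on a ball $B(x,\sfd_1)$ with $\mu_0$ on the balls inflated or deflated by $\mathfrak{D}/\dd$, and close with the doubling property. The difference is in how the radial weight $\rr_{01}^2(t)$ is controlled. The paper never touches the individual radii: for the upper bound it uses the convexity \eqref{rr prop2}, and for the lower bound it restricts the dynamical plan to curves starting in the smaller ball, observes that this restriction is again a geodesic, and applies the total-mass inequality $\tilde m(t)\ge(1{-}t)^2\tilde m(0)+t^2\tilde m(1)$ coming from \eqref{rr prop3}, handling $t\in[0,1/2]$ from the $\mu_0$ side and $t\in[1/2,1]$ from the $\mu_1$ side. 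You instead substitute the radii $r_i=1/\sqrt{\sigma_i(x_i)}$ and the optimality relation \eqref{eq:ww} into \eqref{quadraticmass} to obtain a pointwise two-sided bound on $\rr_{01}^2(t)$, which is uniform in $t$ and avoids the symmetrization.

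The soft spot is the one you flag, but your resolution of it is not correct as stated. What is independent of the choice of optimal cone coupling is the integral $\int\rr_{01}^2(t)\chi\,\d\bfLambda_{01}$ (by the dilation invariance \eqref{eq:41} together with Proposition \ref{pr:He.Geod.Sca}), not the pointwise radii: applying $\dil\theta$ yields another optimal coupling inducing the same projected geodesic but with $r_i\ne 1/\sqrt{\sigma_i(x_i)}$, so the bound $\rr_{01}^2(t)\ge 1/(2C_{\max})$ does \emph{not} hold on the support of an arbitrary optimal coupling. To repair this you must either invoke the structure theorem of \cite{LiMiSa14?OETP} identifying every optimal coupling (here free of apex mass and of critical-distance mass, thanks to the density bounds) with a dilation of the canonical lift of an optimal $\LET_\dd$ plan, and then use dilation invariance to normalize to $r_i=1/\sqrt{\sigma_i(x_i)}$; or, more economically, replace the pointwise radius bounds by the coupling-independent inequalities $(1{-}t)^2r_0^2+t^2r_1^2\le\rr_{01}^2(t)\le(1{-}t)r_0^2+tr_1^2$ together with $\int r_0^2\,\d\bfLambda_{01}(\cdot\times\mfC)=\mu_0$, which is precisely the paper's route. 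A last cosmetic point: $\eta_0\le\mu_0$ is false in general, since the optimality conditions allow $\sigma_0>1$; you need the harmless extra factor $C_{\max}$ there.
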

\begin{proof}
  It is straightforward to see that
  $\overline{\M}^{\mathcal{L}}_{\delta}(X)$ is a subset of some
  $\widetilde{\M}^{\mathcal{L}}_{\min\{\frac{\pi}{4\dd},\frac{1}{2}\},\delta}(X).$
  Therefore, by Lemma \ref{uniform bounds of density}, we find
  $\tilde{d} \in {]0,\pi/2[}$, which depends 
  only on $\delta$, such that $\dd\sfd_{X}(x_{0},x_{1})\leq
  \tilde{d}<\frac{\pi}{2}$ holds for $\bfH_{01}$-a.a. $(x_0,x_1)$. 
  Let $\Lambda_{01}$ be the optimal plan in the cone definition, and $\bm{\Lambda}_{0\rightarrow 1}$ the
occurring plan on the geodesics. For $x_{0}\in X,$ we have
\begin{equation} \label{aaa}
\begin{split}
&\mm_{01}\left(t;B\left(x_{0}, \frac{\pi+ 2\tilde{d}}{4\dd}\right)\right)\geq
\mfP\left[(e_{t})_{\sharp}\left(\bm{\Lambda}_{0\rightarrow 1}\right) \res{\left\{\xx_{01}(0)\in B\left(x_{0},\frac{\pi- 2\tilde{d}}{4\dd}\right)\right\}}\right](X) ,
\end{split}
\end{equation}
since all points in
$B\left(x_{0},\frac{\pi- 2\tilde{d}}{4\dd}\right),$ will be transfered
at most distance $\frac{\tilde{d}}{\dd}$. Therefore will remain in a ball of radius
$B\left(x_{0},\frac{\pi+ 2\tilde{d}}{4\dd}\right).$ Now
$\tilde{\mm}_{01}(t)=\mfP\left[(e_{t})_{\sharp}\left(\bm{\Lambda}_{0\rightarrow
      1}\right) \res{\left\{\xx_{01}(0)\in B\left(x_{0},\frac{\pi-
          2\tilde{d}}{4\dd}\right)\right\}}\right]$ is a geodesic
starting from $\mu_{0}\res{B\left(x_{0},\frac{\pi-
      2\tilde{d}}{4\dd}\right)}.$ Let
$\tilde{m}(t)=(\tilde{\mm}_{01}(t))(X).$ By \eqref{rr prop3} and
recalling \eqref{cone structure of HK} we get  $$\tilde{m}(t)\geq
(1{-}t) ^{2}\tilde{m}(0) +t^{2}\tilde{m}(1),$$ which in turn for
$t\in\left[0,\frac{1}{2}\right]$, gives  
\begin{equation}\label{bbb}
\begin{split}
&\mfP\left[(e_{t})_{\sharp}\left(\bm{\Lambda}_{0\rightarrow 1}\right)\res{\left\{\xx_{01}(0)\in B\left(x_{0},\frac{\pi- 2\tilde{d}}{4\dd}\right)}\right\}\right](X)\\&\geq
 (1{-}t) ^{2} \mfP\left[(e_{0})_{\sharp}\left(\bm{\Lambda}_{0\rightarrow 1}\right)\res{\left\{\xx_{01}(0)\in B\left(x_{0},\frac{\pi- 2\tilde{d}}{4\dd}\right)\right\}}\right] (X) 
\\&\geq (1{-}t) ^{2}\mu_{0}\left(B\left(x_{0},\frac{\pi-
      2\tilde{d}}{4\dd}\right)\right)\geq\frac{1}{4}\mu_{0}\left(B\left(x_{0},\frac{\pi-
      2\tilde{d}}{4\dd}\right)\right)\geq\frac{\delta}{4}\mathcal{L}\left(B\left(x_{0},\frac{\pi-
      2\tilde{d}}{4\dd}\right)\right)\\&\geq \frac{
  \delta}{4\widetilde{C}_{M}\left(\left(\frac{\pi+
        2\tilde{d}}{4\dd}\right)/\left(\frac{\pi-
        2\tilde{d}}{4\dd}\right)\right)}\mathcal{L}\left(B\left(x_{0},\frac{\pi+
      2\tilde{d}}{4\dd}\right)\right). \end{split} 
\end{equation}
Combining \ref{aaa} and \ref{bbb}, we get that 

\begin{equation}
\frac{\mm_{01}\left(t;B\left(x_{0}, \frac{\pi + 2\tilde{d}}{4\dd}\right)\right)}{\mathcal{L}\left(B\left(x_{0},\frac{\pi + 2\tilde{d}}{4\dd}\right)\right)}\geq \frac{\delta}{4\widetilde{C}_{M}\left(\left(\frac{\pi+ 2\tilde{d}}{4\dd}\right)/\left(\frac{\pi- 2\tilde{d}}{4\dd}\right)\right)}
\end{equation}
We work in the same manner with the roles of $\mu_{0}$ and $\mu_{1}$
being reversed to recover the same estimate on the interval $[1/2,1],$
and this way we retrieve the lower bound with $\sfd_{1}=\frac{\pi +
  2\tilde{d}}{4\dd}$ and $\sfd_{2}=
\frac{\delta}{4\widetilde{C}_{M}\left(\left(\frac{\pi+
        2\tilde{d}}{4\dd}\right)/\left(\frac{\pi-
        2\tilde{d}}{4\dd}\right)\right)}$. 

In a similar manner by
utilizing \eqref{rr prop2}  instead of \eqref{rr prop3}, we
obtain a corresponding upper bound.  
\end{proof}

\begin{lemma}
\label{exist Lambdas}
Let $(X,\sfd_{X})$ be doubling, $\mathcal{L}$ a finite, locally
doubling measure, and let
$\widetilde{\M}^{\mathcal{L}}_{\sfd_{1},\sfd_{2}}(X)$ be as in
\eqref{eqdef:Md}. Then, there exist $R_{\min},R_{\max}>0$ that depend
on $\sfd_{1},\sfd_{2},$ such that for $\mu_{0},\mu_{1}$ with
$\mm_{01}(t)\in\widetilde{\M}^{\mathcal{L}}_{\sfd_{1},\sfd_{2}}(X)$
and $\mu_{2}\in\widetilde{\M}^{\mathcal{L}}_{\sfd_{1},\sfd_{2}}(X)$
we can find measures $\lambda_{0},\lambda_{1},\lambda_{2},\lambda_{t}
\in \mathcal{P}_{2}(\mfC [R_{\min},R_{\max}]) $ with
\[
\mathfrak{P}\lambda_{i}=\mu_{i},\hspace{8pt}\mathfrak{P}\lambda_{t} 
=\mm_{01}(t),\hspace{8pt}
\sfW_{\sfd_{\mfC,\dd }} (\lambda_{i},\lambda_{t}) =
\HK_{\dd}(\mu_{i},\mm_{01}(t)) \quad \text{ for }
i=0,1,2.
\]
\end{lemma}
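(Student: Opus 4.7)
The plan is to build the four cone lifts from the optimal LET-plans of Theorem~\ref{thm:LET} and then exploit the dilation invariance \eqref{eq:41} of the squared cone distance $\sfd_{\mfC,\dd}^2$ to collapse the three resulting lifts of $\mm_{01}(t)$ into a single common $\lambda_t$.

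First, since $\mu_0, \mu_1, \mu_2$ and $\mm_{01}(t)$ all lie in $\widetilde{\M}^{\mathcal{L}}_{\sfd_1,\sfd_2}(X)$, applying Lemma~\ref{uniform bounds of density} to each of the three pairs $(\mu_i, \mm_{01}(t))$, $i=0,1,2$, yields uniform constants $0<C_{\min}\le C_{\max}$ and some $\mathfrak{D}<\pi/2$ such that each optimal LET-plan $\bfH^{(i)}$ has marginal densities $\sigma_0^{(i)}$ (on the $\mu_i$-side) and $\sigma_1^{(i)}$ (on the $\mm_{01}(t)$-side) satisfying $C_{\min}\le\sigma_j^{(i)}\le C_{\max}$ and supported on pairs with $\dd\,\sfd_X(x_0,x_1)\le \mathfrak{D}$. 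In particular, there is no annihilation or creation for any pair, and the standard LET-to-cone correspondence produces an optimal cone plan
\[
\Lambda^{(i)} := \bfH^{(i)}\otimes \delta_{1/\sqrt{\sigma_0^{(i)}}}\otimes \delta_{1/\sqrt{\sigma_1^{(i)}}},
\]
with marginals $\lambda_i^{\circ}$ and $\lambda_t^{(i)}$ supported on radii in $[1/\sqrt{C_{\max}},1/\sqrt{C_{\min}}]$ and achieving $\sfW^2_{\sfd_{\mfC,\dd}}(\lambda_i^{\circ}, \lambda_t^{(i)})=\HK^2_\dd(\mu_i, \mm_{01}(t))$.

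Next, to collapse the three different $t$-side marginals $\lambda_t^{(i)}$ into one, I fix the uniform reference radius $r^*:=\sqrt{\mm_{01}(t)(X)}$, which is bounded above and below in terms of $\sfd_1, \sfd_2, \mathcal{L}$ by the $\widetilde{\M}$-assumption, and set
\[
\lambda_t := \tfrac{1}{(r^*)^2}\,\mm_{01}(t)\otimes \delta_{r^*} \;\in\; \cP_2(\mfC),
\]
so that $\mathfrak{P}\lambda_t=\mm_{01}(t)$. For each $i$, I then apply the dilation $\theta_i(z_0, z_1) := 1/(r^*\sqrt{\sigma_1^{(i)}(x_1)})$, which depends only on the $t$-coordinate and is uniformly bounded away from $0$ and $\infty$ by the density bounds. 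A direct pushforward computation shows that the $t$-marginal of $\dil{\theta_i}(\Lambda^{(i)})$ equals the prescribed $\lambda_t$, independently of $i$, while its $i$-marginal $\lambda_i$ is supported at radii $r^*\sqrt{\sigma_1^{(i)}/\sigma_0^{(i)}}\in [R_{\min}, R_{\max}]$ for constants depending only on $\sfd_1, \sfd_2, \mathcal{L}$, and satisfies $\mathfrak{P}\lambda_i=\mu_i$. By the dilation invariance \eqref{eq:41} the transport cost is preserved, so $\sfW^2_{\sfd_{\mfC,\dd}}(\lambda_i, \lambda_t)=\HK^2_\dd(\mu_i, \mm_{01}(t))$, as required.

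The main obstacle is achieving these three requirements --- a single common $\lambda_t$, the projection identities $\mathfrak{P}\lambda_i=\mu_i$ and $\mathfrak{P}\lambda_t=\mm_{01}(t)$, and all radii strictly inside $[R_{\min}, R_{\max}]$ --- \emph{simultaneously} for the three pairs. The specific form of $\theta_i$ is engineered so that the new $t$-marginal becomes independent of $i$ and equal to $\lambda_t$, while the uniform bounds $\sigma_j^{(i)}\in[C_{\min}, C_{\max}]$ from Lemma~\ref{uniform bounds of density} keep the resulting radii inside the annulus. The absence of annihilation and creation for each pair --- itself a consequence of the $\widetilde{\M}$-assumption via the transport-distance bound $\mathfrak{D}<\pi/2$ --- is crucial, as it rules out any mass at the apex $\bm{0}\in \mfC$, which would otherwise be forced outside $\mfC[R_{\min}, R_{\max}]$.
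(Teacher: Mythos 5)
Your proof is correct and follows essentially the same route as the paper: lift the three optimal LET-plans to the cone via the optimal densities, then apply a dilation depending only on the $t$-coordinate to force a common marginal $\lambda_{t}$, with the radius bounds supplied by Lemma~\ref{uniform bounds of density}. The only (cosmetic) difference is that you normalize the common marginal to the constant radius $r^{*}=\sqrt{\mm_{01}(t)(X)}$, whereas the paper dilates so as to match the $t$-marginal produced by the $i=0$ plan; your choice has the small advantage of making the probability normalization of $\lambda_{t}$ explicit.
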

\begin{proof}
   For $i=0,1,2,$ let
  $\bfH_{ti}$ be the optimal plan in the definition of
  $\LET_{\dd}(\cdot;\mu_{i},\mm(t)),$ and
  $\sigma_{i}^{ti},\sigma_{t}^{ti}$ the densities of
  $\eta^{ti}_{i},\eta^{ti}_{t}$ with respect to $\mu_{i},\mu_{t}.$
  Let now the plans
\[
\Lambda_{ti}(\d z_{i}, \d z_{t}) = \delta_{\sqrt{\sigma^{ti}_{i}(x_{i})}}(\d r_{i})
\delta_{\sqrt{\sigma^{ti}_{t}(x_{t})}} (\d r_{i})\bfH_{ti}(\d x_{i}, \d x_{t}).
\]
For $i=0,1,2,$ we take $\theta^{ti}([z_{t},z_{i}])
=\sqrt{\frac{\sigma_{t}^{ti}(x_{t})} {\sigma_{t}^{t0}(x_{t})}},$ and
we define $\tilde{\Lambda}_{ti}=\dil{\theta^{ti}}(\Lambda^{ti}).$
Finally we set $\lambda_{i}=\pi^i_{\sharp}\tilde{\Lambda}_{ti}$ for
$i=0,1,2.$ It is straightforward to see that
$r_{i}=\sqrt{\frac{\sigma_{t}^{ti}(x_{t})}{\sigma_{t}^{t0}(x_{t})}}\sqrt{\sigma_{i}^{ti}(x_{i})}$
for $\lambda_{i}$-a.e. $z_{i}=[x_{i},r_{i}],$ with $i=0,1,2.$ By Lemma
\ref{uniform bounds of density}, we now obtain
\[
 R_{\min} :=  
 \frac{C_{\min}}{\sqrt{C_{\max}}}\leq r_{i} \leq
 \frac{C_{\max}}{\sqrt{C_{\min}}}  =: R_{\max} 
 \hspace{8pt}\text{for}\hspace{8pt}
 \lambda_{i}\text{-a.e.}\hspace{8pt} z_{i}=[x_{i},r_{i}],\hspace{8pt}
 \text{for}\hspace{8pt} i=0,1,2.
\] 
This proves the the claim that all $\lambda_i$ are supported in
$\mfC[R_{\min},R_{\max} ]$. 
\end{proof}	

Now we are able to conclude the proof of the main result of this
section.\medskip

\noindent
\begin{proof}[Proof of Theorem \ref{thm:K.Semi}] 
	By Lemma \ref{MdeltatoMd1d2} there exists
	$0<\sfd_{1}<\frac{\pi}{2\dd}$ and $0<\sfd_{2}$ such that every geodesic
	$\mm_{01}$ connecting
	$\mu_{0},\mu_{1}\in\overline{\M}^{\mathcal{L}}_{\delta}(X)$
	satisfies $\mm_{01}(t)\in
	\widetilde{\M}^{\mathcal{L}}_{\sfd_{1},\sfd_{2}}(X)$ for all $ t\in [0,1].$
	We also have $\mu_{2}\in
	\widetilde{\M}^{\mathcal{L}}_{\sfd_{1},\sfd_{2}}(X) \supset
	\overline{\M}^{\mathcal{L}}_{\delta}(X)$.
	We would like to utilize the equivalent definition of $K-$semiconcavity given in \eqref{equivksemi}, therefore we will just take $\tilde{\mu}_{0}=\mm_{01}(t_{1}), \tilde{\mu}_{1}=\mm_{01}(t_{2}),$ for $t_{1},t_{2}\in[0,1],$ and $\tilde{\mm}_{01}(t)=\mm_{01}(t(t_{2}-t_{1})+t_{1}).$  
	By Lemma \ref{exist Lambdas}, there exists $R_{\min},R_{\max}$ that depend on
  $\sfd_{1},\sfd_{2},$ and therefore on $\delta,$ such that for every
  $\tilde{\mu}_{0},\tilde{\mu}_{1},\tilde{\mu}_{2}\in\widetilde{\M}^{\mathcal{L}}_{\sfd_{1},\sfd_{2}}(X)$
  and $0<t<1$ we can find measures
  $\lambda_{0},\lambda_{1},\lambda_{2},\lambda_{t} \in
  \mathcal{P}_{2}(\mfC [R_{\min},R_{\max}]) $ with 
  \begin{equation}
    \label{eq:La.mu}
      \mathfrak{P}\lambda_{i}=\tilde{\mu}_{i},\hspace{8pt}\mathfrak{P}\lambda_{t} 
   =\tilde{\mm}_{01}(t),\hspace{4pt} \text{and} \hspace{4pt} 
  \sfW_{\sfd_{\mfC,\dd}}(\lambda_{i},\lambda_{t}) 
  =\HK_{\dd}(\tilde{\mu}_{i},\tilde{\mm}_{01}(t)),\hspace{4pt} i=0,1,2.
  \end{equation}
Using the geodesic property of $\tilde{\mm}_{01}$ yields 
\begin{align*}
\sfW_{  \sfd_{\mfC,\dd}}(\lambda_{0},\lambda_{t}) + \sfW_{
    \sfd_{\mfC,\dd}}(\lambda_{1},\lambda_{t}) &=
  \HK_{\dd}(\mu_{0},\tilde{\mm}_{01}(t))+\HK_{\dd}(\mu_{1},\tilde{\mm}_{01}(t))\\
&= \HK_{\dd}(\tilde{\mu}_{0},\tilde{\mu}_{1})\!\leq\!
  \sfW_{ \sfd_{\mfC,\dd}}(\lambda_{0},\lambda_{1}).
\end{align*} 
Hence, it is
  straightforward to see that there exists a geodesic $\lala_{01}$ connecting
  $\lambda_{0},\lambda_{1},$ such that $\lala_{01}(t)=\lambda_{t}.$
  Furthermore, by \cite[Thm.\,6]{Lisini2006a} there is a plan
  $\bfLambda_{0\rightarrow 1}$ on the geodesics such that
  $\Lambda_{ts}:=(e_{t},e_{s})_{\sharp}\bfLambda_{0\rightarrow1}$
  is an optimal plan between $\lala(t)$ and $\lala(s).$ Now, by using
  a gluing lemma, we can find a plan $\bfLambda^{0 \to
    1}_{2t}$ in $\mathcal{P}((C[0,1];\mfC)\times\mfC),$ such
  that $\Lambda_{01}=(e_{0},e_{1})_{\sharp}\left(\pi^{0\rightarrow
      1}_{\sharp}\bfLambda^{0 \to 1}_{2t}\right),$
  and $(e_{t}(\pi^{0\rightarrow 1})\times
  I)_{\sharp}\bfLambda^{0 \to 1}_{2t}$ is an optimal plan
  for $ \sfW_{\sfd_{\mfC ,\dd}}(\lambda_{2},\lala_{01}(t)).$ Finally by applying the last part
  of Lemma \ref{uniform bounds of density}, we get the existence of a $\mathfrak{D}<\frac{\pi}{2}$ such that $\dd\sfd_{X}(x_{2},x_{t})<\mathfrak{D}$ for $(e_{t}(\pi^{0\rightarrow 1})\times  I)_{\sharp}\bfLambda^{0 \to 1}_{2t}$ almost every $(z_{2},z_{t}),$ similarly $\dd\sfd_{X}(x_{0},x_{1})<\mathfrak{D}$ for $\Lambda_{01}$ almost every $[z_{0},z_{1}].$ Therefore, for $\bfLambda^{0 \to
  	1}_{2t}$ almost every $(z_{2}, \zz(\cdot,z_{0},z_{1})),$ where $\zz(\cdot,z_{0},z_{1})$ is a geodesic connecting $z_{0},z_{1},$ we have $x_{0},x_{1},x_{2}, \bar{\xx}(t,z_{0},z_{1})\in B\left(\bar{\xx}(t,z_{0},z_{1}),d\right).$ By Lemma \ref{mainKseminconcavitylemma} we get a $K'$ such that 

\begin{equation}
\label{eq:C.Kconcave}
  \sfd^{2}_{\mfC ,\dd}(z_{2},\zz(t,z_{0},z_{1})) +
   K't(1{-}t)\sfd^{2}_{\mfC ,\dd}(z_{0},z_{1})\geq
   (1{-}t) \sfd^{2}_{\mfC ,\dd}(z_{2},z_{0})+ t\, \sfd^{2}_{\mfC ,\dd}(z_{2},z_{1}),
\end{equation}
for $\bfLambda^{0 \to 1}_{2t}$ almost every $(z_{2}, \zz(\cdot,z_{0},z_{1})).$ By integrating with respect to $\bfLambda^{0 \to 1}_{2t},$ we
find
\begin{equation}
    \sfW_{\sfd_{\mfC ,\dd}}^{ 2}(\lambda_{2},\lala_{01}(t)) +
    K't (1{-}t) \sfW_{\sfd_{\mfC ,\dd}}^{ 2}
    (\lambda_{0},\lambda_{1})\geq   
   (1{-}t) \sfW_{\sfd_{\mfC,\dd}}^{ 2}(\lambda_{2},\lambda_{0})+ 
   t \, \sfW_{\sfd_{\mfC,\dd}}^{ 2} (\lambda_{2},\lambda_{1}).
\end{equation}
Using \eqref{eq:La.mu} we find the desired semiconcavity, and Theorem
\ref{thm:K.Semi} is proved. 
\end{proof}

To obtain a similar result for the Spherical
Hellinger--Kantorovich distance $\SHK_{\dd} $ we define 
\[
\mathcal{P}^{\mathcal{L}}_{\delta}(X) := 
\left\{\nu\in\mathcal{P}(X) \; : \; \nu=\frac{\mu}{\mu(X)},\   \mu \in
  \overline{\M}^{\mathcal{L}}_{\delta}(X)\right\}\supset\mathcal{P}(X)\cap\overline{\M}^{\mathcal{L}}_{\delta}(X) 
\]
as analog of $\overline{\M}^{\mathcal{L}}_{\delta}(X)$, see
\eqref{eqdef:Mdelta}.  
Now for the Spherical Hellinger-Kantorovich space 
$(\mathcal{P}(X),\SHK_{\dd}) $  satisfies the following analog of
Theorem \ref{thm:K.Semi} for $(\M(X),\HK_{\dd}) $.  

\begin{theorem}[ $K$-semiconcavity for $(\mathcal{P}(X),\SHK_{\dd})$] 
\label{thm:SK.Semi} 
Let $(X,\sfd_{X})$ be a doubling metric space and assume that
$(X,\sfd_{X})$ satisfies $K$-semiconcavity on every ball
$B\left(x,\frac{\pi}{2\dd}\right).$ Furthermore, let $\mathcal{L}$ be
a finite, locally doubling measure, and
$\overline{\M}^{\mathcal{L}}_{\delta}(X)$ as in
\eqref{eqdef:Mdelta}.  Then, there exists $K'\in\mathbb{R}$,
which depends only on $K,\delta,\ell$, such that
$(\mathcal{P}(X),\SHK_{\dd})$ is $K'$-semiconcave on
$\mathcal{P}^{\mathcal{L}}_{\delta}(X).$
\end{theorem}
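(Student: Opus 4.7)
The plan is to combine Theorem~\ref{thm:K.Semi} with the cone-to-base transfer of semiconcavity in Proposition~\ref{mainKseminconcavitylemma}(B), exploiting the identification of probability measures $\nu\in\cP(X)$ with the ``unit radius'' cone points $[\nu,1]\in\M(X)$ (so that all three relevant radii are equal to $1$).

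First, I would embed $\mathcal{P}^{\mathcal{L}}_\delta(X)$ into a set of measures with bounded densities. If $\mu\in\overline{\M}^{\mathcal{L}}_\delta(X)$ then $\mu(X)\in[\delta\mathcal{L}(X),\delta^{-1}\mathcal{L}(X)]$, so $\nu:=\mu/\mu(X)$ has density in $[\tilde\delta,\tilde\delta^{-1}]$ with $\tilde\delta:=\delta^2/\mathcal{L}(X)$. Thus $\mathcal{P}^{\mathcal{L}}_\delta(X)\subset\overline{\M}^{\mathcal{L}}_{\tilde\delta}(X)$. Applying Theorem~\ref{thm:K.Semi} with parameter $\tilde\delta$ provides a constant $K_1\in\R$, depending only on $K,\delta,\dd,\mathcal{L}$, such that $(\M(X),\HK_\dd)$ is $K_1$-semiconcave on $\overline{\M}^{\mathcal{L}}_{\tilde\delta}(X)$. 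In particular, for any triple $\nu_0,\nu_1,\nu_2\in\mathcal{P}^{\mathcal{L}}_\delta(X)$ there exists an $\HK_\dd$-geodesic $\boldsymbol{\mu}_{01}$ connecting $\nu_0$ to $\nu_1$ along which $K_1$-semiconcavity holds with respect to the observer $\nu_2$. By Theorem~\ref{antetwra}, every such cone geodesic has the form $\boldsymbol{\mu}_{01}(t)=[\bar{\boldsymbol{\nu}}_{01}(\ztzt_{01}(t)),\rr_{01}(t)]$ for some $\bar{\boldsymbol{\nu}}_{01}\in\mathrm{Geod}(\nu_0,\nu_1)$ in $(\cP(X),\SHK_\dd)$.

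Second, I would establish a strict diameter bound for $\mathcal{P}^{\mathcal{L}}_\delta(X)$ inside $(\cP(X),\SHK_\dd)$. For any $\nu_0,\nu_1\in\mathcal{P}^{\mathcal{L}}_\delta(X)$ the measure $\nu_0\wedge\nu_1$ has density $\geq\tilde\delta$ with respect to $\mathcal{L}$, so $m:=(\nu_0\wedge\nu_1)(X)\geq\tilde\delta\,\mathcal{L}(X)=\delta^2$. Plugging the diagonal plan $\bfH=(\mathrm{id},\mathrm{id})_\sharp(\nu_0\wedge\nu_1)$ into $\LET_\dd(\cdot;\nu_0,\nu_1)$ (noting that $L_\dd$ vanishes on the diagonal and $F(s)\leq 1-s$ for $s\in[0,1]$) yields
\[
  \HK^{2}_\dd(\nu_0,\nu_1)\ \leq\ 2(1-m)\ \leq\ 2-2\delta^2,
  \qquad\text{hence}\qquad
  \SHK_\dd(\nu_0,\nu_1)\ \leq\ \mathcal{D}:=\arccos(\delta^2)\ <\ \tfrac{\pi}{2}.
\]

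Third, I would invoke Proposition~\ref{mainKseminconcavitylemma}(B) for the cone geodesic $\zz_{01}=\boldsymbol{\mu}_{01}$ with observer $z_2=\nu_2$ and radii $r_0=r_1=r_2=1$. The hypothesis that the base points $\nu_0,\nu_1,\nu_2$ lie in a common $\SHK_\dd$-ball of radius $\mathcal{D}<\pi/2$ is fulfilled by Step~2, and the $K_1$-semiconcavity of $\HK_\dd$ along $\boldsymbol{\mu}_{01}$ w.r.t.\ $\nu_2$ is fulfilled by Step~1. The proposition then returns $K'$-semiconcavity of $(\cP(X),\SHK_\dd)$ along the base geodesic $\bar{\boldsymbol{\nu}}_{01}\in\mathrm{Geod}(\nu_0,\nu_1)$ w.r.t.\ $\nu_2$, with a constant $K'$ depending continuously only on $K_1,r_0,r_2,\mathcal{D}$, and hence only on $K,\delta,\dd,\mathcal{L}$. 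Since $\nu_0,\nu_1,\nu_2\in\mathcal{P}^{\mathcal{L}}_\delta(X)$ were arbitrary, this is exactly the asserted $K'$-semiconcavity of $(\cP(X),\SHK_\dd)$ on $\mathcal{P}^{\mathcal{L}}_\delta(X)$.

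The main obstacle is securing the strict diameter bound $\mathcal{D}<\pi/2$ of Step~2; without it, the factors $\cos(\mathcal{D})$ and $1/\sin(\tilde\phi_{01}/2)$ appearing in the proof of Proposition~\ref{mainKseminconcavitylemma}(B) would blow up and no uniform $K'$ could be extracted. The uniform lower bound $\nu_0\wedge\nu_1\geq \tilde\delta\mathcal{L}$ on the common mass is precisely what prevents two elements of $\mathcal{P}^{\mathcal{L}}_\delta(X)$ from attaining the maximal $\SHK_\dd$-distance $\pi/2$ and is what drives the whole argument.
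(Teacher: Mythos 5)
Your proposal is correct and follows essentially the same route as the paper: normalize to embed $\mathcal{P}^{\mathcal{L}}_{\delta}(X)$ into a set of probability measures with two-sided density bounds, observe that this set has $\SHK_{\dd}$-diameter strictly below $\pi/2$, and then combine Theorem~\ref{thm:K.Semi} with the cone-to-base transfer of semiconcavity at unit radius (your direct use of Proposition~\ref{mainKseminconcavitylemma}(B) with $r_0=r_1=r_2=1$ is exactly the content of Corollary~\ref{Ksemicor}, which is what the paper invokes). Your Step~2, bounding $\HK^2_{\dd}(\nu_0,\nu_1)\leq 2-2\delta^2$ via the diagonal plan $(\mathrm{id},\mathrm{id})_\sharp(\nu_0\wedge\nu_1)$ in the $\LET_{\dd}$ functional, is a welcome explicit justification of the diameter bound that the paper dismisses as ``trivial to see.''
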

\begin{proof}
For  $\mu\in\overline{\M}^{\mathcal{L}}_{\delta}(X),$ we have  $\delta\mathcal{L}(X)\leq \mu(X)\leq \frac{1}{\delta}\mathcal{L}(X),$ therefore for $\nu=\frac{\mu}{\mu(X)},$ we have ${\delta^{2}}\leq\frac{d\nu}{d\mathcal{L}}(x)\leq\frac{1}{\delta^{2}}$. We get that $\mathcal{P}^{\mathcal{L}}_{\delta}(X)\subset \mathcal{P}(X)\cap\overline{\M}^{\mathcal{L}}_{\delta^{2}}(X).$ It is also trivial to see that it exists a $\mathcal{D}<\frac{\pi}{2},$ such that $\mathcal{P}(X)\cap\overline{\M}^{\mathcal{L}}_{\delta^{2}}(X)\subset B(\nu_{0},\mathcal{D}),$ for some $\nu_{0}\in \mathcal{P}(X)\cap\overline{\M}^{\mathcal{L}}_{\delta^{2}}(X)\subset B(\nu_{0},\mathcal{D}).$ Now we apply Corollary \ref{Ksemicor} with combination with Theorem \ref{thm:K.Semi}, and get the result.
\end{proof}

\paragraph*{Acknowledgment}
We would like to thank Anton Petrunin, Giuseppe Savar\'e, and Marios
Stamatakis for useful communication in different stages of this
project. We would like to especially thank Marios Stamatakis for
providing us a first proof of Proposition \ref{ThKsSZ}.

\newcommand{\etalchar}[1]{$^{#1}$}
\def\cprime{$'$}

\end{document}